\documentclass[11pt]{article}
\usepackage{amsthm}
\usepackage{amssymb}
\usepackage{amsmath}
\usepackage{hyperref}
\usepackage{graphicx}
\usepackage{caption}
\usepackage{array}
\usepackage{enumitem}
\usepackage{mathtools}
\usepackage{etoolbox}
\usepackage{bbm}
\usepackage{mathdots}
\usepackage[backend=biber, style=alphabetic, maxbibnames=9, maxcitenames=9, maxalphanames=9]{biblatex}
\bibliography{Residual}
\usepackage[all,cmtip]{xy}
\usepackage{tensor}

\pretocmd{\section}{%
  }{}{}

\numberwithin{table}{section}

\newtheorem{theorem}{Theorem}[subsection]
\newtheorem*{theorem*}{Theorem}
\newtheorem{proposition}[theorem]{Proposition}
\newtheorem*{proposition*}{Proposition}
\newtheorem{corollary}[theorem]{Corollary}
\newtheorem{lemma}[theorem]{Lemma}

\newtheorem*{lemma*}{Lemma}

\newtheorem{introthm}{Theorem}[section]

\theoremstyle{definition}

\newtheorem*{exercise*}{Exercise}
\newtheorem{remark}[theorem]{Remark}

\newtheorem{example}[theorem]{Example}
\newtheorem{setup}[theorem]{Setup}

\numberwithin{equation}{subsection}
\numberwithin{figure}{section}

\let\hom\relax
\let\det\relax

\newcommand{\mb}{\mathbb}
\newcommand{\mbf}{\mathbf}
\newcommand{\mc}{\mathcal}
\newcommand{\mf}{\mathfrak}
\newcommand{\mr}{\mathrm}
\newcommand\sm[1]{{\tiny\arraycolsep=0.3\arraycolsep\ensuremath{\begin{pmatrix}#1\end{pmatrix}}}}
\newcommand\pmat[1]{\begin{pmatrix}#1\end{pmatrix}}

\newcommand\tp[1]{\prescript{t}{}{#1}}

\newcommand{\Z}{\mathbb{Z}}
\newcommand{\Q}{\mathbb{Q}}
\newcommand{\R}{\mathbb{R}}
\newcommand{\C}{\mathbb{C}}

\newcommand{\A}{\mathbb{A}}

\renewcommand{\sl}{\mathfrak{sl}}

\newcommand{\Sset}[2]{\left\lbrace{#1}\,\,\middle|\,\,{#2}\right\rbrace}
\newcommand{\sset}[2]{\lbrace{#1}\,\,|\,\,{#2}\rbrace}
\newcommand{\Set}[1]{\left\lbrace{#1}\right\rbrace}

\DeclareMathOperator{\Ad}{Ad}

\DeclareMathOperator{\det}{det}
\DeclareMathOperator{\diag}{diag}

\DeclareMathOperator{\disc}{disc}

\DeclareMathOperator{\hom}{Hom}
\newcommand{\id}{\mathrm{id}}

\DeclareMathOperator{\Ind}{Ind}

\DeclareMathOperator{\Lie}{Lie}

\newcommand{\modulo}[1]{\,\,(\mathrm{mod}\,\,{#1})}

\DeclareMathOperator{\pr}{pr}

\DeclareMathOperator{\re}{Re}

\DeclareMathOperator*{\res}{Res}

\DeclareMathOperator{\sign}{sign}

\DeclareMathOperator{\sph}{sph}

\DeclareMathOperator{\std}{Std}

\DeclareMathOperator{\Sym}{Sym}

\begin{document}
\title{A vanishing theorem for residual Eisenstein cohomology}
\author{Sam Mundy}
\date{}
\maketitle

\begin{abstract}
We study the residual Eisenstein cohomology of semisimple groups in the context of maximal parabolic subgroups which remain maximal over $\R$. Under certain general hypotheses, we show that these residual representations are cohomological one degree below middle, and one above; however, the classes above middle vanish in the full automorphic cohomology.

We offer two proofs of this vanishing, one quick and one more explicit. The latter of the two proofs finds an explicit cochain which provides a primitive to the image of a nontrivial class from the cohomology of such a residual representation. This cochain is valued in regular Eisenstein series. Along the way, we study in detail the archimedean component of the relevant induced representation. In particular, we prove that it has a subrepresentation which is the sum of two discrete series, whose Harish-Chandra parameters we describe, and that the intertwining operator vanishes to order exactly $1$ on that subrepresentation.
\end{abstract}

\section*{Introduction}

Let $\mc{G}$ be a connected, semisimple, linear algebraic group over $\Q$. This paper aims to use explicit cochain constructions in relative Lie algebra cohomology to study a part of the automorphic cohomology of $\mc{G}$ which has generally remained rather mysterious, namely that part which is spanned by residual Eisenstein series. Our main theorem amounts to giving sufficient conditions for certain nontrivial direct summands of the $L^2$-cohomology of $\mc{G}$ to vanish in the full automorphic cohomology of $\mc{G}$, and for others to persist. Let us begin by recalling the relevant notions.

Let $E$ be an irreducible, finite dimensional representation of $\mc{G}(\C)$. Let $\mf{g}$ be the complexified Lie algebra of $\mc{G}$, and $U(\mf{g})$ its universal enveloping algebra. Then the annihilator of $E$ in the center of $U(\mf{g})$ is an ideal, which we denote by $J_E$. Inside the space of all automorphic forms on $\mc{G}(\A)$, we can define $\mc{A}_E(\mc{G})$ to be the subspace of automorphic forms which are annihilated by a power of $J_E$. Fix a maximal compact subgroup $K_\infty$ in $\mc{G}(\R)$, and write $K_\infty^\circ$ for the connected component of the identity in $K_\infty$. We define the \textit{automorphic cohomology} of $\mc{G}$ with respect to the dual $E^\vee$ of $E$ to be
\[H^*(\mc{G},E^\vee)=H^*(\mf{g},K_\infty^\circ;\mc{A}_E(\mc{G})\otimes E^\vee).\]
The right hand side is the definition of the left and it denotes the usual $(\mf{g},K_\infty^\circ)$-cohomology of the $(\mf{g},K_\infty)$-module $\mc{A}_E(\mc{G})\otimes E^\vee$; we review briefly this notion in Section \ref{subsecgkcoh}. The automorphic cohomology $H^*(\mc{G},E^\vee)$ carries the natural structure of a $\mc{G}(\A_f)$-module (where $\A_f$ denotes the ring of finite adeles) with a natural commuting action of the component group $\pi_0(K_\infty)$ of $K_\infty$.

The space $H^*(\mc{G},E^\vee)$ is primarily of interest because it is isomorphic, by a deep theorem of Franke \cite[Theorem 18]{franke}, to the cohomology of a local system associated with $E^\vee$ on the locally symmetric spaces associated with $\mc{G}$ in the inductive limit over all level structures. We will not use this point view in this paper, however. But the techniques used to prove Franke's theorem have also led to a better understanding of the structure of the space $\mc{A}_E(\mc{G})$, particularly in ways that are helpful for computing the automorphic cohomology of $\mc{G}$; see, for example, \cite{FS}. The space $\mc{A}_E(\mc{G})$ decomposes into cuspidal and Eisenstein parts, and then the Eisenstein part decomposes much further into individual direct summands. If the Eisenstein series contributing to a particular summand are all regular, then this summand has a particularly simple parabolically induced structure. Otherwise it is an iterated extension of such. This paper is concerned with the cohomology of these summands in the simplest case when such a summand is not merely a parabolically induced representation; see Theorem \ref{thmgrbac} of Grbac below for a precise description of this structure in the case of interest for us.

Now let $\mc{P}$ be a maximal parabolic $\Q$-subgroup of $\mc{G}$ with Levi factor $\mc{M}$ and unipotent radical $\mc{N}$. Then given a unitary cuspidal automorphic representation $\pi$ of $\mc{M}(\A)$ and $s\in\C$, we can build the induced representation
\[\Ind_{\mc{P}(\A)}^{\mc{G}(\A)}(\pi,s);\]
here the transformation law is normalized so that for $\phi:\mc{G}(\A)\to\pi$ in this induced representation, we have
\[\phi(mng)=\delta_{\mc{P}}^{s+1/2}(m)\pi(m)\phi(g),\qquad m\in\mc{M}(\A),\, n\in\mc{N}(\A),\, g\in\mc{G}(\A),\]
where $\delta_{\mc{P}}$ denotes the usual modulus character for $\mc{P}(\A)$. Putting such a $\phi$ into a flat family of sections $\phi_s$ as $s$ varies, we can build an \textit{Eisenstein series} $E(\phi,s,g)$ for any $s\in\C$ and $g\in\mc{G}(\A)$; we recall the theory in this case in Section \ref{subseceisenstein} below. The Eisenstein series $E(\phi,s,g)$ is defined by meromorphic continuation, and so it might have a pole at some $s$ for given $\phi$ and $g$. But if $\re(s)\geq 0$, then at least the pole is simple and taking the residue gives a \textit{residual Eisenstein series}. In this case, the automorphic representation spanned by such residues at a fixed $s$ as $\phi\in \Ind_{\mc{P}(\A)}^{\mc{G}(\A)}(\pi,s)$ varies will be denoted $\mc{L}(\pi,s)$, and it is isomorphic to the unique irreducible quotient of $\Ind_{\mc{P}(\A)}^{\mc{G}(\A)}(\pi,s)$.

We assume throughout that
\begin{equation}
\label{eqnassumption1}
\mc{P}(\R)\textrm{ is maximal in }\mc{G}(\R),\tag{A.1}
\end{equation}
and further that
\begin{equation}
\label{eqnassumption2}
\textrm{Both }\mc{G}(\R)\textrm{ and }\mc{M}(\R)\textrm{ have discrete series.}\tag{A.2}
\end{equation}
In this case we can define a particular value $s_0>0$ which, in practice, seems to frequently give rise to residual representations $\mc{L}(\pi,s_0)$ like above which have $(\mf{g},K_\infty^\circ)$-cohomology. We will give a few different interpretations of this number in this paper; here is one. The condition that \eqref{eqnassumption1} holds implies that the $\R$-split center, call it $A$, of $\mc{M}(\R)$ is one dimensional. Let $\mf{a}$ be the complexified Lie algebra of $A$, and let $\mf{m}$ and $\mf{n}$ be those of $\mc{M}$ and $\mc{N}$, and complete $\mf{a}$ to a Cartan subalgebra $\mf{h}$ of $\mf{g}$ contained in $\mf{m}$. Then it follows from Lemma \ref{lemrootalpha} below that, under the assumption \eqref{eqnassumption2} there is a unique root $\alpha$ of $\mf{h}$ which is contained in $\mf{n}$ and which vanishes on the orthogonal complement of $\mf{a}$ in $\mf{h}$ under the Killing form. We will want to twist by $\frac{1}{2}\alpha$, so we let $s_0$ be defined by the equation
\[\frac{1}{2}\alpha|_{\mf{a}}=2s_0\rho_{\mc{P}}|_{\mf{a}},\]
where $\rho_{\mc{P}}$ denotes the half sum of roots of $\mf{h}$ in $\mf{n}$ with multiplicity.

Now fix a cuspidal automorphic representation $\pi$ of $\mc{M}(\A)$ such that
\begin{equation}
\label{eqnassumption3}
\textrm{The central character }\chi\textrm{ of }\pi\textrm{ is trivial on }A^\circ;\tag{A.3}
\end{equation}
This assumption is just an innocuous normalization. Let $\pi_\infty$ be the archimedean component of $\pi$. We assume in addition that
\begin{equation}
\label{eqnassumption4}
\pi_\infty\textrm{ is discrete series,}\tag{A.4}
\end{equation}
and that there is an irreducible finite dimensional representation $E$ of $\mc{G}(\C)$ such that
\begin{equation}
\label{eqnassumption5}
\textrm{The infinitesimal character of }\Ind_{\mc{P}(\R)}^{\mc{G}(\R)}(\pi_\infty,s_0)\textrm{ matches that of }E.\tag{A.5}
\end{equation}
This assumption guarantees that the Eisenstein series, or their possible residues, defined by sections $\phi$ in $\Ind_{\mc{P}(\A)}^{\mc{G}(\A)}(\pi,s_0)$ are in the space $\mc{A}_E(\mc{G})$. We then have the following theorem.

\begin{introthm}
\label{introthmmain}
Let the notation be as above, including assumptions \eqref{eqnassumption1}-\eqref{eqnassumption5}. Assume in addition that:
\begin{enumerate}[label=(\roman*)]
\item There is a flat section
\[\phi\in\Ind_{\mc{P}(\A)}^{\mc{G}(\A)}(\pi,s)\]
such that the Eisenstein series $E(\phi,s,g)$ has a pole at $s=s_0$.
\item The corresponding residual representation $\mc{L}(\pi,s_0)$ has that
\[H^*(\mf{g},K_\infty^\circ;\mc{L}(\pi,s_0)\otimes E^\vee)\]
is nonvanishing.
\end{enumerate}
Let us write $d=\frac{1}{2}\dim(\mc{G}(\R)/K_\infty^\circ)$ (which is middle degree). Then:
\begin{enumerate}[label=(\alph*)]
\item We have that there is an integer $m>0$ such that
\[\dim_{\C}H^q(\mf{g},K_\infty^\circ;\mc{L}(\pi,s_0)_\infty\otimes E^\vee)=\begin{cases}
m&\textrm{if }q=d-1\textrm{ or }d+1;\\
0&\textrm{otherwise}.
\end{cases}\]
\item The natural map 
\[H^{d-1}(\mf{g},K_\infty^\circ;\mc{L}(\pi,s_0)\otimes E^\vee)\to H^{d-1}(G,E^\vee),\]
induced by including $\mc{L}(\pi,s_0)$ into $\mc{A}_E(\mc{G})$, is injective.
\item However, the natural map 
\[H^{d+1}(\mf{g},K_\infty^\circ;\mc{L}(\pi,s_0)\otimes E^\vee)\to H^{d+1}(G,E^\vee),\]
again induced by including $\mc{L}(\pi,s_0)$ into $\mc{A}_E(\mc{G})$, is zero.
\end{enumerate}
\end{introthm}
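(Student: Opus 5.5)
The plan is to work one place at a time: first the archimedean induced representation, then the global Eisenstein summand via Grbac's description (Theorem \ref{thmgrbac}).

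\textbf{Step 1: the archimedean representation.} Put $I_\infty(s)=\Ind_{\mc{P}(\R)}^{\mc{G}(\R)}(\pi_\infty,s)$ and let $M(s)$ be the standard intertwining operator. Using \eqref{eqnassumption1}, \eqref{eqnassumption2} and \eqref{eqnassumption4}, I would show three things.
\begin{enumerate}[label=(\arabic*)]
\item $I_\infty(s_0)$ has a subrepresentation $D=D_1\oplus D_2$, a sum of two discrete series. Their Harish-Chandra parameters should be $\lambda_\pi+\rho_{\mc{M}}$ shifted by $s_0$ and reflected in the root $\alpha$ of Lemma \ref{lemrootalpha}. This is the usual picture of a Cayley transform along the noncompact root $\alpha$.
\item The Langlands quotient $J=I_\infty(s_0)/D$ is $\mc{L}(\pi,s_0)_\infty$.
\item $M(s)$ vanishes to order exactly $1$ on $D$ at $s=s_0$.
\end{enumerate}
To compute the cohomology of $J$, I would first get $H^*(\mf{g},K_\infty^\circ;I_\infty(s_0)\otimes E^\vee)$ from Borel--Wallach's Frobenius reciprocity. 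Here $\pi_\infty$ is discrete series and the $\mf{a}$-part contributes a one-dimensional exterior algebra, so this cohomology is concentrated in degrees $d_M+q_0$ and $d_M+q_0+1$, where $d_M$ is the middle degree for $\mc{M}$ and $q_0$ is the relevant Kostant degree in $H^*(\mf{n},E^\vee)$. Next, $D$ has cohomology only in degree $d$. The long exact sequence for $0\to D\to I_\infty(s_0)\to J\to 0$ should then give $H^q(J\otimes E^\vee)$ of dimension $m$ exactly in degrees $d-1$ and $d+1$. The degree-$(d+1)$ class arises from the connecting map into $H^{d}(D)$, or dually by Poincaré duality of $L^2$-cohomology. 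This proves (a), with $m$ the number of $K_\infty^\circ$-orbits involved, using hypothesis (ii).

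\textbf{Step 2: part (b).} By Grbac's theorem, the summand of $\mc{A}_E(\mc{G})$ attached to $(\mc{P},\pi)$ fits in an extension
\[0\to\mc{L}(\pi,s_0)\to\mc{A}_{\{\mc{P},\pi\}}\to\Ind(\pi,s_0)\otimes\C[T]/\text{(residual part)}\to 0.\]
In the Franke filtration $\mc{L}(\pi,s_0)$ sits as the square-integrable bottom step. So injectivity in degree $d-1$ reduces to showing that the connecting map from $H^{d-2}$ of the quotient vanishes. That cohomology of the induced quotient lives only in degrees $\geq d$ by the Step 1 computation, which gives (b).

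\textbf{Step 3: part (c), the main obstacle.} Here one must exhibit a primitive. Quickly: the degree-$(d+1)$ class of $J$ comes, via the connecting homomorphism, from a class of $I_\infty(s_0)$ in degree $d$. Globally, the derivative in $s$ of $(s-s_0)E(\phi,s)$ realizes the extension in Grbac's description. So the image of the class in $H^{d+1}(\mc{G},E^\vee)$ is $d\omega$ for an explicit cochain $\omega$ valued in regular Eisenstein series. I would take $\omega$ to be built from $\frac{d}{ds}\big|_{s_0}\big[(s-s_0)E(\phi_s,s)\big]$ applied to the degree-$d$ cochain on $D$. The order-exactly-one vanishing of $M(s)$ on $D$ from Step 1 ensures this derivative is holomorphic and that its coboundary equals the residual class.

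The hard part is this order-of-vanishing statement together with the explicit computation of $d\omega$. I would attempt it by restricting to the minimal $K$-types of $D_1$ and $D_2$ and reducing to an $SL_2(\R)$ computation along $\alpha$. The rank-one intertwining operator there has a known Gamma-factor formula.
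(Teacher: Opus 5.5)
Your outline follows the paper. Part (a) comes from the archimedean exact sequences. Part (b) comes from Grbac's sequence, because its quotient has no cohomology below degree $d$. Part (c) uses a primitive valued in regular Eisenstein series, built from the degree-$d$ cocycles on $D$; a simple zero of the archimedean $M(s,\cdot)$ on $D$ cancels the simple pole of the finite intertwining operator. That is the paper's second proof. The gap is that the archimedean input carrying (c) is not supplied, and the route you suggest for it does not work. Restricting to the minimal $K$-types of $D_1,D_2$ only tells you that $M(s,\cdot)$ acts on each by some scalar function of $s$. But $M(s,\cdot)$ integrates over all of $N$, which is much larger than the root group of $\alpha$, so it is not the $SL_2(\R)$ operator attached to $\alpha$. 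You give no mechanism linking the two, so the Gamma-factor formula does not compute that scalar. The paper's Theorem \ref{thmintertwining} uses a different idea. By Lemma \ref{lemsectionslinear}, the differential of a flat family of cochains is linear in $s$. Since $c^{d-1}_{-s_0}$ is a cocycle, this gives $dc^{d-1}_s=(s+s_0)(a_0^+c^{d,+}_s+a_0^-c^{d,-}_s)$. Applying $M(s,\cdot)$, which commutes with $d$ and is nonzero on the lowest $K$-type of $J$, turns this linear zero into a simple zero on $D_\pm$. A separate argument from degree $d$ to $d+1$ handles the case where one of $a_0^\pm$ vanishes.

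Second, the simple zero alone does not make $d\omega$ nonzero. Its constant term is $a_\pm(\res_{s=s_0}M(s,\phi_f))\otimes\partial_\infty(c^{d,\pm}_\infty)$. Here $\partial_\infty\colon H^d(D\otimes E^\vee)\to H^{d+1}(J\otimes E^\vee)$ is the connecting map of $0\to J\to I_\infty(-s_0)\to D\to 0$. You need $\partial_\infty\neq 0$, which is $H^{d+1}(I_\infty(-s_0)\otimes E^\vee)=0$ (Corollary \ref{corbdrynontrivial}), and you never compute cohomology at $-s_0$. Your claim that the degree-$(d+1)$ class of $J$ comes from a degree-$d$ class of $I_\infty(s_0)$ via a connecting map is also wrong. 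For $0\to D\to I_\infty(s_0)\to J\to 0$, the map $H^{d+1}(I_\infty(s_0))\to H^{d+1}(J)$ is an isomorphism, and the only nonzero connecting map is $H^{d-1}(J)\to H^d(D)$.

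Third, the structure in your Step 1 does not follow from (A.1), (A.2), (A.4): the character of $\pi_\infty$ on $Z_{M_0}$ matters. For $SL_2(\R)$, the principal series from the sign character is irreducible at $s=1/2$, although (A.1)--(A.5) hold. In the paper, hypothesis (ii), unitarity of $J$ and Vogan--Zuckerman (Proposition \ref{propaltsetup}) force $J\cong A_{\mf{q}}(\lambda)$ with $\mf{q}$ attached to a single noncompact simple root. The exact sequences then come from cohomological induction and Vogan's comparison of standard modules (Theorem \ref{thmstdmodforind}). This is also what pins down the degrees $d_M+q_0=d$ that your Borel--Wallach computation leaves open.

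Note that (c) does not need the order of vanishing at all. The paper's first proof uses only that $H^{d+1}(I_\infty(s_0))\to H^{d+1}(J)$ is onto, which gives $H^{d+2}(\mc{K}\otimes E^\vee)=0$. Then the surjection $\mc{E}$ onto $\mc{A}_{\pi,s_0}$ with kernel $\iota(\mc{K})$, together with concentration of the cohomology of $\Ind(\pi\otimes\Sym(\C)_{s_0})$ in degree $d$, gives $H^{d+1}(\mc{A}_{\pi,s_0}\otimes E^\vee)=0$.
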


Thus this says that the \textit{residual Eisenstein cohomology} coming from $\pi$ is concentrated in degree $d-1$, despite the residual representation $\mc{L}(\pi,s_0)$ having cohomology in degree $d+1$ as well. Part (c) above is the vanishing theorem alluded to in the title. We offer two proofs of it. The first such proof is relatively short once everything is set up, and it mimics a method employed in \cite{RSson1}. The second proof we give is much more explicit than the first, and it is the main novelty in this paper. Parts (a) and (b) have been observed in many cases and our main contribution to them here has been to put a general framework around their occurrence.

Let us give a few remarks about the hypotheses. First, we have assumed throughout that $\mc{G}$ is semisimple instead of merely reductive. This is simply to avoid technical, but surmountable, issues involving the possible presence of a nonsplit center in $\mc{G}$. More precisely, assume for a moment that $\mc{G}$ were instead reductive, and assume moreover that the center $Z_{\mc{G}}$ of $\mc{G}$ were to have a torus component $\mc{T}_{ns}$ which is nonsplit over $\Q$ but split over $\R$. This component $\mc{T}_{ns}$ would then contribute an extra factor the automorphic cohomology of $\mc{G}$ that spans several degrees. From the point of view of the spaces involved, the locally symmetric spaces for $\mc{G}$ would become nontrivial fibrations over those for the derived group $\mc{G}^{\mr{der}}$ with fibers given by compact tori. Therefore the statements in the theorem above about degrees $d\pm 1$ become more numerically complicated. However, it is not difficult to obtain precise statements in the reductive case using K\"unneth-type arguments to reduce to the semisimple case which we have covered.

Next we remark that the number $s_0$ is easy to calculate in explicit contexts and is often related to the calculation of the constant terms of the relevant Eisenstein series. Indeed, in general, this number seems to appear quite frequently in the following context. Often, the quotient of $L$-functions appearing via the Langlands--Shahidi method in the constant term of the Eisenstein series $E(\phi,s,g)$ will have a zeta function of the form $\zeta(n_0s)$ as a constituent of the numerator, for some positive integer $n_0$. In cases where this is the only zeta function appearing, one very often has that $s_0=n_0^{-1}$, which often allows $E(\phi,s,g)$ to attain a pole at $s=s_0$ (depending on the nature of the other $L$-values involved).

Finally, we remark on the assumption (ii) in the theorem, namely that $\mc{L}(\pi,s_0)$ is cohomological. This assumption may look potentially difficult to verify in practice, but actually part of this paper is devoted to giving a way to explicitly write down all representations $\pi_\infty$ of $\mc{M}(\R)$ satisfying \eqref{eqnassumption3}-\eqref{eqnassumption5} and (ii), at least when $\mc{G}(\R)$ is connected (the disconnected case can then be treated by restriction to the connected component of the identity in $\mc{G}(\R)$).

We give several examples of situations where the theorem applies, some in quite a bit of detail, in Section \ref{subsecexamples}. And indeed, instead of verifying (ii) directly in these examples, we use the content of Section \ref{secreal} to write down all possible representations which can occur as the archimedean part of $\pi$ and for which the conditions of the theorem can hold.

Theorem \ref{introthmmain} is proved by writing down a specific $d$-cochain in the $(\mf{g},K_\infty^\circ)$-cohomology complex valued in regular Eisenstein series whose image under the boundary map is the $(d+1)$-cocycle which represents an element in the image of the map
\[H^{d+1}(\mf{g},K_\infty^\circ;\mc{L}(\pi,s_0)\otimes E^\vee)\to H^{d+1}(G,E^\vee).\]
To explain how to write down this cochain, we need to describe the structure of the archimedean induced representation $\Ind_{\mc{P}(\R)}^{\mc{G}(\R)}(\pi_\infty,s_0)$. We do this here in the introduction under the simplifying assumption that $\mc{G}(\R)$ is connected; in the paper, Section \ref{secreal} is written for connected groups, and the content of Section \ref{secadele} reduces to this case.

So under the additional assumption that $\mc{G}(\R)$ is connected, instead of starting with a $\pi_\infty$ satisfying \eqref{eqnassumption3}-\eqref{eqnassumption5} along with (ii) of Theorem \ref{introthmmain}, we start with the following in Sections \ref{subsecsetup} and \ref{subsecds}: First we give ourselves
\begin{itemize}
\item A Cartan involution $\theta$ on $\mc{G}(\R)$ giving $K_\infty$;
\item A compact Cartan subgroup $T$ of $\mc{G}(\R)$ contained in $K_\infty$ with complexified Lie algebra $\mf{t}$;
\item A system of positive roots $\Delta^{+}$ in the root system $\Delta(\mf{g},\mf{t})$ of $\mf{t}$ in $\mf{g}$;
\item A simple noncompact root $\alpha_{0}$ in $\Delta^{+}$;
\item A Harish-Chandra parameter $\Lambda$ for $\mf{t}$ in $\mf{g}$ which is dominant for $\Delta^{+}$ and such that
\[\frac{2\langle\Lambda,\alpha_{0}\rangle}{\langle\alpha_{0},\alpha_{0}\rangle}=1.\]
\end{itemize}
Section \ref{subsecsetup} produces from these first four pieces of data a maximal parabolic subgroup of $\mc{G}(\R)$, which we will call $P$, with Levi decomposition $P=MN$, and Section \ref{subsecds} produces for us a discrete series representation of $M$, which we will call $\pi_\infty'$ in this introduction (it is simply called $\pi$ in Section \ref{subsecds}, but we have already used that notation here). We eventually prove in Proposition \ref{propaltsetup} that we can choose this data so that $P$ is conjugate to our $\mc{P}(\R)$ and that $\pi_\infty'$ is conjugate $\pi_\infty$ by the same element of $\mc{G}(\R)$. So let us state the main theorems we prove on the archimedean side for $\mc{P}(\R)$ and $\pi_\infty$ in this introduction.

We then prove the following; see Theorem \ref{thmstdmodforind} below.

\begin{introthm}
\label{introthmstdmodforind}
Continue to assume $\mc{G}(\R)$ is connected. Let $D_+$ be the discrete series representation of $\mc{G}(\R)$ with Harish-Chandra parameter $\Lambda$, and $D_-$ that with Harish-Chandra parameter $\Lambda-\alpha_0$. Let $J$ be the unique irreducible quotient of $\Ind_{\mc{P}(\R)}^{\mc{G}(\R)}(\pi_\infty,s_0)$. Then
\begin{enumerate}[label=(\alph*)]
\item The representation $J$ along with, of course, $D_+$ and $D_-$, are all unitary.
\item We have exact sequences
\begin{equation}
\label{eqnthmBses1}
0\to D_+\oplus D_-\to \Ind_{\mc{P}(\R)}^{\mc{G}(\R)}(\pi_\infty,s_0)\to J\to 0,\tag{SES.1}
\end{equation}
and
\begin{equation}
\label{eqnthmBses2}
0\to J \to \Ind_{\mc{P}(\R)}^{\mc{G}(\R)}(\pi_\infty,-s_0)\to D_+\oplus D_-\to 0.\tag{SES.2}
\end{equation}
\end{enumerate}
\end{introthm}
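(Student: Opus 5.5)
This is a statement about a degenerate principal series at a reducibility point, so I would identify the composition series via Vogan--Zuckerman/Schmid theory. The plan is to reduce to the setup of Sections \ref{subsecsetup}--\ref{subsecds}, where $\pi_\infty$ has Harish-Chandra parameter $\Lambda$ restricted to the compact Cartan of $M$. The parameter $s_0$ is then the value making the induced infinitesimal character equal to $\Lambda$, with $\frac12\alpha_0$ accounting for the twist.

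\textbf{Step 1: discrete series inside the standard module.} Using the Cayley transform with respect to the noncompact simple root $\alpha_0$, I would pass from the compact Cartan $\mf{t}$ to the fundamental Cartan of $\mf{m}\oplus\mf{a}$. I would then invoke Schmid's character identity (the Hecht--Schmid identity). The condition $2\langle\Lambda,\alpha_0\rangle/\langle\alpha_0,\alpha_0\rangle=1$ makes both $\Lambda$ and $s_{\alpha_0}$-type neighbours relevant. Concretely, $\Lambda-\alpha_0$ is again regular and dominant for the chamber $s_{\alpha_0}\Delta^+$, because $s_{\alpha_0}\Lambda=\Lambda-\alpha_0$ and $\alpha_0$ is simple. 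The identity then gives, at the level of characters,
\[\Theta\big(\Ind(\pi_\infty,s_0)\big)=\Theta(D_+)+\Theta(D_-)+\Theta(J).\]
Here $J$ is the Langlands quotient, which exists because $s_0>0$ puts the induced module in the Langlands position. By Langlands' classification $J$ occurs with multiplicity one as the unique irreducible quotient, and discrete series can never be Langlands quotients of a properly induced module. Hence $D_\pm$ lie in the kernel of the projection to $J$, and a length count gives \eqref{eqnthmBses1} up to showing that this kernel is semisimple.

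\textbf{Step 2: splitting and the dual sequence.} Discrete series are projective and injective in the category of admissible modules with the given infinitesimal character, or one can use that $\mr{Ext}^1$ between discrete series vanishes. Either way the kernel $D_+\oplus D_-$ splits. For \eqref{eqnthmBses2}, I would use that $\Ind(\pi_\infty,-s_0)$ is the Hermitian (contragredient-conjugate) dual of $\Ind(\pi_\infty,s_0)$, since $\pi_\infty$ is unitary. Dualizing \eqref{eqnthmBses1} and using that $D_\pm$ and $J$ are self-dual in this Hermitian sense then yields \eqref{eqnthmBses2}. The self-duality of $J$ follows because it is the image of the long intertwining operator and has real parameter.

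\textbf{Step 3: unitarity of $J$.} This is the step I expect to be the main obstacle. I would argue via the complementary series: for $0\le s<s_0$ the intertwining operator is holomorphic and nonvanishing on the irreducible $\Ind(\pi_\infty,s)$, so the Hermitian forms there are definite, giving unitarity for $s<s_0$. Taking the limit $s\to s_0$ produces a positive semidefinite form whose radical is exactly $D_+\oplus D_-$, and the induced form on the quotient $J$ is positive definite. Irreducibility for $0<s<s_0$ must be checked. I would do this by showing that there is no infinitesimal character coincidence with a regular integral character in that range, using the fact from Lemma \ref{lemrootalpha} that $\alpha$ is the unique root in $\mf{n}$ orthogonal to the complement of $\mf{a}$, so that reducibility is governed by the single rank-one family attached to $\alpha$.

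If that argument proves hard, the alternative is to identify $J$ with a cohomologically induced module $A_{\mf{q}}(\lambda)$ whose parameter is in the good range. That identification would also be consistent with the cohomology in degrees $d\pm1$ claimed in Theorem \ref{introthmmain}, and Vogan's unitarity theorem for $A_{\mf{q}}(\lambda)$ would then give unitarity directly.
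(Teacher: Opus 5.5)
\textbf{Main gap: Step~1 assumes the composition series.} The heart of the theorem is that $\Ind(\pi_\infty,s_0)$ has exactly the constituents $D_+$, $D_-$, $J$, each once. Your Step~1 asserts this rather than proves it.

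The Hecht--Schmid identity is a relation among characters of \emph{standard} modules, either at a parameter singular along the relevant root or through its coherent-continuation consequences. It does not show that $\Theta(\Ind(\pi_\infty,s_0))-\Theta(D_+)-\Theta(D_-)$ is the character of a single irreducible module. Nor does it exclude further discrete series constituents or higher multiplicities.

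Langlands' classification does give you two things: $J$ occurs once, and (as in Claim~2 of Proposition~\ref{propaltsetup}) every other constituent is tempered with regular integral infinitesimal character, hence a discrete series. But \emph{which} discrete series occur, and how often, is precisely the content of the theorem. Extracting that along your route would need substantial extra input, for example Kazhdan--Lusztig--Vogan theory.

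\textbf{What the paper does instead.} It recognizes $\Ind_P^G(\pi,-s_0)$ as a cohomologically induced standard module. By Vogan's comparison of cuspidal and $\theta$-stable data \cite[Theorem 6.6.15]{voganbook},
$\Ind_P^G(\pi,-s_0)\cong\mc{R}_\mf{q}^S(\Ind_{B_L}^L((\lambda|_{T'})\otimes\nu))$. Here $\mf{q}$ is the $\theta$-stable parabolic whose Levi $L$ has roots $\pm\alpha_0$, so $L^{\mathrm{der}}\cong SL_2(\R)$ or $PSL_2(\R)$.

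Exactness and irreducibility of $\mc{R}_\mf{q}^S$ in the good range carry the rank-one sequence $0\to\lambda_L\to\Ind_{B_L}^L((\lambda|_{T'})\otimes\nu)\to\delta_2(\lambda)\oplus\delta_{-2}(\lambda)\to0$ over to \eqref{eqnthmBses2}. Induction in stages then gives $\mc{R}_\mf{q}^S(\delta_{\pm2}(\lambda))\cong D_\pm$.

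This yields at once the multiplicity-one composition series, the direct-sum splitting, the identification $J\cong A_\mf{q}(\lambda)$, and unitarity of $J$. Then \eqref{eqnthmBses1} follows by Hermitian duality. Your Step~3 fallback names $A_\mf{q}(\lambda)$, but without this comparison of standard modules you have no way to identify $J$ with it.

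\textbf{Step~2: the first option is false.} Discrete series are \emph{not} projective or injective among Harish-Chandra modules with fixed infinitesimal character. The sequence \eqref{eqnthmBses1} is itself nonsplit, since its unique irreducible quotient is $J$. Hence $\mathrm{Ext}^1(J,D_\pm)\neq0$, exactly as in the $SL_2$ example of Section~1.

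Your other option, vanishing of $\mathrm{Ext}^1$ between $D_+$ and $D_-$, is true but needs a source. A cheaper route: once the constituents are known, dualize the maps of Theorem~\ref{thmschmidblank}. This embeds $D_+$ and $D_-$ in $\Ind(\pi_\infty,s_0)$ and forces the kernel to be $D_+\oplus D_-$.

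\textbf{Step~3: the irreducibility criterion is insufficient.} The complementary-series argument needs $\Ind(\pi_\infty,s)$ to be irreducible for all $s\in[0,s_0)$, including $s=0$. Excluding a \emph{regular integral} infinitesimal character does not give this, because reducibility can occur when only some roots are integral (Speh--Vogan).

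Concretely, the infinitesimal character is $\gamma(s)=\gamma(s_0)-(1-s/s_0)\tfrac12\alpha$. A complex root $\beta$ with $\langle\alpha,\beta^\vee\rangle=\pm3$ (as in $G_2$ with $\alpha$ long) becomes integral at $s=s_0/3$. Your argument says nothing about such points.
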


The proof of this theorem is, more or less, an assembly of facts from the theory of \textit{cohomological induction}; see \cite{knvo} and \cite{voganbook}. In fact, there is a natural $\theta$-stable parabolic subalgebra $\mf{q}$ in $\mf{g}$ singled out by the data we have fixed, namely the one whose Levi contains just the two roots $\pm\alpha_0$. Because $\alpha_0$ is noncompact, the corresponding $\theta$-stable Levi subgroup $L$ of $\mc{G}(\R)$ has derived group $L^{\mr{der}}$ given by $SL_2(\R)$ or $PSL_2(\R)$, and center a codimension $1$ subgroup of the torus $T$. Using $\Lambda$ we define a character, call it $\lambda$, of $L$ which is trivial on the derived group $L^{\mr{der}}$. Let $I$ denote the nonunitary induction to $L$ of the trivial character from a Borel subgroup of $L$. Then there is an exact sequence of representations of $L$:
\[0\to\lambda \to I\otimes \lambda \to (D_{2}\otimes\lambda)\oplus (D_{-2}\otimes\lambda)\to 0;\]
here, $D_2$ denotes the weight $2$ (holomorphic) discrete series of $L$ with trivial central character, and $D_{-2}$ the weight $-2$ (antiholomorphic) discrete series of $L$, again with trivial central character. Then it turns out that
\[\Ind_{\mc{P}(\R)}^{\mc{G}(\R)}(\pi_\infty,-s_0)\cong \mc{R}_\mf{q}^S(I\otimes\lambda),\]
where $\mc{R}_\mf{q}^S$ denotes a cohomological induction functor. We deduce all parts of Theorem \ref{introthmstdmodforind} from this identity. So the archimedean situation for $\mc{G}$ is really \textit{cohomologically induced} from that for $(P)SL_2(\R)$.

We then write down the long exact $(\mf{g},K_\infty)$-cohomology sequence for \eqref{eqnthmBses1} and \eqref{eqnthmBses2} in Section \ref{subsecgkcoh}, a key fact for the proof of Theorem \ref{introthmmain} being that the boundary map for \eqref{eqnthmBses2} between degrees $d$ and $d+1$ is nonzero. In fact, it is not difficult to deduce the following from the presentation of the representations $D_\pm$ and $J$ as cohomologically induced modules.

\begin{introthm}
\label{introthmcohles}
Continue to assume $\mc{G}(\R)$ is connected. For short, let us write
\[h^q(\cdot)=\dim_{\C}H^q(\mf{g},K_\infty;(\cdot)\otimes E^\vee),\qquad I_{\pm}=\Ind_{\mc{P}(\R)}^{\mc{G}(\R)}(\pi_\infty,\pm s_0),\qquad D=D_+\oplus D_-.\]
Then the following tabulates the dimensions of each term in the long exact cohomology sequences associated with \eqref{eqnthmBses1} and \eqref{eqnthmBses2} of Theorem \ref{introthmstdmodforind} in the degrees where they are nonvanishing:
\begin{center}
\begin{tabular}{| c || c | c | c |} 
\hline
$q$ & $h^q(D)$ & $h^q(I_{+})$ & $h^q(J)$ \\
\hline\hline
$d-1$ & $0$ & $0$ & $1$ \\ 
\hline
$d$ & $2$ & $1$ & $0$ \\
\hline
$d+1$ & $0$ & $1$ & $1$ \\
\hline
\end{tabular}
\qquad\qquad
\begin{tabular}{| c || c | c | c |}
\hline
$q$ & $h^q(J)$ & $h^q(I_{-})$ & $h^q(D)$ \\
\hline\hline
$d-1$ & $1$ & $1$ & $0$ \\ 
\hline
$d$ & $0$ & $1$ & $2$ \\
\hline
$d+1$ & $1$ & $0$ & $0$ \\
\hline
\end{tabular}
\end{center}
We also have $h^d(D_+)=h^d(D_-)=1$.
\end{introthm}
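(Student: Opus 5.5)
We compute each column separately and then check that the long exact sequences are consistent with the table.

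\emph{Step 1: the discrete series.} By Vogan--Zuckerman, a discrete series $D$ of the connected group $\mc{G}(\R)$ with infinitesimal character that of $E$ has
\[
H^q(\mf{g},K_\infty;D\otimes E^\vee)=\C \quad\text{for } q=d,
\]
and zero otherwise. Since $\mc{G}(\R)$ is connected, $K_\infty=K_\infty^\circ$, so we are working with honest $(\mf{g},K)$-cohomology. Assumption \eqref{eqnassumption5} together with Theorem \ref{introthmstdmodforind} ensures that $D_\pm$ have the infinitesimal character of $E$: the parameters $\Lambda$ and $\Lambda-\alpha_0$ are both regular, and they are conjugate under the reflection $s_{\alpha_0}$. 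This gives $h^d(D_\pm)=1$ and therefore $h^q(D)=2\delta_{q,d}$.

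\emph{Step 2: $J$.} Write $J\cong\mc{R}_\mf{q}^S(\lambda)$, i.e. an $A_\mf{q}(\lambda)$-module. The identity
\[
\mc{R}_\mf{q}^S(I\otimes\lambda)\cong \Ind_{\mc{P}(\R)}^{\mc{G}(\R)}(\pi_\infty,-s_0),
\]
combined with exactness of $\mc{R}^S_\mf{q}$ in the good range, identifies the submodule of \eqref{eqnthmBses2} with $\mc{R}_\mf{q}^S(\lambda)$; this is the sub appearing in the $L$-level sequence. The Vogan--Zuckerman formula then gives
\[
H^q(\mf{g},K;A_\mf{q}(\lambda)\otimes E^\vee)\cong \hom_{L\cap K}\bigl(\wedge^{q-R}(\mf{l}\cap\mf{p}),\C\bigr),
\qquad R=\dim(\mf{u}\cap\mf{p}).
\]
Here $\mf{l}\cap\mf{p}$ is spanned by the root vectors for $\pm\alpha_0$, both of which are noncompact. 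Under $L\cap K\supset T$ these two vectors have weights $\pm\alpha_0$, so the invariants in $\wedge^j$ are one-dimensional for $j=0,2$ and vanish for $j=1$. The remaining point is to compute $R$. The $\theta$-stable Levi $\mf{l}$ has noncompact dimension $2$, so
\[
2R+2=\dim\mf{p}=2d,
\]
which gives $R=d-1$. Hence $h^q(J)=1$ exactly for $q=d-1$ and $q=d+1$.

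\emph{Step 3: the induced modules $I_\pm$.} Rather than computing $h^q(I_\pm)$ directly (for instance via the Borel--Wallach formula $H^*(\mf{g},K;\Ind)\cong H^*(\mf{m},K_M;\pi_\infty\otimes H^*(\mf{n},E^\vee))$ with a shifted $\mf{a}$-character), I would deduce it from the long exact sequences.

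For \eqref{eqnthmBses1}, the sequence gives:
\begin{itemize}
\item[] $h^{d-1}(I_+)\to h^{d-1}(J)\to h^d(D)\to h^d(I_+)\to h^d(J)=0$;
\item[] $0=h^{d+1}(D)\to h^{d+1}(I_+)\to h^{d+1}(J)\to h^{d+2}(D)=0$, so $h^{d+1}(I_+)=1$.
\end{itemize}
In degree $\le d-1$, the fact that $h^{q}(D)$ vanishes for $q\le d-1$ gives $h^q(I_+)\cong h^q(J)$ for $q<d-1$, and hence zero there. The crux is to show that $h^{d-1}(I_+)=0$, equivalently that the connecting map $H^{d-1}(J)\to H^d(D)$ is injective. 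Granting that, $h^d(I_+)=2-1=1$.

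The same argument applied to \eqref{eqnthmBses2} shows that the table for $I_-$ is equivalent to the connecting map $H^d(D)\to H^{d+1}(J)$ being surjective. Its kernel is then one-dimensional, which yields $h^d(I_-)=1$, $h^{d-1}(I_-)=1$ and $h^{d+1}(I_-)=0$.

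\emph{Step 4, the main obstacle: nonvanishing of the connecting maps.} I would verify these via the $L$-level picture. Apply $\mc{R}_\mf{q}^S$ to the $L$-sequence
\[
0\to\lambda\to I\otimes\lambda\to D_{\pm2}\otimes\lambda\to0
\]
and use the Vogan--Zuckerman spectral sequence, which reduces $(\mf{g},K)$-cohomology of cohomologically induced modules to $(\mf{l},L\cap K)$-cohomology shifted by $R$. The connecting maps are natural under this reduction, so the question becomes the corresponding statement for $SL_2(\R)$. There, for the principal series $I$ with trivial infinitesimal character, one has $H^1(\mf{sl}_2,SO(2);I)=0$ and $H^0=\C$. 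The first holds since $I$ has no $K$-types $\pm2$ mapping onto the cocycle condition appropriately; equivalently, Frobenius reciprocity gives $H^*(I)\cong H^*(\mf{n},\C)^{\mf{a}\text{-wt}}$, which is concentrated in degree $0$. Consequently the connecting map $H^0(\C)\to H^1(D_2\oplus D_{-2})$ has a one-dimensional cokernel, forcing $H^1(\C)\cong\C$ to be hit.

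Alternatively, and more simply, I would compute $h^*(I_\pm)$ directly from the Borel--Wallach/Frobenius formula. The infinitesimal-character match $s_0\leftrightarrow\tfrac12\alpha$ singles out exactly one Kostant representative $w$ of length $\ell(w)=\tfrac12\dim\mf{n}$ (respectively its $\alpha$-neighbour). Since $\pi_\infty$ is discrete series, it contributes in middle degree $d_M$ for $M$. The computation then gives a single degree, $d$ or $d-1$ for $\pm s_0$, and checking these degree counts is the delicate bookkeeping step.
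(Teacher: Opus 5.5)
Your Steps 1--3 are sound and agree with the paper. The discrete series and $A_{\mf{q}}(\lambda)$ computations via Vogan--Zuckerman, including $R=d-1$, are Proposition \ref{propcomplexesconc}(a),(b). You also correctly reduce the remaining entries to $h^{d-1}(I_+)=0$ and $h^{d+1}(I_-)=0$, i.e.\ to nonvanishing of the two connecting maps. But Step 4, which is the real content, is not established by either of your routes.

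\emph{First route.} The $SL_2(\R)$ input is wrong.
\begin{itemize}
\item For the principal series $I$ with trivial subrepresentation (the one in your $L$-level sequence), one has $H^0(I)=H^1(I)=\C$ and $H^2(I)=0$, not $H^1(I)=0$. With $H^1(I)=0$ the long exact sequence would inject $H^1(D_2\oplus D_{-2})\cong\C^2$ into $H^2(\C)\cong\C$.
\item The Frobenius-reciprocity answer is never concentrated in one degree, because $\wedge^\bullet\mf{a}$ with $\dim\mf{a}=1$ always produces two adjacent degrees.
\item $H^1(\mf{sl}_2,SO_2;\C)=0$, not $\C$.
\item $H^0(\C)\to H^1(D_2\oplus D_{-2})$ is the connecting map of the other sequence.
\end{itemize}
What you actually need there is $H^2(I)=0$, i.e.\ that $X_+$ or $X_-$ carries the weight $\mp 2$ vector to a nonzero multiple of the spherical vector. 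Beyond this, the naturality of a reduction $H^\bullet(\mf{g},K;\mc{R}^S_{\mf{q}}(Z)\otimes E^\vee)\cong H^{\bullet-R}(\mf{l},L\cap K;Z\otimes\C_{-\lambda})$ for nonunitary $Z$ is only asserted. Your $L$-level sequence also only produces $I_-\cong\mc{R}^S_{\mf{q}}(I\otimes\lambda)$, so $h^{d-1}(I_+)=0$ is not addressed at all.

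\emph{Second route.} It claims a single degree and a Kostant representative of length $\tfrac12\dim\mf{n}$. But $\dim\mf{n}=2(d-d')-1$ is odd, where $d'$ is half the dimension of $M_0/(K\cap M_0)$. The relevant lengths are $d-d'$ and $d-d'-1$, each contributing two adjacent degrees. The bookkeeping itself is left undone.

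The missing idea is that no explicit analysis of the connecting maps is needed. Since $\pi$ is discrete series (one-dimensional cohomology in a single degree) and $\dim\mf{a}=1$, \cite[Theorem III.3.3]{BW} shows that $H^\bullet(\mf{g},K;I_\pm\otimes E^\vee)$ is either zero or one-dimensional in exactly two consecutive degrees. On the other hand, $I_\pm$ has the same $K$-types as $D\oplus J$. So by your Steps 1--2, the cochain complex of $I_\pm\otimes E^\vee$ itself vanishes outside degrees $d-1,d,d+1$ (Proposition \ref{propcomplexesconc}(c)). Your long exact sequences already give $h^{d+1}(I_+)=1$ and $h^{d-1}(I_-)=1$. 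The two-consecutive-degrees constraint together with this support forces the degrees $\{d,d+1\}$ for $I_+$ and $\{d-1,d\}$ for $I_-$. Hence $h^{d-1}(I_+)=h^{d+1}(I_-)=0$, and both connecting maps are nonzero. This is exactly how the paper proves Proposition \ref{propcohofinduced}, from which the table follows.
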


This theorem follows from Propositions \ref{propcomplexesconc} and \ref{propcohofinduced} below.

For the explicit cochain computations in our second proof of Theorem \ref{introthmmain} (c), the nature of intertwining operators and their orders of vanishing will also be of significant importance. There is an element $w_0$ preserving the Levi $\mc{M}$ and sending $\mc{P}(\R)$ to its opposite. Then $w_0$ defines an intertwining operator
\[M(s,\cdot):\Ind_{\mc{P}(\R)}^{\mc{G}(\R)}(\pi_\infty,s)\to\Ind_{\mc{P}(\R)}^{\mc{G}(\R)}(\pi_\infty,-s)\]
for any $s\in\C$. We specialize to $s=s_0$ and prove the following; see Theorem \ref{thmintertwining} below.

\begin{introthm}
\label{introthmintertwining}
Continue to assume $\mc{G}(\R)$ is connected. Let $\phi_s\in\Ind_{\mc{P}(\R)}^{\mc{G}(\R)}(\pi_\infty,s)$ be any flat section whose specialization $\phi_{s_0}$ at $s=s_0$ is in $D_+\oplus D_-$ (see Theorem \ref{introthmstdmodforind}). Then the limit
\[\lim_{s\to s_0}\frac{1}{s-s_0}M(s,\phi_s)\modulo{J}\]
defines an isomorphism
\[(D_+\oplus D_-)\overset{\sim}{\longrightarrow}\Ind_{\mc{P}(\R)}^{\mc{G}(\R)}(\pi_\infty,-s)/J.\]
\end{introthm}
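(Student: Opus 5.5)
The plan is to argue structurally, using the exact sequences of Theorem \ref{introthmstdmodforind} and the irreducibility of the individual constituents, rather than by computing with $K$-types directly.

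\textbf{Step 1: the limit is well defined and intertwining.} The family $M(s,\cdot)$ is holomorphic near $s_0$ with $s_0>0$. Its image at $s=s_0$ lies in $J\subset I_-$, and it factors through the Langlands quotient. So $M(s_0,\cdot)$ kills $D_+\oplus D_-$, the kernel in \eqref{eqnthmBses1}. Hence for a flat section $\phi_s$ with $\phi_{s_0}\in D_+\oplus D_-$, the quotient $M(s,\phi_s)/(s-s_0)$ is holomorphic at $s_0$.

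Changing $\phi_s$ by $(s-s_0)\psi_s$ changes the limit by $M(s_0,\psi_{s_0})\in J$. So the map into $I_-/J$ depends only on $\phi_{s_0}$.

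It is a $(\mf{g},K_\infty)$-map. For $X\in\mf{g}$, the section $X\phi_s$ is again flat, since flat sections are identified with a fixed $K_\infty$-type space. Moreover $X\phi_s$ agrees with $\pi_s(X)\phi_s$ up to $(s-s_0)$ times a holomorphic section. Since $M(s)$ intertwines $\pi_s$ and $\pi_{-s}$, the limit commutes with $X$ modulo $J$.

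By \eqref{eqnthmBses2}, $I_-/J\cong D_+\oplus D_-$. So we obtain a $(\mf{g},K_\infty)$-map
\[\Phi: D_+\oplus D_-\to D_+\oplus D_-.\]

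\textbf{Step 2: reduce to nonvanishing on each summand.} The discrete series $D_+$ and $D_-$ are irreducible and non-isomorphic, since their Harish-Chandra parameters $\Lambda$ and $\Lambda-\alpha_0$ are not $W_K$-conjugate. By Schur's lemma, $\Phi$ is diagonal with scalar entries. It is therefore an isomorphism if and only if $\Phi|_{D_\pm}\neq0$, that is, if and only if $M(s,\cdot)$ vanishes to order exactly $1$ on each $D_\pm$.

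It suffices to test on one $K_\infty$-type. I would take the lowest $K$-type $\mu_\pm$ of $D_\pm$, which occurs in $I_\pm$ with multiplicity one. On the $\mu_\pm$-isotypic part, $M(s)$ acts by a scalar meromorphic function $c_\pm(s)$. We need $c_\pm$ to have a simple zero at $s_0$.

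\textbf{Step 3: the main obstacle, computing $\mathrm{ord}_{s_0}c_\pm$.} This is where I expect the real work. I would use the cohomological induction description from the proof of Theorem \ref{introthmstdmodforind}: $I_s\cong\mc{R}_\mf{q}^S(I_L(s)\otimes\lambda)$ for a family $I_L(s)$ of principal series of $L^{\mr{der}}\cong (P)SL_2(\R)$. The aim is to transport the intertwining operator through $\mc{R}_\mf{q}^S$. By the bottom-layer map, the $\mu_\pm$-isotypic piece corresponds to the weight $\pm2$ vector in $I_L(s)$. Functoriality of $\mc{R}_\mf{q}^S$ in the $L$-module should then identify $c_\pm(s)$, up to a nowhere-vanishing holomorphic factor, with the $SL_2(\R)$ intertwining scalar on the weight $\pm2$ vector.

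That scalar is a classical ratio of Gamma functions. It has a simple zero at the reducibility point where the discrete series $D_{\pm2}$ is the subrepresentation, and the zero is simple by the explicit $\Gamma$-formula.

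The delicate point is justifying the compatibility of $M(s)$ with $\mc{R}_\mf{q}^S$ in a holomorphic family. If that proves awkward, the fallback is a rank-one reduction. Factor $M(s)$ as a product of rank-one operators along a reduced expression, and check that only the factor attached to $\alpha_0$ (a real root after Cayley transform) can vanish on $\mu_\pm$. The remaining factors are holomorphic and nonzero there by dominance of $\Lambda$.
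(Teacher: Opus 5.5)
Your Steps 1 and 2 are fine as formal reductions, apart from one slip. $X\phi_s$ is not flat: by Lemma \ref{lemsectionslinear} it equals $s\phi_s^{(1)}+\phi_s^{(0)}$. It therefore differs from the flat section through $X\phi_{s_0}$ by $(s-s_0)\phi_s^{(1)}$, which contributes $M(s_0,\phi_{s_0}^{(1)})\in J$ to the limit. That is why the map is equivariant only modulo $J$, and with this correction your argument goes through.

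\textbf{The gap: your Step 3 is not proved.} Step 3 is the entire content of the theorem, and neither of your routes establishes it.

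In route (a), the isomorphism $\Ind_P^G(\pi,s)\cong\mc{R}_{\mf q}^S(\Ind_{B_L}^L(\cdot))$ comes from Vogan's comparison of standard modules, made one parameter at a time. It is an abstract isomorphism with no holomorphic dependence on $s$ and no relation to flat sections. Functoriality of $\mc{R}_{\mf q}^S$, together with generic one-dimensionality of intertwining spaces, gives at best $\mc{R}_{\mf q}^S(M_L(s))=a(s)M(s)$, and only after independent rescalings of the identifications of each $K$-type at each $s$. The order of vanishing of $c_\pm(s)$ is defined only relative to the flat structure, and it is exactly what such rescalings can change. Even the normalization-free ratio $c_\pm(s)/c_0(s)$ against the $\lambda_0$-type transports correctly only if the identifications of the $\lambda_0$- and $\lambda_0\pm\alpha_0$-types vary compatibly in $s$. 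Nothing in Vogan's theorem gives this.

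Route (b) fails for a more basic reason. $P$ is maximal with $\dim A=1$, so $M(s)$ has no factorization at the level of $P$. To factor it you would have to embed the discrete series $\pi$ into a nonunitary principal series of $M$ and apply Gindikin--Karpelevich for a smaller parabolic. There $w_0$ is typically a long word, the rank-one factors act by matrices on $K$-types of $G$ that occur with multiplicity, and individual factors may have zeros or poles at the embedding parameter that cancel in the product. Nothing supports the claim that only the factor attached to $\alpha_0$ can vanish ``by dominance of $\Lambda$.''

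\textbf{The missing idea.} It is the linearity you touched on in Step 1: on flat sections, a degree-one element of $U(\mf g)$ acts affine-linearly in $s$. The paper uses this through flat families of cochains, in the following steps.
\begin{enumerate}
\item The coboundary $dc_s^{d-1}$ is affine in $s$.
\item It is not identically zero, because the complex is exact for imaginary $s$, where $\Ind_P^G(\pi,s)$ is unitary with the wrong infinitesimal character.
\item It vanishes at $s=-s_0$, because $c_{-s_0}^{d-1}$ is a cocycle by Proposition \ref{propcohofinduced}.
\item Hence $dc_s^{d-1}=(s+s_0)(a_0^+c_s^{d,+}+a_0^-c_s^{d,-})$.
\item Apply $M(s)$, which is nonzero at $s_0$ on the $\lambda_0$-type. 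The left side becomes $c_0(s)(s-s_0)\phi_{-s}^{(1)}$ with $\phi^{(1)}\neq 0$. This forces a zero of order exactly one on $D_\pm$ whenever $a_0^\pm\neq 0$.
\item If, say, $a_0^-=0$, the same argument one degree higher, using $dc_s^{d,-}=b_0(s-s_0)c_s^{d+1}$, handles $D_-$.
\end{enumerate}
The cohomological input is what guarantees that the degree-one link between the $\lambda_0$-type and the $\lambda_0\pm\alpha_0$-types is nonzero exactly where it is needed, and your proposal has no substitute for it. In the $SL_2$ case this is done by hand in the proof of Proposition \ref{propintertbaby}, where $X_\pm$ visibly connect the weight $0$ and weight $\pm 2$ vectors.
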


We remark that $s_0$ has an alternate interpretation here in terms of the root $\alpha_0$. In fact, we have
\[s_0=(2\rho_{\mc{P}}(X_{\alpha_0}+X_{-\alpha_0}))^{-1},\]
where again $\rho_{\mc{P}}$ is the half sum of roots in the unipotent radical $\mc{N}$ of $\mc{P}$, and $X_{\pm\alpha_0}$ are appropriately normalized root vectors; then $X_{\alpha_0}+X_{-\alpha_0}$ spans the real Lie algebra of the $\R$-split center of $\mc{M}(\R)$.

The proof of Theorem \ref{introthmintertwining}, perhaps somewhat surprisingly, makes crucial use of the $(\mf{g},K_\infty)$-cohomology complex for the representations $\Ind_{\mc{P}(\R)}^{\mc{G}(\R)}(\pi_\infty,s)\otimes E^\vee$ as $s$ varies. The idea is as follows. We know for general reasons that $M(s_0,\cdot)$ vanishes on $D_+\oplus D_-$ in $\Ind_{\mc{P}(\R)}^{\mc{G}(\R)}(\pi_\infty,s_0)$, but not on elements mapping nontrivially to $J$. Because the boundary map in the long exact sequence associated with \eqref{eqnthmBses1} is nonzero, there is a cochain $c$ in degree $d-1$ for $\Ind_{\mc{P}(\R)}^{\mc{G}(\R)}(\pi_\infty,s_0)\otimes E^\vee$ whose image is nontrivial in $J\otimes E^\vee$, and whose image $dc$ under the boundary map is a nonzero cocycle valued in $D_+\oplus D_-$. So we want to show that $M(s_0,dc)$ vanishes to order $1$ at $s=s_0$.

Now this cochain $c$ can be extended flatly to a cochain $c_s$ valued in $\Ind_{\mc{P}(\R)}^{\mc{G}(\R)}(\pi_\infty,s)\otimes E^\vee$ for all $s\in\C$. Moreover, the evaluation of $dc_s$ at any element of $\bigwedge^d\mf{g}$ is by definition a linear combination of degree $1$ elements of $U(\mf{g})$ applied to flat sections tensored with elements of $E^\vee$; this, in turn, may be shown to be a element of the form
\[s\phi_s^{(1)}\otimes e_1+\phi_s^{(0)}\otimes e_0,\]
for some flat $\phi_s^{(0)}$ and $\phi_s^{(1)}$ in $\Ind_{\mc{P}(\R)}^{\mc{G}(\R)}(\pi_\infty,s)$ and some $e_0,e_1\in E^\vee$. The linearity in $s$ here is crucial and follows from the fact that the elements of $U(\mf{g})$ involved in the definition of the boundary are degree $1$. It follows that if $dc_s$ vanishes, it does so to order $1$ in $s$ somewhere. But it must actually do so at $s=-s_0$ since the boundary map between degrees $d-1$ and $d$ coming from \eqref{eqnthmBses2} is zero. However,
\[M(s,dc)=dM(s,c)=a(s)dc_{-s},\]
for some holomorphic function $a(s)$ defined and nonvanishing near $s=s_0$. Thus taking $s$ to $s_0$ gives order $1$ vanishing on the right hand side, hence on the left, as desired.

If $dc$ takes values genuinely in both components in the sum $(D_+\otimes E^\vee)\oplus (D_-\otimes E^\vee)$, then Theorem \ref{introthmintertwining} follows. In the case that $dc$ is valued in only one of these two summands, we obtain the theorem only for the discrete series representation in that corresponding summand. However, a similar but modified argument passing between degrees $d$ and $d+1$ will prove the theorem for the other summand in this case.

Finally, let us outline the second proof we give of Theorem \ref{introthmmain} (c) in the case that $\mc{G}(\R)$ is connected. Let
\[\phi_s=\phi_{f,s}\otimes\phi_{\infty,s}\in\Ind_{\mc{P}(\A_f)}^{\mc{G}(\A_f)}(\pi_f,s)\otimes\Ind_{\mc{P}(\R)}^{\mc{G}(\R)}(\pi_\infty,s)\cong\Ind_{\mc{P}(\A)}^{\mc{G}(\A)}(\pi,s)\]
be a flat section such that $E(\phi,s,g)$ has a pole at $s=s_0$. The constant term of this Eisenstein series along $\mc{P}$ is
\[E_{\mc{N}}(\phi,s,g)=\phi_s(g)(1_{\mc{M}})+(M(s,\phi_f)\otimes M(s,\phi_\infty))(1_{\mc{M}});\]
here, both sections above are valued in $\pi$, which is a space of cusp forms on $\mc{M}(\A)$, so it makes sense to evaluate them at the identity element $1_{\mc{M}}$ of $\mc{M}(\A)$ to obtain a complex number. Then it follows that the finite adelic intertwined section $M(s,\phi_f)$ must have a pole at $s=s_0$, which is necessarily simple, as maximal parabolic Eisenstein series have at worst simple poles. It also follows that $\phi_{\infty,s_0}$ has nontrivial image in $J$.

One key point then is that if we replace $\phi_{\infty,s}$ above with any flat section $\phi_{\infty,s}'$ such that $\phi_{s_0,\infty}\in D_+\oplus D_-$, then by Theorem \ref{introthmintertwining}, the simple zero of $M(s,\phi_\infty)$ \textit{exactly} cancels the pole of $M(s,\phi_f)$ at $s=s_0$. Therefore the Eisenstein series $E(\phi_f\otimes\phi_\infty',s,g)$ will be regular at $s=s_0$, and its constant term still contains a second summand.

So we can define two $\mc{A}_E(\mc{G})\otimes E^\vee$-valued cocycles as follows. First, let
\[c_{\infty}^{d,\pm}:\sideset{}{^d}\bigwedge(\mf{g}/\mf{k})\to D_{\pm}\otimes E^\vee\]
be the unique (up to scalar) nontrivial cocycles, viewed as taking values in $\Ind_{\mc{P}(\R)}^{\mc{G}(\R)}(\pi_\infty,s_0)\otimes E^\vee$. Then there is an obvious way to use $c_{\infty}^{d,\pm}$ to define two $\mc{A}_E(\mc{G})\otimes E^\vee$-valued cochains $c^{d,\pm}$ by tensoring with $\phi_{f,s_0}$ and forming Eisenstein series. We note the important fact that these Eisenstein series valued cochains will not necessarily be cocycles! In fact, the constant terms of these Eisenstein series are given by the sum of the cocycle
\[\phi_{f,s_0}\otimes c_{\infty,s_0}:\sideset{}{^d}\bigwedge(\mf{g}/\mf{k})\to\Ind_{\mc{P}(\R)}^{\mc{G}(\R)}(\pi,s_0)\otimes E^\vee\]
and some nonzero multiple of the cochain
\[M(s_0,\phi_f)\otimes M(s_0,c_\infty^{d,\pm}):\sideset{}{^d}\bigwedge(\mf{g}/\mf{k})\to\Ind_{\mc{P}(\R)}^{\mc{G}(\R)}(\pi,-s_0)\otimes E^\vee,\]
where the above is really interpreted as a limit as $s\to s_0$. By Theorem \ref{introthmcohles}, it follows that the boundary map $d$ kills the first of these, but that, for at least one choice of the sign $\pm$, the second has image a nontrivial $(d+1)$-cocycle valued in $\mc{L}(\pi,s_0)_f\otimes J\otimes E^\vee=\mc{L}(\pi,s_0)\otimes E^\vee$. Note that \textit{a priori} this cocycle is a coboundary valued in
\[\mc{L}(\pi,s_0)_f\otimes \Ind_{\mc{P}(\R)}^{\mc{G}(\R)}(\pi,s_0)\otimes E^\vee,\]
but it becomes a nontrivial cocycle when viewed as valued in the subrepresentation $\mc{L}(\pi,s_0)\otimes E^\vee$.

This is enough to imply Theorem \ref{introthmmain} (c) by comparing constant terms of $dc^{d,\pm}$ with that of certain residual Eisenstein series. Actually, in the main body of the paper, we take the point of view that the above computes the connecting homomorphism between degrees $d$ and $d+1$ in a long exact cohomology sequence associated with a simple short exact sequence coming from the \textit{Franke filtration} on a direct summand of $\mc{A}_E(\mc{G})$; see Theorem \ref{thmgrbac} of Grbac \cite{grbac} recalled below. This point of view seems to be necessary for proving part (b) of Theorem \ref{introthmmain} in any case. And for part (c), Grbac's theorem implies that the cocycles $dc^{d,\pm}$ are automatically valued in the residual representation $\mc{L}(\pi,s_0)\otimes E^\vee$, and so the nontriviality of one of them is enough to conclude.

\subsection*{Previous results}

The method used proof of part (b) of Theorem \ref{introthmmain} is not new and uses a technique involving the Franke filtration which is common in the literature on Eisenstein cohomology. This technique may be found in, for example, the paper of Rohlfs--Speh \cite{rohlfsspeh}, whose result actually implies Theorem \ref{introthmmain} (b) in the case that $\mc{G}(\R)$ is connected, though they do not explicitly use this filtration. The paper of Grobner \cite{grobqres} does use this filtration explicitly, and it takes place in a much more general setting than ours. However, it falls just short of implying Theorem \ref{introthmmain} (b) just because it is so general. But our technique for this part of our theorem is a specialization of his. We mention also the paper of Grbac and Schwermer \cite{grbsch} on the residual Eisenstein cohomology of symplectic groups, as well as that of Gotsbacher and Grobner \cite{gotsgrob} on that of split odd special orthogonal groups. A paper of Grobner \cite{grobsp11} also obtains results for inner forms of $Sp_4$, while also bounding the degree of residual Eisenstein cohomology above by the middle one, similar to part (c) of Theorem \ref{introthmmain}.

The consolidation of the archimedean setting used in this paper, made in sections \ref{subsecsetup}, \ref{subsecds}, and \ref{subsecaltsetup}, is a relatively novel feature of this paper which we have not seen elsewhere.

Questions of the type answered by part (c) of Theorem \ref{introthmmain} seem not to have been widely addressed thus far. Of course, there is the paper Rohlfs--Speh \cite{RSson1} which we mention above, and which we mimic for the first proof we give of Theorem \ref{introthmmain} (c). There is also the paper of Grobner \cite{grobsp11} mentioned above, as well as \cite{GrobnerQuat}; the latter in fact uses the cited method of Rohlfs--Speh from \cite{RSson1}.

The second method to prove Theorem \ref{introthmmain} (c) seems to be new. It shares some features with another new method of C. Skinner which roughly uses Eisenstein cohomology to construct nontrivial extensions of Hodge structures, the existence of which is predicted by conjectures of Beilinson--Bloch--Kato. See, for example, the recent Princeton thesis of Z. Shang \cite{shang}, written under the supervision of Skinner.

We believe that to go further with these methods, one really needs to be able to construct explicit primitives in Eisenstein cohomology and examine boundary maps in cohomology coming from the Franke filtration; our second proof of Theorem \ref{introthmmain} is a first step in this direction, and may be viewed as a proof of concept in this light.

\subsection*{Acknowledgements}

I wish to thank Harald Grobner for inviting me to Universit\"at Wien to speak on this topic, and for pointing me to various places in the literature. I also thank A. Raghuram for taking an interest in this project encouraging me to work on it.

\subsection*{Notation and conventions}

Throughout this paper, the symbol $\A$ denotes the adeles of $\Q$ and $\A_f$ the ring of finite adeles of $\Q$.

Given a real Lie group $G$, we denote by $G^\circ$ the connected component of the identity in $G$.

The center of a group will be denoted with $Z$; for instance if $G$ is a real Lie group, then $Z_G$ is its center.

Centralizers will be denoted with $C$ and normalizers with $N$; so for instance if $G$ is again a real Lie group, and $X$ is a subset of either $G$ or its (possibly complexified) Lie algebra, then $C_G(X)$ and $N_G(X)$ denote the centralizer and normalizer, respectively, of $X$ in $G$.

Certain fonts will be reserved for certain types of objects.

Linear algebraic groups over $\Q$ will be denoted with calligraphic letters, like $\mc{G}$. The only exception to this will be groups with standardized names, for example $SL_2$, $Sp_{2n}$, and so on.

Real Lie groups will be denoted with Roman letters, like $G$, unless they are the $\R$ points of a linear algebraic group, say $\mc{G}$, in which case we would simply write $\mc{G}(\R)$. Since Section \ref{secreal} is about real Lie groups and no algebraic groups over $\Q$ appear there, these conventions will then be useful in Section \ref{secadele} for differentiating between groups coming from Section \ref{secreal} and the algebraic groups appearing in Section \ref{secadele}.

Lowercase fraktur letters alone, like $\mf{g}$, will be reserved for complex Lie algebras. If there is a (real Lie or linear algebraic) group whose complexified Lie algebra is considered, the corresponding letters in the notation will almost always match. So $\mf{g}$ could denote the complexified Lie algebra of a real Lie group $G$ or a linear algebraic group $\mc{G}$. The one exception to this rule will appear in Section \ref{secreal} in certain places; there we will have fixed a real parabolic subgroup $P$ of a real Lie group $G$, but $\mf{p}$ will denote the $(-1)$-eigenspace with respect to a fixed Cartan involution in the complexified Lie algebra $\mf{g}$ of $G$, and it will \textit{not} denote the complexified Lie algebra of $P$. However, we warn the reader that in Section \ref{secadele}, we will have occasion to consider the complexified Lie algebra $\mf{p}$ of a parabolic subgroup $\mc{P}$ of a linear algebraic group $\mc{G}$, and we will not need to consider the $(-1)$-eigenspace of a Cartan involution there. So we expect this to cause no confusion.

When we have occasion to consider real Lie algebras, they will always come with a subscript $\R$. So for instance $\mf{g}_\R$ could denote the real Lie algebra of a real Lie group $G$.

Let $G$ be a real Lie group, and write $\mf{g}$ and $K$ for its complexified Lie algebra and a fixed maximal compact subgroup, respectively. Then a ``representation" of $G$ will always mean an ``admissible $(\mf{g},K)$-module."

If $\mc{G}$ is a reductive algebraic group over $\Q$, an ``automorphic representation" of $\mc{G}(\A)$ will mean, in addition to many other conditions, an irreducible object in the category of $\mc{G}(\A_f)\times (\mf{g},K_\infty)$-modules where, of course, $\mf{g}$ is the complexified Lie algebra of $\mf{g}$ and $K_\infty$ is a fixed maximal compact subgroup of $\mc{G}(\R)$. If $\pi$ is such, then we write $\pi_f$ for the finite adelic part and $\pi_\infty$ for the archimedean part. Then $\pi_f$ is smooth and admissible and $\pi_\infty$ is admissible.

We will often consider representations induced from Levi factors of maximal parabolic subgroups, and we will do so in the real, adelic, and finite adelic settings. Let $\mc{G}$ be a linear semisimple algebraic group over $\Q$, and let $\mc{P}$ be a maximal parabolic subgroup of $\mc{G}$ with Levi factor $\mc{M}$ and unipotent radical $\mc{N}$. Let $\mf{g}$ be the complexified Lie algebra of $\mc{G}$ and $K$ a fixed maximal compact subgroup of $\mc{G}(\A)$. Let $\pi$ be an automorphic representation of $\mc{M}(\A)$, and $s$ a complex number. Then the notation
\[\Ind_{\mc{P}(\A)}^{\mc{G}(\A)}(\pi,s)\]
will mean the space of functions $\mc{G}(\A)\to \pi$ such that:
\begin{itemize}
\item $\phi(mng)=\delta_{\mc{P}}^{s+1/2}(m)\pi(m)\phi(g)$ for all $m\in\mc{M}(\A)$, $n\in\mc{N}(\A)$, and $g\in\mc{G}(\A)$, where $\delta_{\mc{P}}$ is the modulus character of $\mc{P}(\A)$;
\item $\phi$ is smooth under right translation by $\mc{G}(\A)$;
\item $\phi$ is $K$-finite under right translation.
\end{itemize}
Thus implicit in this notation is a normalization by $\delta_{\mc{P}}^{1/2}$. Then $\Ind_{\mc{P}(\A)}^{\mc{G}(\A)}(\pi,s)$ is an admissible $\mc{G}(\A_f)\times (\mf{g},K_\infty)$-module by right translation.

Similar remarks, including this normalization by the square root of the modulus character, all apply to induced representations in the finite adelic and real contexts. We will also consider an alternative interpretation of these induced representations in the real context in Section \ref{subsecintertwining}.

A linear algebraic group, respectively real Lie group, will mean one which admits a faithful finite dimensional algebraic, respectively complex, representation. In the real context, this is important because this condition is assumed in Vogan's book \cite{voganbook}, whose content we use heavily in Section \ref{subsecstandardmods}.

Finally, we note a discrepancy between our choice of notation versus a piece of notation that exists in much of the literature on real Lie groups. We will often consider a Levi subgroup, which we will denote $M$, of a real semisimple Lie group $G$. We will then have subgroups of $M$ which we denote below by $M_0$ and $A$, and we sometimes consider the connected component $M_0^\circ$ of the identity in $M_0$. However, in much of the literature, other authors write $M_0$ where we have written $M_0^\circ$, they have written $M$ where we have written $M_0$, and they have written $MA$ where we have written $M$. We have chosen our notation in order to be more consistent with the notation we use in the global, adelic setting, and that notation seems to be more standard there.

\tableofcontents

\section{The $SL_2$ case}
\setcounter{subsection}{1}
In order to illustrate the second of the two methods used to prove the main theorem of this paper, Theorem \ref{thmmainthm} below, we run the entire proof in the simple case of $SL_2$ here in this section. Let us begin by setting up notation.

Throughout this section we denote by $SL_2$ the usual group with that name, viewed as simple algebraic group over $\Q$. Its complex Lie algebra will be denoted by $\mf{sl}_2$. The group $SL_2(\R)$ of real points of $SL_2$ contains the compact subgroup $SO_2$ in the standard way:
\[SO_2=\Sset{\pmat{a&b\\ -b&a}}{a,b\in\R,\,\, a^2+b^2=1}.\]
Then $SO_2$ is a maximal compact subgroup of $SL_2(\R)$ whose complexified Lie algebra we denote by $\mf{so}_2$. Note that $SO_2$ is also a Cartan subgroup of $SL_2(\R)$.

The main consideration of this section is the $(\mf{sl}_2,SO_2)$-cohomology of the space, which we denote by $\mc{A}(SL_2)$, of automorphic forms for $SL_2$. The general theory of $(\mf{g},K)$-cohomology is recalled in Section \ref{subsecgkcoh} below, but in this case we can be quite explicit.

So consider the matrices
\begin{equation}
\label{eqnHXplusXminus}
H=i\pmat{0&-1\\ 1&0},\qquad X_+=\frac{1}{2}\pmat{1&i\\ i&-1},\qquad X_-=\frac{1}{2}\pmat{1&-i\\ -i&-1}.
\end{equation}
Then $H$ spans $\mf{so}_2$, and $(H,X_+,X_-)$ forms an $\mf{sl}_2$-triple.

The $(\mf{sl}_2,SO_2)$-cohomology complex an $(\mf{sl}_2,SO_2)$-module $V$ can be explicitly written as the complex
\[\dotsb\to 0\to C^0(\mf{sl}_2,SO_2;V)\to C^1(\mf{sl}_2,SO_2;V)\to C^2(\mf{sl}_2,SO_2;V)\to 0 \to\dotsb,\]
whose terms are defined as follows. Note first that the subspace
\[\C X_+\oplus\C X_-\subset \sl_2\]
is preserved by the adjoint action of $SO_2$, and the determinant of this space, namely
\[\C \cdot(X_+\wedge X_-),\]
inherits the trivial action of $SO_2$. Thus we can define
\[C^i(\mf{sl}_2,SO_2;V)=\hom_{SO_2}(\sideset{}{^i}{\bigwedge}(\C X_+\oplus\C X_-),V),\]
where the subscript above means $SO_2$-equivariant linear maps. Thus there are obvious isomorphisms
\[C^0(\mf{sl}_2,SO_2;V)\cong V^{SO_2}\cong C^2(\mf{sl}_2,SO_2;V),\]
where the superscript above denotes $SO_2$-invariants; the left isomorphism is given by taking the image of $1\in\C=\bigwedge^0(\C X_+\oplus\C X_-)$, and the right by taking the image of $X_+\wedge X_-$.

The boundary maps
\[d_q:C^q(\mf{sl}_2,SO_2;V)\to C^{q+1}(\mf{sl}_2,SO_2;V),\qquad q=0,1\]
are given by
\begin{equation}
\label{eqndzero}
(d_0v)(X)=Xv,\qquad X=X_+,X_-\textrm{ and }v\in V^{SO_2},
\end{equation}
and
\begin{equation}
\label{eqndone}
d_1c=X_+c(X_-) - X_-c(X_+)\in V^{SO_2},\qquad c\in\hom_{SO_2}(\C X_+\oplus\C X_-,V).
\end{equation}

Denote the cohomology groups of this complex by $H^{*}(\mf{sl}_2,SO_2;V)$. Despite the presence of the symmetry above between the complexes about degree $1$, of course the cohomology groups do not enjoy such symmetry in general. Instead, something more interesting happens!

First, if $V$ is, for example, the trivial representation $\C$ of $SL_2(\R)$, then visibly
\[C^1(\mf{sl}_2,SO_2;\C)=0,\]
so that
\[H^0(\mf{sl}_2,SO_2;\C)=C^0(\mf{sl}_2,SO_2;\C)=\C\]
and
\[H^2(\mf{sl}_2,SO_2;\C)=C^2(\mf{sl}_2,SO_2;\C)=\C\cdot(X_+\wedge X_-)^\vee,\]
which are thus isomorphic; here we have denoted by $(X_+\wedge X_-)^\vee$ the linear map $\C\cdot(X_+\wedge X_-)\to\C$ sending $X_+\wedge X_-$ to $1$. So this symmetry remains present on the level of cohomology in this case. However, we can also view the trivial representation as an automorphic representation, spanned by the constant function $1$. Of course, when restricted from $SL_2(\A)$ to $SL_2(\R)$, it gives the trivial representation $\C$ of $SL_2(\R)$ above. Then we have the following theorem.

\begin{theorem}
\label{thmmainthmbaby}
The inclusion $\C\cdot 1\hookrightarrow\mc{A}(SL_2)$ of the span of the constant function into the space of automorphic forms for $SL_2$ induces injections
\[H^q(\mf{sl}_2,SO_2;\C)\hookrightarrow H^q(\mf{sl}_2,SO_2;\mc{A}(SL_2))\]
in all degrees $q$ except $q=2$, where this map is the zero map.
\end{theorem}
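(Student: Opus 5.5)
The plan is to treat the three degrees separately, doing the trivial degrees first and spending the real effort on $q=2$. There we will write down an explicit primitive valued in \emph{regular} Eisenstein series of weight $\pm2$.

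\textbf{Degrees $0$ and $1$.} In degree $0$ the map is the inclusion $\C=\C^{SO_2}\hookrightarrow\mc{A}(SL_2)^{SO_2}$, restricted to $d_0$-cocycles. Since $C^{-1}=0$, there are no coboundaries in degree $0$. Hence $H^0$ is just the space of $d_0$-cocycles, and the map is injective because $1\neq0$. In degree $1$, $C^1(\mf{sl}_2,SO_2;\C)=0$, so the source vanishes and injectivity is automatic. The same holds in all other degrees, where the source is zero.

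\textbf{Degree $2$: a primitive of weight $\pm2$.} We must exhibit $c\in\hom_{SO_2}(\C X_+\oplus\C X_-,\mc{A}(SL_2))$ with $d_1c=X_+c(X_-)-X_-c(X_+)=1$. By $SO_2$-equivariance, $c(X_\pm)$ must be automorphic forms of $SO_2$-weight $\pm2$, since $[H,X_\pm]=\pm2X_\pm$. We will take them to be Eisenstein series.

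Let $f^{(k)}_s\in\Ind_{B(\A)}^{SL_2(\A)}(1,s)$ be the flat section that is spherical at all finite places and of weight $k$ at infinity, and write $E(f^{(k)},s,g)$ for the Eisenstein series. With the normalization $\delta^{s+1/2}$, the series $E(f^{(0)},s,g)$ has a simple pole at $s_0=1/2$, and its residue is a nonzero constant, which we scale to $1$.

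The first step is an archimedean computation inside $\Ind_{B(\R)}^{SL_2(\R)}(s)$. We will check that
\[X_-f^{(2)}_s=\kappa_-(s-\tfrac12)f^{(0)}_s,\qquad X_+f^{(-2)}_s=\kappa_+(s-\tfrac12)f^{(0)}_s,\]
with nonzero constants $\kappa_\pm$. The linearity in $s$ holds because $X_\pm$ have degree $1$ in $U(\mf{sl}_2)$. The vanishing at $s=1/2$ reflects the fact that $f^{(\pm2)}_{1/2}$ lies in the subrepresentation $D_2\oplus D_{-2}$. Since the $(\mf{g},K)$-action commutes with forming Eisenstein series, we get
\[X_-E(f^{(2)},s,g)=\kappa_-(s-\tfrac12)E(f^{(0)},s,g),\]
and similarly for $X_+E(f^{(-2)},s,g)$.

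\textbf{Main obstacle: regularity at $s=1/2$.} The key point, and the main obstacle, is that $E(f^{(\pm2)},s,g)$ is \emph{holomorphic} at $s=1/2$. We will show this through the constant term $f^{(\pm2)}_s+M(s)f^{(\pm2)}_s$. Here $M(s)$ is the product of the finite part, which is proportional to $\xi(2s)/\xi(2s+1)$ and has a simple pole at $s=1/2$, and the archimedean factor on weight $\pm2$. Computing the latter explicitly (a ratio of Gamma functions, as in the $SL_2$ instance of Theorem \ref{introthmintertwining}), we find that it has a simple zero at $s=1/2$. These cancel, so the constant term, and hence the Eisenstein series, is regular at $s=1/2$.

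\textbf{Conclusion.} Granting regularity, we let $s\to1/2$ in the identity above. This gives
\[X_-E(f^{(2)},\tfrac12,g)=\kappa_-\cdot\lim_{s\to1/2}(s-\tfrac12)E(f^{(0)},s,g)=\kappa_-,\]
and likewise $X_+E(f^{(-2)},\tfrac12,g)=\kappa_+$. We then set
\[c(X_-)=\frac{1}{2\kappa_+}E(f^{(-2)},\tfrac12,\cdot),\qquad c(X_+)=-\frac{1}{2\kappa_-}E(f^{(2)},\tfrac12,\cdot).\]
This $c$ is $SO_2$-equivariant and satisfies $d_1c=\tfrac12+\tfrac12=1$. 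Therefore the class $(X_+\wedge X_-)^\vee\mapsto 1$ is a coboundary in $\mc{A}(SL_2)$, and the map in degree $2$ is zero.
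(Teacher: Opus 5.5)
Your proof is correct, but in degree $2$ it takes a different route from the paper's.

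\textbf{The paper's route.} The paper specializes at $s=1/2$ first and then computes the constant term of $(d_1c)(X_+\wedge X_-)$. The $\phi$-term dies because $X_\mp\phi^{(\pm2)}_{1/2}=0$. The intertwined term survives only because the archimedean operator vanishes to order \emph{exactly} one (Proposition~\ref{propintertbaby}). The paper then needs the nonsplit sequence $0\to\C\to\Ind_{\mc{B}(\R)}^{SL_2(\R)}(1,-1/2)\to D_2\oplus D_{-2}\to0$ to see that at least one of $X_\pm\phi^{(\mp2)}_{\infty,-1/2}$ is a nonzero spherical vector. Finally it chooses $a_\pm$ and identifies the form by its constant term.

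\textbf{Your route.} You keep $s$ as a variable. The identity $X_\mp f^{(\pm2)}_s=\kappa_\mp(s-\tfrac12)f^{(0)}_s$ passes to Eisenstein series; a direct computation as in the proof of Proposition~\ref{propintertbaby} gives $\kappa_\pm=1$ when $f^{(k)}_{\infty,s}(1)=1$. So at $s=1/2$ the coboundary is literally a multiple of $\res_{s=1/2}E(f^{(0)},s,\cdot)$. This is the global version of the relation $dc_s^{d,-}=b_0(s-s_0)c_s^{d+1}$ from Step 4 of the proof of Theorem~\ref{thmintertwining}.

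\textbf{What this buys you.} You avoid the constant-term computation of $d_1c$ and the structure of the induced module at $-1/2$. You also need less archimedean input. Regularity of $E(f^{(\pm2)},s,\cdot)$ at $1/2$ only requires the archimedean intertwining operator to vanish to order at least one on weights $\pm2$. That already follows from $M(1/2,\cdot)$ killing $D_2\oplus D_{-2}$, so your Gamma-function computation is more than you need. Your argument even recovers order exactly one a posteriori: if the order were $\geq2$, the constant term of $X_-E(f^{(2)},1/2,\cdot)$ would be $X_-f^{(2)}_{1/2}=0$ rather than the nonzero constant $\kappa_-$. Since both $\kappa_\pm\neq0$, you could even take $c(X_-)=0$.

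\textbf{What the paper's route buys.} It exhibits the coboundary's constant term as coming entirely from the residue of the finite intertwining operator. This is the form the paper generalizes in its second proof of Theorem~\ref{thmmainthm}(c).

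\textbf{One small slip.} The finite part of $M(s)$ is $\zeta(2s)/\zeta(2s+1)$, not a constant multiple of $\xi(2s)/\xi(2s+1)$. The two differ by a ratio of archimedean Gamma factors that is holomorphic and nonzero at $s=1/2$, so the simple pole you use is unaffected.
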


Of course, it is well known that $H^*(\mf{sl}_2,SO_2;\C)$ computes the inductive limit, over all level structures, of the cohomology of the modular curves with those level structures. These modular curves are open Riemann surfaces, and therefore have no $H^2$. So the theorem by itself is maybe not so interesting. The purpose of this section, however, is to write down a purely automorphic proof of this theorem, without appealing in any way to geometry.

Now of course the constant function $1$ has vanishing derivatives by $X_+$ and $X_-$, and so $d_0 1=0$ by \eqref{eqndzero}. Thus $1\in \mc{A}(SL_2)^{SO_2}$ gives a $0$-cocycle (not just a cochain) and thus provides a nonzero class in $H^0(\mf{sl}_2,SO_2;\mc{A}(SL_2))$, proving that the map
\[H^q(\mf{sl}_2,SO_2;\C)\to H^q(\mf{sl}_2,SO_2;\mc{A}(SL_2))\]
is an injection in degree $q=0$. It is injective in all other degrees $q\ne 2$ because the source group is zero.

So to prove the theorem, we must show the map
\[H^2(\mf{sl}_2,SO_2;\C)\to H^2(\mf{sl}_2,SO_2;\mc{A}(SL_2))\]
is zero, i.e., we need to show that the cochain
\[1\cdot (X_+\wedge X_-)^\vee\in C^2(\mf{sl}_2,SO_2;\mc{A}(SL_2))\]
is actually a coboundary. We will do this by explicitly constructing a $1$-cochain in automorphic forms whose image under the boundary map $d_1$ is the constant function.

The automorphic forms which will help us accomplish this task will be Eisenstein series, whose construction in this case we recall below, after some preliminaries on the induced representations needed to define them.

Let us write $\mc B$, $\mc T$, and $\mc N$ for the usual subgroups of $SL_2$,
\[\mc{B}=\Set{\pmat{*&*\\ 0&*}},\qquad \mc{T}=\Set{\pmat{*&0\\ 0&*}},\qquad \mc{N}=\Set{\pmat{1&*\\ 0&1}},\]
so that $\mc B$ is the a Borel with Levi decomposition $\mc{B}=\mc{T}\mc{N}$. By abuse of notation, we write $\delta$ for the usual modulus character of $\mc{B}(\R)$, $\mc{B}(\A_f)$, or $\mc{B}(\A)$, given in all cases by
\[\delta\left(\sm{t&*\\ 0&t^{-1}}\right)=\vert t^2\vert.\]

Let $K=SL_2(\widehat{\Z})\times SO_2\subset SL_2(\A)$, which is a maximal compact subgroup of $SL_2(\A)$. For $s\in\C$, we consider the (unitarily normalized) induced representation from the trivial character $1$ of $T(\A)$,
\[\Ind_{\mc{B}(\A)}^{SL_2(\A)}(1,s)=\Sset{\phi:SL_2(\A)\to\C}{\begin{array}{l}
\phi\textrm{ is smooth and }K\textrm{-finite, }\\
\textrm{and }\phi(bg)=\delta(b)^{s+1/2}\phi(g)\\
\textrm{for all }b\in \mc{B}(\A),\,\, g\in SL_2(\A)
\end{array}}.\]
This decomposes as a tensor product
\begin{equation}
\label{eqntensorofind}
\Ind_{\mc{B}(\A)}^{SL_2(\A)}(1,s)\cong\Ind_{\mc{B}(\A_f)}^{SL_2(\A_f)}(1,s)\otimes\Ind_{\mc{B}(\R)}^{SL_2(\R)}(1,s),
\end{equation}
where the definitions of either factor on the right hand side are similar as above.

Now for each $s\in\C$, there is a unique section $\phi_{f,s}^{\sph}\in \Ind_{B(\A_f)}^{SL_2(\A_f)}(1,s)$ such that $\phi_{f,s}^{\sph}(k_f)=1$ for any $k_f\in SL_2(\widehat{\Z})$. Similarly, there is a unique section $\phi_{\infty,s}^{(0)}\in\Ind_{\mc{B}(\R)}^{SL_2(\R)}(1,s)$ such that $\phi_{\infty,s}^{(0)}(k_\infty)=1$ for all $k_\infty\in SO_2$. We use this latter section to define two more sections at the archimedean place as follows.

First, recall that at $s=1/2$, there is an exact sequence
\[0\to D_{2}\oplus D_{-2} \to \Ind_{\mc{B}(\R)}^{SL_2(\R)}(1,1/2)\to \C\to 0;\]
here, $D_2$ is the weight $2$ holomorphic discrete series representation of $SL_2(\R)$, and likewise $D_{-2}$ is the weight $-2$ antiholomorphic discrete series representation of $SL_2(\R)$, both seen as admissible $(\mf{sl}_2,SO_2)$-modules. For clarity, the $\C$ at the end of this exact sequence is the trivial representation. This exact sequence is nonsplit, even though it is split as an exact sequence of $SO_2$-modules. In fact, the section $\phi_{\infty,1/2}^{(0)}$ spans the unique trivial $SO_2$-type in $\Ind_{\mc{B}(\R)}^{SL_2(\R)}(1,1/2)$, and so maps nontrivially into the $\C$ at the end; however, if we define
\[\phi_{\infty,1/2}^{(2)}=X_+\phi_{\infty,1/2}^{(0)}\textrm{ and }\phi_{\infty,1/2}^{(-2)}=X_-\phi_{\infty,1/2}^{(0)},\]
then $\phi_{\infty,1/2}^{(\pm 2)}\in D_{\pm 2}$, and $\phi_{\infty,1/2}^{(\pm 2)}$ spans the lowest $SO_2$-type in $D_{\pm 2}$ in both cases.

We can also put the sections $\phi_{\infty,1/2}^{(\pm 2)}$ into unique \textit{flat} sections $\phi_{\infty,s}^{(\pm 2)}\in\Ind_{\mc{B}(\R)}^{SL_2(\R)}(1,s)$ varying over $s\in\C$, where flatness here means that the restrictions $\phi_{\infty,s}^{(\pm 2)}|_{SO_2}$ are independent of $s$ as functions on $SO_2$. These sections therefore span the unique $SO_2$-types in $\Ind_{\mc{B}(\R)}^{SL_2(\R)}(1,s)$ of weight $\pm 2$.

With the decomposition \eqref{eqntensorofind} in mind, we define three sections $\phi_s^{(-2)},\phi_s^{(0)},\phi_s^{(2)}\in\Ind_{\mc{B}(\A)}^{SL_2(\A)}(1,s)$, which are flat now in the sense that their restrictions to $K$ do not depend on $s$, by letting
\[\phi_s^{(?)}=\phi_{f,s}^{\sph}\otimes\phi_{\infty,s}^{(?)},\qquad ?\in\{-2,0,2\}.\]
We build Eisenstein series using these sections.

In fact, for $\phi\in\{\phi_s^{(-2)},\phi_s^{(0)},\phi_s^{(2)}\}$, let us define the Eisenstein series by
\[E(\phi,s,g)=\sum_{\gamma\in \mc{B}(\Q)\backslash SL_2(\Q)}\phi(\gamma g),\qquad s\in\C,\,\, g\in SL_2(\A).\]
It is a classical fact that this series converges when $\re(s)$ is in a certain right half plane, and can be extended meromorphically to all of $\C$, in the sense that for each $g\in SL_2(\A)$, the function $E(\phi,s,g)$ in $s$ is meromorphic. In fact, when $\re(s)>0$, the expressions $E(\phi,s,g)$ are holomorphic and define automorphic forms on $SL_2(\A)$, except when $s=1/2$ and $\phi=\phi_s^{(0)}$; in this case the Eisenstein series $E(\phi^{(0)},s,g)$ (we omit the subscript $s$ from $\phi^{(0)}$ here because it is implicit in the data) has a pole of order $1$ at $s=1/2$, and its residue,
\[\res_{s=1/2}E(\phi^{(0)},s,g)=\lim_{s\to 1/2}(s-\tfrac{1}{2})E(\phi^{(0)},s,g)\]
is a nonzero multiple of the constant function of $g$.

Now let us return to cohomology. Let $a_+\in\C$ and $a_-\in\C$ be constants, which we will allow ourselves to specify later. With some foresight, we define a cochain $c\in C^1(\mf{sl}_2,SO_2;\mc{A}(SL_2))$ by specifying its values on $X_+$ and $X_-$ as follows:
\begin{equation}
\label{eqndefofcinbabycase}
c(X_{\pm})=a_\pm E(\phi^{(\pm 2)},1/2,\cdot).
\end{equation}
The Eisenstein series $E(\phi^{(\pm 2)},1/2,\cdot)$ is acted upon by the group $SO_2$ via the character of weight $\pm 2$, just as is the element $X_\pm$, and so this gives a well defined $1$-cochain.

In a moment we will compute that $(d_1c)(X_+\wedge X_-)$ is, as desired, a nonzero multiple of the constant function. In order to do this, we need some way of identifying the automorphic form $(dc)(X_+\wedge X_-)$, and this identification will be made via the constant term, as we describe now.

So for $\phi\in\{\phi_s^{(-2)},\phi_s^{(0)},\phi_s^{(2)}\}$, write $E_{\mc{N}}(\phi,s,g)$ for the constant term along $\mc B$,
\begin{equation}
\label{eqnconsttermbaby}
E_{\mc{N}}(\phi,s,g)=\int_{\mc{N}(\Q)\backslash \mc{N}(\A)}E(\phi,s,ng)\,dn,
\end{equation}
the integration being normalized, say, so that $\mc{N}(\Q)\backslash \mc{N}(\A)$ has measure $1$. Then in fact
\[E_N(\phi,s,g)=\phi(g)+M(s,\phi)(g),\]
where $M(s,\cdot):\Ind_{\mc{B}(\A)}^{SL_2(\A)}(1,s)\to\Ind_{\mc{B}(\A)}^{SL_2(\A)}(1,-s)$ is the intertwining operator defined by
\[M(s,\phi)(g)=\int_{\mc{N}(\A)}\phi_s(w_0ng)\, dn,\qquad w_0=\sm{0&-1\\ 1&0}\in SL_2(\Q);\]
this definition is correct if $\re(s)$ is in a certain right half plane, and again is extended to all $s$ by meromorphic continuation. In fact, this intertwining operator $M(s,\phi)$ has poles exactly when the Eisenstein series $E(\phi,s,\cdot)$ does.

One can evaluate the intertwining operator $M(s,\phi)(g)$ explicitly. Assume that $\phi_s\in\Ind_{\mc{B}(\A)}^{SL_2(\A)}(1,s)$ is a flat section that factorizes as $\phi_s=\phi_{f,s}^{\sph}\otimes\phi_{\infty,s}$ for some $\phi_{\infty,s}\in\Ind_{\mc{B}(\R)}^{SL_2(\R)}(1,s)$. Then it turns out that
\begin{equation}
\label{eqnintertwiningzeta}
M(s,\phi)=\frac{\zeta(2s)}{\zeta(2s+1)}\phi_{f,-s}^{\sph}\otimes M(s,\phi_\infty),
\end{equation}
where on the right hand side, the intertwined archimedean section $M(s,\phi_\infty)$, which is an element of $\Ind_{\mc{B}(\R)}^{SL_2(\R)}(1,-s)$, has an analogous meaning to the global one; namely,
\[M(s,\phi_\infty)(g)=\int_{\mc{N}(\R)}\phi_{\infty,s}(w_0ng)\,dn\]
when $\re(s)$ is large enough, and otherwise we make sense of this expression by analytic continuation. We will need the following.

\begin{proposition}
\label{propintertbaby}
The archimedean intertwining operator $M(s,\cdot):\Ind_{\mc{B}(\R)}^{SL_2(\R)}(1,s)\to \Ind_{\mc{B}(\R)}^{SL_2(\R)}(1,-s)$ vanishes at $s=1/2$ to order $1$ on the sections $\phi_{\infty,s}^{(\pm 2)}$.
\end{proposition}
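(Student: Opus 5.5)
The plan is to reduce everything to scalars on $SO_2$-types and use a one-step recursion. Each $SO_2$-type of $\Ind_{\mc{B}(\R)}^{SL_2(\R)}(1,s)$ has multiplicity one, and $M(s,\cdot)$ commutes with $SO_2$. Hence for each even weight $k$ there is a meromorphic function $c_k(s)$ with
\[M(s,\phi_{\infty,s}^{(k)})=c_k(s)\,\phi_{\infty,-s}^{(k)},\]
where $\phi_{\infty,s}^{(k)}$ is the flat section spanning the weight $k$ type. The proposition then says exactly that $c_{\pm 2}$ has a simple zero at $s=1/2$. I will compute $c_{\pm2}$ from $c_0$ using the fact that $M(s,\cdot)$ also intertwines the $\mf{sl}_2$-action.

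\textbf{Step 1: $X_\pm$ on flat sections.} The element $X_+$ raises weight by $2$, so $X_+\phi_{\infty,s}^{(0)}=a_+(s)\,\phi_{\infty,s}^{(2)}$ for a scalar $a_+(s)$. Differentiating the transformation law $\phi(bg)=\delta(b)^{s+1/2}\phi(g)$ along an Iwasawa decomposition of $X_+$ shows that $a_+(s)$ is affine-linear in $s$. Two values pin it down.
\begin{itemize}
\item $a_+(1/2)=1$, by the definition $\phi_{\infty,1/2}^{(2)}=X_+\phi_{\infty,1/2}^{(0)}$.
\item $a_+(-1/2)=0$, because $\phi^{(0)}_{\infty,-1/2}$ is the constant function $1$, which spans the trivial subrepresentation of $\Ind(1,-1/2)$.
\end{itemize}
Hence $a_+(s)=s+\tfrac12$. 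The same argument gives $X_-\phi_{\infty,s}^{(0)}=(s+\tfrac12)\,\phi_{\infty,s}^{(-2)}$.

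\textbf{Step 2: the recursion.} Apply $M(s,\cdot)$ to the identity in Step 1 and use that it commutes with $X_+$:
\[(s+\tfrac12)\,c_2(s)\,\phi^{(2)}_{\infty,-s}=M(s,X_+\phi^{(0)}_{\infty,s})=X_+M(s,\phi^{(0)}_{\infty,s})=c_0(s)\,(-s+\tfrac12)\,\phi^{(2)}_{\infty,-s}.\]
Therefore
\[c_{\pm2}(s)=\frac{\tfrac12-s}{\tfrac12+s}\,c_0(s).\]

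\textbf{Step 3: $c_0$ at $s=1/2$.} It remains to show that $c_0(s)$ is holomorphic and nonzero at $s=1/2$. This is the main point requiring input. I would compute $c_0$ directly by evaluating at $g=1$, which gives
\[c_0(s)=\int_\R (1+x^2)^{-(s+1/2)}\,dx=\sqrt{\pi}\,\frac{\Gamma(s)}{\Gamma(s+\tfrac12)}.\]
This equals $\pi\neq0$ at $s=1/2$, and it is holomorphic there since the integral converges absolutely for $\re(s)>0$.

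Alternatively, one can argue abstractly. The section $\phi^{(0)}_{\infty,1/2}$ maps onto the trivial quotient, so the image of $M(1/2,\cdot)$ would vanish if $c_0(1/2)=0$. Since $M(1/2,\cdot)$ is nonzero and factors through the quotient $\C$, we get $c_0(1/2)\ne0$.

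Combining Steps 2 and 3, $c_{\pm2}(s)$ has a simple zero at $s=1/2$, which proves the proposition. The factor $\tfrac12-s$ comes precisely from the vanishing of $X_\pm$ on the constant function at $s=-1/2$. This is the baby version of the cohomological argument sketched in the introduction.
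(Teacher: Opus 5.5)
Your proposal is correct and is essentially the paper's proof. Both arguments obtain $X_{\pm}\phi^{(0)}_{\infty,s}=(s+\frac{1}{2})\phi^{(\pm 2)}_{\infty,s}$, commute $M(s,\cdot)$ past $X_\pm$ to get $M(s,\phi^{(\pm 2)}_\infty)=\frac{1/2-s}{1/2+s}\,c_0(s)\,\phi^{(\pm 2)}_{\infty,-s}$, and conclude from $c_0(1/2)\neq 0$.

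The differences are only cosmetic. You pin down the affine function $s+\frac{1}{2}$ by its values at $s=\pm\frac{1}{2}$, using that $\phi^{(0)}_{\infty,-1/2}$ is constant; the paper instead computes $(X_+\phi^{(0)}_{\infty,s})(1)$ directly from the Iwasawa decomposition of $X_+$. You also take the explicit Gamma integral $c_0(1/2)=\pi$ as your primary argument. The paper deliberately uses only the Langlands-classification fact $M(1/2,\phi^{(0)}_\infty)\neq 0$, which is your stated alternative.
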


This proposition follows from well known computations involving gamma functions, but we will give a different proof of it in a moment using the fact that $M(1/2,\phi_\infty^{(0)})\ne 0$, this latter fact being a special case of a general fact about the relationship between intertwining operators and the Langlands classification. The proof we give is what inspired our approach to the analogous fact in this paper, Theorem \ref{thmintertwining}. But for now let us finish the proof of Theorem \ref{thmmainthmbaby}.

Eisenstein series (and their residues) are determined entirely by their constant terms. So to prove that $(d_1c)(X_+\wedge X_-)$ is the constant function, and hence to prove Theorem \ref{thmmainthmbaby}, it suffices to show that the constant term of $(d_1c)(X_+\wedge X_-)$ is constant as a function of $g\in SL_2(\A)$. Now we compute by definition, using \eqref{eqndone} and \eqref{eqndefofcinbabycase}, that
\[(d_1c)(X_+\wedge X_-)=X_+c(X_-)-X_-c(X_+)=a_-X_+E(\phi^{(-2)},1/2,\cdot)-a_+X_-E(\phi^{(2)},1/2,\cdot),\]
where we recall that $a_+$ and $a_-$ are constants that are, for now, unspecified. This automorphic form has constant term, by \eqref{eqnconsttermbaby} and \eqref{eqnintertwiningzeta}, given by
\begin{multline}
\label{eqnconstofd1c}
(d_1c)(X_+\wedge X_-)_{\mc{N}}=a_-X_+\phi_{1/2}^{(-2)}-a_+X_-\phi_{1/2}^{(2)}\\
+\lim_{s\to 1/2}\frac{\zeta(2s)}{\zeta(2s+1)}\phi_{f,-s}^{\sph}\otimes (a_-X_+M(s,\phi_\infty^{(-2)})-a_+X_-M(s,\phi_\infty^{(2)})).
\end{multline}

The section $X_-\phi_{1/2}^{(2)}$ vanishes because $\phi_{\infty,1/2}^{(2)}$ is already in the lowest $SO_2$-type in $D_2$, and similarly $X_+\phi_{1/2}^{(-2)}=0$ as well. Also, by Proposition \ref{propintertbaby}, the simple pole of $\zeta(2s)$ at $s=1/2$ cancels the simple zeros of the intertwining operators in \eqref{eqnconstofd1c}. Thus there are nonzero constants $b_+$ and $b_-$ such that
\[(d_1c)(X_+\wedge X_-)_{\mc{N}}=\phi_{f,-1/2}^{\sph}\otimes (a_-b_-X_+\phi_{\infty,-1/2}^{(-2)}-a_+b_+X_-\phi_{\infty,-1/2}^{(2)}).\]
The fact that we obtain the nonzero multiples of the sections $\phi_{\infty-1/2}^{(-2)}$ and $\phi_{\infty,-1/2}^{(2)}$ on the right hand side just follows from the fact that the $SO_2$-types of weight $2$ and $-2$ in $\Ind_{\mc{B}(\R)}^{SL_2(\R)}(1,-1/2)$ are unique.

Now there is an exact sequence
\[0\to \C \to \Ind_{\mc{B}(\R)}^{SL_2(\R)}(1,-1/2)\to D_{2}\oplus D_{-2}\to 0,\]
dual to the one above, which is nonsplit. It follows that at least one of $X_+\phi_{\infty,-1/2}^{(-2)}$ or $X_-\phi_{\infty,-1/2}^{(2)}$ is a nonzero multiple of $\phi_{\infty,-1/2}^{(0)}$. So there are constants $b_+'$ and $b_-'$, at least one of which is nonzero (in fact, neither are zero, but we want to emphasize that we do not actually need to know this), such that
\[(d_1c)(X_+\wedge X_-)_\mc{N}=(a_-b_-b_-'-a_+b_+b_+')\phi_{f,-1/2}^{\sph}\otimes \phi_{\infty,-1/2}^{(0)}.\]
By choosing $a_-$ and $a_+$ appropriately, we can make
\[a_-b_-b_-'-a_+b_+b_+'\ne 0,\]
and so with this choice, the constant term $(d_1c)(X_+\wedge X_-)_\mc{N}$ is a nonzero multiple of $\phi_{f,-1/2}^{\sph}\otimes \phi_{\infty,-1/2}^{(0)}$. But this section is by definition $K$-fixed, and there is a unique such section in $\Ind_{\mc{B}(\A)}^{SL_2(\A)}(1,-1/2)$, namely the constant function on $SL_2(\A)$. This suffices to show that $(d_1c)(X_+\wedge X_-)_\mc{N}$, and hence $(d_1c)(X_+\wedge X_-)$, are nonzero constant functions, proving Theorem \ref{thmmainthmbaby} under the assumption that Proposition \ref{propintertbaby} is true.

We now turn to the proof of Proposition \ref{propintertbaby}. For simplicity, let us prove that $M(s,\phi_\infty^{(2)})$ vanishes to order $1$ at $s=1/2$; the proof for $\phi_\infty^{(-2)}$ in place of $\phi_\infty^{(2)}$ is completely analogous.

To do this, we first compute the difference between $\phi_{\infty,s}^{(2)}$ and $X_+\phi_{\infty,s}^{(0)}$. Of course, by our definition of $\phi_{\infty,s}^{(2)}$, these two sections agree at $s=1/2$, but they may (and will) differ at other values of $s$. But we know at least that for any fixed $s$, they are multiples of each other, because they are both weight $2$ and the $SO_2$-type in $\Ind_{\mc{B}(\R)}^{SL_2(\R)}(1,s)$ of weight $2$ is unique for any $s$.

Now by definition of $\phi_{\infty,s}^{(0)}$, we have $\phi_{\infty,s}^{(0)}(1)=1$ for all $s$. Let us write
\[X_+=\frac{1}{2}\pmat{1&0\\ 0&-1}+\frac{1}{2}H+i\pmat{0&1\\ 0&0},\]
where $H$ is as in \eqref{eqnHXplusXminus}, and our explicit definition of $X_+$ is given there. Then we compute, using the very definition of the action of the Lie algebra $\mf{sl}_2$ on the induced representation via exponentiation and differentiation, that
\begin{align*}
(X_+\phi_{\infty,s}^{(0)})(1)& =\frac{1}{2}\frac{d}{dt}\phi_{\infty,s}^{(0)}\sm{e^{t}&0\\ 0& e^{-t}}|_{t=0}+\frac{1}{2}(H\phi_{\infty,s}^{(0)})(1)+i\frac{d}{dt}\phi_{\infty,s}^{(0)}\sm{1& t\\ 0&1}|_{t=0}\\
&=\frac{1}{2}\frac{d}{dt}e^{2(s+1/2)t}\phi_{\infty,s}^{(0)}(1)|_{t=0}+0+i\frac{d}{dt}\phi_{\infty,s}^{(0)}(1)|_{t=0}\\
&=(s+\tfrac{1}{2})\phi_{\infty,s}^{(0)}(1)\\
&=s+\tfrac{1}{2};
\end{align*}
in the passage to the second line above, we use that $H\phi_{\infty,s}^{(0)}=0$ because $H\in\mf{so}_2$ and $\phi_{\infty,s}^{(0)}$ is weight $0$, along with the transformation laws for $\phi_{\infty,s}^{(0)}$ under left translation by $\mc{T}(\R)$ and $\mc{N}(\R)$. From this computation we deduce that $\phi_{\infty,1/2}^{(2)}(1)=1$ by taking $s=1/2$ above, and hence that
\[\phi_{\infty,s}^{(2)}(1)=1\textrm{ for all }s,\]
and therefore also that
\[X_+\phi_{\infty,s}^{(0)}=(s+\tfrac{1}{2})\phi_{\infty,s}^{(2)}.\]

Now, since the $SO_2$-type of weight $0$ in $\Ind_{\mc{B}(\R)}^{SL_2(\R)}(1,s)$ is unique for any $s$, we know that there is a holomorphic function $a(s)$ with $a(1/2)\ne 0$, such that
\[M(s,\phi_{\infty}^{(0)})=a(s)\phi_{\infty,-s}^{(0)}.\]
So we compute
\begin{align*}
M(s,\phi_{\infty}^{(2)})&=\frac{1}{s+\tfrac{1}{2}}M(s,X_+\phi_{\infty}^{(0)})\\
&=\frac{1}{s+\tfrac{1}{2}}X_+M(s,\phi_{\infty}^{(0)})\\
&=\frac{1}{s+\tfrac{1}{2}}a(s)X_+\phi_{\infty,-s}^{(0)}\\
&=\frac{-s+\tfrac{1}{2}}{s+\tfrac{1}{2}}a(s)\phi_{\infty,-s}^{(2)},
\end{align*}
which visibly has a zero of order $1$ at $s=1/2$. This completes the proof of Proposition \ref{propintertbaby}, and hence of Theorem \ref{thmmainthmbaby}.\qed

\indent

We remark that a careful reading of the proof Proposition \ref{propintertbaby}, including a working of the case involving $\phi_{\infty,s}^{(-2)}$, would reveal that actually $b_+=b_-$ and $b_+'=b_-'$ above. Thus it suffices to take $a_+=-a_-=1$ to prove Theorem \ref{thmmainthmbaby}. It is, however, a very convenient feature of the proof that we do not need to know this, and in fact we will not have such explicit knowledge of the objects which appear in the proof of Theorem \ref{thmmainthm} below.

\section{Results at the archimedean place}
\label{secreal}

This section is devoted to the results we need at the archimedean place. The main results are Theorem \ref{thmstdmodforind}, the cohomological computations of Section \ref{subsecgkcoh}, and Theorem \ref{thmintertwining}.

After setting the stage in Sections \ref{subsecsetup} and \ref{subsecds}, in Section \ref{subsecstandardmods} we study in detail the structure of certain induced representations, whose cohomology we then describe in Section \ref{subsecgkcoh}. Section \ref{subsecintertwining} uses these results to study the orders of vanishing of intertwining operators on these induced representations. Finally, we give a set of hypotheses in Section \ref{subsecaltsetup} which we then prove are equivalent to the setup in Sections \ref{subsecsetup} and \ref{subsecds}.

\subsection{Setup: Groups and Lie algebras}
\label{subsecsetup}

We fix notation that will be in play throughout this section. Let $G$ be a real, connected, linear, semisimple Lie group. Let $K$ be a maximal compact subgroup of $G$, and assume $K$ contains a compact Cartan subgroup for $G$; fix one such, call it $T$. Then $K$ is connected because $G$ is, as is $T$ (by, for instance, \cite[Lemma 4.43 (d)]{knvo}). Write $\mf{g}$, $\mf{k}$, and $\mf{t}$ for the complexified Lie algebras of $G$, $K$, and $T$, respectively.

Write $\Delta=\Delta(\mf{g},\mf{t})$ for the set of roots of $\mf{t}$ in $\mf{g}$. We fix an ordering on $\Delta$ and let $\Delta^+$ be the subset of positive roots in $\Delta$. We assume that $\Delta^+$ contains a simple root $\alpha_0$ which is noncompact, i.e., which does not occur in $\mf{k}$. We will make various constructions with this fixed simple root $\alpha_0$.

Now as in \cite[\S 2]{blank}, we normalize the root vectors coming from roots in $\Delta$ in the following standard way. Let $B$ be the Killing form on $\mf{g}$ and $\langle\cdot,\cdot\rangle$ the bilinear form on the dual space $\mf{t}^\vee$ of $\mf{t}$ coming from $B$. Fix a Cartan involution $\theta$ on $G$ which gives $K$. Then $T$ is $\theta$-stable. For each $\alpha\in\Delta$, we fix normalized root vectors $X_\alpha$ in $\mf{g}$ such that
\begin{equation}
\label{eqnnormofrootvecs}
B(X_\alpha,X_{-\alpha})=\frac{2}{\langle\alpha,\alpha\rangle}\qquad\textrm{and}\qquad\theta\overline{X}_\alpha=-X_{-\alpha},
\end{equation}
where $\overline{X}_\alpha$ denotes the complex conjugate of $X_\alpha$ with respect to the real Lie algebra of $G$. Let $H_\alpha\in\mf{t}$ be the vector $[X_\alpha,X_{-\alpha}]$. Then $\alpha(H_\alpha)=2$, and so $(H_\alpha,X_{\alpha},X_{-\alpha})$ form a normalized $\mf{sl}_2$-triple.

With these normalizations in mind, we now make constructions involving the fixed noncompact simple root $\alpha_0$. First, write
\[\mf{a}=\C(X_{\alpha_0}+X_{-\alpha_0})\subset \mf{g}.\]
Then $\mf{a}$ is an abelian Lie subalgebra of $\mf{g}$. We also write $\mf{t}'$ for the orthogonal complement of $H_{\alpha_0}$ in $\mf{t}$. Then the Lie algebra $\mf{h}$ defined by $\mf{h}=\mf{a}\oplus\mf{t}'$ is a $\theta$-stable Cartan subalgebra $\mf{g}$. In fact, let
\[C_{\alpha_0}=\Ad(\exp(\tfrac{\pi}{4}(X_{\alpha_0}-X_{-\alpha_0})))\]
be the Cayley transform associated with $\alpha_0$. Then $C_{\alpha_0}$ fixes $\mf{t}'$ and brings $X_{\alpha_0}+X_{-\alpha_0}$ to a nonzero real multiple of $H_{\alpha_0}$. The transpose $\tp{C_{\alpha_0}}$ of the Cayley transform therefore provides a bijection
\[\Delta(\mf{g},\mf{t})\overset{\sim}{\longrightarrow}\Delta(\mf{g},\mf{h}),\]
between the roots of $\mf{t}$ in $\mf{g}$ and those of $\mf{h}$ in $\mf{g}$.

Let $\mf{g}_{\R}$ be the real Lie algebra of $G$. We define $M_0^{\circ}$ to be the connected Lie subgroup of $G$ corresponding to the centralizer $C_{\mf{g}_{\R}}(\mf{a})$. Let $\mf{m}_0$ be the complexified Lie algebra of $M_0^\circ$, so $\mf{m}_0=Z_{\mf{g}_{\R}}(\mf{a})\otimes_{\R}\C$. We note that $\mf{t}'$ is a Cartan subalgebra of $\mf{m}_0$ and that $M_0^\circ\cap T$ is thus a Cartan subgroup of $M_0^\circ$ which is compact. Define $M_0=C_K(\mf{a})M_0^\circ$, so that indeed $M_0^\circ$ is the connected component of the identity in $M_0$. We also write
\begin{equation}
\label{eqndefofA}
A=\exp(\R(X_{\alpha_0}+X_{-\alpha_0}))
\end{equation}
and let $M=M_0A$. Then $M$ is the centralizer of $A$ in $G$. It is also the Levi component of a maximal parabolic subgroup of $G$. In fact, order the roots of $\mf{a}$ in $\mf{g}$ by whether they are positive when evaluated on the element $X_{\alpha_0}+X_{-\alpha_0}$. Then the subspace $\mf{n}$ of $\mf{g}$ generated by the positive root spaces for $\mf{a}$ is a nilpotent Lie subalgebra of $\mf{g}$, and there is a unipotent subgroup $N$ of $G$ whose complexified Lie algebra is $\mf{n}$, and such that $P=MN$ is a maximal parabolic subgroup of $G$ with Levi $M$.

Note that the construction of $M$ and $P$ only depend on $\alpha_0$ being a simple root in the ordering on $\Delta$, and otherwise do not depend on any other properties of this ordering.

\subsection{Setup: Discrete series}
\label{subsecds}

We continue with the setting and notation of Section \ref{subsecsetup}.

Let $\rho$ be the half sum of positive roots in $\Delta$. By a \textit{Harish-Chandra parameter} for $G$ we will mean a regular weight $\Lambda$ of $\mf{t}$ such that $\Lambda+\rho$ is integral. We fix throughout a $\Delta^+$-dominant Harish-Chandra parameter $\Lambda$ for $G$ such that
\begin{equation}
\label{eqnlanranlamrho}
\frac{2\langle\Lambda,\alpha_0\rangle}{\langle\alpha_0,\alpha_0\rangle}=\frac{2\langle\rho,\alpha_0\rangle}{\langle\alpha_0,\alpha_0\rangle}=1.
\end{equation}
Then $\Lambda|_{\mf{t}'}$ is a Harish-Chandra parameter for $M_0^\circ$ by \cite[Proposition 4.1]{blank}. It therefore determines a discrete series representation $\pi_0$ of $M_0^\circ$. In the same way, the parameter $\Lambda$ determines a discrete series representation of $G$ which we will denote by $D_+$.

Let $\rho_c$ and $\rho_n$ denote, respectively, the half sum of positive compact or noncompact roots in $\Delta$. Then $\Lambda-\rho_c+\rho_n$ is the Blattner parameter for $D_+$; that is, it is the highest weight of the lowest $K$-type in $D_+$. Let $\xi$ be the character of $T$ whose differential is $\Lambda-\rho_c+\rho_n$ (recall that $T$ is connected). Then $\xi|_{Z_{M_0^\circ}}$ is the central character of $\pi_0$. So we let $\widetilde{\pi}_0$ be the representation of $M_0^\circ Z_{M_0}$ given by
\[\widetilde{\pi}_0=\pi_0\otimes(\xi|_{Z_{M_0}}).\]
That is, for any $m\in M_0^\circ$ and $z\in Z_{M_0}$, we define the product $mz$ to act via $\widetilde{\pi}$ on the space of $\pi$ via $\xi(z)\pi(m)$. Then we induce to get a discrete series representation $\pi$ of $M_0$ by letting
\[\pi=\Ind_{M_0^\circ Z_{M_0}}^{M_0}(\widetilde{\pi}_0).\]
It turns out $\pi$ is irreducible, and we will even see this from an alternate interpretation of $\pi$ in Section \ref{subsecstandardmods}; see Corollary \ref{corpiisirr} below.

We now define another discrete series representation of $G$, as follows. Consider the parameter $\Lambda-\alpha_0$; we claim this parameter is dominant regular for the positive system $w_{\alpha_0}(\Delta^+)$ in $\Delta$, where $w_{\alpha_0}$ is the simple reflection in $\Delta$ corresponding to $\alpha_0$. Indeed, we compute using \eqref{eqnlanranlamrho} that if $\beta\in\Delta$, then
\begin{align*}
\langle\Lambda-\alpha_0,w_{\alpha_0}(\beta)\rangle=& \left\langle\Lambda-\alpha_0,\beta-\frac{2\langle\alpha_0,\beta\rangle}{\langle\alpha_0,\alpha_0\rangle}\alpha_0\right\rangle\\
=&\langle\Lambda,\beta\rangle-\langle\alpha_0,\beta\rangle-\frac{2\langle\alpha_0,\beta\rangle}{\langle\alpha_0,\alpha_0\rangle}\langle\Lambda,\alpha_0\rangle+\frac{2\langle\alpha_0,\beta\rangle}{\langle\alpha_0,\alpha_0\rangle}\langle\alpha_0,\alpha_0\rangle\\
=&\langle\Lambda,\beta\rangle,
\end{align*}
and so in fact, this shows that $\Lambda-\alpha_0=w_{\alpha_0}(\Lambda)$.

Thus we can define the discrete series representation $D_-$ of $G$ to be that corresponding to $\Lambda-\alpha_0$; its Blattner parameter is thus
\[(\Lambda-\alpha_0)-\rho_c+(\rho_n-\alpha_0)=(\Lambda-\rho_c+\rho_n)-2\alpha_0,\]
which differs from that of $D_+$ by $2\alpha_0$. Let $\xi'$ be the character of $T$ whose differential is the Blattner parameter of $D_-$. Then we claim that $\xi'|_{Z_{M_0}}=\xi|_{Z_{M_0}}$. Indeed, we will show below in Proposition \ref{propaboutgroups} (which does not depend on the material in this subsection) that $Z_{M_0}\subset T$ so that this makes sense, and that $Z_{M_0}/(M_0^\circ\cap Z_{M_0})$ has at most $2$ elements. But the root $\alpha_0$ vanishes on $T\cap M_0^\circ$; to see this, note first that it is easy to see that $T\cap M_0^\circ=C_{M_0^\circ}(\mf{t}')$ is the Cartan subgroup of $M_0^\circ$ attached to $\mf{t}'$, and is thus connected. Therefore it is equal to $\exp(\mf{g}_\R\cap\mf{t}')$. But $\alpha_0$ vanishes on $\mf{t}'$ since $\mf{t}'$ is the orthogonal complement of $H_{\alpha_0}$ in $\mf{t}$. Thus $\alpha_0$ vanishes on $Z_{M_0}\cap M_0^\circ$, and so $(2\alpha_0)|_{Z_{M_0}}$ is trivial since $Z_{M_0}/(M_0^\circ\cap Z_{M_0})$ has at most $2$ elements, proving the claim.

It follows that if $\pi_0'$ is the discrete series representation of $M_0^\circ$ with Harish-Chandra parameter $(\Lambda-\alpha_0)|_{\mf{t}'}$, then $\pi_0'=\pi_0$ because $\alpha_0|_{\mf{t}'}=0$. Hence
\[\Ind_{M_0^\circ Z_{M_0}}^{M_0}(\pi_0'\otimes(\xi'|_{Z_{M_0}}))=\pi=\Ind_{M_0^\circ Z_{M_0}}^{M_0}(\pi_0\otimes(\xi|_{Z_{M_0}})).\]

Let us write
\begin{equation}
\label{eqnsnaught}
s_0=(2\rho_P(X_{\alpha_0}+X_{-\alpha_0}))^{-1},
\end{equation}
where $\rho_P$ denotes the half sum of positive roots of $\mf{a}$ in $\mf{n}$, counted with multiplicity. The following theorem is due to Schmid in unpublished work, according to \cite{blank}; \textit{loc. cit.} reproves it and gives integral formulas for the intertwining maps which the theorem concerns. 

\begin{theorem}[Schmid, Blank]
\label{thmschmidblank}
There are nontrivial $(\mf{g},K)$-equivariant maps
\[\Ind_{P}^{G}(\pi,-s_0)\to D_+\qquad\textrm{and}\qquad\Ind_{P}^{G}(\pi,-s_0)\to D_-\]
where we recall that the induced representations above are unitarily normalized, and that we are implicitly taking smooth, $K$-finite vectors in them.
\end{theorem}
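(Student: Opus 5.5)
The plan is to produce embeddings $D_\pm\hookrightarrow\Ind_P^G(\pi,s_0)$ and then dualize. Since $\pi$ is unitary (a discrete series of $M_0$) and $s_0$ is real, the Hermitian dual of $\Ind_P^G(\pi,s_0)$ is $\Ind_P^G(\pi,-\bar s_0)=\Ind_P^G(\pi,-s_0)$, via the usual pairing $\int_K\langle\phi(k),\psi(k)\rangle\,dk$. The representations $D_\pm$ are unitary, hence isomorphic to their Hermitian duals. So a nonzero $(\mf{g},K)$-map $D_\pm\to\Ind_P^G(\pi,s_0)$ dualizes to a nonzero map $\Ind_P^G(\pi,-s_0)\to D_\pm$. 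On $K$-finite vectors the duality is just taking the $K$-finite Hermitian dual of admissible modules, which is exact and faithful.

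To construct the embeddings I would use Frobenius reciprocity in its Casselman form. We have
\[\hom_{(\mf{g},K)}(D,\Ind_P^G(\pi,s))\cong\hom_{(\mf{m},K\cap M)}\big(H_0(\mf{n},D),\ \pi\otimes(\text{character of }\mf{a}\text{ given by }(2s+1)\rho_P)\big).\]
So it suffices to find the representation $\pi\otimes e^{(2s_0+1)\rho_P}$ as a quotient of $H_0(\mf{n},D_\pm)$. For this I would invoke the Schmid embedding for discrete series, in the form computed by Hecht--Schmid (Osborne's conjecture) or via Knapp--Wallach's Szeg\H{o} maps. It says that $H_0(\mf{n},D_\Lambda)$, for the parabolic attached to a noncompact simple root $\alpha_0$ of a positive system for which $\Lambda$ is dominant, has a constituent with the following data.
\begin{itemize}
\item The $M_0^\circ$-part is the discrete series with Harish-Chandra parameter $\Lambda|_{\mf{t}'}$, which is $\pi_0$ here.
\item The $\mf{a}$-weight is obtained from $\Lambda+\rho_P$ (shifted appropriately) transported through the Cayley transform $\tp{C_{\alpha_0}}$.
\end{itemize}

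Using $2\langle\Lambda,\alpha_0\rangle/\langle\alpha_0,\alpha_0\rangle=1$, the transported weight restricted to $\mf{a}$ is $\tfrac12\alpha_0|_{\mf{a}}$. By the definition \eqref{eqnsnaught}, this equals $2s_0\rho_P|_{\mf{a}}$, since $\alpha_0(X_{\alpha_0}+X_{-\alpha_0})$ becomes $2$ after the Cayley transform. This gives exactly the parameter $s_0$, and on the correct side: the embedding parameter has positive real part, i.e.\ $D$ is a submodule of $\Ind(\pi,+s_0)$ and a quotient of $\Ind(\pi,-s_0)$.

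For $D_-$ I would run the same argument with the positive system $w_{\alpha_0}(\Delta^+)$. The root $\alpha_0$ is still simple and noncompact for it, with $-\alpha_0$ as its positive representative, and $\Lambda-\alpha_0=w_{\alpha_0}\Lambda$ is dominant for it. The construction of $M$, $A$ and $P$ only depends on $\pm\alpha_0$ up to sign, as noted in the text. One has to check whether $P$ or its opposite arises; this is governed by the sign of $\Lambda-\alpha_0$ on the new simple root, and the same computation again gives $+s_0$ relative to $P$, because $2\langle w_{\alpha_0}\Lambda,-\alpha_0\rangle/\langle\alpha_0,\alpha_0\rangle=1$.

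It remains to upgrade from $M_0^\circ$ to $M_0$, that is, to see that the full $M_0$-module occurring is $\pi=\Ind_{M_0^\circ Z_{M_0}}^{M_0}(\pi_0\otimes\xi|_{Z_{M_0}})$. I would argue via lowest $K$-types. The Blattner $K$-type of $D_\pm$ occurs in $\Ind_P^G(\pi,s_0)$ with the $Z_{M_0}$ acting by $\xi|_{Z_{M_0}}$, respectively $\xi'|_{Z_{M_0}}$, because $Z_{M_0}\subset T$. These two characters agree, as shown above. Frobenius reciprocity for $M_0^\circ Z_{M_0}\subset M_0$ then transfers the map.

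The main obstacle is this last matching on the disconnected group $M_0$, together with making sure of the sign of the $\mf{a}$-parameter. I would handle the sign by checking it against the $SL_2(\R)$ case from the first section, where $D_{\pm2}\subset\Ind(1,1/2)$.
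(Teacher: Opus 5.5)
Your outer framework is sound. Casselman's form of Frobenius reciprocity is stated correctly. The Hermitian-dual step $\Ind_P^G(\pi,s_0)^h\cong\Ind_P^G(\pi,-s_0)$ is exactly what the paper uses to pass between its two exact sequences. The infinitesimal-character bookkeeping that produces $s_0$, with the sign checked against $SL_2(\R)$, is also fine.

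\textbf{The gap.} It sits in the one step that carries all the content. Frobenius reciprocity needs $\pi\otimes\C_{(2s_0+1)\rho_P}$ to be a \emph{quotient} of $H_0(\mf{n},D_\pm)$ as an $(\mf{m},K\cap M)$-module; being a constituent is not enough. Neither source you invoke supplies this.
\begin{itemize}
\item Hecht--Schmid's proof of Osborne's conjecture is a character identity for the alternating sum $\sum_i(-1)^iH_i(\mf{n},D)$ on a suitable chamber. It does not place the relevant piece in degree $0$, since a priori it could be cancelled by or come from $H_1$, let alone as a quotient.
\item Schmid's embedding in this parabolic-rank-one setting, and the Knapp--Wallach Szeg\H{o} integrals adapted to it, are precisely Schmid's unpublished theorem and Blank's paper. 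That is the statement you are asked to prove, in dual form, so citing them makes the argument circular.
\end{itemize}
The passage from $M_0^\circ$ to $M_0$ is also not well-founded. $Z_{M_0}\subset T$ is in general not central in $K$, so the restriction of the Blattner $K$-type to $K\cap M$ carries several $Z_{M_0}$-characters. The claim that the Blattner $K$-type ``occurs with $Z_{M_0}$ acting by $\xi|_{Z_{M_0}}$'' therefore does not determine the central character of the $M$-module you need.

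\textbf{The missing idea.} The paper never analyzes $H_0(\mf{n},D_\pm)$ directly; it recognizes $\Ind_P^G(\pi,-s_0)$ as a cohomologically induced standard module. It applies Vogan's comparison of $\theta$-stable and cuspidal data, going through the character datum $(\Gamma,\gamma)$, which is also where $\xi|_{Z_{M_0}}$ and the disconnectedness of $M_0$ are absorbed. This gives $\Ind_P^G(\pi,-s_0)\cong\mc{R}_{\mf{q}}^S(\Ind_{B_L}^L((\lambda|_{T'})\otimes\nu))$, where $L$ is the $\theta$-stable Levi with derived group $SL_2(\R)$ or $PSL_2(\R)$. Three facts then finish the argument:
\begin{itemize}
\item the rank-one principal series surjects onto $\delta_2(\lambda)\oplus\delta_{-2}(\lambda)$;
\item $\mc{R}_{\mf{q}}^S$ is exact in the good range;
\item induction in stages identifies $\mc{R}_{\mf{q}}^S(\delta_{2}(\lambda))\cong A_{\mf{b}}(\lambda)\cong D_+$ and $\mc{R}_{\mf{q}}^S(\delta_{-2}(\lambda))\cong A_{w_{\alpha_0}\mf{b}}(\lambda)\cong D_-$.
\end{itemize}
Together these give the surjection $\Ind_P^G(\pi,-s_0)\to D_+\oplus D_-$ directly, with no dualization and no $\mf{n}$-homology.
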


Actually, in the next subsection, we will give a separate proof of this theorem using cohomological induction. The proof we will give allows us to deduce more about the structure of the induced representation appearing above.

\subsection{Standard modules}
\label{subsecstandardmods}
We keep the notation of Sections \ref{subsecsetup} and \ref{subsecds}. In particular we have our subgroup $A$ and its centralizer $M=M_0A$. In this subsection, we describe a construction of the discrete series representations $\pi$, $D_+$ and $D_-$ from Section \ref{subsecds} can be used to reprove Theorem \ref{thmschmidblank} while also giving important information about all the constituents of the induced representation $\Ind_{P}^{G}(\pi,-s_0)$ from that theorem. The construction makes use of cohomological induction functors; see \cite{knvo} for a reference for this theory, though we will recall all the facts from this theory that we need.

Recall that we have written $\theta$ for the Cartan involution of $G$ which gives $K$; let us use the same notation $\theta$ for the corresponding Cartan involution on $\mf{g}$ which gives $\mf{k}$. Part of the data that is input into these cohomological induction functors involves a $\theta$-stable parabolic subalgebra of $\mf{g}$; we fix two such. Let $\mf{b}$ denote the Borel subalgebra of $\mf{g}$, with Levi factor $\mf{t}$, which is standard for the system of positive roots $\Delta^+$ we fixed on $\Delta(\mf{g},\mf{t})$. Let $\mf{q}=\C X_{-\alpha_0}\oplus\mf{b}$, which has Levi factor of rank $1$ containing $\pm\alpha_0$. Then the compact Cartan subgroup $T$ is the normalizer of $\mf{b}$ in $G$.

Let $H=C_G(\mf{h})$, where $\mf{h}=\mf{a}\oplus\mf{t}'$ is as in Section \ref{subsecsetup}. Then $H$ is a $\theta$-stable Cartan subgroup in the sense of \cite[pp. 239, 250]{knvo} by \cite[Proposition 4.38 (d)]{knvo}. By \cite[Lemma 4.37]{knvo}, we have that $H=T'A$ where $T'=H\cap K$.

Let $L$ denote the normalizer of $\mf{q}$ in $G$, and $\mf{l}$ its Lie algebra, which is the Levi factor of $\mf{q}$. Then $L$ contains the subgroup $A$, and the derived group $L^d$ of $L$ is split and simple of rank $1$. It is connected by \cite[Lemma 5.10]{knvo}, and we therefore have that $L^{\mr{der}}$ is isomorphic to $SL_2(\R)$ or $PSL_2(\R)$ (recall that $G$ has a faithful finite dimensional representation by assumption, so $L^{\mr{der}}$ cannot be a nontrivial cover of either of these). By the same reasoning as above, $H$ is a $\theta$-stable Cartan subgroup of $L$ as well.

\begin{proposition}
\label{propaboutgroups}
Let the notation be as above.
\begin{enumerate}[label=(\alph*)]
\item We have $H\subset M$ and $H\subset L$.
\item We have $Z_{M_0}\subset H$ and $Z_{M_0}\subset T$.
\item The groups $\pi_0(H)$ and $Z_{M_0}/(M_0^\circ \cap Z_{M_0})$ have at most two elements.
\end{enumerate}
\end{proposition}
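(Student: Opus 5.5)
For (a), I would use that $H=C_G(\mf{h})$ and that $\mf{h}=\mf{a}\oplus\mf{t}'$ contains $\mf{a}$. Then $H$ centralizes $\mf{a}$, hence centralizes $A=\exp(\R(X_{\alpha_0}+X_{-\alpha_0}))$. So $H\subset C_G(A)=M$.

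For $H\subset L$, recall that $\mf{q}=\mf{t}\oplus\C X_{\alpha_0}\oplus\C X_{-\alpha_0}\oplus\bigoplus_{\beta\in\Delta^+\setminus\{\alpha_0\}}\mf{g}_\beta$, where the Levi factor is $\mf{l}=\mf{t}\oplus\mf{g}_{\alpha_0}\oplus\mf{g}_{-\alpha_0}$. Note that $\mf{h}\subset\mf{l}$. The positive roots $\beta\ne\alpha_0$ are exactly those with $\langle\beta,\mu\rangle>0$ for a suitable element $\mu\in(\mf{t}')^\vee$, namely a weight orthogonal to $\alpha_0$ and positive on the other simple roots. So $\mf{q}$ is the sum of the nonnegative eigenspaces of $\mathrm{ad}(H_\mu)$ for some $H_\mu\in\mf{t}'$ (real up to $i$), and $L=C_G(H_\mu)$. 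Since $H$ centralizes $\mf{t}'\ni H_\mu$, we get $H\subset L$.

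For (b), I would first show that $Z_{M_0}$ centralizes $\mf{t}'$. The key point is that $\mf{t}'$ is a Cartan subalgebra of $\mf{m}_0$, and an element central in $M_0$ acts trivially on $\mf{m}_0\supset\mf{t}'$. It also centralizes $\mf{a}$, since $M_0\subset M=C_G(A)$. Hence $Z_{M_0}\subset C_G(\mf{h})=H$. To get $Z_{M_0}\subset T$, I would argue as follows. The element $z\in Z_{M_0}$ lies in $M_0=C_K(\mf{a})M_0^\circ$, and $Z_{M_0}\subset H=T'A$. Since $M_0\cap A=1$ (the split component is a direct factor of $M=M_0A$), we get $z\in T'=H\cap K\subset K$. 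Moreover $z$ centralizes $\mf{t}'$ and $\mf{a}$. Conjugation by the Cayley transform $C_{\alpha_0}$ relates $\mf{h}$ to $\mf{t}$, fixing $\mf{t}'$ and sending $\mf{a}$ to $\C H_{\alpha_0}$. I would like to conclude that $z$ centralizes $\mf{t}$. Concretely, $\Ad(z)$ preserves the $\mf{sl}_2$ spanned by $X_{\pm\alpha_0},H_{\alpha_0}$, because this is the centralizer of $\mf{t}'$ modulo $\mf{t}'$ and $z$ fixes $\mf{t}'$. It also fixes $X_{\alpha_0}+X_{-\alpha_0}$ and commutes with $\theta$ since $z\in K$. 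An automorphism of $\mf{sl}_2$ commuting with $\theta$ and fixing the noncompact element $X_{\alpha_0}+X_{-\alpha_0}$ must be $\pm$-diagonal in the basis $\{H_{\alpha_0},X_{\alpha_0}-X_{-\alpha_0}\}$. So $\Ad(z)H_{\alpha_0}=\pm H_{\alpha_0}$. I expect this sign issue to be the main obstacle. I would try to resolve it by noting that $z$ lies in $K$ and centralizes $T'$, which has rank equal to that of $K$ minus possibly one, and then using that $T$ is a maximal torus of the connected group $K$. If $\Ad(z)H_{\alpha_0}=-H_{\alpha_0}$, then $z$ normalizes $\mf{t}$ and acts by the reflection $w_{\alpha_0}$. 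But $\alpha_0$ is noncompact, so $w_{\alpha_0}$ is not in the Weyl group of $K$ in general. This rules out the minus sign, giving $z\in C_K(\mf{t})=T$.

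For (c), I would use Knapp--Vogan \cite[Proposition 4.38, Lemma 4.37]{knvo}: $H=T'A$ with $A$ connected, so $\pi_0(H)=\pi_0(T')$. The standard description of components of a Cartan subgroup is $T'=(T')^\circ\cdot F$, where $F$ is generated by elements $\gamma_\beta=\exp(\pi i H_\beta)$ for real roots $\beta$ of $\mf{h}$. The only real roots are $\pm$ the Cayley transform of $\alpha_0$, because the real roots are those vanishing on $\mf{t}'$ and $\mf{t}'$ is orthogonal to $H_{\alpha_0}$ only. Hence $F$ has at most two elements, namely $1$ and $\gamma_{\alpha_0}$, and $\pi_0(H)$ has order at most $2$. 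Finally, $Z_{M_0}\subset H$ and $M_0^\circ\supset (T')^\circ$. Therefore $Z_{M_0}/(M_0^\circ\cap Z_{M_0})$ injects into $H/(T')^\circ A=\pi_0(H)$, which gives the bound.
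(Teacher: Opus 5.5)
Most of your argument works, but the step you yourself single out as the main obstacle in (b) is asserted rather than proved. You reduce to the case where $\mathrm{Ad}(z)$ acts on $\mathfrak{t}$ by $w_{\alpha_0}$, and then rule it out because ``$\alpha_0$ is noncompact, so $w_{\alpha_0}$ is not in the Weyl group of $K$ in general.'' Noncompactness only says that $w_{\alpha_0}$ is not one of the generating reflections $s_\gamma$, $\gamma\in\Delta_c$, of $N_K(T)/T=W(\Delta_c)$. It does not by itself exclude $w_{\alpha_0}$ being a product of such reflections, and ``in general'' is not the statement you need.

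The claim is true, but it needs an argument. By Chevalley's lemma, the stabilizer in $W(\Delta_c)$ of a vector $v$ is generated by the $s_\gamma$ with $\gamma\in\Delta_c$ and $\gamma\perp v$. Choose $v\in\alpha_0^\perp$ off every hyperplane $\gamma^\perp$, $\gamma\in\Delta_c$; this is possible since $\Delta$ is reduced and $\pm\alpha_0\notin\Delta_c$. Then that stabilizer is trivial, so $w_{\alpha_0}\notin W(\Delta_c)$.

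More simply, you can drop the $\mathfrak{sl}_2$ sign analysis altogether. The element $z\in K$ centralizes the torus $S=\exp(\mathfrak{t}'\cap\mathfrak{g}_\R)$, and centralizers of tori in the connected compact group $K$ are connected. The complexified Lie algebra of $C_K(S)$ is $C_{\mathfrak{k}}(\mathfrak{t}')=\mathfrak{t}$, because the only roots vanishing on $\mathfrak{t}'$ are $\pm\alpha_0$, which are noncompact. Hence $C_K(S)=T\ni z$.

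A smaller point: deducing $z\in T'$ from $M_0\cap A=1$ silently uses $T'\subset M_0$. This holds because the compact group $M\cap K$ projects trivially to $A$; alternatively, use compactness of $Z_{M_0}$ directly, as the paper does.

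Compared with the paper, your route differs in all three parts. The paper derives (b) and (c) from one explicit computation with $\varphi:SL_2(\R)\to L^{\mathrm{der}}$, obtaining $H=\exp(\mathfrak{t}')\times\varphi(\{\pm1\})\times A$. Then $\#\pi_0(H)\le 2$ is immediate, and $Z_{M_0}\subset T$ follows because the compact group $Z_{M_0}$ has trivial $A$-component. Your (c) instead cites the general theorem $T'=(T')^\circ F$, with $F$ generated by the $\gamma_\beta$ over real roots. That is correct for connected linear $G$ and more conceptual, but it is exactly the fact the paper verifies by hand in this rank-one case. With it, (b) would also have been immediate, since $(T')^\circ\subset T$ and $\gamma_\beta=\varphi(-1)\in\varphi(SO_2)\subset T$.

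For $H\subset L$, the paper writes $\mathfrak{q}$ as a sum of two Borel subalgebras, each normalized by $H$. Your identification $L=C_G(H_\mu)$ with $H_\mu\in\mathfrak{t}'$ is valid for connected $G$ but deserves a line: an $\mathrm{Ad}(g)$ normalizing $\mathfrak{q}$ also normalizes $\overline{\mathfrak{q}}$, hence lies in the Levi $Z(H_\mu)$ of the complex parabolic. Both routes do more than necessary. $H=C_G(\mathfrak{h})$ preserves every $\mathrm{ad}(\mathfrak{h})$-stable subspace of $\mathfrak{g}$, and $\mathfrak{q}\supset\mathfrak{h}$ is one.
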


\begin{proof}
For (a), first note that we have
\[H=C_G(\mf{h})\subset C_G(\mf{a})=C_G(A)=M,\]
proving the first inclusion.

For the second, we remark that by \cite[Proposition 4.38 (d)]{knvo}, we have
\begin{equation}
\label{eqnHequalsintngb}
H=C_G(\mf{h})=N_G(\mf{b}')\cap N_G(\theta\mf{b}'),
\end{equation}
where $\mf{b}'$ is any Borel subalgebra of $\mf{g}$ with Levi $\mf{h}$. Also, if $C_{\alpha_0}$ denotes the Cayley transform as in Section \ref{subsecsetup}, then $C_{\alpha_0}$ acts by the adjoint action via some element in $L_\C$, the complexification of $L$. Thus $C_{\alpha_0}(\mf{b})\subset\mf{q}$. Similarly, letting $w_{\alpha_0}$ denote the simple reflection in the Weyl group of $\mf{t}$ in $\mf{g}$ associated with $\alpha_0$, we also have $C_{\alpha_0}(w_{\alpha_0}(\mf{b}))\subset\mf{q}$. Since $w_{\alpha_0}(\mf{b})+\mf{b}=\mf{q}$, we therefore have
\begin{equation}
\label{eqnmfqequalsbsum}
\mf{q}=C_{\alpha_0}(\mf{b})+C_{\alpha_0}(w_{\alpha_0}(\mf{b})).
\end{equation}
So combining \eqref{eqnHequalsintngb} and \eqref{eqnmfqequalsbsum}, we get
\[H\subset N_G(C_{\alpha_0}(\mf{b}))\cap N_G(C_{\alpha_0}(w_{\alpha_0}(\mf{b})))\subset N_G\Bigl(C_{\alpha_0}(\mf{b})+C_{\alpha_0}(w_{\alpha_0}(\mf{b}))\Bigr)=N_G(\mf{q})=L,\]
as desired.

For (b), we have $\mf{h}\subset\mf{m}_0$ because $\mf{h}$ is abelian and contains $\mf{a}$, and $\mf{m}_0=C_{\mf{g}}(\mf{a})$. Therefore,
\[Z_{M_0}\subset C_G(\mf{m}_0)\subset C_G(\mf{h})=H,\]
by \eqref{eqnHequalsintngb}. This proves the first inclusion, and we will prove the second after we prove (c) below.

For (c), let $\varphi:SL_2(\R)\to L^{\mr{der}}$ be the homomorphism whose differential sends $\sm{1&0\\ 0&-1}$ to $X_{\alpha_0}+X_{-\alpha_0}\in\mf{a}$, and $\sm{1&1\\ 0&1}$ to $C_{\alpha_0}^{-1}(X_{\alpha_0})$. Then $\varphi$ is either an isomorphism or a $2$-to-$1$ map killing the center.

The group $T$ is connected, and is thus equal to $\exp(\mf{t})$. It follows that $\varphi^{-1}(T\cap L^{\mr{der}})$ is the usual $SO_2$ embedded in $SL_2(\R)$ via its standard embedding, because its Lie algebra is $C_{\alpha_0}(\mf{a})=\C H_{\alpha_0}$. Thus it also follows that $\exp(\mf{t}')\times\varphi(SO_2)=T$. Because $T=\exp(\mf{t})$ and $\mf{t}\subset\mf{l}$, we have $T\subset L$, and therefore $T=C_L(\mf{t})$. It follows that $Z_L\subset T$. But clearly the largest subgroup of $T$ which is central in $L$ is $\exp(\mf{t}')\times\varphi(\{\pm 1\})$, and therefore $Z_L=\exp(\mf{t}')\times\varphi(\{\pm 1\})$.

By a similar argument using (a), we have $H=C_L(\mf{h})$, and so $H$ contains $Z_L A$. But since $\varphi^{-1}(A)\cdot\{\pm 1\}$ is the standard diagonal torus in $SL_2(\R)$, it is its own centralizer in $L^{\mr{der}}$. Thus $C_{L^{\mr{der}}}(A)=A\cdot\varphi(\{\pm 1\})$, and it follows that $H=Z_L A=\exp(\mf{t}')\times\varphi(\{\pm 1\})\times A$, which has at most $2$ components. Thus $\#\pi_0(H)\leq 2$.

Now $\exp(\mf{h})\subset M_0^\circ$ since $\mf{h}\subset\mf{m}_0$, and so it follows that $H/(H\cap M_0^\circ)$ has at most $2$ elements. By the first inclusion in (b) we may intersect further with $Z_{M_0}$:
\[Z_{M_0}/(M_0^\circ \cap Z_{M_0})\cong (M_0^\circ Z_{M_0})/M_0^\circ\subset (M_0^\circ H)/M_0^\circ\cong H/(H\cap M_0^\circ),\]
which shows that $Z_{M_0}/(M_0^\circ \cap Z_{M_0})$ has at most $2$ elements as well.

We now finish the proof of (b). Since $Z_{M_0}\subset H$, and $H=\exp(\mf{t}')\times\varphi(\{\pm 1\})\times A$ by the proof of (c) above, we have $Z_{M_0}\subset \exp(\mf{t}')\times\varphi(\{\pm 1\})$; indeed, otherwise $Z_{M_0}$ would have nontrivial projection to $A$ through the product above, and would thus be infinite. But clearly $Z_{M_0}$ has finite center. Thus $Z_{M_0}\subset\exp(\mf{t}')\times\varphi(\{\pm 1\})\subset T$, as desired.
\end{proof}

We now introduce a character of $T$ and a character of $L$, both of which will be denoted by $\lambda$. Let $\rho$ be the half sum of positive roots in $\Delta$, and $\Lambda$ the $\Delta^+$-dominant Harish-Chandra parameter fixed in Section \ref{subsecds}. We write $\lambda$ for the character of $T$ whose differential is $\Lambda-\rho$. Then because $\Lambda$ is a Harish-Chandra parameter, this does indeed give a well defined character of $T$. By the assumption made in \eqref{eqnlanranlamrho}, the weight $\Lambda-\rho$ is orthogonal to $\alpha_0$ and is thus trivial on $\mf{t}\cap\Lie(L^{\mr{der}})_\C$. It therefore defines a character of $L$, which we also denote by $\lambda$ by abuse of notation.

We will \textit{cohomologically induce} $\lambda$ to $G$ in three different ways as follows. We can regard the character $\lambda$ of $L$ as a one-dimensional $(\mf{l},T)$-module; note that $L$ is connected and contains $T$, which is thus a maximal compact subgroup of $L$ contained in $K$, and therefore $L\cap K=T$. Then we can consider the cohomologically induced module
\[A_\mf{q}(\lambda)=\mc{R}_\mf{q}^{S}(\lambda);\]
here, the number $S$ is the number of compact roots in the radical of $\mf{q}$, the right hand side is the definition of the left hand side, and the right hand side itself is defined in \cite[Section V.1]{knvo}. In general, the notation $\mc{R}_\mf{q}^{i}$ represents the $i$th \textit{cohomological induction functor} from $(\mf{l},T)$-modules to $(\mf{g},K)$-modules along the $\theta$-stable parabolic subalgebra $\mf{q}$, so in particular $\mc{R}_\mf{q}^{S}(\lambda)$ is by definition a $(\mf{g},K)$-module. We will try to recall in detail all of the facts about these functors we will need.

Similarly, viewing $\lambda$ as a $(\mf{t},T)$-module, we can define the $(\mf{g},K)$-modules
\[A_\mf{b}(\lambda)=\mc{R}_\mf{b}^{S}(\lambda),\qquad A_{w_{\alpha_0}\mf{b}}(\lambda)=\mc{R}_{w_{\alpha_0}\mf{b}}^{S}(\lambda),\]
where $w_{\alpha_0}$ denotes the simple reflection in the Weyl group of $\Delta(\mf{g},\mf{t})$ associated with $\alpha_0$. Note that $S$ is also the number of compact roots in the radicals of $\mf{b}$ or $w_{\alpha_0}\mf{b}$.

In general, the number of compact roots in the radical of a given $\theta$-stable parabolic subalgebra is what is \textit{a priori} the middle degree for the cohomological induction functors. It turns out that these functors also vanish above this degree; see \cite[Theorem 5.35]{knvo}. For modules with infinitesimal character in the \textit{good range} (see \cite[Definition 0.49]{knvo}), the cohomological induction functors are concentrated in this middle degree as well \cite[Theorem 5.99 (b)]{knvo}. The modules denoted $\lambda$ above have infinitesimal character in this good range, and we will use this vanishing theorem for them below.

The cohomological induction functors also have an understood effect on infinitesimal characters. Since $(\mf{g},K)$ is in the \textit{Harish-Chandra class} (our $G$ being connected semisimple implies this condition, see \cite[Definition 4.29]{knvo} and the example afterward), by \cite[Corollary 5.25 (b)]{knvo} our cohomologically induced modules have infinitesimal characters given by the inducing module plus the half sum of all roots in the radical of the $\theta$-stable parabolic subalgebra. Thus, since the trivial representation of $L^{\mr{der}}$ has infinitesimal character $\frac{1}{2}\alpha_0$, it follows from this that all the modules $A_\mf{q}(\lambda)$, $A_\mf{b}(\lambda)$, and $A_{w_{\alpha_0}\mf{b}}(\lambda)$ have infinitesimal character given by the Weyl orbit of $\Lambda$.

Our goal now is to prove the following theorem.

\begin{theorem}
\label{thmstdmodforind}
Let the notation be as above.
\begin{enumerate}[label=(\alph*)]
\item The $(\mf{g},K)$-modules $A_\mf{q}(\lambda)$, $A_\mf{b}(\lambda)$, and $A_{w_{\alpha_0}\mf{b}}(\lambda)$ are all unitary and irreducible.
\item Let $\pi$ be the discrete series representation of $M$, and $D_+$ and $D_-$ those of $G$, introduced in Section \ref{subsecds}. Let $J$ be the unique irreducible quotient of $\Ind_P^G(\pi,s_0)$, with $s_0$ as in \eqref{eqnsnaught}. Then we have isomorphisms
\[A_\mf{b}(\lambda)\cong D_+,\qquad A_{w_{\alpha_0}\mf{b}}(\lambda)\cong D_-,\qquad A_\mf{q}(\lambda)\cong J.\]
\item We have exact sequences
\begin{equation}
\label{eqnsespluss0}
0\to D_+\oplus D_-\to \Ind_P^G(\pi,s_0)\to J\to 0,
\end{equation}
and
\begin{equation}
\label{eqnsesminuss0}
0\to J \to \Ind_P^G(\pi,-s_0)\to D_+\oplus D_-\to 0.
\end{equation}
\end{enumerate}
\end{theorem}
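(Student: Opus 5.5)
The plan is to derive everything from an identification of the standard module $\Ind_P^G(\pi,-s_0)$ with a cohomologically induced module $\mc{R}_\mf{q}^S(I\otimes\lambda)$. Here $I$ is the nonunitary principal series of $L$ induced from the trivial character of a Borel subgroup of $L$ containing $A$ (the one attached to $-s_0$, i.e.\ with the trivial character as its sub). We then apply the exactness of $\mc{R}_\mf{q}^S$ in the good range to the short exact sequence of $L$-modules
\[0\to\lambda\to I\otimes\lambda\to (D_2\otimes\lambda)\oplus(D_{-2}\otimes\lambda)\to 0,\]
which is the $SL_2$ picture from the first section pulled back through $\varphi$ and twisted by $\lambda$.

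\textbf{Part (a).} Irreducibility and unitarity follow from the standard results in the good range, \cite[Theorem 5.99]{knvo} for vanishing outside degree $S$ and irreducibility, together with the unitarizability theorem for $\mc{R}_\mf{q}^S$ of unitary modules in the good range. We first check that $\lambda$, as an $(\mf{l},T)$- or $(\mf{t},T)$-module, is in the good range for $\mf{q}$, $\mf{b}$ and $w_{\alpha_0}\mf{b}$. For $\mf{b}$ this is the statement that $\Lambda$ is $\Delta^+$-dominant regular. For $w_{\alpha_0}\mf{b}$ it is the fact that $\Lambda-\alpha_0=w_{\alpha_0}\Lambda$ is dominant regular for $w_{\alpha_0}\Delta^+$. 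For $\mf{q}$ the infinitesimal character is $\Lambda$, and $\langle\Lambda,\beta\rangle>0$ for every root $\beta$ of $\mf{u}$, since $\beta\neq\alpha_0$ is positive.

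\textbf{Part (b).} The isomorphisms $A_\mf{b}(\lambda)\cong D_+$ and $A_{w_{\alpha_0}\mf{b}}(\lambda)\cong D_-$ are the standard realization of discrete series by cohomological induction from a Borel with compact Levi: the Harish-Chandra parameters are $\lambda+\rho=\Lambda$, respectively $\lambda+\rho(w_{\alpha_0}\Delta^+)=\Lambda-\alpha_0$. Next we use induction in stages $\mf{b}\subset\mf{q}$ and $w_{\alpha_0}\mf{b}\subset\mf{q}$ (\cite[Corollary 11.86]{knvo}) to write
\[D_\pm\cong\mc{R}_\mf{q}^S(D_{\pm2}\otimes\lambda),\]
since the weight $\pm2$ discrete series of $L$ are $\mc{R}^1$ of $\lambda$ from the Borels $\mf{b}$, $w_{\alpha_0}\mf{b}$ of $\mf{l}$, and the degrees add.

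The key identification is
\[\Ind_P^G(\pi,-s_0)\cong\mc{R}_\mf{q}^S(I\otimes\lambda).\]
The tool is the compatibility of cohomological induction with real parabolic induction, \cite[Theorem 11.225]{knvo}, applied with the $\theta$-stable data $\mf{q}\cap\mf{m}$ on $M$. First, $\mc{R}$ from $\mf{q}\cap\mf{m}_0$ of $\lambda$ gives $\pi$: this is checked via the Harish-Chandra parameter $\Lambda|_{\mf{t}'}$, and the component group of $M_0$ is handled by the $\xi|_{Z_{M_0}}$ twist and Proposition \ref{propaboutgroups}. This also yields Corollary \ref{corpiisirr}. Second, the $\mf{a}$-parameter works out as follows: $\lambda|_\mf{a}$ is trivial, and the $\frac12\alpha_0$ shift becomes $-s_0\cdot2\rho_P$ after the Cayley transform, by the definition \eqref{eqnsnaught}.

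Given the identification, applying the exact functor $\mc{R}_\mf{q}^S$ (all three terms lie in the good range because they share infinitesimal character $\Lambda$, so the other degrees vanish) gives \eqref{eqnsesminuss0} with $A_\mf{q}(\lambda)$ as the sub. Its irreducibility from (a) then identifies it with the Langlands quotient $J$: the Langlands quotient of $\Ind(\pi,s_0)$ is the unique irreducible sub of $\Ind(\pi,-s_0)$, namely the image of the long intertwining operator.

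\textbf{Part (c).} The sequence \eqref{eqnsespluss0} follows by duality. Hermitian or contragredient duality sends $\Ind_P^G(\pi,-s_0)$ to $\Ind_P^G(\pi,s_0)$ (with $\pi$ unitary), and it fixes the unitary modules $J$, $D_\pm$ up to the appropriate identification. Alternatively one can use the companion identification of $\Ind(\pi,s_0)$ with $\mc{R}_\mf{q}^S$ of the opposite principal series of $L$.

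\textbf{Main obstacle.} I expect the hardest step to be the identification $\Ind_P^G(\pi,-s_0)\cong\mc{R}_\mf{q}^S(I\otimes\lambda)$. It requires matching the conventions of the Knapp–Vogan induction-in-stages theorem (sign of $s$, normalization by $\delta_P^{1/2}$, choice of Borel of $L$, and the disconnectedness of $M_0$). If the general theorem is awkward to apply directly, I would fall back on a weaker argument. Both sides are standard modules with the same Langlands parameters, so they have the same $K$-type multiplicities and composition factors, and a nonzero map in one direction can be built, for example, by Frobenius reciprocity at the lowest $K$-type.
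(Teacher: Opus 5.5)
Your plan is essentially the paper's proof. The paper gets $\Ind_P^G(\pi,-s_0)\cong\mc{R}_{\mf{q}}^S(\Ind_{B_L}^L((\lambda|_{T'})\otimes\nu))$ from Vogan's equivalence of the standard modules attached to a $\theta$-stable datum and its cuspidal datum, where the real work is checking that the cuspidal datum's discrete series $\delta$ is $\pi$ (your step realizing $\pi$ by cohomological induction on $M_0$), and then uses good-range exactness, induction in stages for $D_\pm$ and the Hermitian dual as you do, reading off $J\cong A_{\mf{q}}(\lambda)$ as the irreducible quotient in \eqref{eqnsespluss0}. Two small corrections: since $\alpha_0$ is noncompact and $L\cap K=T$, the weight $\pm2$ discrete series of $L$ are $\mc{R}^0$, not $\mc{R}^1$, of $\lambda$ from $\mf{b}\cap\mf{l}$ and $(w_{\alpha_0}\mf{b})\cap\mf{l}$, so the degrees add to $S$ rather than $S+1$, where the functor vanishes; and identifying the sub of $\Ind_P^G(\pi,-s_0)$ with $J$ through the long intertwining operator tacitly uses $\pi^{w_0}\cong\pi$ (Lemma \ref{lemisopiw0pi}), which the Hermitian-dual route avoids.
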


The proof will occupy the rest of this subsection. The main point will be to write the induced representation $\Ind_P^G(\pi,-s_0)$ as a \textit{standard module} in two different ways, and then invoke a theorem in Vogan's book \cite{voganbook} that says these two presentations as standard modules are equivalent. (We remark that this theorem is also, more or less, present in \cite{knvo} where they prove their Theorem 11.216, but the statement of that theorem is made only for Langlands quotients and not the standard modules themselves. We will prefer the presentation in \cite{voganbook} for our purposes.)

These two different kinds of standard modules are attached to two different kinds of data. One such kind of data is called a $\theta$\textit{-stable datum}, defined in \cite[Definition 6.5.1]{voganbook}. The standard module attached to a $\theta$-stable datum is of the form
\[(\textrm{cohomological induction})\circ(\textrm{parabolic induction}),\]
where the representation being induced is determined by this datum.

The other kind of data is a \textit{cuspidal datum}, defined in \cite[Definition 6.6.11]{voganbook}. The standard module attached to a cuspidal datum is of the reverse form,
\[(\textrm{parabolic induction})\circ(\textrm{cohomological induction}).\]

The link between these two different kinds of data is bridged by a third notion of data, called a \textit{character datum}, defined in \cite[Definition 6.6.1]{voganbook}. In fact, there is a bijection between the set of $\theta$-stable data for $G$ and the set of character data for $G$, which is given explicitly in \cite[Proposition 6.6.2]{voganbook}. Then, moreover, there is a surjection from the set of character data for $G$ to the set of cuspidal data for $G$, given explicitly in \cite[Lemma 6.6.12]{voganbook}. Thus, composing these two maps, one obtains a cuspidal datum for $G$ starting from a $\theta$-stable datum for $G$, and the point is that \cite[Theorem 6.6.15]{voganbook} asserts that the standard modules attached to both are isomorphic.

We now trace this line of reasoning in our setting. Consider the quadruple $(\mf{q},H,\lambda|_{T'},\nu)$ with $\mf{q}$, $H$, $\lambda$, and $T'$ defined previously in this subsection, and with $\nu$ the character $A\to\R_{>0}$ whose differential is $-\frac{1}{2}\tp{C}_{\alpha_0}(\alpha_0)$, i.e., it is the inverse of the square root of the modulus character for the standard Borel in $L$ with Levi $H$. Then $(\mf{q},H,\lambda|_{T'},\nu)$ is a $\theta$-stable datum as in \cite[Definition 6.5.1]{voganbook}. In fact, the only thing to check is that condition (c) of that definition, which asserts that $\lambda|_{T'}$ is \textit{fine} with respect to $L$. this condition means that $\lambda|_{L^{\mr{der}}\cap T'}$ is trivial. But this follows because $\langle\Lambda-\rho,\alpha_0\rangle=0$ by \eqref{eqnlanranlamrho}.

We build a standard module associated with $(\mf{q},H,\lambda|_{T'},\nu)$ as in \cite[Definition 6.5.2]{voganbook} as follows. Let $B_L$ be the standard Borel in $L$ with Levi $H$, so that $\nu$ is negative with respect to $B_L$ as required in \textit{loc. cit.} Then $(\lambda|_{T'})\otimes\nu$ is a character of $H$ and we can take the unitary parabolic induction to $L$, and then cohomologically induce to $G$; that is to say, our standard module is
\[X(\mf{q},H,\lambda|_{T'},\nu)=\mc{R}_\mf{q}^S(\Ind_{B_L}^L((\lambda|_{T'})\otimes\nu)).\]
We remark that the parabolic induction that appears above is still normalized to be unitary.

We now use the process of \cite[Proposition 6.6.2]{voganbook} to construct a character datum from $(\mf{q},H,\lambda|_{T'},\nu)$. Let $\mf{u}$ be the radical of $\mf{q}$, and let $\mf{p}=\mf{g}^{\theta=-1}$. Define a character $\Gamma$ of $H$ by
\[\Gamma|_A=\nu\quad\textrm{and}\quad \Gamma|_{T'}=\lambda|_{T'}\otimes\sideset{}{^\mathrm{top}}\bigwedge(\mf{u}\cap\mf{p}),\]
and a weight $\gamma$ of $\mf{h}$ by
\[\gamma|_{\mf{t}'}=(\Lambda-\alpha_0)|_{\mf{t}'}\quad\textrm{and}\quad\gamma|_{\mf{a}}=-\frac{1}{2}\tp{C}_{\alpha_0}(\alpha_0)=d\nu.\]
Then by \cite[Proposition 6.6.2]{voganbook}, the pair $(\Gamma,\gamma)$ is a character datum as in \cite[Definition 6.6.1]{voganbook}.

We pass from this character datum to a cuspidal datum. Consider the triple $(M,\delta,\nu)$ with $\nu$ and $M=M_0A$ as above, and with $\delta$ the discrete series representation of $M_0$ given by the following. Consider the pair $(\Gamma',\gamma')$ given by restriction of $(\Gamma,\gamma)$ to $T'$:
\[\Gamma'=\Gamma|_{T'}\quad\textrm{and}\quad \gamma'=\gamma|_{\mf{t}'}.\]
Then $(\Gamma',\gamma')$ is a character datum for $M_0$, and hence it comes from a cuspidal datum $(\mf{b}\cap\mf{m}_0,T',\lambda',1)$, where the entry $1$ is the trivial character of the trivial group, and $\lambda'$ is the character of $T'$ necessarily given by the formula
\begin{equation}
\label{eqnlambdaprimedef}
\lambda'=\Gamma|_{T'}\otimes\left(\sideset{}{^\mathrm{top}}\bigwedge(\mf{u}'\cap\mf{m}_0\cap\mf{p})\right)^{-1},
\end{equation}
where $\mf{u}'$ is the radical of $\mf{b}$. Letting $S'$ be the number of positive compact roots of $\mf{t}'$ in $\mf{m}_0$, we then define
\[\delta=\mc{R}_{\mf{b}\cap\mf{m}_0}^{S'}(\lambda'),\]
so that $\delta$ is the standard module $X(\mf{b}\cap\mf{m}_0,T',\lambda',1)$. The representation $\delta$ is indeed a discrete series representation of the (possibly disconnected) group $M_0$, as follows from the connected case by \cite[Theorem 11.178]{knvo}.

By \cite[Lemma 6.6.12]{voganbook}, the triple $(M,\delta,\nu)$ is a cuspidal datum as in \cite[Definition 6.6.11]{voganbook}. With this datum in hand, we can construct another standard module as follows. The parabolic subgroup $P$ of $G$ with Levi $M=M_0A$ defined in Section \ref{subsecsetup} is the parabolic defined by $-\nu$, this notion being explained in \cite[Definition 6.6.13]{voganbook}. Thus we can define the standard module
\[X(M,\delta,\nu)=\Ind_P^G(\delta\otimes\nu),\]
the parabolic induction again normalized unitarily. We then have the following, which is an immediate consequence of \cite[Theorem 6.6.15]{voganbook}.

\begin{proposition}
Notation as above, there is an isomorphism
\[X(\mf{q},H,\lambda|_{T'},\nu)\cong X(M,\delta,\nu).\]
That is,
\[\mc{R}_\mf{q}^S(\Ind_{B_L}^L((\lambda|_{T'})\otimes\nu))\cong\Ind_P^G(\mc{R}_{\mf{b}\cap\mf{m}_0}^{S'}(\lambda')\otimes\nu).\]
\end{proposition}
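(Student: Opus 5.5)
The plan is to apply \cite[Theorem 6.6.15]{voganbook} directly. Most of the work lies in checking that the objects constructed above are exactly the ones that theorem compares. That theorem states that if a $\theta$-stable datum is taken, via \cite[Proposition 6.6.2]{voganbook}, to a character datum, and that character datum is then taken, via \cite[Lemma 6.6.12]{voganbook}, to a cuspidal datum, then the two attached standard modules are isomorphic. So I will check three things in order. First, $(\mf{q},H,\lambda|_{T'},\nu)$ satisfies \cite[Definition 6.5.1]{voganbook}, and its standard module is $\mc{R}_\mf{q}^S(\Ind_{B_L}^L((\lambda|_{T'})\otimes\nu))$. Second, $(\Gamma,\gamma)$ is the character datum produced by \cite[Proposition 6.6.2]{voganbook}. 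Third, $(M,\delta,\nu)$ is the cuspidal datum produced from $(\Gamma,\gamma)$ by \cite[Lemma 6.6.12]{voganbook}, and its standard module is $\Ind_P^G(\delta\otimes\nu)$ with $P$ the parabolic attached to $-\nu$.

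For the first check, the only nontrivial condition is fineness, which was verified above using $\langle\Lambda-\rho,\alpha_0\rangle=0$ from \eqref{eqnlanranlamrho}. We also need $\nu$ to be negative for $B_L$, which holds because $d\nu=-\tfrac12\tp{C}_{\alpha_0}(\alpha_0)$. For the second check, I will compare the formulas for $\Gamma$ and $\gamma$ with Vogan's recipe. The expected $\gamma|_{\mf{t}'}$ is the differential of $\lambda$ plus $\rho(\mf{u})$, restricted to $\mf{t}'$. Since $\alpha_0|_{\mf{t}'}=0$ and $\rho(\mf{u})=\rho-\tfrac12\alpha_0$, this equals $(\Lambda-\rho+\rho-\tfrac12\alpha_0)|_{\mf{t}'}=\Lambda|_{\mf{t}'}=(\Lambda-\alpha_0)|_{\mf{t}'}$. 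So the two descriptions agree, and we can use either one.

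For the third check, Lemma 6.6.12 has two inputs. The first is the Levi $M$ attached to $\mf{a}$, which is $C_G(A)=M_0A$ by Section \ref{subsecsetup}. The second is the discrete series of $M_0$ attached to the restricted character datum $(\Gamma',\gamma')$. I will verify that $(\Gamma',\gamma')$ is a character datum for the possibly disconnected group $M_0$. This is where Proposition \ref{propaboutgroups} enters: it gives $Z_{M_0}\subset H$, and $T'=H\cap K$ meets every component of $M_0$. I will then identify the resulting representation with $\mc{R}_{\mf{b}\cap\mf{m}_0}^{S'}(\lambda')$, where $\lambda'$ is as in \eqref{eqnlambdaprimedef}, noting that $\mf{b}\cap\mf{m}_0$ is a $\theta$-stable Borel for $M_0^\circ$ with Levi $\mf{t}'$. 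By \cite[Theorem 11.178]{knvo}, this is a discrete series of $M_0$. Finally, the parabolic in Vogan's construction is the one on which $-\nu$ is positive, namely the one whose $\mf{a}$-roots are positive on $X_{\alpha_0}+X_{-\alpha_0}$. That parabolic is the $P$ of Section \ref{subsecsetup}, up to the sign convention relating $\mf{a}$ and $\tp{C}_{\alpha_0}(\alpha_0)$.

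I expect that last sign matching to be the main obstacle, together with the matching of the $\rho$-shifts (the $\bigwedge^{\mathrm{top}}(\mf{u}\cap\mf{p})$ twists) between Vogan's normalizations and the unitary normalization used here. The concern is that a sign error would swap $P$ with its opposite, or $\nu$ with $\nu^{-1}$. To rule this out, I will compute $\tp{C}_{\alpha_0}(\alpha_0)$ evaluated on $X_{\alpha_0}+X_{-\alpha_0}$ explicitly using the $\mf{sl}_2$-triple normalization \eqref{eqnnormofrootvecs}, and check that it is positive. Once the data are matched, the isomorphism follows immediately from \cite[Theorem 6.6.15]{voganbook}.
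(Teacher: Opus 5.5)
Your plan is the paper's argument. The paper obtains the isomorphism as an immediate consequence of \cite[Theorem 6.6.15]{voganbook}, after tracing $(\mf{q},H,\lambda|_{T'},\nu)$ through \cite[Proposition 6.6.2]{voganbook} and \cite[Lemma 6.6.12]{voganbook}. Your consistency checks on $\gamma|_{\mf{t}'}$ and on the sign of $\nu$ are right: one finds $d\nu(X_{\alpha_0}+X_{-\alpha_0})=-1$, so $P$ is the correct parabolic.

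One correction, which does not affect the argument. It is not true in general that $T'=H\cap K$ meets every component of $M_0$, and Proposition \ref{propaboutgroups} does not assert it. For the Siegel parabolic of $Sp_4(\R)$, $H\cong\C^\times$ is connected while $M_0\cong SL_2(\R)^{\pm}$ has two components; compare the genuinely induced $\pi_\infty'$ in Example \ref{exgl2sp2nminus4}. You do not need this claim: a character datum for $M_0$ involves only its Cartan subgroup $T'=C_{M_0}(\mf{t}')$, and Vogan's lemma itself supplies the cuspidal datum. The component group of $M_0$ is absorbed by the cohomological induction to $(\mf{m}_0,K\cap M_0)$.
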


Now, to return to the proof of Theorem \ref{thmstdmodforind}, the first thing we will do is identify the standard module $X(M,\delta,\nu)$ above with the induced representation $\Ind_P^G(\pi,-s_0)$. Afterwards, we will identify the constituents of the other standard module $\mc{R}_\mf{q}^S(\Ind_{B_L}^L((\lambda|_{T'})\otimes\nu))$, which will then suffice to prove Theorem \ref{thmstdmodforind} except for the exact sequence \eqref{eqnsespluss0} in part (c) of this theorem. We will then use an argument involving Hermitian duals and the other exact sequence \eqref{eqnsesminuss0} to complete the proof entirely.

Thus, we begin with the following.

\begin{proposition}
There is an isomorphism $\delta\cong\pi$.
\end{proposition}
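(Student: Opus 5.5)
We need to show $\delta=\mc{R}^{S'}_{\mf{b}\cap\mf{m}_0}(\lambda')\cong\pi=\Ind_{M_0^\circ Z_{M_0}}^{M_0}(\pi_0\otimes\xi|_{Z_{M_0}})$. The plan is to compare the two representations in three steps: their Harish-Chandra parameters on $M_0^\circ$, their central characters on $Z_{M_0}$, and then the way each is built up from $M_0^\circ Z_{M_0}$ to $M_0$.

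\textbf{Step 1 (restriction to $M_0^\circ$).} By the good-range vanishing \cite[Theorem 5.99]{knvo} and the identification of $\mc{R}^{S'}_{\mf{b}\cap\mf{m}_0}$ of a character with a discrete series, the restriction of $\delta$ to the connected group $M_0^\circ$ has discrete series constituents. Their Harish-Chandra parameter is the inducing character plus $\rho(\mf{u}'\cap\mf{m}_0)$. We compute $d\lambda'$ from \eqref{eqnlambdaprimedef}: $d\Gamma|_{\mf{t}'}=(\Lambda-\rho)|_{\mf{t}'}+2\rho(\mf{u}\cap\mf{p})|_{\mf{t}'}$, and we subtract $2\rho(\mf{u}'\cap\mf{m}_0\cap\mf{p})$. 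Since $\mf{u}\cap\mf{m}_0$ and $\mf{u}'\cap\mf{m}_0$ agree up to $\mf{a}$-weight considerations, the noncompact contributions should combine so that the resulting parameter is $\Lambda|_{\mf{t}'}$. Equivalently, we can check that $\gamma'=(\Lambda-\alpha_0)|_{\mf{t}'}=\Lambda|_{\mf{t}'}$ is the infinitesimal character, using $\alpha_0|_{\mf{t}'}=0$. One needs dominance of $\Lambda|_{\mf{t}'}$ for $\mf{b}\cap\mf{m}_0$, which is automatic because the positive roots of $\mf{m}_0$ are restrictions of positive roots orthogonal to $\alpha_0$ \cite[Proposition 4.1]{blank}. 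Hence every $M_0^\circ$-constituent of $\delta$ is $\pi_0$, or a $Z_{M_0}$-conjugate of it; conjugation by $Z_{M_0}$ is trivial on $M_0^\circ$, so all constituents are $\pi_0$.

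\textbf{Step 2 (central character).} By Proposition \ref{propaboutgroups}, $Z_{M_0}\subset T'\subset T$ and $Z_{M_0}/(M_0^\circ\cap Z_{M_0})$ has order at most $2$. The group $Z_{M_0}$ acts on $\delta$ by a scalar determined by the lowest $T'$-type of $\delta$, which is the Blattner-type character $\lambda'\otimes\bigwedge^{\mr{top}}(\mf{u}'\cap\mf{m}_0\cap\mf{p})=\Gamma|_{T'}$. So we must check that $\Gamma|_{Z_{M_0}}=\xi|_{Z_{M_0}}$, i.e. that $\lambda\otimes\bigwedge^{\mr{top}}(\mf{u}\cap\mf{p})$ has differential $\Lambda-\rho+2\rho_n-\alpha_0$ (since $\alpha_0$ is excluded from $\mf{u}$). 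This equals $\Lambda-\rho_c+\rho_n-\alpha_0$, which agrees with $\xi$ on $Z_{M_0}$ because $\alpha_0$, resp. $2\alpha_0$, is trivial there by the argument already given in Section \ref{subsecds}. The one delicate point is that $\alpha_0$ itself, and not only $2\alpha_0$, must be trivial on $Z_{M_0}\subset\exp(\mf{t}')\times\varphi(\{\pm1\})$. This holds because $\alpha_0(\varphi(-1))=e^{\pi i\cdot 2}=1$, $\varphi(-1)$ being $\exp(\pi i H_{\alpha_0}/\cdot)$ appropriately normalized. Thus $\widetilde\pi_0$ occurs in $\delta|_{M_0^\circ Z_{M_0}}$.

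\textbf{Step 3 (induction to $M_0$).} Cohomological induction commutes with induction from the finite-index subgroup $M_0^\circ Z_{M_0}$, in the sense that $\mc{R}^{S'}$ for $(\mf{m}_0,M_0\cap K)$ equals ordinary induction of $\mc{R}^{S'}$ for $(\mf{m}_0,(M_0^\circ Z_{M_0})\cap K)$, provided $T'$ meets every component of $M_0$ and $M_0=M_0^\circ Z_{M_0}T'$. This is the mechanism in \cite[Theorem 11.178]{knvo}. Using this, $\delta\cong\Ind_{M_0^\circ Z_{M_0}}^{M_0}(\widetilde\pi_0)=\pi$, which by Steps 1 and 2 is the same data. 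I expect this step, namely the component-group bookkeeping that $M_0=M_0^\circ Z_{M_0}T'$ together with the compatibility of $\mc{R}$ with finite induction, to be the main obstacle. Once it is established, irreducibility of $\pi$ (Corollary \ref{corpiisirr}) follows as a by-product from the irreducibility of $\delta$.
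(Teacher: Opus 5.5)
There is a genuine gap in Step 3, and the same misconception also affects the end of Step 1. Your overall architecture matches the paper's: identify the module on the identity component, match central characters on $Z_{M_0}$, then pass from $M_0^\circ Z_{M_0}$ to $M_0$ by induction in stages. Step 2 is correct, including your check that $\alpha_0$ itself, and not only $2\alpha_0$, is trivial on $\varphi(-1)$; the paper leaves this implicit.

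The problem is the proviso in Step 3. You require that $T'$ meet every component of $M_0$, i.e.\ $M_0=M_0^\circ Z_{M_0}T'$, and you single out proving this as the main obstacle. It is false in general. For $\mc{P}_2\subset Sp_{2n}$ (Example \ref{exgl2sp2nminus4}) one has $M_0\cong SL_2(\R)^{\pm}\times Sp_{2n-4}(\R)$, while $Z_{M_0}$ and $T'$ both lie in $M_0^\circ$. More fundamentally, the intermediate functor into $(\mf{m}_0,(K\cap M_0^\circ)Z_{M_0})$-modules only makes sense when $T'\subset M_0^\circ Z_{M_0}$. So your proviso would force $M_0=M_0^\circ Z_{M_0}$ and make the induction defining $\pi$ vacuous. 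The extra components of $M_0$ are represented by elements of $N_{K\cap M_0}(\mf{t}')$ acting nontrivially on $\mf{t}'$, never by $T'=C_{M_0}(\mf{t}')$.

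This also breaks the last sentence of Step 1. In that example $\pi|_{M_0^\circ}$ contains both the holomorphic and the antiholomorphic discrete series of the $SL_2(\R)$-factor. So if $\delta\cong\pi$, then $\delta|_{M_0^\circ}$ is not a sum of copies of $\pi_0$, and Steps 1 and 3 as written contradict each other.

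The repair is to carry out Steps 1 and 2 for the intermediate module $\delta_1=(\mc{R}_{\mf{t}',T'}^{\mf{m}_0,(K\cap M_0^\circ)Z_{M_0}})^{S'}(\lambda')$ rather than for $\delta$. Show that $\delta_1|_{M_0^\circ}$ is the discrete series with Harish-Chandra parameter $\Lambda|_{\mf{t}'}$, and that $Z_{M_0}$ acts by $\lambda'|_{Z_{M_0}}=\Gamma|_{Z_{M_0}}=\xi|_{Z_{M_0}}$; then $\delta_1\cong\widetilde{\pi}_0$. Now apply \cite[Formula (5.8)]{knvo}, which gives $\delta\cong\Ind_{M_0^\circ Z_{M_0}}^{M_0}(\delta_1)=\pi$.

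That formula is plain induction in stages for the compact groups, and its only hypothesis is $T'\subset (K\cap M_0^\circ)Z_{M_0}$. This holds: $T'=\exp(\mf{t}')\varphi(\{\pm1\})$, and $\varphi(-1)=\exp(\pi i(X_{\alpha_0}+X_{-\alpha_0}))$ (computed in the complexification) acts trivially on $\mf{m}_0=C_{\mf{g}}(\mf{a})$ and commutes with $C_K(\mf{a})$. Hence $\varphi(-1)$ is central in $M_0$. This is exactly the paper's proof, except that the paper identifies $\delta_1$ through its lowest $K$-type via \cite[Proposition 11.192]{knvo} rather than through its Harish-Chandra parameter.

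Separately, your parameter computation in Step 1 stops at ``should combine'', and the missing point is subtle. Roots $\beta\perp\alpha_0$ with $\beta\pm\alpha_0\in\Delta$ change compactness under the Cayley transform. So the identity you need, $(\rho_n-\rho_c)|_{\mf{t}'}=\rho_n^{\mf{m}_0}-\rho_c^{\mf{m}_0}$ (half-sums for $\mf{t}'$ in $\mf{m}_0$), does not hold root by root. This is the point the paper delegates to \cite[Lemma 5.3.29]{voganbook}. Appealing to $\gamma'$ instead only helps if you invoke Vogan's Proposition 6.6.2 and Lemma 6.6.12, since computing the infinitesimal character of $\mc{R}(\lambda')$ directly is the same calculation.
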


\begin{proof}
We recall that $\pi$ is obtained in three steps as follows. First, we take $\pi_0$ to be the discrete series representation of $M_0^\circ$ with Harish-Chandra parameter $\Lambda|_{\mf{t}'}$. Then we take $\widetilde{\pi}_0=\pi_0\otimes(\xi|_{Z_{M_0}})$ as a representation of $M_0^\circ Z_{M_0}$, where $\xi$ is the character of $T$ whose differential is the Blattner parameter $\Lambda-\rho_c+\rho_n$. Finally, we take $\pi=\Ind_{M_0^\circ Z_{M_0}}^{M_0}(\widetilde{\pi}_0)$.

We may alternatively describe the representation $\widetilde{\pi}_0$ as the representation of $M_0^\circ Z_{M_0}$ whose restriction to $M_0^\circ$ is $\pi_0$ and whose central character is $\xi|_{Z_{M_0}}$. Note that the character $\Gamma$ that occurs in our $\theta$-stable datum is exactly $\xi|_{T'}$ by definition. Thus the character $\lambda'$ of \eqref{eqnlambdaprimedef} has $\lambda'|_{Z_{M_0}}=\xi|_{Z_{M_0}}$, since the restriction to $Z_{M_0}$ of the character $\bigwedge^\mathrm{top}(\mf{u}'\cap\mf{m}_0\cap\mf{p})$ is trivial as $Z_{M_0}$ centralizes $\mf{m}_0$. (We also note that the restriction $\lambda'|_{Z_{M_0}}$ makes sense in the first place since Proposition \ref{propaboutgroups} (b) implies $Z_{M_0}\subset T'$.) To avoid confusion, let us write $(\mc{R}_{\mf{t}',T'}^{\mf{m}_0,(K\cap M_0^\circ)Z_{M_0}})^{S'}$ for the cohomological induction functor from $(\mf{t}',T')$-modules to $(\mf{m}_0,(K\cap M_0^\circ)Z_{M_0})$-modules, and $(\mc{R}_{\mf{t}',T'}^{\mf{m}_0,K\cap M_0})^{S'}$ for that from $(\mf{t}',T')$-modules to $(\mf{m}_0,K\cap M_0)$-modules. Then \cite[Proposition 11.192]{knvo} tells us that $(\mc{R}_{\mf{t}',T'}^{\mf{m}_0,(K\cap M_0^\circ)Z_{M_0}})^{S'}(\lambda')$ is the discrete series representation of $M_0^\circ Z_{M_0}$ with lowest $K$-type with a highest weight, call it $\mu$, such that
\[\mu|_{(T')^\circ}=(\lambda'|_{(T')^\circ})\cdot\sideset{}{^\mathrm{top}}\bigwedge(\mf{u}'\cap\mf{m}_0\cap\mf{p}),\]
and such that $\mu|_{Z_{M_0}}=\xi|_{Z_{M_0}}$. But by \cite[Lemma 5.3.29]{voganbook}, we have
\[\sideset{}{^\mathrm{top}}\bigwedge(\mf{u}'\cap\mf{m}_0\cap\mf{p})|_{(T')^\circ}=\sideset{}{^\mathrm{top}}\bigwedge(\mf{u}'\cap\mf{p})|_{(T')^\circ}.\]
Thus the differential of $\mu$ is the Blattner parameter of $\widetilde{\pi}_0$, and $\mu|_{Z_{M_0}}$ is the central character of $\widetilde{\pi}_0$, hence $(\mc{R}_{\mf{t}',T'}^{\mf{m}_0,(K\cap M_0^\circ)Z_{M_0}})^{S'}(\lambda')$ is exactly $\widetilde{\pi}_0$.

Finally, \cite[Formula (5.8)]{knvo} then tells us that
\[(\mc{R}_{\mf{t}',T'}^{\mf{m}_0,K\cap M_0})^{S'}(\lambda')=\Ind_{M_0^\circ Z_{M_0}}^{M_0}((\mc{R}_{\mf{t}',T'}^{\mf{m}_0,(K\cap M_0^\circ)Z_{M_0}})^{S'}(\lambda')).\]
The induction on the left hand side is, by above, isomorphic to $\Ind_{M_0^\circ Z_{M_0}}^{M_0}(\widetilde{\pi}_0)$, which is exactly the definition of $\pi$, and this proves the proposition.
\end{proof}

\begin{corollary}
We have an isomorphism
\[X(M,\delta,\nu)\cong\Ind_P^G(\pi,-s_0).\]
\end{corollary}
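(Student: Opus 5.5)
By definition $X(M,\delta,\nu)=\Ind_P^G(\delta\otimes\nu)$, with unitary normalization and with $P$ the parabolic determined by $-\nu$. The preceding proposition gives $\delta\cong\pi$. It therefore remains to identify the character $\nu$ of $A$ with the twist $\delta_P^{-s_0}$ appearing in the definition of $\Ind_P^G(\pi,-s_0)$. Recall that this induced representation has transformation law $\delta_P^{-s_0+1/2}$. The factor $\delta_P^{1/2}$ is exactly the unitary normalization, so we must show that $\nu=\delta_P^{-s_0}|_A$. Both sides are positive characters of the connected one-dimensional group $A=\exp(\R(X_{\alpha_0}+X_{-\alpha_0}))$. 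Hence it suffices to compare their differentials on the single element $Y=X_{\alpha_0}+X_{-\alpha_0}$.

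The differential of $\delta_P$ on $\mf{a}$ is $2\rho_P$, so that of $\delta_P^{-s_0}$ is $-2s_0\rho_P$. By \eqref{eqnsnaught} this evaluates to $-1$ on $Y$. The differential of $\nu$ is $-\frac{1}{2}\tp{C}_{\alpha_0}(\alpha_0)$, whose value on $Y$ is $-\frac{1}{2}\alpha_0(C_{\alpha_0}(Y))$. The key computation is therefore that $C_{\alpha_0}(Y)=\pm H_{\alpha_0}$ exactly, so that $\alpha_0(C_{\alpha_0}(Y))=\pm 2$. I would check this inside the $\mf{sl}_2$-triple $(H_{\alpha_0},X_{\alpha_0},X_{-\alpha_0})$. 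Using the normalization \eqref{eqnnormofrootvecs}, the Cayley transform $\Ad\exp(\frac{\pi}{4}(X_{\alpha_0}-X_{-\alpha_0}))$ acts as the standard rotation by $\pi/2$ in $SL_2$. That rotation conjugates $\sm{0&1\\1&0}$ to $\pm\sm{1&0\\0&-1}$, as one sees from a direct $2\times 2$ matrix computation. Equivalently, $\varphi$ in the proof of Proposition \ref{propaboutgroups} sends $\sm{1&0\\0&-1}$ to $Y$.

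The sign is the main subtlety. It must be fixed so that $d\nu(Y)=-1$, which says that $\nu$ is negative with respect to $B_L$, the standard Borel of $L$ with Levi $H$. I would handle this using the fact that $P$ is the parabolic defined by $-\nu$, meaning that $-d\nu$ is positive on the roots of $\mf{a}$ in $\mf{n}$. Since $\mf{n}$ was defined via the roots positive on $Y$, we get $-d\nu(Y)>0$. Combined with $|d\nu(Y)|=1$, this forces $d\nu(Y)=-1$.

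Consequently $d\nu=-2s_0\rho_P$ on $\mf{a}$, and exponentiating gives $\nu=\delta_P^{-s_0}$ on $A$. Combining this with $\delta\cong\pi$ yields
\[X(M,\delta,\nu)=\Ind_P^G(\delta\otimes\nu)\cong\Ind_P^G(\pi\otimes\delta_P^{-s_0})=\Ind_P^G(\pi,-s_0).\]
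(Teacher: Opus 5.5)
Your proof is correct and follows the paper's argument: after $\delta\cong\pi$, you reduce to $\nu=\delta_P^{-s_0}|_A$ and check that both differentials send $X_{\alpha_0}+X_{-\alpha_0}$ to $-1$, which the paper simply asserts and you justify via the Cayley transform. You do not need to fix the sign by appealing to $P$ being the parabolic defined by $-\nu$ (that fact itself rests on this sign): the $\mf{sl}_2$-triple computation gives $C_{\alpha_0}(X_{\alpha_0}+X_{-\alpha_0})=H_{\alpha_0}$ exactly, so $d\nu(X_{\alpha_0}+X_{-\alpha_0})=-\tfrac{1}{2}\alpha_0(H_{\alpha_0})=-1$ directly.
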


\begin{proof}
By the proposition it suffices to prove that $\delta_P^{-s_0}|_{A}=\nu$. But by definition of $s_0$ in \eqref{eqnsnaught}, the character $\delta_P^{-s_0}$ differentiates to the one taking $(X_{\alpha_0}+X_{-\alpha_0})$ to $-1$; the character $\nu$ does the same.
\end{proof}

We now begin to describe the other standard module $X(\mf{q},H,\lambda|_{T'},\nu)$. Recall that the derived group $L^{\mr{der}}$ of $L$ is either $SL_2(\R)$ or $PSL_2(\R)$. If $B_2$ denotes the standard upper triangular Borel of $SL_2(\R)$, then the principal series $\Ind_{B_2}^{SL_2(\R)}(\delta_{B_2}^{-1/2})$ decomposes with quotient representation the direct sum of the weight $2$ and weight $-2$ discrete series, and unique irreducible subrepresentation given by the trivial representation. We also recall that $\lambda$ has trivial restriction to $L^{\mr{der}}\cap T$ because of \eqref{eqnlanranlamrho}. So let $\delta_{\pm 2}(\lambda)$ denote the discrete series representation of $L$ with Harish-Chandra parameter $\lambda\pm\frac{1}{2}\alpha_0$. Then $\delta_{\pm 2}(\lambda)|_{L^{\mr{der}}}$ is the weight $\pm 2$ discrete series representation of $L^{\mr{der}}$. For the moment, let us write $\lambda_L$ for the character $\lambda$ viewed as a character of $L$, to distinguish it from the character $\lambda$ of $T$. Then it follows that the induced representation $\Ind_{B_L}^L((\lambda|_{T'})\otimes\nu)$ sits in an exact sequence
\[0\to\lambda_L\to\Ind_{B_L}^L((\lambda|_{T'})\otimes\nu)\to\delta_2(\lambda)\oplus\delta_{-2}(\lambda)\to 0.\]

By the vanishing theorem \cite[Theorem 5.99 (b)]{knvo} and the nature of cohomological induction functors as derived functors, cohomological induction is exact in the good range. It follows that we have an exact sequence
\begin{equation}
\label{eqnexseqforstd}
0\to A_\mf{q}(\lambda)\to X(\mf{q},H,\lambda|_{T'},\nu)\to\mc{R}_\mf{q}^S(\delta_2(\lambda))\oplus\mc{R}_\mf{q}^S(\delta_{-2}(\lambda))\to 0
\end{equation}
Each of the constituents $A_\mf{q}(\lambda)$, $\mc{R}_\mf{q}^S(\delta_{2}(\lambda))$, and $\mc{R}_\mf{q}^S(\delta_{-2}(\lambda))$ is irreducible by the irreducibility theorem \cite[Theorem 8.2]{knvo}, and thus the isomorphisms
\[X(\mf{q},H,\lambda|_{T'},\nu)\cong X(M,\delta,\nu)\cong\Ind_P^G(\pi,-s_0)\]
already established show that $A_{\mf{q}}(\lambda)$ is an irreducible subrepresentation of $\Ind_P^G(\pi,-s_0)$. Strictly speaking, we haven't shown that $\pi$ itself is irreducible yet. We will deduce this below from the following.

\begin{proposition}
\label{propisosbetweends}
We have isomorphisms
\[\mc{R}_\mf{q}^S(\delta_{2}(\lambda))\cong A_\mf{b}(\lambda)\cong D_+,\quad\textrm{and}\quad \mc{R}_\mf{q}^S(\delta_{-2}(\lambda))\cong A_{w_{\alpha_0}\mf{b}}(\lambda)\cong D_-.\]
\end{proposition}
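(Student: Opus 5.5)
The plan is to prove the two chains of isomorphisms separately: first realize the discrete series $\delta_{\pm 2}(\lambda)$ of $L$ as cohomologically induced from $T$, then use induction in stages, and finally identify the result with a discrete series of $G$ by its Harish-Chandra parameter.

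\textbf{Step 1: realize $\delta_{\pm2}(\lambda)$ inside $L$.} Let $\mf{b}_L=\mf{b}\cap\mf{l}=\mf{t}\oplus\C X_{\alpha_0}$ and $w_{\alpha_0}\mf{b}_L=\mf{t}\oplus\C X_{-\alpha_0}$; these are the two $\theta$-stable Borel subalgebras of $\mf{l}$ containing $\mf{t}$. Since $\alpha_0$ is noncompact, $T$ is a compact Cartan subgroup of $L$ and $\mf{b}_L$ has no compact roots in its radical. The Borel--de Siebenthal/Schmid realization of discrete series as $A_{\mf{b}}$-modules (\cite[Theorem 11.178]{knvo}, or \cite[Proposition 11.192]{knvo} applied to $L$) then gives $\mc{R}^{0}_{\mf{b}_L}(\lambda)$ as the discrete series of $L$ with Harish-Chandra parameter $\lambda+\tfrac12\alpha_0$. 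That parameter is $\delta_2(\lambda)$, provided the sign conventions put the holomorphic one on the side of $\mf{b}_L$; if not, I would simply swap the labels. Likewise $\mc{R}^0_{w_{\alpha_0}\mf{b}_L}(\lambda)\cong\delta_{-2}(\lambda)$. A check I would do is that the lowest $T$-type of $\mc{R}^0_{\mf{b}_L}(\lambda)$ is $\lambda+\alpha_0$, i.e. weight $2$ on $L^{\mr{der}}$, and that the central characters on $Z_L=\exp(\mf{t}')\varphi(\{\pm1\})$ agree because $\alpha_0$ is trivial on $\varphi(-1)$.

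\textbf{Step 2: induction in stages.} Since $\mf{b}\subset\mf{q}$ and $\mf{u}_{\mf{b}}=(\mf{u}_{\mf{b}}\cap\mf{l})\oplus\mf{u}_{\mf{q}}$, apply the induction-in-stages spectral sequence \cite[Corollary 11.86]{knvo}:
\[\mc{R}^p_{\mf{q}}\circ\mc{R}^{r}_{\mf{b}_L}\Rightarrow\mc{R}^{p+r}_{\mf{b}}.\]
The character $\lambda$ is in the good range for $\mf{b}$, because $\Lambda$ is $\Delta^+$-dominant regular. Hence $\delta_2(\lambda)$, which has infinitesimal character $\Lambda$, is in the good range for $\mf{q}$. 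So everything vanishes outside the middle degrees ($0$ for $\mf{b}_L$, $S$ for $\mf{q}$) by \cite[Theorem 5.99]{knvo}. The spectral sequence therefore degenerates and gives
\[\mc{R}^S_{\mf{q}}(\delta_2(\lambda))\cong\mc{R}^S_{\mf{b}}(\lambda)=A_{\mf{b}}(\lambda).\]
The same argument with $w_{\alpha_0}\mf{b}\subset\mf{q}$ gives $\mc{R}^S_\mf{q}(\delta_{-2}(\lambda))\cong A_{w_{\alpha_0}\mf{b}}(\lambda)$. The number of compact roots is unchanged because $\alpha_0$ is noncompact.

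\textbf{Step 3: identify with $D_\pm$.} Apply \cite[Theorem 11.178 / Proposition 11.192]{knvo} for $G$. The module $\mc{R}^S_\mf{b}(\lambda)$ is the discrete series with Harish-Chandra parameter $\lambda+\rho=\Lambda$, which is $D_+$. The module $\mc{R}^S_{w_{\alpha_0}\mf{b}}(\lambda)$ is the discrete series with parameter $\lambda+\rho_{w_{\alpha_0}\Delta^+}$. Since $\rho_{w_{\alpha_0}\Delta^+}=w_{\alpha_0}\rho=\rho-\alpha_0$, this parameter is $\Lambda-\alpha_0$, so it is $D_-$; this is consistent with $\Lambda-\alpha_0=w_{\alpha_0}\Lambda$ computed in Section \ref{subsecds}.

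\textbf{Main obstacle.} I expect the delicate step to be Step 1 and the bookkeeping conventions: the normalization of $\mc{R}$ (the $\bigwedge^{\mathrm{top}}\mf{u}$ twist), and the correct $\pm$ matching of $\delta_{\pm2}$ with $\mf{b}_L$ versus $w_{\alpha_0}\mf{b}_L$. I would resolve both by comparing lowest $K$-types, namely Blattner parameters $\Lambda-\rho_c+\rho_n$ versus $\Lambda-\rho_c+\rho_n-2\alpha_0$, using \cite[Theorem 5.80]{knvo} on the bottom layer.
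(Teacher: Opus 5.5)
Your proposal is correct and is essentially the paper's proof. It realizes $\delta_{\pm2}(\lambda)$ as $\mc{R}^0_{\mf{b}\cap\mf{l}}(\lambda)$ and $\mc{R}^0_{(w_{\alpha_0}\mf{b})\cap\mf{l}}(\lambda)$, collapses the induction-in-stages spectral sequence via the good-range vanishing theorem \cite[Theorem 5.99]{knvo}, and identifies $A_\mf{b}(\lambda)$ and $A_{w_{\alpha_0}\mf{b}}(\lambda)$ with $D_\pm$ via \cite[Theorem 11.178, Proposition 11.192]{knvo}. The differences are cosmetic: you match Harish-Chandra parameters ($\Lambda$ and $\lambda+w_{\alpha_0}\rho=\Lambda-\alpha_0$) where the paper matches Blattner parameters, and the sign hedging in your Step 1 is unnecessary because $\delta_{\pm2}(\lambda)$ is defined by its Harish-Chandra parameter $\lambda\pm\frac{1}{2}\alpha_0$.
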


\begin{proof}
By \cite[Theorem 11.178 and Proposition 11.192]{knvo}, the representation $A_\mf{b}(\lambda)$ is the discrete series representation with Blattner parameter $\lambda+2\rho_n$, which is exactly $D_+$, and similarly $A_{w_{\alpha_0}\mf{b}}(\lambda)\cong D_-$. Thus we must show
\[\mc{R}_\mf{q}^S(\delta_{2}(\lambda))\cong A_\mf{b}(\lambda)\quad\textrm{and}\quad \mc{R}_\mf{q}^S(\delta_{-2}(\lambda))\cong A_{w_{\alpha_0}\mf{b}}(\lambda).\]

By \textit{loc. cit.}, we also have
\[\delta_2(\lambda)\cong\mc{R}_{\mf{b}\cap\mf{l}}^{0}(\lambda)\quad\textrm{and}\quad \delta_{-2}(\lambda)\cong\mc{R}_{(w_{\alpha_0}\mf{b})\cap\mf{l}}^{0}(\lambda),\]
where the cohomological induction functors above are from $(\mf{t},T)$-modules to $(\mf{l},T)$-modules. Now there is a spectral sequence \cite[Theorem 11.77]{knvo} which computes the composition of two cohomological induction functors. In the good range, the vanishing theorem \cite[Theorem 5.99 (b)]{knvo} implies that it collapses immediately, and applying this in our case, we get
\[\mc{R}_\mf{q}^S(\mc{R}_{\mf{b}\cap\mf{l}}^{0}(\lambda))\cong A_\mf{b}(\lambda)\quad\textrm{and}\quad\mc{R}_\mf{q}^S(\mc{R}_{(w_{\alpha_0}\mf{b})\cap\mf{l}}^{0}(\lambda))\cong A_{w_{\alpha_0}\mf{b}}(\lambda).\]
This implies the proposition.
\end{proof}

\begin{corollary}
\label{corpiisirr}
The representation $\pi$ (equivalently, $\delta$) is irreducible.
\end{corollary}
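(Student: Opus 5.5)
We show irreducibility of $\delta$ by pinning down the composition series of $\Ind_P^G(\delta\otimes\nu)$ and comparing it with the two standard-module descriptions. By the corollary above, $X(M,\delta,\nu)\cong\Ind_P^G(\pi,-s_0)$. By \eqref{eqnexseqforstd} and Proposition \ref{propisosbetweends}, this representation has a composition series with exactly three irreducible constituents, $A_\mf{q}(\lambda)$, $D_+$ and $D_-$. The two discrete series $D_\pm$ are inequivalent, since their Harish-Chandra parameters $\Lambda$ and $\Lambda-\alpha_0$ are distinct dominant parameters for different positive systems. In any case there are exactly three constituents, each with multiplicity one.

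Now suppose $\delta$ is reducible. It is a finite length, unitary (hence semisimple) representation of $M_0$, being induced from the irreducible unitary representation $\widetilde{\pi}_0$ of the finite-index subgroup $M_0^\circ Z_{M_0}$. Since that index is at most $2$ by Proposition \ref{propaboutgroups}(c), reducibility would give $\delta=\delta_1\oplus\delta_2$ with both $\delta_i$ nonzero. Parabolic induction is exact, so
\[\Ind_P^G(\delta\otimes\nu)=\Ind_P^G(\delta_1\otimes\nu)\oplus\Ind_P^G(\delta_2\otimes\nu).\]
Each summand $\Ind_P^G(\delta_i\otimes\nu)$ is a standard module in the Langlands sense: $\delta_i$ is discrete series of $M_0$ and $\nu$ is strictly negative for $P$. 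Each therefore has a unique irreducible subrepresentation, its Langlands submodule. This is nonzero, since a standard module is nonzero and has finite length.

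Consequently $\Ind_P^G(\pi,-s_0)$ would have a socle containing at least two irreducible subrepresentations. On the other hand, $\Ind_P^G(\pi,-s_0)\cong X(\mf{q},H,\lambda|_{T'},\nu)$, and \eqref{eqnexseqforstd} presents it as an extension of $D_+\oplus D_-$ by $A_\mf{q}(\lambda)$. So the socle can only contain $A_\mf{q}(\lambda)$ together with whichever of $D_\pm$ split off. The contradiction should come from showing that neither $D_\pm$ splits. For this I would use the characterization in Vogan's book that a standard module attached to a $\theta$-stable datum with $\nu$ negative has a unique irreducible submodule. This is the dual of the unique Langlands quotient statement in \cite[Theorem 6.5.12]{voganbook}, obtained by passing to Hermitian/contragredient duals. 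It gives the socle of $X(\mf{q},H,\lambda|_{T'},\nu)$ as exactly $A_\mf{q}(\lambda)$, which contradicts the existence of two independent irreducible submodules. Hence $\delta\cong\pi$ is irreducible.

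The main obstacle is exactly this uniqueness of the irreducible submodule. If the precise citation is awkward, I would argue instead by counting multiplicities. From $\Ind_P^G(\delta_1\otimes\nu)\oplus\Ind_P^G(\delta_2\otimes\nu)$, the three constituents would have to be distributed so that one summand is irreducible, for example $\Ind_P^G(\delta_1\otimes\nu)=D_+$. I would then rule this out using lowest $K$-types or infinitesimal characters: $\delta_1$ and $\delta_2$ differ by twisting with the sign character of $M_0/M_0^\circ Z_{M_0}$, and this twist would force the two induced pieces to have the same number of constituents. Three is odd, so this is impossible.
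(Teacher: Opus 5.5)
\textbf{There is a genuine gap in how you reach the contradiction.} Your setup is fine: the three multiplicity-one constituents, the splitting $\delta=\delta_1\oplus\delta_2\oplus\cdots$ into irreducibles by unitarity and finite length, and the two Langlands submodules $J_i\subset\Ind_P^G(\delta_i\otimes\nu)$. The problem is the next step.

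\emph{Your main route.} You need the socle of $X(\mf{q},H,\lambda|_{T'},\nu)$ to be exactly $A_{\mf q}(\lambda)$, i.e.\ that neither $D_+$ nor $D_-$ splits off as a submodule. You flag this yourself as the main obstacle, and the justification you give does not establish it. Hermitian duality turns $\mc{R}^S_{\mf q}$ into $\mc{L}_S$ and $\nu$ into $-\bar\nu$. So dualizing a unique-quotient statement gives a unique-submodule statement about a different module, not about $X(\mf{q},H,\lambda|_{T'},\nu)$ itself. There is also a circularity risk. If you get uniqueness of the submodule by transporting Langlands' theorem for $X(M,\delta,\nu)$ through Vogan's Theorem 6.6.15, you are assuming $\delta$ is irreducible, which is what you are proving.

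\emph{Your fallback.} This does not work either. Twisting the inducing data by a finite-order character of $M_0$ can change the length of the induced module. For example, the trivial and sign principal series of $SL_2(\R)$ at $s=-\tfrac12$ have lengths $3$ and $1$. You give no reason why the twist relating $\delta_1$ and $\delta_2$ would preserve length. Also, Proposition~\ref{propaboutgroups}(c) bounds $|Z_{M_0}/(M_0^\circ\cap Z_{M_0})|$, not the index of $M_0^\circ Z_{M_0}$ in $M_0$. So you have not shown there are exactly two summands related by a sign twist.

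\textbf{The missing idea is that the Langlands submodules are nontempered.} This is the paper's argument, and it makes any socle analysis unnecessary. Because $\nu$ is real and nonzero (this is $s_0>0$), the Langlands submodule $J_i$ of $\Ind_P^G(\delta_i\otimes\nu)$ is nontempered, by the Langlands classification. The discrete series $D_\pm$ are tempered. Hence both $J_1$ and $J_2$ would have to be isomorphic to $A_{\mf q}(\lambda)$. But $A_{\mf q}(\lambda)$ occurs only once among the constituents of $\Ind_P^G(\pi,-s_0)$, by \eqref{eqnexseqforstd} and Proposition~\ref{propisosbetweends}. That is a contradiction, and it does not matter whether $D_\pm$ split off.

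The same observation disposes of your fallback case directly. An irreducible standard module $\Ind_P^G(\delta_1\otimes\nu)$ equals its Langlands submodule, so it is nontempered and cannot be $D_+$ or $D_-$.
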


\begin{proof}
Note that the proposition along with \eqref{eqnexseqforstd} imply that there is only one nontempered constituent of $\Ind_P^G(\pi,-s_0)$ (which must be $A_\mf{q}(\lambda)$). By the Langlands classification along with the fact that $s_0>0$, we must have $\pi$ is irreducible.
\end{proof}

Thus we have now shown (b) and the second half of (c) of Theorem \ref{thmstdmodforind} with the unique irreducible subrepresentation of $\Ind_P^G(\pi,-s_0)$ in place of $J$ (which we have not yet shown is indeed $J$ itself), and (a) follows from the irreducibility theorem \cite[Theorem 8.2]{knvo} and unitarizability \cite[Theorem 9.1]{knvo}. Thus, to complete the proof of Theorem \ref{thmstdmodforind}, it suffices to establish the existence of an exact sequence
\[0\to A_\mf{b}(\lambda)\oplus A_{w_{\alpha_0}\mf{b}}(\lambda)\to\Ind_P^G(\pi,s_0)\to A_\mf{q}(\lambda)\to 0.\]

To do this, we apply the \textit{Hermitian dual} functor (see \cite[\S VI.2]{knvo}), denoted $(\cdot)^h$, to the exact sequence
\[0\to A_\mf{q}(\lambda)\to \Ind_P^G(\pi,-s_0)\to A_\mf{b}(\lambda)\oplus A_{w_{\alpha_0}\mf{b}}(\lambda)\to 0.\]
This functor is contravariant and exact, and it preserves unitary modules. Applying it to the above sequence, using the unitarizability of the representations on either end, gives
\[0\to A_\mf{b}(\lambda)\oplus A_{w_{\alpha_0}\mf{b}}(\lambda)\to\Ind_P^G(\pi,-s_0)^h\to A_\mf{q}(\lambda)\to 0.\]
But by \cite[Corollary 11.59]{knvo} and the fact that $s_0$ is real, we have
\[\Ind_P^G(\pi,-s_0)^h\cong \Ind_P^G(\pi^h,\overline{s}_0)\cong \Ind_P^G(\pi,s_0),\]
again because $\pi$ is unitary as it is a discrete series representation. This is the exact sequence we wanted to establish, finishing the proof of Theorem \ref{thmstdmodforind}.\qed

\indent

We note the following corollary of the isomorphism $J\cong A_\mf{q}(\lambda)$ which will be important later.

\begin{corollary}
\label{corlowestKtypeJ}
The representation $J$ has a unique lowest $K$-type whose highest weight is $\Lambda-\rho_c+\rho_n-\alpha_0$.
\end{corollary}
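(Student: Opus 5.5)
Since $J\cong A_\mf{q}(\lambda)$ by Theorem \ref{thmstdmodforind}(b), the plan is to read off the lowest $K$-type from the standard theory of $A_\mf{q}(\lambda)$ modules in the good range. By \cite[Theorem 5.80 and Proposition 10.24]{knvo} (the ``bottom layer'' theory and the computation of the minimal $K$-type of $A_\mf{q}(\lambda)$), in the good range and with $\lambda$ one-dimensional, the module $A_\mf{q}(\lambda)=\mc{R}_\mf{q}^S(\lambda)$ contains the $K$-type with highest weight $\lambda+2\rho(\mf{u}\cap\mf{p})$ with multiplicity one. Moreover, every other $K$-type has highest weight of the form $\lambda+2\rho(\mf{u}\cap\mf{p})+\sum n_\beta\beta$, with $\beta$ ranging over roots in $\mf{u}\cap\mf{p}$ and $n_\beta\ge 0$. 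This makes it the unique lowest $K$-type in Vogan's sense. So the two things to check are this identification and the arithmetic for the highest weight.

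For the arithmetic, I will write $\mf{u}$ for the nilradical of $\mf{q}=\C X_{-\alpha_0}\oplus\mf{b}$. Its roots are exactly $\Delta^+\setminus\{\alpha_0\}$. Since $\alpha_0$ is noncompact, the noncompact roots in $\mf{u}$ are the positive noncompact roots other than $\alpha_0$. Hence
\[2\rho(\mf{u}\cap\mf{p})=2\rho_n-\alpha_0.\]
The differential of $\lambda$ is $\Lambda-\rho$, so the highest weight is
\[\Lambda-\rho+2\rho_n-\alpha_0=\Lambda-\rho_c+\rho_n-\alpha_0,\]
using $\rho=\rho_c+\rho_n$. This is the claimed weight. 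As a sanity check, it is the average of the Blattner parameters of $D_+$ and $D_-$, which fits with $J$ sitting between them in the sequences of Theorem \ref{thmstdmodforind}(c).

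The main obstacle is justifying uniqueness and minimality in the norm $\|\mu+2\rho_c\|$. Two facts need to be confirmed. First, $\lambda+2\rho(\mf{u}\cap\mf{p})$ must be $\Delta^+_c$-dominant, so that it is genuinely the highest weight of a $K$-type. This holds because $\Lambda$ is dominant and $\langle\rho_n-\alpha_0,\beta\rangle$-type corrections are handled by the standard good-range argument. Second, the bottom-layer $K$-type must be nonzero and occur in $\mc{R}^S_\mf{q}$. For this I would invoke \cite[Theorem 5.80]{knvo}: the bottom layer map is injective and maps onto a $K$-type, because $\lambda$ is in the good range, hence weakly fair and the appropriate positivity holds.

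Uniqueness of the lowest $K$-type then follows from Vogan's result for $A_\mf{q}(\lambda)$ modules, namely that the $K$-types are of the form above and that adding noncompact roots of $\mf{u}$ strictly increases the norm. Alternatively, one can argue via Theorem \ref{thmstdmodforind}(c). The $K$-types of $\Ind_P^G(\pi,-s_0)$ are those of $D_+$, $D_-$ and $J$. Frobenius reciprocity shows that the candidate weight occurs in the induced representation. One then checks that it does not occur in $D_\pm$, since it is strictly below their Blattner parameters along $\pm\alpha_0$ by Blattner's formula. This second route is a fallback if the direct citation is not precise enough.
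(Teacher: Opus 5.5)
Your proposal is correct and is essentially the paper's proof. Both arguments identify $J\cong A_\mf{q}(\lambda)$ and use the Knapp--Vogan bottom-layer results (the paper cites Theorem 5.80 and Corollary 5.72) to see that the $K$-type of highest weight $\lambda+2\rho(\mf{u}\cap\mf{p})$ is the lowest one. Both then compute this weight as $\Lambda-\rho+2\rho_n-\alpha_0=\Lambda-\rho_c+\rho_n-\alpha_0$.

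The direct citation is what carries the proof, not your fallback route through $\Ind_P^G(\pi,-s_0)$. That route would only show that this $K$-type occurs in $J$ and not in $D_\pm$; it says nothing about whether it is lowest in $J$, or unique.
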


\begin{proof}
By \cite[Theorem 5.80 and Corollary 5.72]{knvo}, the $K$-type with highest weight $\lambda\otimes\sideset{}{^\mathrm{top}}\bigwedge(\mf{u}\cap\mf{p})$ is the lowest $K$-type in $A_\mf{q}(\lambda)\cong J$; this weight is exactly
\[\lambda+2\rho_n-\alpha_0=\Lambda-\rho+2\rho_n-\alpha_0=\Lambda-\rho_c+\rho_n-\alpha_0,\]
as desired.
\end{proof}

\subsection{$(\mf{g},K)$-cohomology}
\label{subsecgkcoh}
We keep the notation of Sections \ref{subsecsetup} and \ref{subsecds}. In this subsection we discuss the $(\mf{g},K)$-cohomology of the induced representations from Section \ref{subsecstandardmods}.

Let $s_0$ be the particular number from \eqref{eqnsnaught}. Then by Theorem \ref{thmstdmodforind} (c) we have the two exact sequences
\begin{equation*}
0\to D_+\oplus D_-\to \Ind_P^G(\pi,s_0)\to J\to 0,
\end{equation*}
and
\begin{equation*}
0\to J \to \Ind_P^G(\pi,-s_0)\to D_+\oplus D_-\to 0,
\end{equation*}
of \eqref{eqnsespluss0} and \eqref{eqnsesminuss0}, where $D_+$, $D_-$ and $\pi$ are the discrete series representations from Section \ref{subsecds}, and $J$ is the unique irreducible quotient of $\Ind_P^G(\pi,s_0)$. We examine the $(\mf{g},K)$-cohomology exact sequences (with coefficients) associated with these short exact sequences.

Recall (see \cite{BW} for example) that for any $(\mf{g},K)$-module $\Pi$, the $i$th $(\mf{g},K)$-cohomology group of $\Pi$, denoted $H^i(\mf{g},K;\Pi)$, is computed as the cohomology of the complex $C^\bullet(\mf{g},K;\Pi)$ where
\[C^q(\mf{g},K;\Pi)=\hom_K(\sideset{}{^q}\bigwedge(\mf{g}/\mf{k}),\Pi),\]
where $K$ acts on $\mf{g}/\mf{k}$ by the adjoint action, and where the boundary map $d:C^q(\mf{g},K;\Pi)\to C^{q+1}(\mf{g},K;\Pi)$ is given by the following. Let $X_0,\dotsc,X_q\in\mf{g}$ and $c\in C^q(\mf{g},K;\Pi)$. Then $dc$ is given by the formula
\begin{multline}
\label{eqndefofbdrymap}
(dc)(X_0\wedge\dotsb\wedge X_q)=\sum_{i=0}^q (-1)^iX_i c(X_0\wedge\dotsb\wedge\widehat{X}_i\wedge\dotsb\wedge X_q)\\+\sum_{0\leq i<j\leq q}(-1)^{i+j}c([X_i,X_j]\wedge X_0\wedge\dotsb\wedge\widehat{X}_i\wedge\dotsb\wedge\widehat{X}_j\wedge\dotsb\wedge X_q),
\end{multline}
where the terms $X_i$ in the wedge products are viewed as their images in $\mf{g}/\mf{k}$, and the hats mean we omit that corresponding term from the wedge. Then it turns out that this boundary map $d$ is well defined, i.e., independent of the choice of the $X_i$'s up to addition of elements in $\mf{k}$.

We now show the following.

\begin{proposition}
\label{propcomplexesconc}
Let $\Lambda$ be the weight from Section \ref{subsecds}, and let $E_{\Lambda-\rho}$ be the irreducible representation of $G(\C)$ of highest weight $\Lambda-\rho$, where $\rho$ is as usual the half sum of positive roots of the Cartan subalgebra $\mf{t}$ of Section \ref{subsecsetup} in $\mf{g}$. Let $d=\frac{1}{2}\dim_\C(\mf{g}/\mf{k})$.
\begin{enumerate}[label=(\alph*)]
\item The complexes $C^\bullet(\mf{g},K;D_\pm\otimes E_{\Lambda-\rho}^\vee)$ equal their cohomology, are concentrated in degree $d$, and are $1$ dimensional.
\item The complex $C^\bullet(\mf{g},K;J\otimes E_{\Lambda-\rho}^\vee)$ equals its cohomology, is concentrated in degrees $d-1$ and $d+1$, and is $1$ dimensional in both of these degrees.
\item For any $s\in\C$, the complex $C^\bullet(\mf{g},K;\Ind_P^G(\pi,s)\otimes E_{\Lambda-\rho}^\vee)$ is concentrated in degrees $d-1$, $d$, and $d+1$, where it is, respectively, $1$, $2$, and $1$ dimensional.
\end{enumerate}
\end{proposition}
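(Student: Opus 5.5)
We will compute everything at the level of cochains via Wigner's lemma and the $K$-type structure. Since $C^q(\mf{g},K;\Pi\otimes E^\vee)=\hom_K(\bigwedge^q\mf{p},\Pi\otimes E^\vee)$, and $\Pi\otimes E^\vee$ decomposes into $K$-types, the key principle is Kuga's lemma/Wigner's lemma: the Casimir acts on $C^\bullet$ by a scalar which is zero on the complex exactly when infinitesimal characters match, and in that case the differential vanishes on the complex when $\Pi$ is unitary. All of $D_\pm$ and $J$ are unitary by Theorem \ref{thmstdmodforind}(a) and have infinitesimal character $\Lambda$, the same as $E_{\Lambda-\rho}$, so for (a) and (b) the differentials vanish and the complexes equal their cohomology. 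It then remains to compute dimensions, for which we will use Vogan--Zuckerman: for $A_\mf{q}(\lambda)$ with $\lambda$ the highest weight of $E$ restricted appropriately, $\hom_K(\bigwedge^q\mf{p},A_\mf{q}(\lambda)\otimes E^\vee)\cong \hom_{L\cap K}(\bigwedge^{q-R}(\mf{l}\cap\mf{p}),\C)$ with $R=\dim(\mf{u}\cap\mf{p})$.

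For (a): by Proposition \ref{propisosbetweends}, $D_+\cong A_\mf{b}(\lambda)$ and $D_-\cong A_{w_{\alpha_0}\mf{b}}(\lambda)$; here the Levi is $\mf{t}$, $\mf{t}\cap\mf{p}=0$, and $R=\dim\mf{u}\cap\mf{p}=d$ (half the noncompact roots). Thus the complex is one-dimensional and concentrated in degree $d$. For (b): $J\cong A_\mf{q}(\lambda)$; now $\mf{l}\cap\mf{p}=\C X_{\alpha_0}\oplus\C X_{-\alpha_0}$, $R=d-1$ since $\alpha_0$ is noncompact and removed from $\mf{u}$, and $\hom_{T}(\bigwedge^{j}(\mf{l}\cap\mf{p}),\C)$ is $1,0,1$ dimensional for $j=0,1,2$ (the weights $\pm\alpha_0$ are nontrivial on $T$, while $X_{\alpha_0}\wedge X_{-\alpha_0}$ has weight $0$). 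This gives degrees $d-1$ and $d+1$, each one-dimensional. One must also check the cochain spaces themselves (not just cohomology) have these dimensions; this is exactly the Vogan--Zuckerman statement that only the lowest $K$-type contributes, via multiplicity-one of the relevant $K$-type in $\bigwedge\mf{p}\otimes E$ and in the module. I would cite \cite[Theorem 5.80]{knvo}-type lowest $K$-type results together with Corollary \ref{corlowestKtypeJ}.

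For (c): the key observation is that the cochain complex depends only on the $K$-module structure, and $\Ind_P^G(\pi,s)|_K=\Ind_{K\cap M}^K(\pi|_{K\cap M})$ is independent of $s$. So the dimensions of $C^q$ are the same for all $s$, and it suffices to compute at one value, say $s=-s_0$ (or $s_0$). By the exact sequence \eqref{eqnsesminuss0}, which is split as $K$-modules (compact group), the $K$-module $\Ind_P^G(\pi,-s_0)$ is $J\oplus D_+\oplus D_-$, so $C^q$ is the sum of the complexes from (a) and (b), giving dimensions $1,2,1$ in degrees $d-1,d,d+1$ and zero elsewhere. The main obstacle is the cochain-level (as opposed to cohomology-level) concentration in (a),(b): I expect to handle it by showing $\hom_K(\bigwedge^q\mf{p}\otimes E,\Pi)$ only sees $K$-types of $\bigwedge\mf{p}\otimes E$ with Casimir eigenvalue forcing the PRV component, which for $A_\mf{q}(\lambda)$ is the lowest $K$-type occurring with multiplicity one, as in Vogan--Zuckerman's proof.
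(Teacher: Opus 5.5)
Your proposal is correct and follows the paper's proof: the differentials vanish for the unitary modules $D_\pm$ and $J$ (the paper cites \cite[Proposition II.3.1 (b)]{BW}, which is your Wigner/Kuga argument), \cite[Theorem 5.5]{VZ} is applied to $A_{\mf{b}}(\lambda)$, $A_{w_{\alpha_0}\mf{b}}(\lambda)$ and $A_{\mf{q}}(\lambda)$, and (c) uses the $s$-independence of $K$-types together with the $K$-splitting of one of the short exact sequences (the paper uses \eqref{eqnsespluss0} at $s_0$ rather than \eqref{eqnsesminuss0} at $-s_0$, which makes no difference). The ``main obstacle'' you flag at the end is not actually an obstacle: once the differentials vanish, $C^q=H^q$ identically, so the Vogan--Zuckerman count is already a statement about cochains and needs no additional lowest-$K$-type multiplicity argument.
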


\begin{proof}
By \cite[Proposition II.3.1 (b)]{BW}, the complexes in parts (a) and (b) equal their cohomology groups because $D_{\pm}$ and $J$ are unitary by Theorem \ref{thmstdmodforind} (a) and (b). Thus, because we have assumed $G$, and hence $K$, is connected, parts (a) and (b) follow immediately from \cite[Theorem 5.5]{VZ}, as well as Theorem \ref{thmstdmodforind} (b) where we have written our representations in the form $A_{\mf{q}}(\lambda)$. The statement of part (c) for the point $s=s_0$ then follows from this and the exact sequence \eqref{eqnsespluss0}. Hence it follows for all $s$, since the set of $K$-types in the induced representation $\Ind_P^G(\pi,s)$ is independent of $s$.
\end{proof}

We make the following remark which will be useful for us in a few situations later.

\begin{remark}
\label{remfourcochains}
By Proposition \ref{propcomplexesconc}, we have four linearly independent cochains
\[c^{d\pm 1}:\sideset{}{^{d\pm 1}}\bigwedge(\mf{g}/\mf{k})\to\Ind_P^G(\pi,s_0)\otimes E_{\Lambda-\rho}^\vee,\quad\textrm{and}\quad c^{d,\pm}:\sideset{}{^d}\bigwedge(\mf{g}/\mf{k})\to\Ind_P^G(\pi,s_0)\otimes E_{\Lambda-\rho}^\vee\]
which we describe explicitly below up to nonzero scalar multiple, and these cochains span the $(\mf{g},K)$-cohomology complex for $\Ind_P^G(\pi,s_0)\otimes E_{\Lambda-\rho}^\vee$. (The exact scalar multiple will not matter for any of our applications.) Since the $K$-types of $\Ind_P^G(\pi,s)$ are independent of $s$, similar descriptions can be made for the cocycles spanning the $(\mf{g},K)$-cohomology complex for $\Ind_P^G(\pi,s)\otimes E_{\Lambda-\rho}^\vee$

To describe these cochains, note first that $\bigwedge^{d-1}(\mf{g}/\mf{k})$ contains the $1$-dimensional subspace $\bigwedge^{d-1}(\mf{u}/(\mf{u}\cap\mf{k}))$, where $\mf{u}$ is the radical of the $\theta$-stable parabolic subalgebra $\mf{q}$ defined in Section \ref{subsecstandardmods}, above Proposition \ref{propaboutgroups}. This is a $\mf{t}$-stable subspace of weight $2\rho_n-\alpha_0$, and since $\alpha_0$ is simple, it is a highest weight of $\bigwedge^{d-1}(\mf{g}/\mf{k})$. Thus $\bigwedge^{d-1}(\mf{g}/\mf{k})$ contains the representation with this highest weight $2\rho_n-\alpha_0$. As well, the representation $J\otimes E_{\Lambda-\rho}^\vee$, and hence also $\Ind_P^G(\pi,s_0)\otimes E_{\Lambda-\rho}^\vee$, contains a $K$-type with highest weight
\[(\Lambda-\rho_c+\rho_n-\alpha_0)-(\Lambda-\rho)=2\rho_n-\alpha_0\]
occurring in the tensor product of the $K$-type in $J$ (or in $\Ind_P^G(\pi,s_0)$) of highest weight $\Lambda-\rho_c+\rho_n-\alpha_0$ given to us by Corollary \ref{corlowestKtypeJ}, with the $K$-type in $E_{\Lambda-\rho}^\vee$ of lowest weight $-(\Lambda-\rho)$. The cochain $c^{d-1}$ must therefore send the $K$-type in $\bigwedge^{d-1}(\mf{g}/\mf{k})$ of highest weight $2\rho_n-\alpha_0$ isomorphically onto this $K$-type in $\Ind_P^G(\pi,s_0)\otimes E_{\Lambda-\rho}^\vee$, and vanish on all other $K$-types in $\bigwedge^{d-1}(\mf{g}/\mf{k})$.

Similarly, the subspace
\[(\C X_{\alpha_0})\wedge(\C X_{-\alpha_0})\wedge\sideset{}{^{d\pm 1}}\bigwedge(\mf{u}/(\mf{u}\cap\mf{k}))\subset\sideset{}{^{d+1}}\bigwedge(\mf{g}/\mf{k})\]
provides a unique $K$-type in $\bigwedge^{d+1}(\mf{g}/\mf{k})$ of highest weight $2\rho_n-\alpha_0$, and the cochain $c^{d+1}$ must send this $K$-type isomorphically onto the $K$-type in $\Ind_P^G(\pi,s_0)\otimes E_{\Lambda-\rho}^\vee$ of highest weight $2\rho_n-\alpha_0$ described above, and vanish on all other $K$-types in $\bigwedge^{d+1}(\mf{g}/\mf{k})$.

Finally, the subspaces
\[(\C X_{\alpha_0})\wedge\sideset{}{^{d-1}}\bigwedge(\mf{u}/(\mf{u}\cap\mf{k}))\subset\sideset{}{^{d}}\bigwedge(\mf{g}/\mf{k})\]
and
\[(\C X_{-\alpha_0})\wedge\sideset{}{^{d-1}}\bigwedge(\mf{u}/(\mf{u}\cap\mf{k}))\subset\sideset{}{^{d}}\bigwedge(\mf{g}/\mf{k})\]
give unique $K$-types of highest weights $2\rho_n$ and $2\rho_n-2\alpha_0$, respectively, in $\bigwedge^{d}(\mf{g}/\mf{k})$. Similar considerations as above show that $D_+\otimes E_{\Lambda-\rho}^\vee$ and $D_-\otimes E_{\Lambda-\rho}^\vee$ contain these respective $K$-types, and they occur in the tensor product of the lowest $K$-types of $D_+$ or $D_-$ with the $K$-type in $E_{\Lambda-\rho}^\vee$ of lowest weight $-(\Lambda-\rho)$. Thus the two cochains $c^{d,+}$ and $c^{d,-}$ are given by projecting onto the unique $K$-type in $\bigwedge^{d}(\mf{g}/\mf{k})$ of highest weights $2\rho_n$ and $2\rho_n-2\alpha_0$, respectively, then mapping these isomorphically onto the $K$-types in $D_+\otimes E_{\Lambda-\rho}^\vee$ or $D_-\otimes E_{\Lambda-\rho}^\vee$ just described, and then finally composing with the inclusions $D_{\pm}\to \Ind_P^G(\pi,s_0)$.
\end{remark}

\begin{proposition}
\label{propcohofinduced}
Notation as in Proposition \ref{propcomplexesconc}, we have
\[\dim_{\C}H^q(\mf{g},K;\Ind_P^G(\pi,s_0)\otimes E_{\Lambda-\rho}^\vee)=\begin{cases}
1 & \textrm{if }q=d,d+1;\\
0 & \textrm{otherwise,}
\end{cases}\]
and
\[\dim_{\C}H^q(\mf{g},K;\Ind_P^G(\pi,-s_0)\otimes E_{\Lambda-\rho}^\vee)=\begin{cases}
1 & \textrm{if }q=d-1,d;\\
0 & \textrm{otherwise.}
\end{cases}\]
\end{proposition}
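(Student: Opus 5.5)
The plan is to use the long exact sequences in $(\mf{g},K)$-cohomology attached to \eqref{eqnsespluss0} and \eqref{eqnsesminuss0}, together with Proposition \ref{propcomplexesconc}. We only need the dimension of the relevant connecting homomorphisms. Write $h^q(\cdot)$ for $\dim H^q(\mf{g},K;(\cdot)\otimes E_{\Lambda-\rho}^\vee)$. By Proposition \ref{propcomplexesconc} (a) and (b), $h^q(D_+\oplus D_-)$ is $2$ for $q=d$ and $0$ otherwise. Likewise $h^q(J)$ is $1$ for $q=d\pm1$ and $0$ otherwise.

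For $\Ind_P^G(\pi,s_0)$, I use the long exact sequence of \eqref{eqnsespluss0}. In degrees away from $d-1,d,d+1$ everything vanishes. In the relevant range it reads
\[0\to H^{d-1}(\Ind)\to H^{d-1}(J)\xrightarrow{\partial} H^d(D)\to H^d(\Ind)\to 0\to H^{d+1}(\Ind)\to H^{d+1}(J)\to 0.\]
So $h^{d+1}(\Ind_P^G(\pi,s_0))=1$, and everything hinges on whether $\partial$ is nonzero. I would argue directly on the cochain level, using that the complex for $\Ind_P^G(\pi,s_0)$ is spanned by the four cochains of Remark \ref{remfourcochains}.

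If $\partial=0$, then $c^{d-1}$ would be a cocycle in $C^\bullet(\mf{g},K;\Ind_P^G(\pi,s_0)\otimes E^\vee)$. I would rule this out with the Langlands-classification fact used in Section 1. The intertwining operator $M(s_0,\cdot)$ has image $J\subset\Ind_P^G(\pi,-s_0)$, kills $D_+\oplus D_-$, and is nonzero on the $K$-type carrying $c^{d-1}$. Suppose $dc^{d-1}=0$. Then $M(s_0,c^{d-1})$ is a nonzero cocycle valued in $J$, and it does not come from $D$. So $h^{d-1}(\Ind_P^G(\pi,s_0))=1$, and by the sequence this would make $H^{d-1}(\Ind_P^G(\pi,s_0))\cong H^{d-1}(J)$. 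That is not yet a contradiction by itself, so I would obtain one from the dual side, as follows.

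For $\Ind_P^G(\pi,-s_0)$, I use the sequence of \eqref{eqnsesminuss0}:
\[0\to H^{d-1}(J)\to H^{d-1}(\Ind^-)\to 0\to H^d(\Ind^-)\to H^d(D)\xrightarrow{\partial'} H^{d+1}(J)\to H^{d+1}(\Ind^-)\to 0.\]
Hence $h^{d-1}(\Ind^-)=1$, and the remaining dimensions depend on the rank of $\partial'$. The cleanest route to the ranks is an Euler characteristic count plus one nonvanishing statement. The cochain complexes for $\Ind_P^G(\pi,\pm s_0)$ have dimensions $1,2,1$ in degrees $d-1,d,d+1$, so each Euler characteristic is $\pm(1-2+1)=0$. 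The $d-1\to d$ differential is an injective map from a $1$-dimensional space exactly when $c^{d-1}$ is not a cocycle.

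To settle this I would use the Hermitian/contragredient duality already employed in the proof of Theorem \ref{thmstdmodforind}. Poincaré-type duality for $(\mf{g},K)$-cohomology (\cite{BW}) pairs $H^q(\Ind_P^G(\pi,s_0)\otimes E^\vee)$ with $H^{2d-q}$ of the contragredient, which is $\Ind_P^G(\pi^\vee,-s_0)\otimes E$. That contragredient is of the same shape, and this duality exchanges the two sequences. So it suffices to show that exactly one of the two connecting maps is nonzero; the main obstacle is precisely this nonvanishing. I would prove it via a lowest $K$-type argument. The $K$-type of highest weight $2\rho_n-\alpha_0$ occurs exactly once in $\Ind_P^G(\pi,s)$, and for $s=s_0$ it lies in the quotient $J$, while for $s=-s_0$ it lies in the subrepresentation $J$. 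Computing $dc^{d-1}_s$ explicitly, as in the $SL_2$ computation, would give a function of $s$ of the form $(s+s_0)\cdot(\text{nonzero})$ on the $d$-cochains. It therefore vanishes at $-s_0$ and not at $s_0$. The nonvanishing away from $-s_0$ would come from the fact that $X_{\pm\alpha_0}$ applied to the $J$-type at $s_0$ generates $D_\pm$, since the extension \eqref{eqnsespluss0} is nonsplit. This gives $\partial\neq0$, $\partial'=0$, and the stated dimensions.
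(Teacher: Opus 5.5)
Your bookkeeping of the two long exact sequences is correct. It shows that the statement is equivalent to \emph{both} connecting maps being nonzero: $\partial\colon H^{d-1}(J)\to H^d(D)$ for \eqref{eqnsespluss0}, and $\partial'\colon H^d(D)\to H^{d+1}(J)$ for \eqref{eqnsesminuss0}.

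Your final conclusion ``$\partial\neq0$, $\partial'=0$'' therefore cannot give the stated dimensions. By your own display, $\partial'=0$ would yield $h^d(\Ind_P^G(\pi,-s_0))=2$ and $h^{d+1}(\Ind_P^G(\pi,-s_0))=1$. The nonvanishing of $\partial'$ is exactly Corollary \ref{corbdrynontrivial}. The reduction to ``exactly one of the two connecting maps is nonzero'' also contradicts the duality you invoke. The $K$-finite dual of \eqref{eqnsespluss0} is a sequence of type \eqref{eqnsesminuss0} for the contragredient datum. Under Poincar\'e duality the two connecting maps are transposes of one another, so they have the same rank. Used correctly, duality only reduces you to proving $\partial\neq0$ for every admissible datum. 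The Euler characteristic count is $0$ whatever the ranks, so it contributes nothing.

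The remaining gap is that $\partial\neq0$ itself is never established. Linearity in $s$ (Lemma \ref{lemsectionslinear}) and $dc^{d-1}_{-s_0}=0$ (which does follow from the sequence for \eqref{eqnsesminuss0}) give $dc^{d-1}_s=(s+s_0)(a_0^+c^{d,+}_s+a_0^-c^{d,-}_s)$. The whole point, though, is that $(a_0^+,a_0^-)\neq(0,0)$, and nonsplitness of \eqref{eqnsespluss0} does not supply it. A nonsplit extension need not induce a nonzero connecting map in $(\mf{g},K)$-cohomology with fixed coefficients. Moreover, $dc^{d-1}$ evaluated on a $d$-vector is a signed sum of several terms, with $\mf{p}$ also acting on $E_{\Lambda-\rho}^\vee$, and you have not controlled their possible cancellation.

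What is missing is a global constraint on the cohomology of the induced module. The paper uses \cite[Theorem III.3.3]{BW}: since $\pi$ is discrete series and $\dim\mf{a}=1$, the cohomology of $\Ind_P^G(\pi,\pm s_0)\otimes E_{\Lambda-\rho}^\vee$ is either zero or one-dimensional in exactly two consecutive degrees. Together with $h^{d+1}(\Ind_P^G(\pi,s_0))=1$ and $h^{d-1}(\Ind_P^G(\pi,-s_0))=1$, this forces everything.

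If you prefer to stay with cochains, argue as in Steps 2 and 4 of the proof of Theorem \ref{thmintertwining}:
\begin{enumerate}
\item For purely imaginary $s$ the complex is exact, because the infinitesimal character no longer matches that of $E_{\Lambda-\rho}$ (\cite[Proposition II.3.1 (a)]{BW}). So neither $d\colon C^{d-1}\to C^d$ nor $d\colon C^d\to C^{d+1}$ vanishes identically in $s$.
\item Both differentials are linear in $s$ in the flat bases.
\item The first vanishes at $-s_0$ and the second at $s_0$; each is forced by the long exact sequences and one-dimensionality of $C^{d\pm1}$.
\item Hence the first is nonzero at $s_0$ and the second is nonzero at $-s_0$, i.e.\ $\partial\neq0$ and $\partial'\neq0$.
\end{enumerate}
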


\begin{proof}
Since $\pi$ is a discrete series representation of the group $M_0$, its $(\mf{m}_0,K\cap P)$-cohomology is concentrated in one degree and is one dimensional. By \cite[Theorem III.3.3]{BW}, which computes the cohomology of induced representations, the cohomology groups
\[H^q(\mf{g},K;\Ind_P^G(\pi,\pm s_0)\otimes E_{\Lambda-\rho}^\vee)\]
 either vanish, or they are $1$-dimensional and concentrated in two consecutive degrees. Proposition \ref{propcomplexesconc} (c) implies that $H^q(\mf{g},K;\Ind_P^G(\pi,\pm s_0)\otimes E_{\Lambda-\rho}^\vee)$ is concentrated in degrees $d-1,d,d+1$, and the long exact sequence associated with the short exact sequence of \eqref{eqnsespluss0} and \eqref{eqnsesminuss0}, along with Proposition \ref{propcomplexesconc} (a) and (b), give isomorphisms
\[H^{d\pm 1}(\mf{g},K;\Ind_P^G(\pi,\pm s_0)\otimes E_{\Lambda-\rho}^\vee)\cong H^{d\pm 1}(\mf{g},K;J\otimes E_{\Lambda-\rho}^\vee)\cong \C.\]
Therefore we must also have
\[H^{d}(\mf{g},K;\Ind_P^G(\pi,\pm s_0)\otimes E_{\Lambda-\rho}^\vee)\cong\C,\]
and vanishing elsewhere.
\end{proof}

The tables below, which follow from the two propositions above, depict the long exact cohomology sequence associated with the short exact sequences \eqref{eqnsespluss0} and \eqref{eqnsesminuss0}. For them, let us use the shorthand notations
\[h^q(\cdot)=\dim_{\C}H^q(\mf{g},K;(\cdot)\otimes E_{\Lambda-\rho}^\vee),\qquad I_{\pm}=\Ind_P^G(\pi,\pm s_0),\qquad D=D_+\oplus D_-.\]
Then we have:
\begin{center}
\begin{tabular}{| c || c | c | c |} 
\hline
$q$ & $h^q(D)$ & $h^q(I_{+})$ & $h^q(J)$ \\
\hline\hline
$d-1$ & $0$ & $0$ & $1$ \\ 
\hline
$d$ & $2$ & $1$ & $0$ \\
\hline
$d+1$ & $0$ & $1$ & $1$ \\
\hline
\end{tabular}
\qquad\qquad
\begin{tabular}{| c || c | c | c |}
\hline
$q$ & $h^q(J)$ & $h^q(I_{-})$ & $h^q(D)$ \\
\hline\hline
$d-1$ & $1$ & $1$ & $0$ \\ 
\hline
$d$ & $0$ & $1$ & $2$ \\
\hline
$d+1$ & $1$ & $0$ & $0$ \\
\hline
\end{tabular}
\end{center}
In particular, we note that when $s=-s_0$ and we are in the case of the right hand table, then we may observe the following corollary.

\begin{corollary}
\label{corbdrynontrivial}
The boundary map
\[H^{d}(\mf{g},K;(D_+\oplus D_-)\otimes E_{\Lambda-\rho}^\vee)\to H^{d+1}(\mf{g},K;J\otimes E_{\Lambda-\rho}^\vee),\]
associated with the exact sequence \eqref{eqnsesminuss0}, is nontrivial.
\end{corollary}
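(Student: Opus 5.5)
The plan is to read off the boundary map from the long exact $(\mf{g},K)$-cohomology sequence attached to \eqref{eqnsesminuss0}, tensored with $E_{\Lambda-\rho}^\vee$, and to use only the dimension counts already obtained in Propositions \ref{propcomplexesconc} and \ref{propcohofinduced}. Tensoring with the finite dimensional representation $E_{\Lambda-\rho}^\vee$ is exact, so the sequence
\[0\to J\otimes E_{\Lambda-\rho}^\vee\to \Ind_P^G(\pi,-s_0)\otimes E_{\Lambda-\rho}^\vee\to (D_+\oplus D_-)\otimes E_{\Lambda-\rho}^\vee\to 0\]
is still exact. It therefore yields a long exact sequence in $(\mf{g},K)$-cohomology. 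The relevant segment is
\[H^{d}(J)\to H^{d}(I_-)\to H^{d}(D)\overset{\partial}{\to} H^{d+1}(J)\to H^{d+1}(I_-),\]
where I abbreviate $H^q(\cdot)=H^q(\mf{g},K;(\cdot)\otimes E_{\Lambda-\rho}^\vee)$.

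The key step is then a dimension count. By Proposition \ref{propcomplexesconc}(a) and (b), $h^d(D)=2$, $h^{d+1}(J)=1$ and $h^d(J)=0$. By Proposition \ref{propcohofinduced}, $h^d(I_-)=1$ and $h^{d+1}(I_-)=0$. Exactness at $H^{d+1}(J)$ means the image of $\partial$ equals the kernel of $H^{d+1}(J)\to H^{d+1}(I_-)=0$. That kernel is all of $H^{d+1}(J)$, which is one dimensional, so $\partial$ is surjective onto a nonzero space and hence nontrivial.

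As a consistency check, exactness at $H^d(D)$ says $\ker\partial$ is the image of $H^d(I_-)$. That image is at most one dimensional, so $\partial$ has rank at least $2-1=1$; this agrees with surjectivity.

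I do not expect a real obstacle here, since the inputs were established in the two preceding propositions. The one point needing care is the choice of exact sequence: the statement concerns \eqref{eqnsesminuss0} at $-s_0$, not \eqref{eqnsespluss0}. In the $s_0$ sequence the corresponding map $H^d(D)\to H^{d+1}(J)$ would not even be a connecting map, because there $D$ is the subobject.
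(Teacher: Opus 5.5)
Your argument is correct and is the same as the paper's. The paper also reads the corollary off the right-hand table: $h^{d+1}(J)=1$ by Proposition \ref{propcomplexesconc}(b) and $h^{d+1}(I_-)=0$ by Proposition \ref{propcohofinduced}, so exactness forces the connecting map $H^d(D)\to H^{d+1}(J)$ to be surjective onto a nonzero space.
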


This fact will be crucial for the second proof of our main theorem, Theorem \ref{thmmainthm} below, and the first of the two tables above is crucial for the first proof.

\subsection{Intertwining operators}
\label{subsecintertwining}
The content of this subsection plays no role in the first proof we give of Theorem \ref{thmmainthm} below, but is crucial for the second. Nevertheless, we believe Theorem \ref{thmintertwining}, which is proved here, is of independent interest, and so might be its proof.

We keep the notation of Sections \ref{subsecsetup} and \ref{subsecds}, as well as the notation set at the beginning of Section \ref{subsecstandardmods}. To get started, note that Weyl group of the Cartan subgroup $H$ of $G$ has an element $w_0$, defined on the Lie algebra level as the transposed Cayley transform of the simple reflection corresponding to $\alpha_0$. One checks easily that $w_0Mw_0^{-1}=M$, and that $w_0Pw_0^{-1}=\overline{P}$, where $\overline{P}$ is the parabolic opposite to $P$.

Let $\pi^{w_0}$ be the representation of $M_0$ defined by $\pi\circ(\Ad(w_0))$, where we recall that $\pi$ is the discrete series representation of $M_0$ defined in Section \ref{subsecds}. We start by proving the following.

\begin{lemma}
\label{lemisopiw0pi}
There is an isomorphism $\pi^{w_0}\cong\pi$.
\end{lemma}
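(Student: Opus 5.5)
The plan is to use the description of $\pi$ as a cohomologically induced module and to track what $\Ad(w_0)$ does to the inducing data. Recall that $\pi\cong\delta=\mc{R}_{\mf{b}\cap\mf{m}_0}^{S'}(\lambda')$. Here $\lambda'$ is the character of $T'$ defined in \eqref{eqnlambdaprimedef}, and $\pi$ is irreducible by Corollary \ref{corpiisirr}. The element $w_0$ lies in $N_G(\mf{h})$, and may be taken inside $N_L(H)\subset L$, since it is represented by an element of $L^{\mr{der}}$ realizing the nontrivial Weyl element of the split rank one group. Therefore $w_0$ normalizes $M_0$, $K\cap M_0$ and $T'$. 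Twisting by an automorphism is compatible with cohomological induction, so we get
\[\pi^{w_0}\cong\mc{R}_{\Ad(w_0)^{-1}(\mf{b}\cap\mf{m}_0)}^{S'}(\lambda'\circ\Ad(w_0)).\]
It therefore suffices to show two things. First, $\Ad(w_0)$ fixes $\mf{t}'$ pointwise, and so preserves $\mf{b}\cap\mf{m}_0$. Second, $\lambda'\circ\Ad(w_0)=\lambda'$ on $T'$.

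For the first point, I would use the construction of $w_0$. It is the transposed Cayley transform of the reflection $w_{\alpha_0}$. On $\mf{h}=\mf{a}\oplus\mf{t}'$ it acts by $-1$ on $\mf{a}$ and trivially on $\mf{t}'$, because $w_{\alpha_0}$ fixes the orthogonal complement $\mf{t}'$ of $H_{\alpha_0}$, and $C_{\alpha_0}$ fixes $\mf{t}'$. Concretely, I would choose the representative $w_0=\varphi\sm{0&-1\\1&0}$, with $\varphi:SL_2(\R)\to L^{\mr{der}}$ as in the proof of Proposition \ref{propaboutgroups}(c). Since $L^{\mr{der}}$ commutes with $\exp(\mf{t}')$, this element centralizes $\exp(\mf{t}')$. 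It also centralizes $\varphi(\{\pm1\})$, being central in $L^{\mr{der}}$. Because $T'=H\cap K=\exp(\mf{t}')\times\varphi(\{\pm1\})$ by Proposition \ref{propaboutgroups}, it follows that $w_0$ centralizes $T'$. Consequently $\Ad(w_0)$ preserves each root space of $\mf{t}'$ in $\mf{m}_0$ together with the positive system, so $\Ad(w_0)(\mf{b}\cap\mf{m}_0)=\mf{b}\cap\mf{m}_0$ and $\lambda'\circ\Ad(w_0)=\lambda'$.

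One subtlety is that the cohomological induction here goes to $(\mf{m}_0,K\cap M_0)$-modules. We therefore need $\Ad(w_0)$ to preserve $K\cap M_0$ and to act on $\mf{m}_0$ compatibly with the identity on $T'$. This holds because $w_0\in K\cap L$: the element $\sm{0&-1\\1&0}$ lies in $SO_2$, and $\varphi(SO_2)\subset T\subset K$. A cleaner way to see it is that $w_0\in T$ already, and $T$ normalizes $M_0$. Then $\pi^{w_0}$ is simply $\pi$ conjugated by an element of $K\cap M_0$ or of $T$, so the isomorphism is given by $v\mapsto\pi(w_0)v$. This still requires checking that $T\subset M_0$, or at least that $w_0$ lies in $M_0$ modulo $A$.

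The main obstacle is exactly this step. Is $w_0$ representable by an element of $M=C_G(A)$? It is not, since it inverts $A$. So the argument must go through the cohomological induction description rather than an inner automorphism. The key verifiable facts are then that $w_0$ centralizes $T'$ and that twisting commutes with $\mc{R}$. As a fallback, I would compare Harish-Chandra parameters and central characters directly. The discrete series $\pi^{w_0}|_{M_0^\circ}$ has parameter $\Lambda|_{\mf{t}'}$, since $w_0$ fixes $\mf{t}'$. Its central character on $Z_{M_0}\subset T'$ is unchanged, since $w_0$ centralizes $T'$. By the construction $\pi=\Ind_{M_0^\circ Z_{M_0}}^{M_0}(\widetilde{\pi}_0)$, these data determine $\pi$.
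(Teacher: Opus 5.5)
Your argument is correct, but its main line differs from the paper's. The paper's proof is essentially your fallback.

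The paper works directly with $\pi=\Ind_{M_0^\circ Z_{M_0}}^{M_0}(\widetilde{\pi}_0)$:
\begin{itemize}
\item since $w_0$ is realized by $\Ad$ of an element of $(L^{\mr{der}})_\C$, it fixes $\mf{t}'$ pointwise, hence fixes $\Lambda|_{\mf{t}'}$ and so $\pi_0$;
\item $w_0$ stabilizes the group $Z_{M_0}/(M_0^\circ\cap Z_{M_0})$ of order at most $2$, so it acts trivially on it;
\item therefore $\widetilde{\pi}_0$, and hence $\pi$, is fixed.
\end{itemize}

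You instead go through $\pi\cong\delta=\mc{R}^{S'}_{\mf{b}\cap\mf{m}_0}(\lambda')$ and transport of structure. You use the real representative $w_0=\varphi\sm{0&-1\\1&0}\in\varphi(SO_2)\subset T\cap L^{\mr{der}}$. It inverts $A$ and lies in $K$, so it normalizes $M_0=C_K(\mf{a})M_0^\circ$ and $K\cap M_0$. It centralizes $T'\subset T$ and normalizes $\mf{b}$, so every piece of inducing data is fixed.

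The paper's route is more elementary: it needs neither the identification with $\delta$ nor the compatibility of $\mc{R}$ with automorphisms. Yours treats $M_0^\circ$ and the component group of $M_0$ in one stroke.

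The real gain from your representative is that $w_0$ lies in the abelian group $T$, which contains $Z_{M_0}$ by Proposition \ref{propaboutgroups}(b). So $w_0$ commutes with $Z_{M_0}$ outright. This makes the central-character step airtight, in your fallback as well. The paper's ``fixed setwise, hence pointwise'' remark by itself only controls $w_0$ modulo $M_0^\circ\cap Z_{M_0}$. Fixing a representative is harmless: two representatives differ by an element of $H=T'A$, which acts on $M_0$ by an inner automorphism.

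Two asides in your write-up are wrong, though neither is used in the end. First, $T$ does not normalize $M_0$: a generic element of $\varphi(SO_2)$ moves $\mf{a}$, and only elements normalizing $A$, such as $w_0$, normalize $M_0$. Second, $v\mapsto\pi(w_0)v$ makes no sense because $w_0\notin M_0$, as you yourself then observe.
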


\begin{proof}
Let $L$ be the $\theta$-stable Levi of Section \ref{subsecstandardmods}. Since the Cayley transform $C_{\alpha_0}$ is given by the adjoint action by an element of $(L^{\mr{der}})_{\C}$, as is the simple reflection associated with $\alpha_0$, we have that $w_0$ is as well. Therefore $w_0$ fixes $\mf{t}'$ pointwise, and hence also the Harish-Chandra parameter $\Lambda|_{\mf{t}'}$ of the discrete series representation $\pi_0$ of $M_0^\circ$ used to define $\pi$ in Section \ref{subsecds}. Since $Z_{M_0}/(M_0^\circ \cap Z_{M_0})$ has at most two elements by Proposition \ref{propaboutgroups} (c), it is also fixed pointwise by the element $w_0$ because it is fixed setwise. Thus $w_0$ fixes the representation $\widetilde{\pi}_0$ of $M_0^\circ Z_{M_0}$ used to define $\pi$. Finally, since $\pi=\Ind_{M_0^\circ Z_{M_0}}^{M_0}(\widetilde{\pi}_0)$, we have that $w_0$ fixes $\pi$ as well.
\end{proof}

Using the above lemma, recall that one can then define the \textit{intertwining operator}
\[M(s,\cdot):\Ind_P^G(\pi,s)\to\Ind_{\overline{P}}^G(\pi,s);\]
for $\re(s)$ sufficiently large, by the convergent integral
\[M(s,\phi)(g)=\int_N\phi(w_0ng)\,dn,\]
and by meromorphic (actually analytic when $\re(s)>0$) continuation otherwise. Note that the target induced representation should really be $\Ind_{\overline{P}}^G(\pi^{w_0},s)$, but we are invoking the lemma just proved. Also, this representation has that
\[\Ind_{\overline{P}}^G(\pi,s)\cong\Ind_{P}^G(\pi,-s),\]
hence we may view $M(s,\cdot)$ as a map
\[M(s,\cdot):\Ind_P^G(\pi,s)\to\Ind_{P}^G(\pi,-s).\]

Let us be more precise about the meaning of meromorphicity and meromorphic continuation here. Fix any $s_1\in\C$. For any $\phi\in\Ind_P^G(\pi,s_1)$, the Iwasawa decomposition allows us to put $\phi$ into a unique \textit{flat section}; that is, there is a unique family $\phi_s\in\Ind_P^G(\pi,s)$ for $s\in\C$, such that $\phi_{s_1}=\phi$ and $\phi_s(k)$ is independent of $s$ for any $k\in K$. Then as $s$ varies, the section
\[M(s,\phi_s)\in\Ind_{P}^G(\pi,-s)\]
is a finite sum
\[M(s,\phi_s)=\sum_{i}c_i(s)\phi_{i,-s}\]
for some meromorphic functions $c_i(s)$ which are holomorphic when $\re(s)>0$, and some flat sections $\phi_{i,s}\in\Ind_{P}^G(\pi,s)$.

Let $s_0$ be the particular point of \eqref{eqnsnaught}. By Theorem \ref{thmstdmodforind} (c), the induced representation $\Ind_{P}^G(\pi,-s_0)$ has unique irreducible subrepresentation $J$, with quotient
\[\Ind_{P}^G(\pi,-s_0)/J\cong D_+\oplus D_-.\]
Here $D_+$ and $D_-$ are the discrete series representations of $G$ from Section \ref{subsecds}, with Harish-Chandra parameters given by $\Lambda$ and $\Lambda-\alpha_0$, respectively. The same theorem also tells us then that $\Ind_P^G(\pi,s_0)$ has the same constituents $D_+$, $D_-$ and $J$, but in the opposite order; there is an exact sequence
\[0\to D_+\oplus D_-\to \Ind_P^G(\pi,s_0)\to J\to 0.\]
The Langlands classification also tells us that the intertwining operator $M(s_0,\phi)$ vanishes on $D_+\oplus D_-$ and gives a nontrivial map of $(\mf{g},K)$-modules
\[M(s_0,\cdot):\Ind_P^G(\pi,s_0)/(D_+\oplus D_-)\to J\subset\Ind_{P}^G(\pi,-s_0).\]
Our goal in this subsection is to prove that this vanishing is to order $1$. Namely:

\begin{theorem}
\label{thmintertwining}
Let $\phi_s\in \Ind_{P}^G(\pi,s)$ be any flat section such that $\phi_{s_0}\in D_+\oplus D_-$. Then the limit
\[\lim_{s\to s_0}\frac{1}{s-s_0}M(s,\phi_s)\modulo{J}\]
gives a well defined $(\mf{g},K)$-equivariant isomorphism
\[(D_+\oplus D_-)\overset{\sim}{\longrightarrow}\Ind_{P}^G(\pi,-s_0)/J.\]
\end{theorem}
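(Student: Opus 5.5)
The idea is to imitate the proof of Proposition \ref{propintertbaby}, using cochains in place of the explicit raising operator $X_+$. First I would check that the limit is well defined and equivariant. Since $M(s_0,\cdot)$ kills $D_+\oplus D_-$, for a flat $\phi_s$ with $\phi_{s_0}\in D_+\oplus D_-$ the function $M(s,\phi_s)=\sum_i c_i(s)\phi_{i,-s}$ vanishes at $s_0$. Hence $\frac{1}{s-s_0}M(s,\phi_s)$ has a limit. Modulo $J$, this limit depends only on $\phi_{s_0}$: two flat sections with the same value at $s_0$ differ by $(s-s_0)\psi_s$ for a holomorphic family $\psi_s$, and $M(s_0,\psi_{s_0})\in J$ because the image of $M(s_0,\cdot)$ lies in $J$. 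Equivariance follows by differentiating the identity $M(s,X\phi_s)=XM(s,\phi_s)$ for $X\in\mf g$, together with the observation that $X\phi_s$ is flat up to a term of the form $(s-s_0)\cdot(\text{holomorphic})$. So we obtain a $(\mf g,K)$-map $D_+\oplus D_-\to D_+\oplus D_-$. By Schur's lemma, and because $D_+\not\cong D_-$, it is an isomorphism if and only if it is nonzero on each summand. It therefore suffices to produce, for each sign, one vector in the lowest $K$-type of $D_\pm$ on which $M$ vanishes to order exactly $1$.

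For this, tensor with $E^\vee=E_{\Lambda-\rho}^\vee$ and use Remark \ref{remfourcochains}. Let $c_s$ be the flat extension of the cochain $c^{d-1}$. Its image in $J\otimes E^\vee$ generates $H^{d-1}$, so by the left-hand table $dc_{s_0}$ is a nonzero cocycle valued in $D\otimes E^\vee$, a combination $a_+c^{d,+}+a_-c^{d,-}$. By \eqref{eqndefofbdrymap}, each value of $dc_s$ is obtained by applying degree-one elements of $U(\mf g)$ to flat sections. A computation with the Iwasawa decomposition $X=X_{\mf n}+X_{\mf m}+X_{\mf k}$, as in the $SL_2$ case, shows that each value has the form $s\,\phi^{(1)}_s\otimes e_1+\phi^{(0)}_s\otimes e_0$, which is affine-linear in $s$. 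By $K$-type uniqueness we get $dc_s=\ell_+(s)c^{d,+}_s+\ell_-(s)c^{d,-}_s$ with $\ell_\pm$ affine. Intertwining commutes with $d$. Since the weight-$(2\rho_n-\alpha_0)$ $K$-type is unique in the induced module, $M(s,c_s)=a(s)c_{-s}$ for some holomorphic $a$ with $a(s_0)\neq0$. This nonvanishing holds because $c^{d-1}_{s_0}$ maps nontrivially to $J$ and $M(s_0,\cdot)$ is nonzero there. Therefore
\[\ell_+(s)M(s,c^{d,+}_s)+\ell_-(s)M(s,c^{d,-}_s)=a(s)\bigl(\ell_+(-s)c^{d,+}_{-s}+\ell_-(-s)c^{d,-}_{-s}\bigr).\]
The right-hand table says that $H^{d-1}(J)\to H^{d-1}(I_-)$ is an isomorphism, so the boundary $d$ maps the $(d-1)$-cochain at $-s_0$ to zero; hence $\ell_\pm(-s_0)=0$. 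Whenever $\ell_\pm(s_0)\neq0$, the affine function $\ell_\pm$ is not identically zero, so it vanishes exactly to first order at $-s_0$. Comparing $K$-type components then shows that $M(s,c^{d,\pm}_s)$ vanishes to order exactly $1$ at $s_0$ for every sign with $\ell_\pm(s_0)\ne0$.

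The main obstacle is a sign $\pm$ for which $\ell_\pm(s_0)=0$, that is, when $dc_{s_0}$ lands in only one summand. For that summand I would run the analogous argument one degree up. Take flat extensions of the $d$-cochains $c^{d,\pm}_s$ and compute $dc^{d,\pm}_s=m_\pm(s)c^{d+1}_s$ with $m_\pm$ affine. By Corollary \ref{corbdrynontrivial}, at $-s_0$ some $c^{d,\pm}$ has nonzero boundary into $J$, so $m_\pm(-s_0)\ne0$ for that sign. At $s_0$ the cochains are cocycles in $D\otimes E^\vee$, so $m_\pm(s_0)=0$. Apply $M$: $dM(s,c^{d,\pm}_s)=m_\pm(s)M(s,c^{d+1}_s)$, where $M(s,c^{d+1}_s)=b(s)c^{d+1}_{-s}$ with $b(s_0)\neq0$ (again from uniqueness of the $K$-type and nonvanishing on $J$). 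Write $M(s,c^{d,\pm}_s)=f(s)c^{d,\pm}_{-s}$. Then $f(s)m_\pm(-s)=m_\pm(s)b(s)$, so $f$ has a simple zero at $s_0$, namely the simple zero of $m_\pm$. The delicate point is matching signs: the sign missed in degree $d-1$ must be the one detected here. I expect this to follow from the table, because $h^d(I_-)=1$ forces exactly a one-dimensional image of $H^d(D)$ in $H^d(I_-)$, so the kernel of the boundary is complementary in the right way. If not, I would instead use Hermitian duality, which interchanges $D_+$ and $D_-$ roles appropriately.
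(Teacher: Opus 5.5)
Your overall strategy is the paper's: flat families of cochains, affine dependence of $dc_s$ on $s$ (the paper's Lemma \ref{lemsectionslinear}), and vanishing of the $(d-1)\to d$ boundary at $-s_0$. For a sign missed there, you pass to the $d\to d+1$ boundary, which vanishes at $s_0$. Your degree $d-1$ step, using scalars $a(s)$ and $f_\pm(s)$ on the multiplicity-one $K$-types, is a cleaner version of the paper's Step 3. The well-definedness and Schur reduction are fine.

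\textbf{The gap.} It is the step you yourself flag: matching signs between the two degrees. Your proposed justification does not work. Corollary \ref{corbdrynontrivial} and the table at $s=-s_0$ only say that the kernel of $\partial\colon H^d(D)\to H^{d+1}(J)$ is \emph{some} line. Nothing evaluated at the single points $\pm s_0$ says which line it is. In your notation, write $\ell_\pm(s)=a_\pm(s+s_0)$ and $m_\pm(s)=b_\pm(s-s_0)$. All dimension counts at $\pm s_0$ are consistent with $a_-=b_-=0\neq a_+b_+$. In that case your degree-$d$ argument would again detect only $D_+$, and $D_-$ would never be reached. The Hermitian-duality fallback cannot help either. $D_\pm$ are unitary, so $(\cdot)^h$ fixes each of them and only exchanges $\Ind_P^G(\pi,s_0)$ with $\Ind_P^G(\pi,-s_0)$; it never interchanges $D_+$ and $D_-$.

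\textbf{The fix.} You need a relation valid for all $s$ that ties the two boundaries together. Apply $d\circ d=0$ to the flat family $c_s$. Since $c_s^{d+1}\neq 0$, this gives $\ell_+(s)m_+(s)+\ell_-(s)m_-(s)\equiv 0$, that is, $(s^2-s_0^2)(a_+b_++a_-b_-)\equiv 0$, so $a_+b_++a_-b_-=0$. Now $(a_+,a_-)\neq 0$ because $h^{d-1}(I_+)=0$, and $(b_+,b_-)\neq 0$ by Corollary \ref{corbdrynontrivial}. Hence any sign $\epsilon$ with $a_\epsilon=0$ has $b_\epsilon\neq 0$, and your degree-$d$ argument applies to exactly that sign.

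The paper reaches the same conclusion by a different route: exactness of the complex for purely imaginary $s$. If $a_0^-=0$, the image of $d$ on $C^{d-1}$ is $\C c_s^{d,+}$. This must equal the kernel of $d$ on $C^d$, so $dc_s^{d,-}\not\equiv 0$. With either fix inserted, your proof is complete.
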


To prove Theorem \ref{thmintertwining}, we will pass from the group setting to the Lie algebra setting in the following way. The induced representation $\Ind_{P}^G(\pi,s)$ may be seen as a space of linear maps $\phi:U(\mf{g})\to\pi$ satisfying certain properties. These properties include
\[\phi(X_{\mf{a}}u)=(2s+1)\rho_P(X_{\mf{a}})\phi(u),\qquad\textrm{for all }u\in U(\mf{g})\textrm{ and }X_{\mf{a}}\in\mf{a},\]
where $\rho_P$ is the half sum of roots of $\mf{a}$ in the Lie algebra of $P$. We also have
\[\phi(X_{\mf{m}_0}u)=X_{\mf{m}_0}\cdot\phi(u),\qquad\textrm{for all }u\in U(\mf{g})\textrm{ and }X_{\mf{m}_0}\in\mf{m}_0,\]
as well as\
\[\phi(X_{\mf{n}}u)=0,\qquad\textrm{for all }u\in U(\mf{g})\textrm{ and }X_{\mf{n}}\in\mf{n}.\]
See, for example, \cite[III.2]{BW}. The universal enveloping algebra $U(\mf{g})$ will then act on $\Ind_{P}^G(\pi,s)$ by right translation in the variable,
\[(u\phi)(u')=\phi(u'u),\qquad\textrm{for all }u,u'\in U(\mf{g}).\]
If $\phi_s\in \Ind_{P}^G(\pi,s)$ is a flat section, this means in the Lie algebra picture that $\phi_s(u)$ is independent of $s$ for $u\in U(\mf{k})$.

In the following, we denote by $\mf{p}\subset\mf{g}$ the $(-1)$-eigenspace for the Cartan involution which gives $\mf{k}$ (not to be confused with the complex Lie algebra of $P$, to which we will grant no special notation). We start with the following easy lemma which will come in handy.

\begin{lemma}
\label{lemhandylemma}
Let $X\in\mf{p}$, and let $u\in U(\mf{k})$ be a monomial of some degree $d$, i.e., $u=Y_1Y_2\dotsb Y_d$ where $Y_1,\dotsc,Y_d\in\mf{k}$. Then there are elements $X_1,\dotsc X_l\in\mf{p}$ for some nonnegative integer $l$, as well as monomials $u_1,\dotsc, u_l\in U(\mf{k})$ of degree strictly smaller than $d$, such that
\[uX=Xu+\sum_{i=1}^l X_i u_i.\]
\end{lemma}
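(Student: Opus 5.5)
We argue by induction on the degree $d$ of the monomial $u$. The only structural input is that $\mf{g}=\mf{k}\oplus\mf{p}$ is the Cartan decomposition, so that $[\mf{k},\mf{p}]\subset\mf{p}$. For $d=0$ we have $u=1$, and the statement holds with $l=0$.

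For the inductive step, write $u=Y_1u'$ with $Y_1\in\mf{k}$ and $u'=Y_2\dotsb Y_d$ a monomial of degree $d-1$. By the inductive hypothesis,
\[u'X=Xu'+\sum_{i} X_i'u_i',\]
where $X_i'\in\mf{p}$ and each $u_i'$ is a monomial in $U(\mf{k})$ of degree strictly less than $d-1$. Multiplying on the left by $Y_1$ gives
\[uX=Y_1Xu'+\sum_i Y_1X_i'u_i'.\]
For each product of the form $Y_1Z$ with $Z\in\mf{p}$, we commute using $Y_1Z=ZY_1+[Y_1,Z]$, and note that $[Y_1,Z]\in\mf{p}$.

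Applying this to the first term, we get
\[Y_1Xu'=XY_1u'+[Y_1,X]u'=Xu+[Y_1,X]u'.\]
Here $[Y_1,X]\in\mf{p}$, and $u'$ is a monomial of degree $d-1<d$.

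Applying it to each remaining term, we get
\[Y_1X_i'u_i'=X_i'(Y_1u_i')+[Y_1,X_i']u_i'.\]
The monomial $Y_1u_i'$ has degree at most $(d-2)+1<d$, and $u_i'$ itself has degree less than $d$. Both coefficients $X_i'$ and $[Y_1,X_i']$ lie in $\mf{p}$.

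Collecting terms, $uX$ equals $Xu$ plus a sum of terms of the form $X_iu_i$ with $X_i\in\mf{p}$ and $u_i$ a monomial in $U(\mf{k})$ of degree strictly less than $d$. This completes the induction.

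No step presents a real obstacle. The only points needing care are the convention that degree-$0$ monomials are the scalar $1$, and the degree bookkeeping in the inductive step, where $Y_1u_i'$ must still have degree strictly below $d$.
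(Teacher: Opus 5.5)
Your proof is correct and is essentially the paper's argument: induction on $d$, using only $[\mf{k},\mf{p}]\subset\mf{p}$. The one difference is cosmetic. The paper splits off the last factor, writing $uX=Y_1\dotsb Y_{d-1}XY_d+Y_1\dotsb Y_{d-1}[Y_d,X]$ and applying the inductive hypothesis to both terms, whereas you split off the first factor, apply the hypothesis once, and then commute $Y_1$ through by hand.
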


\begin{proof}
We induct on $d$, the lemma being obvious when $d=0$. If $d>0$, we write
\[uX=Y_1\dotsb Y_{d-1}Y_d X=(Y_1\dotsb Y_{d-1}XY_d)+(Y_1\dotsb Y_{d-1}[Y_d,X]).\]
Since $[Y_d,X]\in\mf{p}$, we can apply the induction hypothesis to both terms on the right hand side above.
\end{proof}

We return to induced representations now.

\begin{lemma}
\label{lemsectionslinear}
Let $\phi_s\in\Ind_{P}^G(\pi,s)$ be a flat section, and let $X\in\mf{g}$. Then there are flat sections $\phi_s^{(0)},\phi_s^{(1)}\in\Ind_{P}^G(\pi,s)$ such that
\[X\phi_s=s\phi_s^{(1)}+\phi_s^{(0)}.\]
\end{lemma}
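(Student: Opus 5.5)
I plan to work in the Lie algebra picture just described, where a section of $\Ind_P^G(\pi,s)$ is a linear map $\phi:U(\mf{g})\to\pi$ and flatness means that $\phi_s|_{U(\mf{k})}$ is independent of $s$. Since the Lie algebra of $P$ together with $\mf{k}$ spans $\mf{g}$ (Iwasawa decomposition), $U(\mf{g})=U(\mf{p}_P)\,U(\mf{k})$ by PBW, where I temporarily write $\mf{p}_P=\mf{m}_0\oplus\mf{a}\oplus\mf{n}$. Hence a section is determined by its restriction to $U(\mf{k})$. Conversely, any $K$-finite linear map $U(\mf{k})\to\pi$ satisfying the $\mf{m}_0\cap\mf{k}$-compatibility extends uniquely to an element of $\Ind_P^G(\pi,s)$ for every $s$ (the compact picture). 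So it suffices to show that the function $u\mapsto (X\phi_s)(u)=\phi_s(uX)$ for $u\in U(\mf{k})$ has the form $s\,f_1(u)+f_0(u)$, with $f_0,f_1$ independent of $s$. I then let $\phi_s^{(i)}$ be the flat sections with restriction $f_i$ to $U(\mf{k})$. That these restrictions are legitimate compact-picture data follows by writing $f_1$ and $f_0$ as the difference quotient and the value at $s=0$ of the family $(X\phi_s)|_{U(\mf{k})}$, each of which is such data.

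By linearity I may take $X\in\mf{p}$, since for $X\in\mf{k}$ the section $X\phi_s$ is already flat. For $X\in\mf{p}$, I would prove by induction on the degree of a monomial $u\in U(\mf{k})$ that $\phi_s(uX)=s\,a(u)+b(u)$ with $a,b$ independent of $s$. Lemma \ref{lemhandylemma} gives $uX=Xu+\sum_i X_iu_i$ with $X_i\in\mf{p}$ and $u_i$ of lower degree, so by induction it suffices to handle $\phi_s(Xu)$ for arbitrary $X\in\mf{p}$ and $u\in U(\mf{k})$. This base step reduces the problem to evaluating a single element of $\mf{g}$ placed on the left.

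For the base step, I decompose $X=X_{\mf{m}_0}+X_{\mf{a}}+X_{\mf{n}}+X_{\mf{k}}$ along $\mf{g}=\mf{m}_0+\mf{a}+\mf{n}+\mf{k}$. This decomposition is not direct, but I fix one choice, for instance via $\mf{g}=\mf{n}\oplus\mf{m}_0\oplus\mf{a}\oplus(\mf{k}$-complement$)$ obtained from Iwasawa. Then, using the three transformation laws, I get
\[
\phi_s(Xu)=X_{\mf{m}_0}\cdot\phi_s(u)+(2s+1)\rho_P(X_{\mf{a}})\phi_s(u)+0+\phi_s(X_{\mf{k}}u).
\]
Each of $\phi_s(u)$ and $\phi_s(X_{\mf{k}}u)$ is independent of $s$ by flatness, since $X_{\mf{k}}u\in U(\mf{k})$. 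So this expression is affine in $s$, and the only $s$-dependence is the factor $2s+1$. The sum of affine-in-$s$ terms from the induction is again affine, which completes the argument.

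The main obstacle is the bookkeeping that makes the induction close. Lemma \ref{lemhandylemma} moves $X$ to the left of $u$ but produces new $X_i$, which are handled by applying the base step to each $X_iu_i$ directly; the induction is only needed if one insists on reducing to lower degree. The other point needing care is confirming that the resulting $f_0,f_1$ are the restrictions of genuine ($K$-finite, smooth) sections, which I would settle by the difference-quotient remark above.
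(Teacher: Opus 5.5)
Your argument is correct. Its computational core is the same as the paper's: Lemma \ref{lemhandylemma} rewrites $uX_{\mf{p}}$ as $\sum_j Y_ju_j$ with $Y_j\in\mf{p}$ and $u_j\in U(\mf{k})$. Each $Y_j$ is then split, in some fixed way, along $\mf{m}_0+\mf{a}+\mf{n}+\mf{k}$, and the three transformation laws leave the factor $(2s+1)\rho_P(Y_{j,\mf{a}})$ as the only $s$-dependence.

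You differ from the paper in how you get back from values on $U(\mf{k})$ to flat sections.

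\emph{The paper's route.} It first shows that $X\phi_s$ lies in a fixed finite sum of $K$-types for every $s$. It then expands $X\phi_s=\sum_i a_i(s)\phi_{i,s}$ in a finite basis of flat sections, which is where admissibility enters. Since the $a_i(s)$ are a priori arbitrary functions, it picks $u_1,\dotsc,u_n\in U(\mf{k})$ and a functional $L$ on $\pi$ with $L(B)$ invertible, and solves for $\mbf{a}(s)$ to see that it is affine.

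\emph{Your route.} You prove $(X\phi_s)|_{U(\mf{k})}=sf_1+f_0$ pointwise, with $f_0$ the restriction of $X\phi_0$ and $f_1$ the restriction of $X\phi_1$ minus that of $X\phi_0$. Putting $X\phi_0$ and $X\phi_1$ into flat sections and subtracting produces $\phi_s^{(0)}$ and $\phi_s^{(1)}$. The identity $X\phi_s=s\phi_s^{(1)}+\phi_s^{(0)}$ then follows because sections are determined by their values on $U(\mf{k})$.

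\emph{What each buys.} Your route is shorter, and it needs neither the uniform bound on $K$-types nor the finite-dimensionality of $K$-isotypic pieces. The paper's route directly exhibits the coefficients of $X\phi_s$ in a fixed finite basis of flat sections as affine in $s$, which is the form invoked in Step 2 of the proof of Theorem \ref{thmintertwining}. That form also follows at once from your version by restricting to $U(\mf{k})$.

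One small caution: your stated converse is imprecise. You claim that any $K$-finite map $U(\mf{k})\to\pi$ compatible with $\mf{m}_0\cap\mf{k}$ extends to a section. But $K\cap M_0=C_K(\mf{a})$ can be disconnected, so compatibility must be with that group. Your proof never uses this converse, however. It only uses that every element of $\Ind_P^G(\pi,s_1)$ lies on a flat section, which the paper also uses.
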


\begin{proof}
First we note that when $s$ is fixed, the section $\phi_s$ belongs to a direct sum of finitely many $K$-types for $\Ind_{P}^G(\pi,s)$ by admissibility. The $K$-types of $\Ind_{P}^G(\pi,s)$ are independent of $s$, and so if $\phi_1',\dotsc,\phi_l'$ form a basis for a finite direct sum of $K$-types in $\Ind_{P}^G(\pi,s_1)$ for fixed $s_1\in\C$, then the flat sections $\phi_{1,s}',\dotsc,\phi_{l,s}'$ they give rise to form a basis of the direct sum of the same $K$-types for any $s\in\C$ upon specialization. It is straightforward to show that if $\pi$ is a $(\mf{g},K)$ module and $v\in\pi$ is in a $K$-type of some highest weight $\mu$, then the sum of $K$-types to which $Xv$ belongs have highest weight at most $\mu+2\beta_h$ where $\beta_h$ is the highest root. It thus follows that there is a fixed finite collection of $K$-types, the sum of which contains $X\phi_s$ for any $s\in\C$.

Thus $X\phi_s$ may be written as
\begin{equation}
\label{eqnxphissum}
X\phi_s=\sum_{i=1}^n a_i(s)\phi_{i,s},
\end{equation}
for some $\C$-valued functions $a_i(s)$ and linearly independent flat sections $\phi_{i,s}$ (this linear independence being independent of $s$). The transformation laws satisfied by sections in $\Ind_{P}^G(\pi,s)$, along with the Iwasawa decomposition, imply that such sections are determined by their values on $U(\mf{k})$. So the linear independence of the sections $\phi_{i,s}$ implies that there are elements $u_1,\dotsc, u_n\in U(\mf{k})$ such that the vectors $v_i=(\phi_{i,s}(u_1),\dotsc,\phi_{i,s}(u_n))\in \pi^{\oplus n}$,  for $i=1,\dotsc,n$, are linearly independent in $\pi^{\oplus n}$. Thus there is a linear functional $L:\pi\to\C$ such that the vectors $L(v_i)$ defined by
\[L(v_i)=(L(\phi_{i,s}(u_1)),\dotsc,L(\phi_{i,s}(u_n)))\]
are linearly independent in $\C^n$. Writing $B$ for the $n\times n$ matrix with entries in $\pi$ whose $i$th column is $\tp{v}_i$, we thus have that $L(B)\in GL_n(\C)$, where by $L(B)$ we of course mean the matrix obtained by evaluating $L$ on every entry of $B$.

Now for $s\in\C$, let $\mbf{a}(s)$ be the vector
\[\mbf{a}(s)=(a_1(s),\dotsc,a_n(s))\in\C^n,\]
and let $\mbf{w}(s)$ be the vector
\[\mbf{w}(s)=((X\phi_s)(u_1),\dotsc,(X\phi_s)(u_n))\in\pi^{\oplus n}.\]
Then what we have said may be rephrased as
\begin{equation}
\label{eqnwsasb}
\mbf{w}(s)=\mbf{a}(s)B.
\end{equation}

Let us write $X=X_{\mf{p}}+X_{\mf{k}}$ with respective parts in $\mf{p}$ and $\mf{k}$. Then for any $i=1,\dotsc,n$, we have
\[(X\phi_s)(u_i)=\phi_s(u_iX_{\mf{p}})+\phi_s(u_iX_{\mf{k}}).\]
So by Lemma \ref{lemhandylemma}, there are elements $Y_1,\dotsc,Y_l\in\mf{p}$ and $u_{i,1},\dotsc,u_{i,l}\in U(\mf{k})$ for some nonnegative integer $l$ such that
\[(X\phi_s)(u_i)=\phi_s(u_iX_{\mf{k}})+\sum_{j=1}^l\phi_s(Y_j u_{i,j}).\]
For each $j=1,\dotsc,l$, let us write $Y_j$ as
\[Y_j=Y_{j,\mf{m}_0}+Y_{j,\mf{a}}+Y_{j,\mf{n}}+Y_{j,\mf{k}},\]
in any way, with respective parts in $\mf{m}_0$, $\mf{a}$, $\mf{n}$ and $\mf{k}$. Then we have
\[(X\phi_s)(u_i)=\phi_s(u_iX_{\mf{k}})+\sum_{j=1}^l[(2s+1)\rho_P(Y_{j,\mf{a}})\phi_s(u_{i,j})+Y_{j,\mf{m}_0}\phi_s(u_{i,j})+\phi_s(Y_{j,\mf{k}}u_{i,j})].\]
Thus
\begin{multline*}
L((X\phi_s)(u_i))=\left(\sum_{j=1}^l 2\rho_P(Y_{j,\mf{a}})L(\phi_s(u_{i,j}))\right)s\\
+L(\phi_s(u_iX_{\mf{k}}))+\sum_{j=1}^l\rho_P(Y_{j,\mf{a}})L(\phi_s(u_{i,j}))+L(Y_{j,\mf{m}_0}\phi_s(u_{i,j}))+L(\phi_s(Y_{j,\mf{k}}u_{i,j})),
\end{multline*}
where the only term that depends on $s$ is the $s$ at the end of the first line. In other words, we may write the evaluation $L(\mbf{w}(s))$ as
\[L(\mbf{w}(s))=s\mbf{c}_1+\mbf{c}_0\]
for some vectors $\mbf{c}_0,\mbf{c}_1\in\C^n$. Substituting this into \eqref{eqnwsasb} gives
\[s\mbf{c}_1+\mbf{c}_0=\mbf{a}(s)L(B),\]
i.e.,
\[\mbf{a}(s)=s\mbf{c}_1L(B)^{-1}+\mbf{c}_0L(B)^{-1}.\]
Thus, in view of \eqref{eqnxphissum}, the lemma is proved.
\end{proof}

We remark that the only property of $\pi$ we used in the proof is admissibility.

\begin{proof}[Proof (of Theorem \ref{thmintertwining})]
We proceed in several steps.

\textit{Step 1.} We begin by defining some flat families of cochains for the induced representations $\Ind_P^G(\pi,s)$ as $s$ varies. Recall that $D_+$ has Harish-Chandra parameter $\Lambda$ described in Section \ref{subsecds}. Let
\[\lambda_0=\Lambda-\rho_c+\rho_n-\alpha_0=\Lambda-\rho+2\rho_n-\alpha_0\]
so that $\lambda_0\pm\alpha_0$ is the Blattner parameter for $D_\pm$, and so that by Corollary \ref{corlowestKtypeJ}, we have that $\lambda_0$ is the highest weight of the lowest $K$-type of $J$. 

For a dominant weight $\mu$ of $K$, write $V_\mu$ for the representation of $K$ of highest weight $\mu$. Let $E_{\Lambda-\rho}$ be the representation of $G(\C)$ of highest weight $\Lambda-\rho$. Then by Proposition \ref{propcomplexesconc}, we have four linearly independent cochains
\[c^{d\pm 1}:\sideset{}{^{d\pm 1}}\bigwedge(\mf{g}/\mf{k})\to\Ind_P^G(\pi,s_0)\otimes E_{\Lambda-\rho}^\vee,\quad\textrm{and}\quad c^{d,\pm}:\sideset{}{^d}\bigwedge(\mf{g}/\mf{k})\to\Ind_P^G(\pi,s_0)\otimes E_{\Lambda-\rho}^\vee\]
which are described explicitly in Remark \ref{remfourcochains}, and these cochains span the $(\mf{g},K)$-cohomology complex for $\Ind_P^G(\pi,s_0)\otimes E_{\Lambda-\rho}^\vee$. 

We can then define cochains
\[c_s^{d\pm 1}:\sideset{}{^{d\pm 1}}\bigwedge(\mf{g}/\mf{k})\to\Ind_P^G(\pi,s)\otimes E_{\Lambda-\rho}^\vee,\quad\textrm{and}\quad c_s^{d,\pm}:\sideset{}{^d}\bigwedge(\mf{g}/\mf{k})\to\Ind_P^G(\pi,s)\otimes E_{\Lambda-\rho}^\vee\]
for any $s$ as follows. For any $Y\in\bigwedge^{d-1}(\mf{g}/\mf{k})$, write
\[c^{d-1}(Y)=\sum_i v_i\otimes e_i,\]
with $v_i\in\Ind_P^G(\pi,s_0)$ lying, by Remark \ref{remfourcochains}, in the unique $K$-type in $\Ind_P^G(\pi,s)$ of highest weight $\lambda_0$, and with $e_i\in E_{\Lambda-\rho}^\vee$. Then we define
\[c_s^{d-1}(Y)=\sum_i v_{i,s}\otimes e_i,\]
where $v_{i,s}$ is the flat section which specializes to $v_i$ at $s=s_0$. It is easy to check that this cocycle is well defined for any $s$. We make a similar definition for the cochains $c_s^{d,\pm}$ and $c_s^{d+1}$, and it follows from Proposition \ref{propcomplexesconc} (c) that these cochains span the $(\mf{g},K)$-cohomology complex for $\Ind_P^G(\pi,s)\otimes E_{\Lambda-\rho}^\vee$.

\textit{Step 2.} We now express the coboundary $dc_s^{d-1}$ in terms of the cocycles $c_s^{\pm d}$.

For $s\ne \pm s_0$, the infinitesimal character of the representation $\Ind_P^G(\pi,s)$ does not match that of $E_{\Lambda-\rho}$, and therefore by \cite[Proposition II.3.1 (a)]{BW}, its cohomology vanishes if $s\ne \pm s_0$ and the representation $\Ind_P^G(\pi,s)$ is unitary. In particular,
\[H^i(\mf{g},K;\Ind_P^G(\pi,s)\otimes E_{\Lambda-\rho}^\vee)=0,\quad\textrm{for }s\textrm{ imaginary}.\]
Thus the $(\mf{g},K)$-cohomology complex for $\Ind_P^G(\pi,s)\otimes E_{\Lambda-\rho}^\vee$ is exact when $s$ is imaginary, and so $dc_s^{d-1}\ne 0$, say
\[dc_s^{d-1}=a^+(s)c_s^{d,+}+a^-(s)c_s^{d,-},\]
for some functions $a^+(s)$ and $a^-(s)$ of $s$, one of which is not identically zero. Note that for $X_0,\dotsc,X_{d-1}\in\mf{p}$, the section $dc_s^{d-1}(X_0\wedge\dotsb\wedge X_{d-1})$ takes the form
\begin{equation}
\label{eqndcsdminus1sum}
dc_s^{d-1}(X_0\wedge\dotsb\wedge X_{d-1})=\sum_{i=0}^{d-1} (-1)^{i}X_i\cdot c_s^{d-1}(X_0\wedge\dotsb\wedge\widehat{X}_i\wedge\dotsb\wedge X_{d-1});
\end{equation}
we remark that the terms in the boundary involving commutators vanish since $[\mf{p},\mf{p}]\subset\mf{k}$. It follows from Lemma \ref{lemsectionslinear} then that the functions $a^+(s)$ and $a^-(s)$ are linear in $s$. Also, by Proposition \ref{propcohofinduced}, we have
\[H^{d-1}(\mf{g},K;\Ind_P^G(\pi,-s_0)\otimes E_{\Lambda-\rho}^\vee)\ne 0,\]
and so $c_{-s_0}^{d-1}$ is a cocycle and $dc_{-s_0}^{d-1}=0$. Thus there are constants $a_0^+,a_0^-\in\C$, at least one of which is nonzero, such that
\begin{equation}
\label{eqndcsdminus1linear}
dc_s^{d-1}=(s+s_0)(a_0^+c_s^{d,+}+a_0^-c_s^{d,-}).
\end{equation}

\textit{Step 3.} Let us now assume that $a_0^+\ne 0$. We will show then that
\[\lim_{s\to s_0}\frac{1}{s-s_0}M(s,\phi_s^+)\notin J,\]
for some flat section $\phi_s^+\in \Ind_{P}^G(\pi,s)$ such that $\phi_{s_0}^+\in D_+$. To do this, fix $X_0,\dotsc,X_{d-1}\in\mf{p}$ be such that $c_s^{d,+}(X_0\wedge\dotsb\wedge X_{d-1})\ne 0$. Write
\begin{equation}
\label{eqncsdplussum}
c_s^{d,+}(X_0\wedge\dotsb\wedge X_{d-1})=\sum_j w_{j,s}^+\otimes e_j,
\end{equation}
where, by Remark \ref{remfourcochains}, the $w_{j,s}^+$ are flat sections in the unique $K$-type in $\Ind_P^G(\pi,s)$ of highest weight $\lambda_0+\alpha_0$, and $e_j\in E_{\Lambda-\rho}^\vee$, and at least one $w_{j,s}^+$ is nonzero. Since, by the same remark, the cocycle $c_s^{d,+}$ is only nontrivial on the $K$-type of highest weight $2\rho_n$ in $\bigwedge^{d}\mf{p}$ and $c_s^{d,-}$ is only nontrivial on the $K$-type of highest weight $2\rho_n-2\alpha_0$ in $\bigwedge^{d}\mf{p}$, we can and will assume $X_0,\dotsc,X_{d-1}$ are chosen so that
\[c_s^{d,-}(X_0\wedge\dotsb\wedge X_{d-1})=0.\]

For each $i=0,\dotsc,d-1$, let us write
\[c_s^{d-1}(X_0\wedge\dotsb\wedge\widehat{X}_i\wedge\dotsb\wedge X_{d-1})=\sum_{l}v_{i,l,s}\otimes e_{i,l}\]
where the $v_{i,l,s}$ are flat sections in the unique $K$-type in $\Ind_P^G(\pi,s)$ of highest weight $\lambda_0$, and $e_{i,l}\in E_{\Lambda-\rho}^\vee$. Then we combine this with \eqref{eqndcsdminus1sum}, \eqref{eqndcsdminus1linear}, and \eqref{eqncsdplussum} to find that
\begin{equation*}
\sum_{i=0}^{d-1}\sum_{l}(-1)^{i}[(X_i v_{i,l,s})\otimes e_{i,l}+v_{i,l,s}\otimes (X_i e_{i,l})]=a_0^+(s+s_0)\sum_j w_{j,s}^+\otimes e_j.
\end{equation*}
Let $f\in E_{\Lambda-\rho}$ be a vector such that $\sum_j e_j(f)M(s,w_{j,s}^+)$ is not identically zero as $s$ varies, which exists because $\sum_j M(s,w_{j,s}^+)\otimes e_j$ is not identically zero. Then we get
\begin{equation*}
\sum_{i=0}^{d-1}\sum_{l}(-1)^{i}[e_{i,l}(f)(X_i v_{i,l,s})+(X_i e_{i,l})(f)v_{i,l,s}]=a_0^+(s+s_0)\sum_j e_j(f)w_{j,s}^+,
\end{equation*}
with both sides not identically zero in $s$

We then apply the intertwining operator $M(s,\cdot)$ to get
\begin{equation*}
\sum_{i=0}^{d-1}\sum_{l}(-1)^{i}[e_{i,l}(f)(X_i M(s,v_{i,l,s}))+(X_i e_{i,l})(f)M(s,v_{i,l,s})]=a_0^+(s+s_0)\sum_j e_j(f)M(s,w_{j,s}^+),
\end{equation*}
with both sides not identically zero in $s$. Since the $K$-type of highest weight $\lambda_0$ in $\Ind_{P}^G(\pi,-s)$ or $\Ind_P^G(\pi,s)$ is unique, and in $\Ind_P^G(\pi,s_0)$ it is not contained in $D_+\oplus D_-$, there is a function $c_0(s)$, holomorphic for $\re(s)>0$, with $c_0(s_0)=1$ and such that
\[M(s,v_{i,l,s})=c_0(s)v_{i,l,-s}'\]
where $v_{i,l,s}'$ is the flat section in $\Ind_{P}^G(\pi,s)$ specializing to $M(s_0,v_{i,l,s})$ at $s=-s_0$. Thus we obtain
\begin{equation*}
c_0(s)\sum_{i=0}^{d-1}\sum_{l}(-1)^{i}[e_{i,l}(f)X_i v_{i,l,-s}'+(X_i e_{i,l})(f)v_{i,l,-s}']=a_0^+(s+s_0)\sum_j e_j(f)M(s,w_{j,s}^+).
\end{equation*}
The right hand side vanishes at $s=s_0$ because the intertwining operator annihilates discrete series when $\re(s)>0$. By Lemma \ref{lemsectionslinear}, the sum on the left hand side is linear in $s$ and thus may be written
\[\sum_{i=0}^{d-1}\sum_{l}(-1)^{i}[e_{i,l}(f)(X_i v_{i,l,-s}')+(X_i e_{i,l})(f)v_{i,l,-s}']=(s-s_0)\phi_{-s}^{(1)}\]
for some nonvanishing flat section $\phi_{s}^{(1)}\in\Ind_{P}^G(\pi,s)$. Whence,
\begin{equation*}
c_0(s)(s-s_0)\phi_{-s}^{(1)}=a_0^+(s+s_0)\sum_j e_j(f)M(s,w_{j,s}^+).
\end{equation*}
Thus since $c_0(s_0)=1$, $s_0\ne 0$, and $a_0^+\ne 0$, we have
\[0\ne\frac{1}{2s_0a_0^+}\phi_{-s_0}^{(1)}=\lim_{s\to s_0}\frac{1}{s-s_0}\sum_j e_j(f)M(s,w_{j,s}^+).\]
Thus, taking $\phi_{s}^+=\sum_j e_j(f)w_{j,s}^+$, we have $\phi_{s_0}^+\in D^+$ and
\[\lim_{s\to s_0}\frac{1}{s-s_0}M(s,\phi_s^+)\ne 0.\]
But $\phi_{s}^+$ is in the $K$-type of highest weight $\lambda_0+\alpha_0$, hence so is the limit above, and therefore it is not in $J$. An easy argument then shows that this defines a nonzero $(\mf{g},K)$-equivariant map $D_+\to\Ind_{P}^G(\pi,-s_0)/J$.

If instead $a_0^-\ne 0$, then a completely analogous proof shows that
\[\lim_{s\to s_0}\frac{1}{s-s_0}M(s,\phi_s^-)\notin J,\]
for some flat section $\phi_s^-\in \Ind_{P}^G(\pi,s)$ such that $\phi_{s_0}^-\in D_-$. So if both $a_0^+$ and $a_0^-$ are nonzero, we are done. Otherwise, one of them is zero and the other is not, as we will assume in the next step in order to finish the proof.

\textit{Step 4.} Say now that $a_0^-=0$ but $a_0^+\ne 0$. We will show now that, under this assumption, there is a flat section $\phi_s^-\in \Ind_{P}^G(\pi,s)$ with $\phi_{s_0}^-\in D_-$ such that
\[\lim_{s\to s_0}\frac{1}{s-s_0}M(s,\phi_s^-)\notin J.\]
This will complete the proof of the theorem when $a_0^-=0$, and the case where $a_0^-\ne 0$ but $a_0^+=0$ is completely analogous.

So assuming $a_0^-=0$ but $a_0^+\ne 0$, we find by \eqref{eqndcsdminus1linear} and the exactness of the $(\mf{g},K)$-cohomology complex for $\Ind_P^G(\pi,s)\otimes E_{\Lambda-\rho}^\vee$ when $s$ is purely imaginary that $dc_s^{d,-}$ is not identically zero in $s$. Since
\[H^{d+1}(\mf{g},K;\Ind_P^G(\pi,s_0)\otimes E_{\Lambda-\rho}^\vee)\ne 0\]
by Proposition \ref{propcomplexesconc} (a), every cochain in $C^{d}(\mf{g},K;\Ind_P^G(\pi,s_0)\otimes E_{\Lambda-\rho}^\vee)$ is a cocycle, and thus $dc_s^{d,-}$ vanishes at $s=s_0$. An argument like the one that justified \eqref{eqndcsdminus1linear} then shows that
\[dc_s^{d,-}=b_0(s-s_0)c_s^{d+1},\]
for some nonzero $b_0\in\C$. Thus we may find elements $X_0,\dotsc,X_d\in\mf{p}$ such that
\[dc_s^{d,-}(X_0\wedge\dotsb\wedge X_d)\ne 0.\]
Like in Step 3, pairing with an appropriate element of $E_{\Lambda-\rho}$ gives us an equality
\[\sum_{i=0}^d (-1)^i X_i w_{i,s}^-=(s-s_0)v_s',\]
where both sides are not identically zero in $s$, and $v_s'\in\Ind_P^G(\pi,s)$ is a flat section in the $K$-type of highest weight $\lambda_0$, and $w_{i,s}^-\in\Ind_P^G(\pi,s)$ are flat sections such that $w_{i,s_0}^-\in D^-$. Applying the intertwining operator and dividing by $(s-s_0)$ then gives
\[\frac{1}{s-s_0}\sum_{i=0}^d (-1)^i X_i M(s,w_{i,s}^-)=M(s,v_s').\]
Since $v_s'$ is in the $K$-type of highest weight $\lambda_0$, and such a $K$-type is present in $J$ and not $D_+$ or $D_-$, we have that $M(s_0,v_s')\ne 0$, whence
\[\lim_{s\to s_0}\frac{1}{s-s_0}\sum_{i=0}^d (-1)^i X_i M(s,w_{i,s}^-)\ne 0.\]
It then follows that
\[\lim_{s\to s_0}\frac{1}{s-s_0}M(s,w_{i,s}^-)\ne 0\]
for some $i$, and an examination of $K$-types shows that this element is not in $J$, completing the proof.
\end{proof}

\subsection{An alternate setup}
\label{subsecaltsetup}
Our main theorem below, Theorem \ref{thmmainthm}, will be stated with hypotheses at the archimedean place that are rather different than the setup we introduced in Sections \ref{subsecsetup} and \ref{subsecds} on which the rest of Section \ref{secreal} has been based. So in this subsection, we prove that these two setups are equivalent in the context of connected groups.

So let $G$ be a real, connected, linear, semisimple Lie group, and let $K$ be a maximal compact subgroup of $G$, given to us by a fixed Cartan involution on $G$ which we call $\theta$. Let $T$ be a $\theta$-stable compact Cartan subgroup of $G$, which is thus contained in $K$. Write $\mf{g}$, $\mf{k}$, and $\mf{t}$ for the respective Lie algebras, and write $\Delta=\Delta(\mf{g},\mf{t})$. Normalize root vectors $X_\alpha$ for $\alpha\in\Delta$ with respect to the Killing form $B$ on $\mf{g}$ by \eqref{eqnnormofrootvecs}.

So far the setup is the same as at the beginning of Section \ref{subsecsetup}, except that we are not fixing an ordering on $\Delta$, nor are we fixing a noncompact simple root. Instead, assume we are given a maximal parabolic subgroup $P'$ of $G$ with Langlands decomposition $P'=M_0'A'N'$. Let $M'=M_0'A'$, and let $\mf{m}'$, $\mf{m}_0'$, $\mf{a}'$, and $\mf{n}'$ be the complexified Lie algebras associated with $M'$, $M_0'$, $A'$, and $N'$, respectively. We first prove the following.

\begin{lemma}
\label{lemrootalpha}
With the setup as above, let $\mf{h}'$ be any Cartan subalgebra of $\mf{g}$ containing $\mf{a}'$. Then there is a root $\alpha$ of $\mf{h}$ which vanishes on the orthogonal complement of $\mf{a}'$ in $\mf{h}'$, and it is unique up to sign.
\end{lemma}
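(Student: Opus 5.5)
The plan is to reduce to a Cartan subalgebra of a convenient shape and then use the classification of $\theta$-stable Cartan subalgebras via Cayley transforms; uniqueness is then a triviality about root systems.

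\textit{Step 1: reduction to a $\theta$-stable Cartan.} Since $\mf{a}'$ is central in $\mf{m}'$, every Cartan subalgebra $\mf{h}'$ of $\mf{g}$ containing $\mf{a}'$ lies in $C_{\mf{g}}(\mf{a}')=\mf{m}'$. It is therefore a Cartan subalgebra of $\mf{m}'$. Any two Cartan subalgebras of the complex reductive algebra $\mf{m}'$ are conjugate under inner automorphisms of $\mf{m}'$. Such automorphisms fix $\mf{a}'$ pointwise and preserve the Killing form $B$ of $\mf{g}$, so they carry the orthogonal complement of $\mf{a}'$ in one Cartan to that in the other, and carry roots to roots. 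Hence existence and uniqueness of $\alpha$ may be checked for a single convenient choice.

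\textit{Step 2: choice of Cartan and identification of the complement.} I would take $\mf{h}'=\mf{a}'\oplus\mf{t}''$, where $\mf{t}''$ is the complexified Lie algebra of a compact Cartan subgroup of $M_0'$. Its existence is exactly where a hypothesis is needed: this is the input, as in the introduction, that $M'$ (equivalently $M_0'$) has discrete series. Since $\mf{a}'$ is in the $(-1)$-eigenspace of $\theta$ and $\mf{t}''$ in the $(+1)$-eigenspace, they are $B$-orthogonal. Hence the orthogonal complement of $\mf{a}'$ in $\mf{h}'$ is $\mf{t}''$, and the real form of $\mf{h}'$ is a $\theta$-stable Cartan with one-dimensional split part $\mf{a}'_\R$.

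\textit{Step 3: existence via Cayley transforms (main step).} Since $G$ has a compact Cartan $T$, the Kostant--Sugiura classification applies: every $\theta$-stable Cartan subalgebra is $K$-conjugate to one obtained from $\mf{t}$ by Cayley transforms along a set of strongly orthogonal noncompact imaginary roots, and the dimension of the split part equals the number of roots used. Our $\mf{h}'$ has split dimension $1$, so after $K$-conjugation $\mf{h}'=C_\beta(\mf{t})$ for a single noncompact root $\beta\in\Delta$. Exactly as for $C_{\alpha_0}$ in Section \ref{subsecsetup}, $C_\beta$ fixes the orthogonal complement of $H_\beta$ in $\mf{t}$ pointwise and sends $H_\beta$ to a multiple of $X_\beta+X_{-\beta}$. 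Thus $\mf{a}'=\C(X_\beta+X_{-\beta})$ and $\mf{t}''=H_\beta^\perp$. The root $\alpha=\tp{C_\beta}(\beta)$ of $\mf{h}'$ then vanishes on $\mf{t}''$, because $\beta$ vanishes on $H_\beta^\perp$. I expect the main obstacle to be stating the Cayley-transform classification carefully enough to get $\mf{a}'=\C(X_\beta+X_{-\beta})$ on the nose, not merely up to conjugacy. Conjugating by $K$ is harmless for the same reasons as in Step 1.

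\textit{Step 4: uniqueness.} Any root vanishing on the hyperplane $\mf{t}''$ of $\mf{h}'$ is determined by its value on $\mf{a}'$, so all such roots are proportional to $\alpha$. Since the root system of a semisimple complex Lie algebra is reduced, the only roots proportional to $\alpha$ are $\pm\alpha$.
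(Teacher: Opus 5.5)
Your proof is correct, and its skeleton is the same as the paper's. Both move to a $\theta$-stable Cartan subalgebra whose split part is exactly (a conjugate of) $\mf{a}'$, produce a real root there, and get uniqueness up to sign from reducedness. The differences are in the inputs.

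\textbf{The reduction.} The paper asserts that $\mf{h}'$ is $G_\C$-conjugate to $\mf{a}'\oplus(\mf{t}\cap\mf{m}_0')$, and then makes this $\theta$-stable using \cite[Proposition 6.59]{knappbook}. Your Step 1, conjugating within $\mr{Int}(\mf{m}')$, is more careful. It makes explicit that the conjugating automorphism fixes $\mf{a}'$ pointwise, which is what lets the orthogonal complements of $\mf{a}'$ correspond.

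\textbf{Existence.} The paper uses \cite[Proposition 6.70]{knappbook}: a $\theta$-stable Cartan subalgebra without real roots is maximally compact. Since $G$ has a compact Cartan, a Cartan with one-dimensional split part is not maximally compact, so it has a real root. Because the split part is exactly $\mf{a}'$, a root is real precisely when it vanishes on the compact part, which is the orthogonal complement of $\mf{a}'$. You instead invoke the Kostant--Sugiura description of $\theta$-stable Cartans up to $K$-conjugacy via Cayley transforms. This is a heavier input, but it gives the explicit formula $\alpha=\tp{C}_\beta(\beta)$ for a noncompact root $\beta$ of $\mf{t}$. That formula parallels the construction of Section \ref{subsecsetup} and is the form in which the root is used in Proposition \ref{propaltsetup}.

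\textbf{Your flagged obstacle.} It is not a real obstacle. For $k\in K$, $\Ad(k)$ commutes with $\theta$. So it carries the $(-1)$-eigenspace $\mf{a}'$ of $\theta$ on $\mf{h}'$ exactly onto the $(-1)$-eigenspace $\C(X_\beta+X_{-\beta})$ of $\theta$ on $C_\beta(\mf{t})$.

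\textbf{The extra hypothesis.} You are right that a compact Cartan in $M_0'$ is genuinely needed. The lemma does not state it, but the paper uses it implicitly (cf.\ \eqref{eqnassumption2}). For the Siegel parabolic of $Sp_6(\R)$, with $M'\cong GL_3(\R)$, a root vanishing on the complement of $\mf{a}'$ in the diagonal Cartan would have to be proportional to $e_1+e_2+e_3$, and no such root exists.
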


\begin{proof}
Let $\mf{t}$ be the complexified Lie algebra of $T$, and $\mf{t}'=\mf{t}\cap\mf{m}_0'$, where $\mf{m}_0'$ is the complexified Lie algebra of $M_0'$. Then $\mf{h}'$ is conjugate to $\mf{a}'\oplus\mf{t}'$ by an element of the complexification $G_\C$ of $G$. Therefore it suffices to prove the lemma for $\mf{a}'\oplus\mf{t}'$ in place of $\mf{h}$.

Now by \cite[Proposition 6.59]{knappbook}, the subalgebra $\mf{a}'\oplus\mf{t}'$ can be conjugated by an element $g_1$ of $G$ to a $\theta$-stable Cartan subalgebra of $\mf{g}$, which we call $\mf{h}''$; actually we will prove the lemma for $\Ad(g_1)(\mf{a}')$ in $\mf{h}''$. Now a root of $\mf{h}''$ is real if and only if it vanishes on $\Ad(g_1)(\mf{t}')$. Since $\mf{a}'\oplus\mf{t}'$, and hence $\mf{h}''$, are not maximally compact, by \cite[Proposition 6.70]{knappbook} there is a real root of $\mf{h}''$. This root is unique up to sign since the root system of $\mf{g}$ is reduced. This proves the lemma.
\end{proof}

Let us now fix a discrete series representation $\pi'$ of $M_0'$. Then we have the following.

\begin{proposition}
\label{propaltsetup}
Notation as above, let $\widetilde{\alpha}$ be the restriction to $\mf{a}'$ of a root $\alpha$ of any Cartan subalgebra $\mf{h}'$ containing $\mf{a}'$ and vanishing on the orthogonal complement of $\mf{a}'$ in $\mf{h}'$; assume moreover that $\alpha$ is chosen with the sign so that $\widetilde{\alpha}$ is in $\mf{n}'$. Such a root $\alpha$ exists by Lemma \ref{lemrootalpha}.

Write $s_0'$ for the positive real number such that $\frac{1}{2}\widetilde{\alpha}=2s_0'\rho_{P'}$, where $\rho_{P'}$ is the half sum of roots of $A'$ in $N'$. Let us assume that the induced representation $\Ind_{P'}^G(\pi',s_0')$ has infinitesimal character matching that of a finite dimensional representation $E$ of the complexification $G_\C$ of $G$. Finally, assume that the unique irreducible quotient, call it $J'$, of $\Ind_{P'}^G(\pi',s_0')$, is unitary.

First, if $J'\otimes E^\vee$ has nonvanishing $(\mf{g},K)$-cohomology, then there are:
\begin{itemize}
\item A system of positive roots $\Delta^{+}$ in $\Delta$;
\item A simple noncompact root $\alpha_{0}$ in $\Delta^{+}$;
\item A Harish-Chandra parameter $\Lambda$ for $\mf{t}$ in $\mf{g}$ which is dominant for $\Delta^{+}$ and such that
\[\frac{2\langle\Lambda,\alpha_{0}\rangle}{\langle\alpha_{0},\alpha_{0}\rangle}=1;\]
\end{itemize}
such that, letting $P=M_0AN$ be as defined as in Section \ref{subsecsetup} using these $\Delta^{+}$ and $\alpha_0$, and letting $\pi$ and $s_0$ be as in defined in Section \ref{subsecds} using $\Lambda$ in addition, then the following holds: There is a $g_0\in K$ such that:
\begin{itemize}
\item We have $M=g_0M'g_0^{-1}$ and $P=g_0P'g_0^{-1}$, and
\item We have $\pi\cong\pi'\circ\Ad(g_0^{-1})$ as representations of $M$, and
\item We have $s_0=s_0'$.
\end{itemize}

Second, if on the other hand $H^*(\mf{g},K;J'\otimes E^\vee)=0$, then actually
\[H^*(\mf{g},K;\Ind_{P'}^G(\pi',s_0')\otimes E^\vee)=0,\]
as well.
\end{proposition}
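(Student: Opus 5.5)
Conjugate everything into standard position first, then use the Vogan--Zuckerman classification of unitary cohomological modules. Fix $\Delta^+$ and $\alpha_0$ so that a $K$-conjugate of $P'$ is the parabolic $P$ of Section \ref{subsecsetup}.

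\textbf{Step 1: conjugating $P'$ into standard position.} By conjugating by an element of $K$ (using \cite[Proposition 6.59]{knappbook} as in Lemma \ref{lemrootalpha}), we may assume $\mf{a}'\oplus\mf{t}'$ is a $\theta$-stable Cartan subalgebra $\mf{h}$, with $\mf{t}'=\mf{t}\cap\mf{m}_0'$ and $\mf{a}'\subset\mf{p}$. The real root $\alpha$ of Lemma \ref{lemrootalpha} is then the only real root, and inverse Cayley transform turns it into a noncompact imaginary root $\beta$ of $\mf{t}$, with $\mf{a}'=\C(X_\beta+X_{-\beta})$ after a further $K\cap M'$-conjugation. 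We then need an ordering $\Delta^+$ of $\Delta(\mf{g},\mf{t})$ in which $\beta$ is simple and for which the $\mf{a}'$-ordering defines $\mf{n}'$. Take a generic element $H_0\in i\mf{t}'_\R$ regular for $\mf{m}_0'$, perturbed by $\epsilon H_\beta$. Since $\beta$ is orthogonal to $\mf{t}'$, the root $\beta$ is simple in the resulting chamber (compare \cite[Proposition 4.1]{blank}). Setting $\alpha_0=\beta$, the construction of Section \ref{subsecsetup} recovers $M'$ and $P'$ up to the conjugation $g_0\in K$. The sign condition that $\widetilde\alpha$ lies in $\mf{n}'$ matches the definition of $\mf{n}$. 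The comparison $s_0=s_0'$ is then immediate from \eqref{eqnsnaught} and the definition of $s_0'$, because $\tfrac12\widetilde\alpha(X_{\alpha_0}+X_{-\alpha_0})=1$ for normalized root vectors.

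\textbf{Step 2: producing $\Lambda$.} Let $\Lambda_0$ be a Harish-Chandra parameter of $\pi'$, viewed on $\mf{t}'$. The infinitesimal character of $\Ind_{P'}^G(\pi',s_0')$, transported to $\mf{t}$ via the Cayley transform, is $\Lambda:=\Lambda_0+\tfrac12\alpha_0$. Indeed, $s_0'$ corresponds to $\tfrac12\alpha_0$ on $\mf{a}'$. Hypothesis (A.5) says $\Lambda$ is regular integral for $E$, and in particular $\Lambda+\rho$ is integral. We choose the $\mf{m}_0'$-chamber in Step 1 so that $\Lambda_0$ is dominant for it. Then $\Lambda$ is $\Delta^+$-dominant with $2\langle\Lambda,\alpha_0\rangle/\langle\alpha_0,\alpha_0\rangle=1$, and this forces $2\langle\rho,\alpha_0\rangle/\langle\alpha_0,\alpha_0\rangle=1$ as well, since $\alpha_0$ is simple.

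The delicate point is whether $\Lambda$ is dominant for the perturbed chamber. Roots $\gamma$ with $\langle\gamma,\Lambda_0\rangle=0$ on $\mf{t}'$ but $\gamma\neq\pm\alpha_0$ would need $\langle\gamma,\alpha_0\rangle>0$, and regularity of $\Lambda$ (from (A.5)) rules out zero. We then check that $\pi$ built from $\Lambda$ in Section \ref{subsecds} equals $\pi'\circ\Ad(g_0^{-1})$. On $M_0^\circ$ this holds by Harish-Chandra parameters. On $Z_{M_0}$ one must compare central characters with $\xi$. Here we would use the cohomological hypothesis: the $K$-types of $J'$, via Corollary \ref{corlowestKtypeJ} together with Vogan--Zuckerman \cite{VZ}, pin down the central character up to the twist by the at most two-element group of Proposition \ref{propaboutgroups} (c).

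\textbf{Step 3: using cohomology of $J'$; the main obstacle.} Since $J'$ is unitary with nonzero cohomology, \cite{VZ} gives $J'\cong A_{\mf{q}'}(\lambda')$ for some $\theta$-stable $\mf{q}'$. Our $J'$ is nontempered, being a Langlands quotient with $s_0'>0$, and its Langlands parameter has one-dimensional $\mf{a}'$. This should force the Levi of $\mf{q}'$ to have semisimple rank one with a noncompact root. We expect this identification, and the consistency of the central character on $Z_{M_0}$, to be the main difficulty. We would argue by comparing Langlands parameters of $A_{\mf{q}'}(\lambda')$, computed as in Section \ref{subsecstandardmods}, with those of $J'$. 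By uniqueness of Langlands data, the $\theta$-stable datum so obtained must be the one built from $(\Delta^+,\alpha_0,\Lambda)$ in Section \ref{subsecstandardmods}, which determines $\pi$ including its central character.

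\textbf{Second assertion.} If $H^*(\mf{g},K;J'\otimes E^\vee)=0$, we use \cite[Theorem III.3.3]{BW}: the cohomology of the induced module is that of $\pi'$ twisted appropriately. Nonvanishing would force $\pi'$ to be cohomological with the matching parameter, hence the configuration of Steps 1--2. Then Theorem \ref{thmstdmodforind} and Proposition \ref{propcomplexesconc} (b) give cohomology for $J'$, a contradiction.
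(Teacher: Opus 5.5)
\textbf{There is a gap in the first assertion.} Your Steps 1--2 are a workable direct construction. It is cleanest to take $\Delta^+$ to be the chamber of the regular integral weight $\Lambda=\Lambda_0+\tfrac12\alpha_0$; then $2\langle\Lambda,\alpha_0\rangle/\langle\alpha_0,\alpha_0\rangle=1$ and integrality force $\alpha_0$ to be simple. These steps correctly recover $P$, $s_0$ and the restriction of $\pi$ to $M_0^\circ$.

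What they do not settle is the one thing genuinely at stake: the character of $Z_{M_0}$. That is, is $\pi'$ equal to $\Ind_{M_0^\circ Z_{M_0}}^{M_0}(\pi_0\otimes\xi|_{Z_{M_0}})$, or to its twist by the nontrivial character of $Z_{M_0}/(M_0^\circ\cap Z_{M_0})$? This is a real constraint; compare the parity condition in the examples of Section \ref{subsecexamples}.
\begin{itemize}
\item In Step 2 you appeal to Corollary \ref{corlowestKtypeJ}. That corollary is about $A_{\mf q}(\lambda)$ and applies to $J'$ only once $J'\cong A_{\mf q}(\lambda)$ is known. Also, ``up to the twist'' is exactly the ambiguity to be removed.
\item Step 3 is supposed to supply that identification, but it only asserts that the Levi of the Vogan--Zuckerman parabolic $\mf q'$ ``should'' contain a single noncompact root pair.
\item The reason you give, $\dim\mf a'=1$, does not suffice. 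It only says the Levi has real rank one, which does not exclude Levis of type $\mf{su}(n,1)$.
\item Ruling those out needs the exact value $\nu=\tfrac12\widetilde\alpha$. Using that would require the Langlands parameters of a general $A_{\mf q'}(\lambda')$, and Section \ref{subsecstandardmods} computes them only for the rank-one $\mf q$.
\end{itemize}

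\textbf{The missing idea is to compute the cohomology of $J'$ exactly and feed it into \cite[Theorem 5.5]{VZ}.} This is how the paper proceeds.
\begin{enumerate}
\item Integrality of the infinitesimal character shows that no $\Ind_{P'}^G(\sigma,s_1')$ with $0<\re(s_1')<s_0'$ has the same infinitesimal character: the relevant pairing with $\alpha^\vee$ has real part $\pm\re(s_1')/s_0'\notin\Z$.
\item By the Langlands classification, every constituent of $\Ind_{P'}^G(\pi',s_0')$ other than $J'$ is therefore tempered, so it has cohomology only in degree $d$ \cite[Proposition III.5.3]{BW}.
\item Combine this with \cite[Theorem III.3.3]{BW} and the long exact sequence. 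This gives the second assertion at once. In the cohomological case it gives $\dim H^q(\mf g,K;J'\otimes E^\vee)=1$ for $q=d\pm1$ and $0$ otherwise.
\item Now apply $H^q(\mf g,K;A_{\tilde{\mf q}}(\tilde\lambda)\otimes E^\vee)\cong\hom_{\tilde{\mf l}\cap\mf k}(\bigwedge^{q-R}(\tilde{\mf l}/\tilde{\mf l}\cap\mf k),\C)$. It forces $\dim\tilde{\mf l}/(\tilde{\mf l}\cap\mf k)=2$. So $\tilde{\mf l}$ has a unique positive noncompact root, which is simple, and its other roots are compact and orthogonal to it.
\item Discarding the compact roots gives $J'\cong A_{\mf q}(\lambda)$. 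Theorem \ref{thmstdmodforind} plus uniqueness of Langlands data then yields $P$, $s_0$ and $\pi$, including its central character.
\end{enumerate}

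\textbf{An alternative fix using your own argument.} The mechanism in your second-assertion argument could also close the gap. By \cite[Theorem III.3.3]{BW}, with the Kostant representative $w$ uniquely determined, cohomology of the induced module pins the $Z_{M_0}$-character of $\pi'$ to that of $\pi$, whose induced module is cohomological by Proposition \ref{propcohofinduced}. But this works only once you know $H^*(\mf g,K;\Ind_{P'}^G(\pi',s_0')\otimes E^\vee)\neq0$ whenever $J'$ is cohomological. That again requires the temperedness of the other constituents, which your proposal never establishes.
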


\begin{proof}
Let $T_0'$ be a compact Cartan subgroup of $M_0$ contained in $K$ (which exists since $M_0$ has discrete series by assumption), and $\mf{t}_0'$ its Lie algebra. Then $\mf{a}'\oplus\mf{t}_0'$ is a Cartan subalgebra of $\mf{g}$.

We will establish this proposition by proving a sequence of claims.

\textit{Claim 1}. Let $s_1'$ be any complex number with $0<\re(s_1')<s_0'$, and let $\sigma$ be any irreducible admissible representation of $M_0'$. Then the infinitesimal character of (any constituent of) $\Ind_{P'}^G(\sigma,s_1')$ is different from that of $\Ind_{P'}^G(\pi',s_0')$.

\textit{Proof of claim}. We view the infinitesimal characters in question as Weyl orbits of complex weights of $\mf{a}'\oplus\mf{t}_0'$. Let $\chi_0$ and $\chi_1$ be the infinitesimal characters of $\Ind_{P'}^G(\pi',s_0')$ and $\Ind_{P'}^G(\sigma,s_1')$, respectively. Let $\lambda_0$ be any representative of $\chi_0$. Then $\lambda_0$ is an integral weight because $\chi_0$ is the infinitesimal character of a finite dimensional algebraic representation of $G_\C$. In particular, we have that
\[2\frac{\langle\lambda_0|_{\mf{a}'},\widetilde{\alpha}\rangle}{\langle\widetilde{\alpha},\widetilde{\alpha}\rangle}=2\frac{\langle\lambda_0,\alpha\rangle}{\langle\alpha,\alpha\rangle}\in\Z,\]
where the equality holds because $\widetilde{\alpha}$ vanishes on $\mf{t}_0'$ and $\mf{t}_0'$ is orthogonal to $\mf{a}'$.

On the other hand, let $\lambda_1$ be a representative of $\chi_1$ such that $\lambda_1|_{\mf{t}_0'}$ is a representative of the infinitesimal character of $\sigma$, and such that $\lambda_1|_{\mf{a}'}=\pm\frac{1}{2}(s_1/s_0')\widetilde{\alpha}$. Then
\[2\re\left(\frac{\langle\lambda_1|_{\mf{a}'},\widetilde{\alpha}\rangle}{\langle\widetilde{\alpha},\widetilde{\alpha}\rangle}\right)=\pm\frac{\re(s_1)}{s_0'}\notin\Z.\]
Therefore $\lambda_1$ does not lie in the Weyl orbit $\chi_0$, proving the claim.

\textit{Claim 2}. Let $U$ be any irreducible constituent of $\Ind_{P'}^G(\pi',s_0')$ different from $J'$. Then $U$ is tempered.

\textit{Proof of claim}. If $U$ were nontempered, then by a property of the Langlands classification, namely \cite[Proposition IV.4.13]{BW}, it is a constituent of some parabolic induction of the form $\Ind_{P'}^G(\sigma,s_1')$ with $\re(s_1')<s_0'$, contradicting the previous claim.

\textit{Claim 3}. If $H^*(\mf{g},K;J'\otimes E^\vee)\ne 0$, then $H^q(\mf{g},K;J'\otimes E^\vee)$ is nonvanishing for some $q\ne d$; if instead $H^*(\mf{g},K;J'\otimes E^\vee)=0$, then
\[H^*(\mf{g},K;\Ind_{P'}^G(\pi',s_0')\otimes E^\vee)=0,\]
and so the second statement of the proposition is true.

\textit{Proof of claim}. By \cite[Proposition III.5.3]{BW} and Claim 2 above, any irreducible constituent $U$ of $\Ind_{P'}^G(\pi',s_0')$ different from $J'$ has that $H^q(\mf{g},K;U\otimes E^\vee)=0$ when $q\ne d$. If the same were true for $J'$ in place of such $U$, then
\begin{equation}
\label{eqngkcohofind}
H^q(\mf{g},K;\Ind_{P'}^G(\pi',s_0')\otimes E^\vee)
\end{equation}
would be concentrated in degree $q=d$. But by \cite[Theorem III.3.3]{BW}, the cohomology group above is concentrated and nonvanishing in two consecutive degrees if it is nonvanishing at all. Thus it must vanish under the assumption that $H^*(\mf{g},K;J'\otimes E^\vee)=0$ (as well as the cohomology of any such $U$ as above, though this is not needed to conclude), proving the second statement of the proposition. If instead $H^*(\mf{g},K;J'\otimes E^\vee)\ne 0$, then indeed \eqref{eqngkcohofind} is nonvanishing and concentrated in degree $q=d$, giving a contradiction. This proves the claim.

So from now on we assume that $H^*(\mf{g},K;J'\otimes E^\vee)\ne 0$, and we aim to prove the first statement of the proposition.

\textit{Claim 4}. Under this assumption, we have
\[\dim_{\C}H^q(\mf{g},K;J'\otimes E^\vee)=\begin{cases}
1 & \textrm{if }q=d-1,d+1;\\
0 & \textrm{otherwise.}
\end{cases}\]

\textit{Proof of claim}. Let
\[\mc{K}'=\ker(\Ind_{P'}^G(\pi',s_0')\to J').\]
Then as in the proof of Claim 3 above, we have that
\[H^q(\mf{g},K;\mc{K}'\otimes E^\vee)\]
is concentrated in degree $d$, and
\[H^q(\mf{g},K;\Ind_{P'}^G(\pi',s_0')\otimes E^\vee)\]
is concentrated in two consecutive degrees and is $1$-dimensional in each. Analysis of the long exact $(\mf{g},K)$-cohomology sequence associated with the short exact sequence
\[0\to \mc{K}'\otimes E^\vee\to \Ind_{P'}^G(\pi',s_0')\otimes E^\vee\to J'\otimes E^\vee\to 0\]
then leads to the desired conclusion.

For the next claims, we must use the theory of cohomological induction which was employed in Section \ref{subsecstandardmods}.

\textit{Claim 5}. There is a system of positive roots $\Delta^+$ in $\Delta$, and a simple noncompact root $\alpha_0$, and a set $S_c$ of simple compact roots in $\Delta^+$ which are orthogonal to $\alpha_0$, such that the following holds: Let $\tilde{\mf{q}}$ be the $\theta$-stable parabolic subalgebra which is standard with respect to $\Delta^+$ and whose standard Levi $\tilde{\mf{l}}$ contains exactly the simple roots $\alpha_0$ and the ones in $S_c$. Let $\tilde{L}=N_G(\tilde{\mf{q}})$ the corresponding Levi subgroup in $G$. Write $\tilde{\mf{q}}=\tilde{\mf{l}}\oplus\tilde{\mf{u}}$, where $\tilde{\mf{u}}$ is the radical of $\tilde{\mf{q}}$. Then there is a unitary character $\tilde{\lambda}$ of $L$ whose differential $d\tilde{\lambda}$ satisfies
\[\langle\beta,d\tilde{\lambda}|_{\mf{t}}\rangle\geq 0,\qquad\textrm{for every root }\beta\textrm{ of }\mf{t}\textrm{ in }\tilde{\mf{u}},\]
such that $J'\cong A_{\tilde{\mf{q}}}(\tilde{\lambda})$, in the notation of Section \ref{subsecstandardmods}.

\textit{Proof of claim}. Because $J'$ is assumed to be unitary and cohomological with respect to $E^\vee$, appealing to \cite[Theorem 5.6]{VZ} gives us objects $\Delta^+$, $\tilde{\mf{q}}=\tilde{\mf{l}}\oplus\tilde{\mf{u}}$, and $\tilde{\lambda}$ satisfying the conclusions of the claim, except without any information about the roots which are contained in $\tilde{\mf{l}}$. So we must show that there is a simple noncompact root $\alpha_0$ in $\tilde{\mf{l}}$, and that the other simple roots contained in $\tilde{\mf{l}}$ are compact and orthogonal to $\alpha_0$.

Let $R$ be the number of noncompact roots in $\tilde{\mf{u}}$. Then by \cite[Theorem 5.5]{VZ}, we have
\[H^q(\mf{g},K,A_{\tilde{\mf{q}}}(\tilde{\lambda})\otimes E^\vee)\cong\hom_{\tilde{\mf{l}}\cap\mf{k}}(\sideset{}{^{q-R}}\bigwedge(\tilde{\mf{l}}/(\tilde{\mf{l}}\cap\mf{k})),\C),\]
where we have written $\mf{k}$ for the complexified Lie algebra of $K$, and the functor $\hom_{\tilde{\mf{l}}\cap\mf{k}}$ means $\C$-linear maps that are equivariant with respect to the adjoint action by $\tilde{\mf{l}}\cap\mf{k}$. Thus by Claim 4 above, we have
\[\dim_{\C}\hom_{\tilde{\mf{l}}\cap\mf{k}}(\sideset{}{^{q-R}}\bigwedge(\tilde{\mf{l}}/(\tilde{\mf{l}}\cap\mf{k})),\C)=\begin{cases}
1 & \textrm{if }q=d-1,d+1;\\
0 & \textrm{otherwise.}
\end{cases}\]
There are always the obvious isomorphisms
\[\sideset{}{^0}\bigwedge(\tilde{\mf{l}}/(\tilde{\mf{l}}\cap\mf{k}))\cong\C\]
and
\[\sideset{}{^{\mathrm{top}}}\bigwedge(\tilde{\mf{l}}/(\tilde{\mf{l}}\cap\mf{k}))\cong\C;\]
the former must therefore contribute to degree $q=d-1$, and the latter to degree $d+1$, from which it follows that $R=d-1$ and
\[\dim_{\C}(\tilde{\mf{l}}/(\tilde{\mf{l}}\cap\mf{k}))=2.\]
This proves that there is a unique positive noncompact root $\alpha_0$ in $\tilde{\mf{l}}$, which must therefore be simple. It is orthogonal to any other simple root in $\tilde{\mf{l}}$; indeed, if $\beta$ is any such root, then $\beta$ is compact, and were it not orthogonal to $\alpha_0$, then $\alpha_0+\beta$ would be a root, necessarily in $\tilde{\mf{l}}$. But then $\alpha_0+\beta$ must also be noncompact, contradicting the uniqueness of $\alpha_0$. This finishes the proof of the claim.

\textit{Claim 6}. With $\Delta^+$ and $\alpha_0$ as in Claim 5, let $\mf{q}$ be the $\Delta^+$-standard $\theta$-stable parabolic subgroup whose Levi is $\mf{l}=\mf{t}\oplus\C X_{\alpha_0}\oplus\C X_{-\alpha_0}$, and let $L=N_G(\mf{q})$ its corresponding Levi subgroup. Also, let $\lambda=\tilde{\lambda}|_{L}$. Then $A_{\tilde{\mf{q}}}(\tilde{\lambda})\cong A_{\mf{q}}(\lambda)$.

\textit{Proof of claim}. As in the proof of Proposition \ref{propisosbetweends}, this follows immediately from the spectral sequence of \cite[Theorem 11.77]{knvo}, using that the roots from $\tilde{\mf{l}}$ omitted in $\mf{l}$ are all compact by Claim 5.

\textit{Claim 7}. Let $\Delta^+$ and $\alpha_0$ be as in Claim 5 above. Write $\rho$ for the half sum of positive roots in $\Delta^+$, and let $\Lambda=(d\lambda|_{\mf{t}})+\rho$ with $\lambda$ as in Claim 6, which is by definition a Harish-Chandra parameter for $\mf{t}$ in $\mf{g}$. Then the proposition is true for this choice $\Delta^+$, $\alpha_0$, and $\Lambda$.

\textit{Proof of claim}. By Theorem \ref{thmstdmodforind}, the Langlands data for $A_{\mf{q}}(\lambda)$, hence for $J'$ by Claims 5 and 6 above, are given by the parabolic $P=M_0AN$ from Section \ref{subsecsetup}, the discrete series representation $\pi$ from Section \ref{subsecds}, and the character $\delta_P^{s_0}$ of $A$ with $s_0$ as in \eqref{eqnsnaught}. By the Langlands classification, using that $P$ and $P'$ are maximal (so that there is no ambiguity about the parabolic subgroups being used to induced tempered representations) we must then have, up to conjugation by an element of $G$, that $P$ and $P'$ coincide, that $\pi$ and $\pi'$ coincide, and that $s_0=s_0'$. This concludes the proof.
\end{proof}

\section{Maximal parabolic Eisenstein series}
\label{secadele}

\subsection{Preliminaries on Eisenstein series}
\label{subseceisenstein}

We now depart a bit from the notation of Section \ref{secreal}. Let $\mc{G}$ be a connected, semisimple algebraic group over $\Q$. We fix a factorizable maximal compact subgroup $K\subset \mc{G}(\A)$ and write $K_f$ and $K_\infty$ for, respectively, the finite adelic and archimedean factors of $K$.

Fix a maximal parabolic $\Q$-subgroup $\mc P$ of $\mc G$ with Levi decomposition $\mc P=\mc{M}\mc{N}$. Let $\pi$ be a cuspidal automorphic representation of $\mc{M}(\A)$, and let $\chi_{\pi}$ be its central character. We assume that $\chi_\pi|_{A_{\mc{M}}(\R)^\circ}$ is trivial, where $A_{\mc{M}}$ is the maximal $\Q$-split torus in the center of $\mc{M}$. This can always be arranged by twisting.

For $s$ varying in $\C$, we can consider the induced representation
\[\Ind_{\mc{P}(\A)}^{\mc{G}(\A)}(\pi,s)\]
where we recall (see the section on notation in the introduction) that the induction is unitarily normalized, and implicitly we are taking smooth $K$-finite vectors.

Let $\phi_s\in\Ind_{\mc{P}(\A)}^{\mc{G}(\A)}(\pi,s)$ be a \textit{flat section}, i.e., it is a family of elements of $\Ind_{\mc{P}(\A)}^{\mc{G}(\A)}(\pi,s)$, one for each $s\in\C$, such that, as functions on $K$, we have $\phi_s|_{K}$ is independent of $s$. Any given section $\phi_{s_1}\in\Ind_{\mc{P}(\A)}^{\mc{G}(\A)}(\pi,s_1)$, for a fixed $s_1\in\C$, can be put into a unique flat section.

With this setup we can now form an Eisenstein series in the usual way. In fact, we can view $\pi$ as a subspace of
\[L^2(\mc{M}(\Q)A_{\mc{M}}(\R)^\circ\backslash \mc{M}(\A)),\]
and so it makes sense to evaluate smooth vectors in $\pi$ at the identity $1_{\mc{M}}$ of $\mc{M}(\A)$. Thus we can define
\[E(\phi,s,g)=\sum_{\gamma\in \mc{P}(\Q)\backslash \mc{G}(\Q)}\phi_s(\gamma g)(1_{\mc{M}}).\]
The above expression is convergent and holomorphic for $\re(s)$ sufficiently large, and defined for general $s$ by meromorphic continuation; that this can be done is due to Langlands \cite{LanglandsES}. The precise meaning of meromorphicity here is as follows: Given a bounded region $U$ in $\C$, there are points $s_1,\dotsc,s_n\in\C$ (not necessarily distinct) such that, for each $g\in \mc{G}(\A)$, we have that
\[(s-s_1)\dotsb(s-s_n)E(\phi,s,g)\]
is a holomorphic function of $s\in\C$.

Since $\mc P$ is maximal, it is known that:
\begin{itemize}
\item The Eisenstein series $E(\phi,s,g)$ has at most a finite number of poles for $\re(s)\geq 0$;
\item All the poles of $E(\phi,s,g)$ when $\re(s)\geq 0$ lie on the segment $(0,1/2]$;
\item All the poles of $E(\phi,s,g)$ when $\re(s)\geq 0$ are simple.
\end{itemize}
See \cite[Proposition in \S IV.1.11]{MW} and \cite[Remark in \S IV.3.12]{MW} for the statements, and \cite[\S IV.3.12]{MW} for their proofs.

Now given any $s_0\in\C$ and nonnegative integer $N$, as well as $U$ and $s_1,\dotsc,s_n$ as above, we have that the function of $g\in \mc{G}(\A)$ defined by
\[\frac{d^N}{ds^N}(s-s_1)\dotsb(s-s_n)E(\phi,s,g)|_{s=s_0},\]
is an automorphic form on $\mc{G}(\A)$. For $\re(s_0)>0$, we denote by $\mc{A}_{\pi,s_0}$ the space of all such automorphic forms obtained in this way as both $\phi_s\in\Ind_{\mc{P}(\A)}^{\mc{G}(\A)}(\pi,s)$ and $N\in\Z_{\geq 0}$ vary; that is, the space $\mc{A}_{\pi,s_0}$ is the space of all Eisenstein series, their derivatives and residues, obtained from $\pi$ at $s=s_0$. By results of Franke and Schwermer \cite[Theorem 1.4]{FS}, the space $\mc{A}_{\pi,s_0}$ is a direct summand of the space of all automorphic forms for $\mc{G}$.

Now for the moment let us assume $s_0>0$ is such that $E(\phi,s,g)$ has a pole at $s=s_0$ for some $\phi_s\in\Ind_{\mc{P}(\A)}^{\mc{G}(\A)}(\pi,s)$. Then by above, this pole is simple, and we can consider the subspace
\[\mc{L}(\pi,s_0)\subset\mc{A}_{\pi,s_0}\]
defined as the space generated by all possible residues at $s=s_0$ of such Eisenstein series $E(\phi,s,\cdot)$ for $\phi_s\in\Ind_{\mc{P}(\A)}^{\mc{G}(\A)}(\pi,s)$. Then $\mc{L}(\pi,s_0)$ is in fact isomorphic to the unique irreducible quotient (i.e., Langlands quotient, whence the notation) of $\Ind_{\mc{P}(\A)}^{\mc{G}(\A)}(\pi,s_0)$.

We would like to describe the quotient $\mc{A}_{\pi,s_0}/\mc{L}(\pi,s_0)$ in this case. We can do so as an induced representation, after we take into account the presence of derivatives. To this end, let us introduce a structure of a $\mc{P}(\A_f)\times(\mf{p},K_\infty\cap \mc{P}(\R))$-module on the full symmetric algebra $\Sym(\C)$ in a way that depends on $s_0$; here, we denote by $\mf{p}$ the complexified Lie algebra of $\mc P$, and
\[\Sym(\C)=\bigoplus_{n=0}^\infty \Sym^n(\C),\]
with the induced algebra structure. The reader may refer to \cite{FS} where this $\mc{P}(\A_f)\times(\mf{p},K_\infty\cap \mc{P}(\R))$-module structure is defined in general, and also \cite[\S 2.1]{mung2cohart} for an exposition; in the former of these two sources the relevant object is denoted $S(\check{\mf{a}}_{\mc{P}})$, and in the latter it is denoted $\Sym(\mf{a}_{\mc{P},0}^\vee)$. Below, in this case of maximal parabolics, we will be implicitly identifying this space $\mf{a}_{\mc{P},0}^\vee=\Lie(A_{\mc{M}})_\C^\vee$ with $\C$ via the affine map given by $(2s+1)\rho_{\mc{P}}\mapsto s$.

To define a $\mc{P}(\A_f)\times(\mf{p},K_\infty\cap \mc{P}(\R))$-module structure on $\Sym^n(\C)$, let $s_0\in\C$ be arbitrary but fixed for the moment. We view $\Sym(\C)$ as a space of holomorphic differential operators on $\C$ supported at $s_0$. Thus $\Sym(\C)$ acts on functions $f(s)$ holomorphic near $s=s_0$ as follows: If $s_1,\dotsc,s_n\in\C$, so that $S=s_1\otimes\dotsb\otimes s_n\in\Sym^n(\C)$, then
\[S(f(s))=s_1\dotsb s_n\frac{d^n}{ds^n}f(s)|_{s=s_0}.\]
Then we define a $\mc{P}(\A_f)\times(\mf{p},K_\infty\cap \mc{P}(\R))$-module structure on $\Sym^n(\C)$ just below, and we will denote by $\Sym^n(\C)_{s_0}$ the resulting $\mc{P}(\A_f)\times(\mf{p},K_\infty\cap \mc{P}(\R))$-module to emphasize the dependence on $s_0$. First, for $p\in \mc{P}(\A_f)$ and $D\in\Sym^n(\C)_{s_0}$, let
\[(pD)(f(s))=D(\delta_{\mc{P}}^{s+1/2}(p)f(s)),\]
for $f(s)$ holomorphic near $s=s_0$. Next, let $\mc{M}_0$ be the joint kernel of all characters of $\mc M$, so that $\mc{P}=\mc{M}_0A_{\mc{M}}\mc{N}$; let $\mf{p}=\mf{m}_0\oplus\mf{a}_{\mc{M}}\oplus\mf{n}$ be the corresponding decomposition of the Lie algebra. Then for $X\in\mf{a}_{\mc{M}}$ and $D\in\Sym^n(\C)_{s_0}$ and $f(s)$ holomorphic near $s=s_0$, let
\[(XD)(f(s))=D((2s+1)\rho_{\mc{P}}(X)f(s)).\]
Finally, we let $\mf{m}_0\oplus\mf{n}$ and $K_\infty\cap \mc{P}(\R)$ act trivially on $\Sym^n(\C)_{s_0}$. This defines the promised $\mc{P}(\A_f)\times(\mf{p},K_\infty\cap \mc{P}(\R))$-module structure on $\Sym^n(\C)_{s_0}$.

We then have the following theorem of Grbac \cite[Theorem 3.1]{grbac}.

\begin{theorem}[Grbac]
\label{thmgrbac}
When $s_0>0$ and some Eisenstein series coming from $\pi$ has a pole at $s=s_0$, we have an isomorphism of $\mc{G}(\A_f)\times(\mf{g},K_\infty)$-modules
\[\mc{A}_{\pi,s_0}/\mc{L}(\pi,s_0)\cong\Ind_{\mc{P}(\A)}^{\mc{G}(\A)}(\pi\otimes\Sym(\C)_{s_0}),\]
where there is no implicit normalization in the parabolic induction, and $\mc{L}(\pi,s_0)$ is the residual representation described above.
\end{theorem}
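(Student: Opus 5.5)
The plan is to build the isomorphism with an explicit map out of the Laurent expansion of Eisenstein series at $s=s_0$, and then show it is well defined, equivariant, injective and surjective. Fix a flat section $\phi_s$, write $D_n$ for the $n$th derivative at $s_0$, and regard the element
\[\phi\otimes D_n\in\Ind_{\mc{P}(\A)}^{\mc{G}(\A)}(\pi\otimes\Sym(\C)_{s_0}),\]
that is, $D_n$ applied to the holomorphic family $g\mapsto\phi_s(g)$. Send it to the class of
\[\frac{d^{n}}{ds^{n}}\Bigl((s-s_0)E(\phi,s,\cdot)\Bigr)\Big|_{s=s_0},\]
suitably normalized (say divided by $n+1$), in $\mc{A}_{\pi,s_0}/\mc{L}(\pi,s_0)$. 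The $n=0$ term is the residue, which lies in $\mc{L}(\pi,s_0)$ and so dies in the quotient. Modulo $\mc{L}$, the class is therefore essentially the $(n-1)$st Laurent coefficient of $E$, up to lower terms, and a shift of indices has to be sorted out here.

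In the opposite direction I would use the constant term along $\mc{P}$. The map sends an automorphic form in $\mc{A}_{\pi,s_0}$ to the holomorphic part at $s=s_0$ of
\[\phi_s+M(s,\phi_s),\]
projected onto the $\exp((2s+1)\rho_{\mc{P}})$-part with $s$ near $s_0$, that is, the summand with exponent $s_0$ as opposed to $-s_0$. The derivatives of the $\phi_s$ term are exactly the elements $\phi\otimes D_n$. Forms in $\mc{A}_{\pi,s_0}$ are determined by their cuspidal support and their constant terms (Franke--Schwermer), so this map is injective modulo its kernel. Its kernel is the set of forms whose constant term has only the exponent $-s_0$, which I expect to be precisely $\mc{L}(\pi,s_0)$. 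This is the step where one uses that residues have constant term $\res M(s,\phi)$, which lives in $\Ind(\pi,-s_0)$, together with square-integrability.

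Equivariance for $\mc{G}(\A_f)\times(\mf{g},K_\infty)$ follows because the Eisenstein map commutes with these actions for each $s$. When we differentiate in $s$, the Leibniz rule for $\delta_{\mc{P}}^{s+1/2}$ and for $(2s+1)\rho_{\mc{P}}$ produces exactly the $\Sym(\C)_{s_0}$ twisting prescribed above. Surjectivity is immediate from the definition of $\mc{A}_{\pi,s_0}$ as the span of such derivatives.

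The main obstacle is pinning down the kernel exactly. On one side we must show that every form whose $s_0$-exponent part vanishes is a residue. On the other, we must show that differentiating the pole term $\res\cdot(s-s_0)^{-1}M(s,\phi)$ does not reintroduce $s_0$-exponent contributions. The first thing I would try is to decompose by Laurent order and use the simplicity of the pole together with the Langlands quotient description of $\mc{L}(\pi,s_0)$.
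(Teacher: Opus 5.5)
Your strategy is viable and differs from what the paper relies on. You build the map from Laurent data of $E(\phi,s,\cdot)$ at $s_0$ and invert it using the part of the constant term along $\mc{P}$ with generalized exponent $s_0$. The paper proves nothing here: it cites Grbac, whose argument specializes Franke's description of the graded pieces of the Franke filtration, with the isomorphism induced by the mean value map $\mbf{MW}$. As written, though, your proposal has a concrete error and leaves the crux open.

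\textbf{The index error.} By Cauchy's formula, your map $\phi\otimes\frac{d^n}{ds^n}\mapsto\frac{d^n}{ds^n}\bigl((s-s_0)E(\phi,s,\cdot)\bigr)|_{s=s_0}$ is exactly the map $\mc{E}$ of the paper's first proof of (c). It sends the whole degree-zero layer $\iota(\Ind(\pi,s_0))$ (writing $\Ind$ for $\Ind_{\mc{P}(\A)}^{\mc{G}(\A)}$) to residues, i.e.\ to $0$ in $\mc{A}_{\pi,s_0}/\mc{L}(\pi,s_0)$. So it cannot be injective into the quotient, and dividing by $n+1$ changes nothing.

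If you shift to $\frac{1}{n+1}\frac{d^{n+1}}{ds^{n+1}}$ you recover $\mbf{MW}$. But then your Leibniz-rule equivariance is no longer exact: a residue term appears, and the paper notes $\mbf{MW}$ is only equivariant modulo $\mc{L}(\pi,s_0)$.

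The clean fix is to keep $\mc{E}$, which is honestly equivariant, and use $T:\Sym(\C)_{s_0}\to\Sym(\C)_{s_0}$, $T(D)(f)=D((s-s_0)f)$. It commutes with multiplication by $\delta_{\mc{P}}^{s+1/2}(p)$ and by $(2s+1)\rho_{\mc{P}}(X)$, so it is equivariant. It is surjective with kernel $\Sym^0(\C)$, and it is the ``shift of indices'' you were after.

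\textbf{The open step.} What you call the main obstacle closes in a few lines. It needs neither Franke--Schwermer uniqueness, nor square-integrability, nor the Langlands quotient description; also, ``injective modulo its kernel'' is vacuous.

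Let $\mathrm{CT}_{s_0}$ be the projection of $F_{\mc{N}}$ onto generalized exponent $s_0$. Identify, as you do, $\Psi(\phi\otimes D)$ with the function $D[\phi_s]$. Then $\mathrm{CT}_{s_0}$ is well defined and equivariant, because $s_0\neq -s_0$ and the left action of $A_{\mc{M}}(\R)^\circ$ commutes with right translation.

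The intertwined term of $E_{\mc{N}}$ (if present) lies in an induced representation at $-s$ for every $s$. So all its Laurent coefficients at $s_0$ have generalized exponent $-s_0$. Hence $\mathrm{CT}_{s_0}\circ\mc{E}=\Ind(T)$, and in particular $\mathrm{CT}_{s_0}$ kills $\mc{L}(\pi,s_0)$.

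Consequently, if $\mc{E}(x)\in\mc{L}(\pi,s_0)$ then $\Ind(T)x=0$, so $x\in\iota(\Ind(\pi,s_0))$. Conversely, $\mc{E}(\iota(\Ind(\pi,s_0)))=\mc{L}(\pi,s_0)$.

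The map $\mc{E}$ is surjective, since $\mc{A}_{\pi,s_0}$ is spanned by residues and Laurent coefficients. It therefore induces $\Ind(\pi\otimes\Sym(\C)_{s_0})/\iota(\Ind(\pi,s_0))\cong\mc{A}_{\pi,s_0}/\mc{L}(\pi,s_0)$. Composing with the isomorphism induced by $\Ind(T)$ gives the theorem, and the composite is exactly $\mbf{MW}$.

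This also proves $\ker\mc{E}=\iota(\mc{K})$, which the paper's first proof of (c) takes from Grbac's theorem. The Franke--Grbac route places the result inside the general filtration, covering arbitrary parabolics and higher-order poles. Your corrected route is self-contained for a simple pole along a maximal parabolic.
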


In fact, the isomorphism of Grbac's theorem above is given by Franke's \textit{mean value map} \cite[p. 235]{franke}, or more accurately, its restriction to $\mc{A}_{\pi,s_0}$. More precisely, let
\[\mbf{MW}:\Ind_{\mc{P}(\A)}^{\mc{G}(\A)}(\pi\otimes\Sym(\C)_{s_0})\to\mc{A}_{\pi,s_0}\]
be defined as follows. First, we make an identification of vector spaces,
\begin{equation}
\label{eqnmapPsi}
\Psi:\Ind_{\mc{P}(\A)}^{\mc{G}(\A)}(\pi)\otimes\Sym(\C)\cong\Ind_{\mc{P}(\A)}^{\mc{G}(\A)}(\pi\otimes\Sym(\C)_{s_0}),
\end{equation}
given by $\Psi(\phi\otimes D)=\phi_D$ where $\phi_D(g)=\phi(g)\otimes D$. (It is possible to upgrade this to an isomorphism of $\mc{G}(\A_f)\times(\mf{g},K_\infty)$-modules by giving an explicit structure of $\mc{G}(\A_f)\times(\mf{g},K_\infty)$-module on the right hand side as in \cite[p. 155]{LS}. However we do not need to explain how to do this here.) Then with this identification, we write a Laurent series about $s=s_0$,
\[DE(\phi,s,g)=\sum_{n=-N}^\infty F_n(g)(s-s_0)^n\]
where $F_n$ are automorphic forms on $\mc{G}(\A)$, and define $\mbf{MW}$ by
\[\mbf{MW}(\Psi(\phi\otimes D))=F_0.\]

The map $\mbf{MW}$ is not a homomorphism of $\mc{G}(\A_f)\times(\mf{g},K_\infty)$-modules in the presence of a pole at $s=s_0$ for the Eisenstein series defined by $\pi$. However, it is well defined modulo the residual representation $\mc{L}(\pi,s_0)$, and Grbac's theorem says exactly that it becomes an isomorphism then.

We note the following consequence of the definitions for later use.

\begin{proposition}
\label{propinclofinds}
Fix any $s_0\in\C$. The degree $0$ subspace $\C=\Sym^{0}(\C)\subset\Sym(\C)_{s_0}$ is stable under the action of $\mc{P}(\A_f)\times(\mf{p},K_\infty\cap \mc{P}(\R))$ and is given by the character $\delta_{\mc{P}}^{s_0+1/2}$. Hence this gives rise to a natural inclusion
\[\Ind_{\mc{P}(\A)}^{\mc{G}(\A)}(\pi,s_0)\hookrightarrow\Ind_{\mc{P}(\A)}^{\mc{G}(\A)}(\pi\otimes\Sym(\C)_{s_0}).\]
\end{proposition}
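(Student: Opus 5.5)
The plan is to verify the claim directly from the definition of the $\mc{P}(\A_f)\times(\mf{p},K_\infty\cap\mc{P}(\R))$-module structure on $\Sym(\C)_{s_0}$, and then to apply exactness and functoriality of (unnormalized) parabolic induction.

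First I check stability of the degree $0$ piece and identify the character. An element $D\in\Sym^0(\C)$ is a scalar $c$, acting by $D(f(s))=c\,f(s_0)$. For $p\in\mc{P}(\A_f)$ we get
\[(pD)(f(s))=c\,\delta_{\mc{P}}^{s_0+1/2}(p)f(s_0),\]
so $pD=\delta_{\mc{P}}^{s_0+1/2}(p)D$. For $X\in\mf{a}_{\mc{M}}$ we get
\[(XD)(f)=c\,(2s_0+1)\rho_{\mc{P}}(X)f(s_0),\]
so $XD=(2s_0+1)\rho_{\mc{P}}(X)D$. This is precisely the differential of $\delta_{\mc{P}}^{s_0+1/2}$ on $\mf{a}_{\mc{M}}$, since $d\delta_{\mc{P}}=2\rho_{\mc{P}}$. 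Finally, $\mf{m}_0\oplus\mf{n}$ and $K_\infty\cap\mc{P}(\R)$ act trivially, and $\delta_{\mc{P}}$ is also trivial on these (it is trivial on $\mc{M}_0$, on $\mc{N}$, and on compact groups). Hence $\Sym^0(\C)$ is a submodule, isomorphic to the character $\delta_{\mc{P}}^{s_0+1/2}$ of $\mc{P}(\A_f)\times(\mf{p},K_\infty\cap\mc{P}(\R))$.

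Next I tensor with $\pi$. The inclusion $\Sym^0(\C)\hookrightarrow\Sym(\C)_{s_0}$ gives an injection of modules
\[\pi\otimes\delta_{\mc{P}}^{s_0+1/2}\hookrightarrow\pi\otimes\Sym(\C)_{s_0}.\]
Applying unnormalized induction, which is left exact because it is defined as a space of functions with a transformation law and the inclusion can be composed pointwise, yields an injection of the induced modules. The source is by definition $\Ind_{\mc{P}(\A)}^{\mc{G}(\A)}(\pi,s_0)$: unnormalized induction of $\pi\otimes\delta_{\mc{P}}^{s_0+1/2}$ is exactly the transformation law $\phi(mng)=\delta_{\mc{P}}^{s_0+1/2}(m)\pi(m)\phi(g)$. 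Smoothness and $K$-finiteness are preserved under the inclusion, and equivariance follows because the map is postcomposition with a module map.

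There is no real obstacle. The only point needing care is the matching of normalizations: the unitarily normalized induction at $s_0$ must coincide with the unnormalized induction twisted by $\delta_{\mc{P}}^{s_0+1/2}$, and this follows from the conventions stated in the introduction. The adelic (rather than purely finite-adelic) transformation law follows by combining the $\mc{P}(\A_f)$ action with the $(\mf{p},K_\infty\cap\mc{P}(\R))$ action as above.
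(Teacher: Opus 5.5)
Your proposal is correct and follows the same approach as the paper, whose proof is simply ``immediate from the definitions.'' You spell that out: on a scalar $D=c\in\Sym^0(\C)$ the defining formulas give $pD=\delta_{\mc{P}}^{s_0+1/2}(p)D$ and $XD=(2s_0+1)\rho_{\mc{P}}(X)D$, the remaining factors act trivially, and postcomposing with the inclusion $\pi\otimes\delta_{\mc{P}}^{s_0+1/2}\hookrightarrow\pi\otimes\Sym(\C)_{s_0}$ yields the injection of induced modules.
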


\begin{proof}
Immediate from the definitions.
\end{proof}

We now turn to the constant terms of Eisenstein series. Let $\mc{P}'$ be another parabolic $\Q$-subgroup of $\mc G$, say with Levi decomposition $\mc{P}'=\mc{M}'\mc{N}'$. Recall that the \textit{constant term} $F_{\mc{N}'}(g)$ of an automorphic form $F$ along $\mc{P}'$ is defined by the integral
\[F_{N'}(g)=\int_{\mc{N}'(\Q)\backslash \mc{N}'(\A)}F(n'g)\,dn',\]
where, say, the Haar measure gives $\mc{N}'(\Q)\backslash \mc{N}'(\A)$ volume $1$. Since $\mc{N}'(\Q)\backslash \mc{N}'(\A)$ is compact, this is always defined and convergent, and it gives a function $\mc{G}(\A)\to\C$. We denote the constant term of an Eisenstein series $E(\phi,s,g)$ along $\mc{P}'$ by $E_{\mc{N}'}(\phi,s,g)$.

It is a fact that Eisenstein series, their residues, and their derivatives are orthogonal to cusp forms. It follows from this that, given any two automorphic forms $F$ and $F'$ which are linear combinations of Eisenstein series and their residues and derivatives, if the constant terms of $F$ and $F'$ along all parabolic $\Q$-subgroups $\mc{P}'$ of $\mc{G}$ agree, then in fact $F=F'$.

Now the constant terms $E_{\mc{N}'}(\phi,s,g)$ of an Eisenstein series $E(\phi,s,g)$ are also meromorphic in $s$, and in fact $E(\phi,s,g)$ has a pole at $s=s_0$ if and only if one of its constant terms does. Thus to locate poles of Eisenstein series, it becomes important to understand their constant terms explicitly. The \textit{Langlands--Shahidi method} allows us to do this using intertwining operators and $L$-functions. In the main body of this paper we will only need to go as far as to describe the connection with intertwining operators, and we do this just below; the connection with $L$-functions will not be necessary for us except to discuss a few examples in Section \ref{subsecexamples}, and so we omit that part of the discussion here.

Similarly to the archimedean case described above in Section \ref{subsecintertwining}, we can define a global intertwining operator, and to do this, it is convenient to fix a maximal split torus in $\mc{G}$, which we call $\mc{T}_0$, and a system of positive roots for $\mc{T}_0$ in $\mc{G}$, with respect to which $\mc{P}$ is standard. Given a rational element $w$ of the Weyl group of $\mc{T}_0$ in $\mc G$, and assuming that there is another standard parabolic $\Q$-subgroup $\mc{P}'=\mc{M}'\mc{N}'$ of $\mc{G}$ such that $\mc{M}'=w^{-1}\mc{M}w$ and such that $w^{-1}\mc{P}w$ is opposite to $\mc{P}'$, we can define an \textit{intertwining operator}
\[M_w(s,\cdot):\Ind_{\mc{P}(\A)}^{\mc{G}(\A)}(\pi,s)\to\Ind_{\mc{P}'(\A)}^{\mc{G}(\A)}(\pi^w,-s),\]
where $\pi^w=\pi\circ\Ad(w)$; this operator is given by
\[M(s,\phi)=\int_{\mc{N}(\A)}\phi_s(wng)\,dn\]
in regions where it is convergent, defining a holomorphic function in such, and then by meromorphic continuation otherwise. We then have the following relationship between constant terms of Eisenstein series and intertwining operators.

\begin{theorem}[Langlands--Shahidi]
\label{thmLS}
Given the ordering on the roots of $\mc{T}_0$ in $\mc{G}$ described above, let $\mc{P}'$ be a standard parabolic $\Q$-subgroup of $\mc G$ with Levi decomposition $\mc{P}'=\mc{M}'\mc{N}'$. Then if there is no element $w$ in the Weyl group of $\mc{T}_0$ in $\mc G$ such that $w^{-1}\mc{M}w=\mc{M}'$, then
\[E_{\mc{N}'}(\phi,s,g)=0.\]
Otherwise, if there is such a $w$ with $w^{-1}\mc{M}w=\mc{M}'$, then we have
\[E_{\mc{N}'}(\phi,s,g)=\begin{cases}
\phi_s(g)(1_{\mc{M}})+M_w(s,\phi)(g)(1_{\mc{M}})&\textrm{if }\mc{M}'=\mc{M}\textrm{ and }\mc{P}'\textrm{ is opposite to }w^{-1}\mc{P}w;\\
M_w(s,\phi)(g)(1_{\mc{M}})&\textrm{if }\mc{M}'\ne\mc{M}\textrm{ but }\mc{P}'\textrm{ is opposite to }w^{-1}\mc{P}w;\\
\phi_s(g)(1_{\mc{M}})&\textrm{otherwise.}\\
\end{cases}\]
Moreover, the last case above occurs if and only if $\mc{P}=\mc{P}'$ and $w=1$ is the unique Weyl element such that $w^{-1}\mc{M}w=\mc{M}'$.
\end{theorem}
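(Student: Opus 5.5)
This is the standard Langlands computation of constant terms of maximal parabolic Eisenstein series, and I would prove it by unfolding. First I would work in the region of absolute convergence $\re(s)\gg 0$. There one may integrate term by term:
\[E_{\mc{N}'}(\phi,s,g)=\int_{\mc{N}'(\Q)\backslash\mc{N}'(\A)}\sum_{\gamma\in\mc{P}(\Q)\backslash\mc{G}(\Q)}\phi_s(\gamma n'g)(1_{\mc{M}})\,dn'.\]
I would then decompose $\mc{G}(\Q)$ using the Bruhat decomposition $\mc{P}(\Q)\backslash\mc{G}(\Q)/\mc{P}'(\Q)$. The double cosets are indexed by Weyl representatives $w$ that are minimal length in $W_{\mc{M}}\backslash W/W_{\mc{M}'}$. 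For each such $w$, I would further decompose the double coset $\mc{P}(\Q)w\mc{P}'(\Q)$ into left cosets modulo $\mc{N}'(\Q)$. In the standard way this gives a sum over $\mc{M}'(\Q)\cap w^{-1}\mc{P}(\Q)w\backslash\mc{M}'(\Q)$, together with an integral over $(w^{-1}\mc{N}w\cap\mc{N}')(\A)\backslash\mc{N}'(\A)$, after unfolding the piece $\mc{N}'\cap w^{-1}\mc{P}w$.

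The key step is to show that a double coset contributes zero unless $w^{-1}\mc{M}w\supseteq\mc{M}'$. Suppose it does not. Then $\mc{M}'\cap w^{-1}\mc{P}w$ is a proper parabolic of $\mc{M}'$, and its unipotent radical is $\mc{M}'\cap w^{-1}\mc{N}w$. The inner integral over $(\mc{M}'\cap w^{-1}\mc{N}w)(\Q)\backslash(\cdot)(\A)$ then becomes, after conjugating by $w$, a constant term of the cusp form $\phi_s(\cdot)$ along a proper parabolic of $\mc{M}$. That constant term vanishes by cuspidality of $\pi$.

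Since $\mc{P}$ is maximal, I would next check that $w^{-1}\mc{M}w\supseteq\mc{M}'$ with $\mc{M}'$ a standard Levi forces one of two possibilities. Either $w^{-1}\mc{M}w=\mc{M}'$, or $\mc{M}'=\mc{G}$, which is not allowed. I would verify this by a rank comparison: $\mc{M}'$ is standard and contains $w^{-1}\mc{M}w$. Once the Levis are equal, there are two cases.
\begin{enumerate}
\item If $w^{-1}\mc{N}w\cap\mc{N}'=\mc{N}'$, i.e.\ $w^{-1}\mc{P}w\supseteq\mc{P}'$, the integral is trivial and we get the term $\phi_s(g)(1_{\mc{M}})$. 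Equality of Levis then forces $\mc{P}'=\mc{P}$, and minimality forces $w=1$.
\item Otherwise, because $\mc{P}$ is maximal, the only other relative position is that $w^{-1}\mc{P}w$ is opposite to $\mc{P}'$. Then $w^{-1}\mc{N}w\cap\mc{N}'=1$, and the unfolded integral over $\mc{N}'(\A)$ is exactly the intertwining integral $M_w(s,\phi)$, after a change of variables identifying $\mc{N}'$ with $w^{-1}\overline{\mc{N}}w$.
\end{enumerate}
This gives the displayed case list. When $\mc{M}'=\mc{M}$, both $w=1$ and the opposing $w$ (which then normalizes $\mc{M}$) occur, and this yields the two-term formula. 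The final ``moreover'' statement comes from the fact that the relative rank of $\mc{P}$ is one: the Weyl elements with $w^{-1}\mc{M}w$ standard are only $1$ and the long element of $W_{\mc{M}}\backslash W/W_{\mc{M}}$-type.

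To finish, I would extend all the identities from $\re(s)\gg 0$ to all $s$ by meromorphic continuation. This uses the continuation of $E(\phi,s,g)$ and $M_w(s,\phi)$ recalled above.

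I expect the main obstacle to be the combinatorial bookkeeping of the double cosets. Specifically, I need to prove that for maximal $\mc{P}$ the only surviving relative positions are ``equal'' and ``opposite''. For this I would rely on the standard description in \cite[II.1.7]{MW}, which I can cite, rather than rederiving it. The cuspidality vanishing and the unfolding are routine.
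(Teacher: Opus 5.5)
Your overall strategy is the standard argument behind \cite[Theorem 6.2.1]{shahidiES}, which is all the paper cites. You unfold over $\mc{P}(\Q)\backslash\mc{G}(\Q)/\mc{P}'(\Q)$ for $\re(s)\gg0$, kill double cosets by cuspidality, identify the survivors, and continue meromorphically. However, your key cuspidality step has the roles of $(\mc{M},\mc{N})$ and $(\mc{M}',\mc{N}')$ interchanged, and as written it fails.

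\textbf{What goes wrong.} An integral over $(\mc{M}'\cap w^{-1}\mc{N}w)(\Q)\backslash(\mc{M}'\cap w^{-1}\mc{N}w)(\A)$ never arises: you integrate over $\mc{N}'$, and the $\mc{M}'(\Q)$-part stays a sum. Even if it did arise, after conjugating by $w$ it is an integral over a subgroup of $\mc{N}$, under which $\phi_s$ is left invariant. So it produces no constant term of a cusp form on $\mc{M}$.

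Your resulting criterion, vanishing unless $w^{-1}\mc{M}w\supseteq\mc{M}'$, points the wrong way. Take $\mc{G}=SL_3$, $\mc{M}\cong GL_2$, and $\mc{P}'$ the Borel, so $\mc{M}'=\mc{T}_0$. Then $w^{-1}\mc{M}w\supseteq\mc{M}'$ for every $w$, and $\mc{M}'\cap w^{-1}\mc{P}w=\mc{M}'$ is never proper. Your argument therefore kills nothing, while the theorem asserts $E_{\mc{N}'}=0$ here. Your subsequent rank comparison (``forces $w^{-1}\mc{M}w=\mc{M}'$ or $\mc{M}'=\mc{G}$'') is false under this containment. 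It only works with the opposite containment, which you tacitly switch to in the next sentence.

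\textbf{The correct step.} Take $w$ of minimal length in $W_{\mc{M}}\backslash W/W_{\mc{M}'}$. Then $\mc{N}'\cap w^{-1}\mc{P}w=(\mc{N}'\cap w^{-1}\mc{M}w)(\mc{N}'\cap w^{-1}\mc{N}w)$. Unfolding the $\mc{N}'(\Q)$-sum leaves an inner integral over $(\mc{N}'\cap w^{-1}\mc{P}w)(\Q)\backslash(\mc{N}'\cap w^{-1}\mc{P}w)(\A)$.
\begin{enumerate}
\item The $\mc{N}'\cap w^{-1}\mc{N}w$ factor integrates trivially.
\item The $\mc{N}'\cap w^{-1}\mc{M}w$ factor becomes, after conjugation by $w$, the constant term of the cusp form along $\mc{M}\cap w\mc{N}'w^{-1}$. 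This is the unipotent radical of the parabolic $\mc{M}\cap w\mc{P}'w^{-1}$ of $\mc{M}$.
\end{enumerate}
This vanishes unless $\mc{M}\subseteq w\mc{M}'w^{-1}$, i.e.\ unless $w^{-1}\mc{M}w\subseteq\mc{M}'$. In that case $A_{\mc{M}'}\subseteq\mc{T}_0$ centralizes $w^{-1}\mc{M}w$, so $A_{\mc{M}'}\subseteq w^{-1}A_{\mc{M}}w$, which has rank one. Since $\mc{G}$ is semisimple and $\mc{P}'$ is proper, this forces $\mc{M}'=w^{-1}\mc{M}w$. Then $\mc{M}'\subseteq w^{-1}\mc{P}w$, so the sum over $(\mc{M}'\cap w^{-1}\mc{P}w)(\Q)\backslash\mc{M}'(\Q)$ is a single term.

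From there your case analysis goes through as written: $w^{-1}\mc{P}w$ equals $\mc{P}'$ or its opposite, the latter giving $M_w(s,\phi)$, and meromorphic continuation finishes. The same applies to your treatment of the ``moreover'' clause.
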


\begin{proof}
See \cite[Theorem 6.2.1]{shahidiES}.
\end{proof}

\subsection{The main theorem}

Recall that, given an irreducible finite dimensional representation $E$ of $\mc{G}(\C)$, in the introduction we defined the \textit{automorphic cohomology} of $\mc{G}$ to be
\[H^*(\mc{G},E^\vee)=H^*(\mf{g},K_\infty^\circ;\mc{A}_E(\mc{G})\otimes E^\vee),\]
where $\mc{A}_E(\mc{G})$ is the space of automorphic forms on $\mc{G}(\A)$ which are annihilated by a power of the kernel $J_E$ of the action of the center of $\mc{U}(\mf{g})$ on $E$. Our main theorem below describes a particular part of this automorphic cohomology coming from residual Eisenstein series. We state it in full.

\begin{theorem}
\label{thmmainthm}
Let $\mc{G}$ be a connected, semisimple algebraic group over $\Q$ such that $\mc{G}(\R)$ has discrete series, let $\mc{P}=\mc{M}\mc{N}$ a maximal parabolic $\Q$-subgroup with given Levi decomposition, and let $A_{\mc{M}}$ be the maximal $\Q$-split torus in the center of $\mc{M}$. Assume that $\mc{P}(\R)$ is still a maximal parabolic subgroup of $\mc{G}(\R)$. Let $\pi$ be a unitary cuspidal automorphic representation of $\mc M$ with central character $\chi$. Assume that the archimedean component $\chi_\infty$ of $\chi$ is trivial on $A_{\mc{M}}(\R)^\circ$, and that the archimedean component $\pi_\infty$ of $\pi$ is discrete series.

Let $\widetilde{\alpha}$ be the character of $A_{\mc{M}}(\R)^\circ$ whose differential is the restriction of a root $\alpha$ of some Cartan subalgebra containing the complexified Lie algebra of $A_{\mc{M}}(\R)^\circ$ such that $\alpha$ vanishes on its orthogonal complement in that Cartan subalgebra; assume moreover that $\alpha$ is chosen with the sign so that $\widetilde{\alpha}$ is a root of $A_{\mc{M}}(\R)^\circ$ in $\mc{N}(\R)$. Such a root $\alpha$ exists by Lemma \ref{lemrootalpha}. Let $s_0$ be the positive real number such that $\frac{1}{2}\widetilde{\alpha}=2s_0\rho_{\mc{P}}|_{A_{\mc{M}}(\R)^\circ}$, where $\rho_{\mc{P}}$ is the half sum of roots of $A_{\mc{M}}$ in $\mc{N}$. Assume
\[\Ind_{\mc{P}(\R)}^{\mc{G}(\R)}(\pi_{\infty},s_0)\]
has infinitesimal character matching that of an irreducible, finite dimensional representation $E$ of $\mc{G}(\C)$.

Now assume the following:
\begin{enumerate}[label=(\roman*)]
\item There is a flat section
\[\phi_s\in\Ind_{\mc{P}(\A)}^{\mc{G}(\A)}(\pi,s)\]
such that the Eisenstein series $E(\phi,s,g)$ has a pole at $s=s_0$.
\item The corresponding residual representation $\mc{L}(\pi,s_0)$ is cohomological.
\end{enumerate}
Let us write $d=\frac{1}{2}\dim(\mc{G}(\R)/K_\infty^\circ)$, where $K_\infty$ is a maximal compact subgroup of $\mc{G}(\R)$, and $\mf{g}$ for the complexified Lie algebra of $\mc{G}$. Then:
\begin{enumerate}[label=(\alph*)]
\item We have that there is an integer $m>0$ such that
\[\dim_{\C}H^q(\mf{g},K_\infty^\circ;\mc{L}(\pi,s_0)_\infty\otimes E^\vee)=\begin{cases}
m&\textrm{if }q=d-1\textrm{ or }d+1;\\
0&\textrm{otherwise}.
\end{cases}\]
\item The natural map 
\[H^{d-1}(\mf{g},K_\infty^\circ;\mc{L}(\pi,s_0)\otimes E^\vee)\to H^{d-1}(G,E^\vee),\]
induced by including $\mc{L}(\pi,s_0)$ into the space $\mc{A}_E(\mc{G})$, is injective.
\item On the other hand, the natural map 
\[H^{d+1}(\mf{g},K_\infty^\circ;\mc{L}(\pi,s_0)\otimes E^\vee)\to H^{d+1}(G,E^\vee),\]
again induced by including $\mc{L}(\pi,s_0)$ into the space $\mc{A}_E(\mc{G})$, is zero.
\end{enumerate}
Thus the residual Eisenstein cohomology coming from $\pi$ is concentrated in degree $d-1$.
\end{theorem}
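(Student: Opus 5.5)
We first reduce to the archimedean analysis of Section \ref{secreal}. Restricting to $\mc{G}(\R)^\circ$ decomposes $\pi_\infty$ and $J=\mc{L}(\pi,s_0)_\infty$ into finitely many pieces permuted by $\pi_0(K_\infty)$. For each piece we apply Proposition \ref{propaltsetup}: hypothesis (ii), together with unitarity of $J$ (it is a residual, hence square-integrable, representation), places us in the setup of Sections \ref{subsecsetup}--\ref{subsecds} up to $K$-conjugation. Proposition \ref{propcomplexesconc}(b) then shows that each cohomological piece contributes exactly one dimension in degrees $d-1$ and $d+1$ and nothing elsewhere. Tensoring with $\mc{L}(\pi,s_0)_f$ gives (a), with $m$ counting the cohomological pieces times the multiplicity of the finite part. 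We regard $(\mf{g},K_\infty^\circ)$-cohomology as $(\mf{g},K^\circ)$-cohomology of the restriction to the connected group.

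For (b) and (c) we use the short exact sequence
\[0\to\mc{L}(\pi,s_0)\to\mc{A}_{\pi,s_0}\to\Ind_{\mc{P}(\A)}^{\mc{G}(\A)}(\pi\otimes\Sym(\C)_{s_0})\to0\]
from Theorem \ref{thmgrbac}. Since $\mc{A}_{\pi,s_0}$ is a direct summand of $\mc{A}_E(\mc{G})$ by Franke--Schwermer, it suffices to study the long exact sequence of this extension.

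\textbf{Part (b).} Injectivity in degree $d-1$ amounts to the vanishing of $H^{d-2}$ of the induced module. We filter $\Sym(\C)_{s_0}$ by degree; its graded pieces are all $\delta_{\mc{P}}^{s_0+1/2}$, so each graded piece of the induced module is $\Ind(\pi,s_0)$. By Proposition \ref{propcohofinduced}, $\Ind(\pi_\infty,s_0)$ has cohomology only in degrees $d$ and $d+1$. Hence $H^{d-2}$ of every finite filtration step vanishes, and passing to the direct limit, $H^{d-2}$ of the full induced module vanishes. This proves (b).

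\textbf{Part (c).} We show that the connecting map from $H^d$ of the induced module onto $H^{d+1}(\mc{L}(\pi,s_0))$ is surjective. The construction follows the second proof sketched in the introduction, modelled on the $SL_2$ case of Theorem \ref{thmmainthmbaby}.
\begin{enumerate}
\item Take the cocycle $c^{d,\pm}_\infty$ valued in $D_\pm\otimes E^\vee$ from Remark \ref{remfourcochains}. Tensor it with a finite section $\phi_f$ for which $M(s,\phi_f)$ has a simple pole at $s_0$; such a section exists by hypothesis (i) and the Langlands--Shahidi formula, Theorem \ref{thmLS}.
\item Form the Eisenstein series at $s_0$. By Theorem \ref{thmintertwining}, the simple zero of the archimedean intertwining operator cancels the pole, so these Eisenstein series are regular at $s_0$ and give cochains $c^{d,\pm}$ valued in $\mc{A}_{\pi,s_0}$. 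Under $\mbf{MW}$ they lift the cocycles $\phi_f\otimes c^{d,\pm}_\infty$ in the degree $0$ part given by Proposition \ref{propinclofinds}.
\item The class $dc^{d,\pm}$ is valued in $\mc{L}(\pi,s_0)\otimes E^\vee$. Its constant term along $\mc{P}$ is the limit of the second constant-term summand, namely a nonzero multiple of $M(s_0,\phi_f)'\otimes d\bigl(\lim (s-s_0)^{-1}M(s,c^{d,\pm}_\infty)\bigr)$. The inner coboundary is taken in $\Ind(\pi_\infty,-s_0)$, where $M(s_0,\phi_f)'$ denotes the residue of the finite intertwining operator.
\item By Theorem \ref{thmintertwining}, the limit cochain maps isomorphically onto $D_\pm$ modulo $J$. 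By Corollary \ref{corbdrynontrivial}, for at least one sign its coboundary is a nonzero class in $H^{d+1}(J\otimes E^\vee)$.
\item Comparing constant terms shows that $dc^{d,\pm}$ agrees with a residual Eisenstein cocycle, and so represents a nonzero class in $H^{d+1}(\mc{L}(\pi,s_0))$ that is a coboundary in $\mc{A}_{\pi,s_0}$. Since $h^{d+1}(J)=1$ on each piece, and $\phi_f$ varies over sections whose residues span $\mc{L}(\pi,s_0)_f$, the whole image is killed.
\end{enumerate}

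An alternative, quicker argument follows \cite{RSson1}. By the table for $I_+$, the map $H^{d+1}(\Ind(\pi,s_0))\to H^{d+1}(J)$ is an isomorphism. One then checks that the residue map intertwines the inclusion of $\mc{L}$ with a class that is already hit from the induced side, and exactness forces the composite to vanish.

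\textbf{Main obstacle.} The hardest step is step 3: verifying that the constant term of $dc^{d,\pm}$ is precisely the archimedean boundary map tensored with the residue, with the nonvanishing sign chosen correctly. This requires tracking the nonequivariance of $\mbf{MW}$ carefully, and also checking that constant terms along non-associate parabolics vanish, which follows from Theorem \ref{thmLS}.
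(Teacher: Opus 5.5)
\textbf{Overall.} Your argument is correct. For (a) and (c) it is essentially the paper's proof: the same reduction to $\mc{G}(\R)^\circ$ via Proposition~\ref{propaltsetup}, then Grbac's sequence, and for (c) the paper's second proof (Proposition~\ref{propbdrymapissurj}). That proof lifts $\phi_f\otimes c^{d,\pm}_\infty$ by regular Eisenstein series and uses the exact order-one zero of Theorem~\ref{thmintertwining} to cancel the simple pole of the finite intertwining operator. It then identifies the constant term of the coboundary with $\res_{s=s_0}M(s,\phi_{f,s})\otimes\partial_\infty(c^{d,\pm}_\infty)$ and concludes with Corollary~\ref{corbdrynontrivial}.

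\textbf{Part (b) takes a different route.} The paper controls the cohomology of $\Ind_{\mc{P}(\A)}^{\mc{G}(\A)}(\pi\otimes\Sym(\C)_{s_0})\otimes E^\vee$ through Lemma~\ref{lemcohofgloinduced}. That lemma rests on \cite[Theorem 2.2.3]{mung2cohart} and gives concentration in degree $d$ alone. You instead filter $\Sym(\C)_{s_0}$ by degree; the filtration is stable and each graded piece is the character $\delta_{\mc{P}}^{s_0+1/2}$. You then apply Proposition~\ref{propcohofinduced} to each graded piece $\Ind(\pi,s_0)$ and pass to the direct limit. This is more elementary and gives vanishing outside $\{d,d+1\}$. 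That suffices for (b), and in fact gives an isomorphism in degree $d-1$. It also suffices for reducing (c) to surjectivity of $\partial$, since that reduction only uses exactness at $H^{d+1}(\mf{g},K_\infty^\circ;\mc{L}(\pi,s_0)\otimes E^\vee)$. For the pieces $i\notin\mc{I}_{\mathrm{coh}}$ you should say explicitly that the second statement of Proposition~\ref{propaltsetup} makes their cohomology vanish.

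\textbf{The ``quicker'' alternative is not a proof as written.} Your filtration cannot show that $H^{d+1}$ of the $\Sym$-induced module vanishes. Every graded piece has nonzero $H^{d+1}$, and the vanishing comes only from the nonsplit (Jordan-block) action of $\mf{a}_{\mc{M}}$ and $\mc{P}(\A_f)$ on $\Sym(\C)_{s_0}$, which is exactly the input from \cite{mung2cohart}. The paper's first proof of (c) needs that vanishing, together with $H^{d+2}(\mc{K}\otimes E^\vee)=0$ and the surjection $\mc{E}$ with kernel $\iota(\mc{K})$. So that sketch would need Lemma~\ref{lemcohofgloinduced}; your main line does not depend on it.

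\textbf{Two small points on (a).}
\begin{itemize}
\item The statement concerns only $\mc{L}(\pi,s_0)_\infty$, so $m=\#\mc{I}_{\mathrm{coh}}$, with no finite multiplicity involved.
\item To feed unitarity of $J$ into Proposition~\ref{propaltsetup} piece by piece, you must identify the summands of $J$ with the Langlands quotients of the individual induced pieces. The paper does this with Lemma~\ref{leminterdecomp}, the compatibility of intertwining operators with the decomposition.
\end{itemize}
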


The rest of this subsection will be devoted to the proof, and we give two proofs of part (c). We will thus split the rest of this subsection into three parts: First, we reduce to the case of connected $\mc{G}(\R)$ and prove parts (a) and (b). Afterwards, we give a short but inexplicit proof of (c) which is in the spirit of \cite{RSson1}. Once this is complete, we offer an alternative proof which is longer, but which explicitly computes the primitive of the class in the image of the map in part (c). We believe that for future work which branches off from the results in this paper, it will be necessary to build on this latter method in order to make significant progress, which is why we have decided to include it.

\subsubsection*{Reduction to the connected case; proofs of (a) and (b)}

We wish to apply the results of Section \ref{secreal}, but we immediately face the rather annoying problem that $\mc{G}(\R)$ may not be connected as a real Lie group. To resolve this issue, we prove a few lemmas that will allow us to reduce to the connected case.

\begin{lemma}
\label{lemrestrofindarch}
Let $w$ be an element in the Weyl group of $\mc{G}$ preserving $\mc{M}$. Denote by $\pi_\infty^w$ the representation on the same space as $\pi_\infty$ but with action given by $\pi_\infty\circ\Ad(w)$. Then for any $s\in\C$, the map
\[\Ind_{\mc{P}(\R)}^{\mc{G}(\R)}(\pi_\infty^w,s)\to\Ind_{\mc{P}(\R)\cap \mc{G}(\R)^\circ}^{\mc{G}(\R)^\circ}(\pi_\infty^w,s)\]
given by
\[\phi\mapsto \phi|_{\mc{G}(\R)^\circ}\]
is an isomorphism of $(\mf{g},K_\infty^\circ)$-modules.
\end{lemma}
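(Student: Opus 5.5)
The key input is the identity $\mc{G}(\R)=\mc{P}(\R)\mc{G}(\R)^\circ$. Granting it, the restriction map is injective, since a section is determined by its values on $\mc{G}(\R)^\circ$ via the left transformation law. Surjectivity comes from extending along $\mc{P}(\R)$. Equivariance is clear: right translation by $K_\infty^\circ$ and differentiation by $\mf{g}$ both take place inside $\mc{G}(\R)^\circ$ and commute with restriction. $K_\infty^\circ$-finiteness and smoothness are preserved in both directions. For finiteness, I would use that $K_\infty^\circ$ has finite index in $K_\infty$, so $K_\infty$-finite vectors are exactly the $K_\infty^\circ$-finite ones.

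\textbf{Step 1: $\mc{G}(\R)=\mc{P}(\R)\mc{G}(\R)^\circ$.} I would use the Iwasawa decomposition $\mc{G}(\R)=\mc{P}(\R)K_\infty$, which reduces the claim to showing that $K_\infty\subset (K_\infty\cap\mc{P}(\R))K_\infty^\circ$. For this I would use the standard fact that every connected component of $\mc{G}(\R)$ meets the centralizer of a maximal $\R$-split torus. More precisely, $\mc{G}(\R)=Z_{\mc{G}(\R)}(S)\,\mc{G}(\R)^\circ$, where $S$ is a maximal $\R$-split torus; see Borel--Tits, where one has $\mc{G}(\R)=\mc{G}(\R)^\circ\cdot F$ with $F$ a finite subgroup of the real points of a maximal split torus. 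Choosing $S$ inside a minimal real parabolic contained in $\mc{P}(\R)$ gives $Z_{\mc{G}(\R)}(S)\subset\mc{P}(\R)$, which proves the claim. I expect citing this component structure cleanly to be the main obstacle. Everything else in the lemma is formal.

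\textbf{Step 2: extension.} Given $f$ in the target, I would define $\phi(pg)=\delta^{s+1/2}(p)\pi_\infty^w(p)f(g)$ for $p\in\mc{P}(\R)$ and $g\in\mc{G}(\R)^\circ$. For well-definedness, suppose $pg=p'g'$. Then $p'^{-1}p=g'g^{-1}\in\mc{P}(\R)\cap\mc{G}(\R)^\circ$, and the transformation law of $f$ under $\mc{P}(\R)\cap\mc{G}(\R)^\circ$ gives consistency. For smoothness, note that $\mc{G}(\R)^\circ$ is open, so the product set $\mc{P}(\R)g_0\mc{G}(\R)^\circ$ is covered by translates on which $\phi$ is locally expressed through $f$; concretely, $\phi(x)=\phi(p x_0)$ for $x$ near $p\,x_0$ is a right translate of $f$ composed with a fixed operator. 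By construction $\phi|_{\mc{G}(\R)^\circ}=f$.

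Finally, $K_\infty$-finiteness of $\phi$ follows because $\phi(\cdot\, k)$ for $k\in K_\infty$ is determined by $f$ translated by $K_\infty^\circ$ together with finitely many coset representatives of $K_\infty/K_\infty^\circ$, and each coset representative can be taken in $K_\infty\cap\mc{P}(\R)$ by Step 1. Under the normalization conventions used here, $K_\infty$-finite and $K_\infty^\circ$-finite agree, and this completes the isomorphism.
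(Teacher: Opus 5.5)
Your proposal is correct and is essentially the paper's argument. The paper also reduces everything to the fact that $\mc{P}(\R)$ meets every connected component of $\mc{G}(\R)$: it cites Vogan's Lemma 0.4.2(a) for a maximally split Cartan subgroup inside $\mc{M}(\R)$, where you use the Borel--Tits component result for the centralizer of a maximal $\R$-split torus in a minimal parabolic. It then gets injectivity from the left transformation law and surjectivity from the extension $\phi(pg_0)=p\,\phi'(g_0)$, as you do. The detour through the Iwasawa decomposition in your Step 1 is unnecessary, since $\mc{G}(\R)=Z_{\mc{G}(\R)}(S)\,\mc{G}(\R)^\circ\subset\mc{P}(\R)\,\mc{G}(\R)^\circ$ already gives the claim directly.
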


\begin{proof}
For simplicity we write the proof only in the case when $w=1$; the proof in any other case is exactly the same.

The map in question is easily checked to be well defined and a map of $(\mf{g},K_\infty^\circ)$-modules. We must check that it is bijective.

Now since $\mc{P}$ is a maximal parabolic in $\mc{G}$ and $\mc{P}(\R)$ is still a maximal parabolic in $\mc{G}(\R)$, there is a maximally $\Q$-split torus $\mc{T}_s$ in $\mc G$ contained in $\mc M$ such that $\mc{T}_s(\R)$ is a maximally split Cartan subgroup of $\mc G$ contained in $\mc{M}(\R)$. By \cite[Lemma 0.4.2 (a)]{voganbook}, the group $\mc{T}_s(\R)$, and hence also $\mc{P}(\R)$, meets every component of $\mc{G}(\R)$. Thus the double coset space
\[\mc{P}(\R)\backslash \mc{G}(\R)/\mc{G}(\R)^\circ\cong (\mc{P}(\R)\cap K_\infty)\backslash K_\infty/K_\infty^\circ\]
is trivial. It follows that any $\phi\in\Ind_{\mc{P}(\R)}^{\mc{G}(\R)}(\pi_\infty,s)$ is completely determined by its restriction to $\mc{G}(\R)^\circ$. Therefore the map in question is injective.

It is also surjective; given $\phi'\in\Ind_{\mc{P}(\R)\cap \mc{G}(\R)^\circ}^{\mc{G}(\R)^\circ}(\pi_\infty,s)$, we can define $\phi\in\Ind_{\mc{P}(\R)}^{\mc{G}(\R)}(\pi_\infty,s)$ as follows. By above, for any $g\in \mc{G}(\R)$ we may write $g=p g_0$ with $p\in \mc{P}(\R)$ and $g_0\in \mc{G}(\R)^\circ$. Then set
\[\phi(g)=p\phi'(g_0).\]
It is easy to check that this is well defined and that $\phi|_{\mc{G}(\R)^\circ}=\phi'$, and the surjectivity follows.
\end{proof}

From now on, let $\mc{M}_0$ be the joint kernel of all characters of $\mc M$, so that $\mc{M}=\mc{M}_0 A_{\mc{M}}$. Let $\mf{m}_0$ and $\mf{a}_{\mc{M}}$ be the corresponding complexified Lie algebras. By \cite[Lemma II.5.6]{BW}, each module $\pi_\infty$ is the direct sum of finitely many irreducible $(\mf{m}_0,K_\infty^\circ\cap \mc{P}(\R))$-modules, say $\pi_{\infty,1},\dotsc,\pi_{\infty,n}$ for some positive integer $n$, all necessary in the discrete series. Correspondingly, for each element $w$ in the Weyl group of $\mc{G}$ preserving $\mc{M}$, we have a natural decomposition
\[\pi_\infty^w\cong\bigoplus_{i=1}^n\pi_{\infty,i}^w.\]

Thus we have decompositions
\[\Ind_{\mc{P}(\R)}^{\mc{G}(\R)}(\pi_\infty^w,s)\to\bigoplus_{i=1}^n\Ind_{\mc{P}(\R)\cap \mc{G}(\R)^\circ}^{\mc{G}(\R)^\circ}(\pi_{\infty,i}^w,s).\]
These decompositions preserve flat sections as $s$ varies. Indeed, for each $i$ with $1\leq i\leq n$, let
\[\pr_i^w:\pi_\infty^w\to\pi_{\infty,i}^w\]
be the corresponding projection. Then $\pr_i\otimes 1$ is the natural projection
\[\pi_\infty^w\otimes\delta_{\mc{P}(\R)}^{s+1/2}\to\pi_{\infty,i}^w\otimes\delta_{\mc{P}(\R)}^{s+1/2}.\]
So if $\phi_s\in\Ind_{\mc{P}(\R)}^{\mc{G}(\R)}(\pi_\infty^w,s)$ is a flat section, then the corresponding section of the right hand side of the above isomorphism is
\[((\pr_1\otimes 1)\circ(\phi_s|_{\mc{G}(\R)^\circ}),\dotsc,(\pr_n\otimes 1)\circ(\phi_s|_{\mc{G}(\R)^\circ})),\]
and the values of each entry on $K_\infty^\circ$ of each entry of this tuple is easily seen to be independent of $s$ if those of $\phi_s$ on $K_\infty$ are.

Furthermore, if $\phi_s\in\Ind_{\mc{P}(\R)}^{\mc{G}(\R)}(\pi_\infty,s)$ is flat, if $\re(s)$ is sufficiently large, and if $w\mc{P}w^{-1}$ is opposite to $\mc{P}$, then we have that
\[\int_{\mc{N}(\R)}\phi_s(wn(\cdot))\,dn\]
is sent, under the map above, to the tuple
\[\left((\pr_1\otimes 1)\circ\left(\int_{\mc{N}(\R)}\phi_s(wn(\cdot))\,dn\right),\dotsc,(\pr_n\otimes 1)\circ\left(\int_{\mc{N}(\R)}\phi_s(wn(\cdot))\,dn\right)\right).\]
This discussion then justifies the following lemma by analytic continuation.

\begin{lemma}
\label{leminterdecomp}
For any $w$ in the Weyl group of $\mc{G}$ preserving $\mc M$ and sending $\mc P$ to its opposite, the decomposition
\[\pi_\infty\cong\bigoplus_{i=1}^n\pi_{\infty,i}\]
above induces the following commutative diagram:
\[\xymatrix{
\Ind_{\mc{P}(\R)}^{\mc{G}(\R)}(\pi_\infty,s) \ar[r]^-{\sim} \ar[d]^{M(s,\cdot)} & \bigoplus_{i=1}^n\Ind_{\mc{P}(\R)\cap \mc{G}(\R)^\circ}^{\mc{G}(\R)^\circ}(\pi_{\infty,i},s)\ar[d]^{\oplus_i M(s,\cdot)}\\
\Ind_{\mc{P}(\R)}^{\mc{G}(\R)}(\pi_\infty^{w},-s) \ar[r]^-{\sim} & \bigoplus_{i=1}^n\Ind_{\mc{P}(\R)\cap \mc{G}(\R)^\circ}^{\mc{G}(\R)^\circ}(\pi_{\infty,i}^w,-s),}\]
where the intertwining operators $M(s,\cdot)$ are defined with respect to $w$.
\end{lemma}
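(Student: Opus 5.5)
The plan is to reduce everything to the connected components by means of the restriction isomorphism of Lemma \ref{lemrestrofindarch}, and then use that the intertwining operator is an integral over $\mc{N}(\R)$, which lies in $\mc{G}(\R)^\circ$.

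The first step is to compose two isomorphisms. One is the restriction isomorphism of Lemma \ref{lemrestrofindarch}, applied to $\pi_\infty$ at parameter $s$ and to $\pi_\infty^w$ at parameter $-s$. The other is the decomposition $\pi_\infty\cong\bigoplus_i\pi_{\infty,i}$ into irreducible $(\mf{m}_0,K_\infty^\circ\cap\mc{P}(\R))$-modules, which is applied pointwise through the projections $\pr_i\otimes 1$; the same projections serve for $\pi_\infty^w$, since $\pi_\infty^w$ has the same underlying space. Together these give the horizontal arrows of the diagram. They are isomorphisms because both constituent maps are: restriction is bijective by the lemma, and composing with $\bigoplus_i(\pr_i\otimes1)$ is bijective on functions valued in $\pi_\infty=\bigoplus_i\pi_{\infty,i}$. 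As checked in the discussion preceding the statement, these isomorphisms also carry flat sections to tuples of flat sections.

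The second step is commutativity in the region of convergence, say $\re(s)\gg0$. Since $\mc{N}(\R)$ is connected unipotent, it lies in $\mc{G}(\R)^\circ$. I take the representative $w$ (or rather a representative whose action defines $M(s,\cdot)$) in a way compatible with the connected-group intertwining operators. Then for $g\in\mc{G}(\R)^\circ$ we have
\[(\pr_i\otimes1)\Bigl(\int_{\mc{N}(\R)}\phi_s(wng)\,dn\Bigr)=\int_{\mc{N}(\R)}(\pr_i\otimes1)(\phi_s(wng))\,dn,\]
because $\pr_i\otimes 1$ is a continuous linear map on a finite sum of $K$-types, so it can be pulled inside the convergent integral. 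The right-hand side is exactly the intertwining operator for $\mc{G}(\R)^\circ$ applied to the $i$th component of the restricted section. One point needs care: the representative $w$ may not lie in $\mc{G}(\R)^\circ$. In that case I would write $w=p w'$ with $p\in\mc{P}(\R)$ and $w'\in\mc{G}(\R)^\circ$, using that $\mc{P}(\R)$ meets every component. Moving $p$ out via the transformation law changes the component operators only by the fixed operator $\pi_\infty(p)\delta^{s+1/2}(p)$. This permutes or twists the summands, and the resulting map is still the intertwining operator defined with respect to $w$ on each side, which is what the statement asserts.

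The final step is analytic continuation. Take a flat section $\phi_s$. Both ways around the diagram produce finite sums of meromorphic coefficients times flat sections, because the horizontal maps preserve flatness. These agree for $\re(s)\gg0$, so they agree identically as meromorphic families. I expect the main obstacle to be the bookkeeping in the second step: making sure that the summand decomposition is indexed compatibly for $\pi_\infty$ and $\pi_\infty^w$, and that the component of the Weyl representative is handled correctly. The remaining steps are formal.
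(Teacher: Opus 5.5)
Your proposal is correct and follows essentially the same route as the paper: compose the restriction isomorphism of Lemma \ref{lemrestrofindarch} with the projections $\pr_i\otimes 1$, note that these preserve flat sections and commute with $\int_{\mc{N}(\R)}\phi_s(wn(\cdot))\,dn$ when $\re(s)\gg 0$, and conclude by analytic continuation. Your extra discussion of a representative of $w$ lying outside $\mc{G}(\R)^\circ$ (where $\pi_\infty(p)$ may permute the summands, so the right-hand vertical map is diagonal only after reindexing) is a point the paper does not address; in the paper's application the element $w_0=g_{0,i}\widetilde{w}_0g_{0,i}^{-1}$ already lies in $\mc{G}(\R)^\circ$, so the issue does not arise there.
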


Now the representation $\mc{L}(\pi,s_0)_\infty$ is unitary, because it is the archimedean component of the residual representation $\mc{L}(\pi,s_0)$, and residual Eisenstein series appear in the discrete spectrum $L_{\disc}^2(\mc{G}(\Q)\backslash \mc{G}(\A))$. This representation is assumed to be cohomological by condition (ii) of Theorem \ref{thmmainthm}, and so it follows that there is an index $i$ with $1\leq i\leq n$ as above such that $\Ind_{\mc{P}(\R)}^{\mc{G}(\R)}(\pi_{\infty,i},s)$ has a unitary quotient $Q_i$ with nontrivial $(\mf{g},K_\infty^\circ)$-cohomology. Let $\mc{I}_{\mathrm{coh}}$ be the nonempty set of indices $i$ with $1\leq i\leq n$ such that $\Ind_{\mc{P}(\R)}^{\mc{G}(\R)}(\pi_{\infty,i},s)$ admits a unitary quotient $Q_i$ with nontrivial $(\mf{g},K_\infty^\circ)$-cohomology. Then for any $i\in \mc{I}_{\mathrm{coh}}$, since $Q_i$ is unitary, it splits as a direct sum, and thus it follows that the unique irreducible quotient of $\Ind_{\mc{P}(\R)}^{\mc{G}(\R)}(\pi_{\infty,i},s)$ is $Q_i$ and hence is unitary and cohomological. By Proposition \ref{propaltsetup}, we are thus in the situation of Sections \ref{subsecsetup} and \ref{subsecds}, and so we have the following.

Let us fix a (necessarily connected) compact Cartan subgroup $T$ of $\mc{G}(\R)^\circ$ contained in $K_\infty^\circ$ with complexified Lie algebra $\mf{t}\subset\mf{g}$. Write $\Delta=\Delta(\mf{g},\mf{t})$ for the set of roots of $\mf{t}$ in $\mf{g}$. Let $\theta$ be a Cartan involution on $\mc{G}(\R)^\circ$ giving $K_\infty^\circ$. Fix root vectors $X_\alpha$ for each $\alpha\in\Delta$, normalized with respect to the Killing form $B$ on $\mf{g}$ so that \eqref{eqnnormofrootvecs} holds. Then by Proposition \ref{propaltsetup}, there are:
\begin{itemize}
\item A system of positive roots $\Delta^{+,i}$ in $\Delta$;
\item A simple noncompact root $\alpha_{0,i}$ in $\Delta^{+,i}$;
\item A Harish-Chandra parameter $\Lambda_i$ for $\mf{t}$ in $\mf{g}$ which is dominant for $\Delta^{+,i}$ and such that
\[\frac{2\langle\Lambda_i,\alpha_{0,i}\rangle}{\langle\alpha_{0,i},\alpha_{0,i}\rangle}=1;\]
\end{itemize}
such that, if we set the following notation:
\begin{itemize}
\item Let $\mf{a}=\C(X_{\alpha_{0,i}}+X_{-\alpha_{0,i}})$;
\item Let $A=\exp(\R(X_{\alpha_{0,i}}+X_{-\alpha_{0,i}}))$;
\item Let $\mf{t}'$ be the orthogonal complement of $[X_{\alpha_{0,i}},X_{-\alpha_{0,i}}]$ in $\mf{t}$;
\item Let $\mf{h}$ be the Cartan subalgebra of $\mf{g}$ given by $\mf{h}=\mf{a}\oplus\mf{t}'$;
\item Let $\mf{g}_\R=\Lie(\mc{G}(\R))$ be the real Lie algebra of $\mc{G}(\R)$;
\item Let $M_0^\circ$ be the connected subgroup of $\mc{G}(\R)^\circ$ with real Lie algebra given by the centralizer in the real Lie algebra of $\mc{G}$ of $(\mf{a})$;
\item Let $M_0=C_{K_\infty^\circ}(\mf{a})M_0^\circ$;
\item Let $C_{\alpha_{0,i}}=\Ad(\exp(\tfrac{\pi}{4}(X_{\alpha_{0,i}}-X_{-\alpha_{0,i}})))$ be the Cayley transform associated with $\alpha_{0,i}$;
\item Let $\tp{C}_{\alpha_{0,i}}:\Delta\overset{\sim}{\longrightarrow}\Delta(\mf{g},\mf{h})$ be the transpose of the Cayley transform;
\item Let $\rho_{c,i}$ and $\rho_{n,i}$ be, respectively, the half sums of compact and noncompact roots in $\Delta^{+,i}$;
\item Let $\xi_i$ be the character of $T$ whose differential is $\Lambda_i-\rho_{c,i}+\rho_{n,i}$;
\item Let $\pi_{0,i}$ be the discrete series representation of $M_0^\circ$ with Harish-Chandra parameter $\Lambda_i|_{\mf{t}'}$;
\end{itemize}
then there is a $g_{0,i}\in \mc{G}(\R)^\circ$ such that the following holds:
\begin{itemize}
\item The parabolic subgroup $g_{0,i}(\mc{P}(\R)\cap \mc{G}(\R)^\circ)g_{0,i}^{-1}$ of $\mc{G}(\R)^\circ$ is the one with Levi $M_0A=g_{0,i}(\mc{M}(\R)\cap \mc{G}(\R)^\circ)g_{0,i}^{-1}$ and unipotent radical $g_{0,i}\mc{N}(\R)g_{0,i}^{-1}$ whose $A$-roots are those which are positive on $X_{\alpha_{0,i}}+X_{-\alpha_{0,i}}$;
\item We have $(\pi_{\infty,i}\circ\Ad(g_{0,i}^{-1}))|_{M_0}\cong\Ind_{M_0^\circ Z_{M_0}}^{M_0}(\pi_{0,i}\otimes(\xi_i|_{Z_{M_0}}))$;
\item We have that $s_0=(2\rho_{g_{0,i}(\mc{P}(\R)\cap \mc{G}(\R)^\circ)g_{0,i}^{-1}}(X_{\alpha_{0,i}}+X_{-\alpha_{0,i}}))^{-1}$.

\end{itemize}
Note that the property $M_0A=g_{0,i}(\mc{M}(\R)\cap \mc{G}(\R)^\circ)g_{0,i}^{-1}$ above determines $A$ as $A=g_{0,i} A_{\mc{M}}(\R)^\circ g_{0,i}^{-1}$, and so determines the root $\alpha_{0,i}$ up to sign. One checks easily then that the notations above that have no reference to the index $i$ indeed carry no dependence on $i$.

Note also that $\Ind_{M_0^\circ Z_{M_0}}^{M_0}(\pi_{0,i}\otimes(\xi_i|_{Z_{M_0}}))$ is the representation playing the role of the discrete series representation which was called $\pi$ in Section \ref{secreal}. The results in Section \ref{secreal} stated for $\Ind_P^G(\pi,\pm s_0)$ thus apply in our case to the induced representation
\[\Ind_{\mc{P}(\R)\cap \mc{G}(\R)^\circ}^{\mc{G}(\R)^\circ}(\pi_{\infty,i},s_0),\qquad \textrm{for }i\in\mc{I}_{\mr{coh}},\]
because it is isomorphic to
\begin{multline*}
\Ind_{g_{0,i}(\mc{P}(\R)\cap \mc{G}(\R)^\circ)g_{0,i}^{-1}}^{\mc{G}(\R)^\circ}(\pi_{\infty,i}\circ\Ad(g_{0,i}^{-1}),s_0)\\
\cong\Ind_{g_{0,i}(\mc{P}(\R)\cap \mc{G}(\R)^\circ)g_{0,i}^{-1}}^{\mc{G}(\R)^\circ}(\Ind_{M_0^\circ Z_{M_0}}^{M_0}(\pi_{0,i}\otimes(\xi_i|_{Z_{M_0}})),s_0).
\end{multline*}

The main point is that we can now apply the results of Sections \ref{subsecstandardmods}, \ref{subsecgkcoh}, and \ref{subsecintertwining} in our situation. But also, we note that since $\alpha_{0,i}$ is determined up to sign, we can define an element $\widetilde{w}_0$ of the Weyl group of $\mf{h}$ in $\mf{g}$ as the simple reflection about $\tp{C}_{\alpha_{0,i}}(\alpha_{0,i})$, and this is independent of $i\in\mc{I}_{\mathrm{coh}}$. Then the element $w_0=g_{0,i}\widetilde{w}_0 g_{0,i}^{-1}$ preserves $\mc{M}$ and sends $\mc{P}$ to its opposite. Therefore the intertwining operator $M(s_0,\cdot)$ on $\Ind_{\mc{P}(\R)\cap \mc{G}(\R)^\circ}^{\mc{G}(\R)^\circ}(\pi_{\infty,i},s_0)$ defined with respect to $w_0$ has image isomorphic to the unique irreducible quotient, which we denote by $\mc{L}(\pi_{i,\infty},s_0)$, of $\Ind_{\mc{P}(\R)\cap \mc{G}(\R)^\circ}^{\mc{G}(\R)^\circ}(\pi_{\infty,i},s_0)$ for any $i$ with $1\leq i\leq n$. Thus by Lemma \ref{leminterdecomp}, we have a decomposition
\[\mc{L}(\pi,s_0)_\infty\cong\bigoplus_{i=1}^n\mc{L}(\pi_{\infty,i},s_0),\]
and so every summand on the right hand side above is already unitary. Thus $\mc{I}_{\mathrm{coh}}$ is the set of indices $i$ such that $\mc{L}(\pi_{\infty,i},s_0)$ is cohomological (with the condition on unitarity being automatic \textit{a posteriori}).

We now would like to compute the $(\mf{g},K_\infty^\circ)$-cohomology of the induced representation
\[\Ind_{\mc{P}(\A)}^{\mc{G}(\A)}(\pi\otimes\Sym(\C)_{s_0}),\]
where, for $s\in\C$, the algebra $\Sym(\C)_s$ is the symmetric algebra of Section \ref{subseceisenstein} with the $\mc{P}(\A_f)\times(\mf{p},K_{\infty}\cap \mc{P}(\R))$-module structure described above Theorem \ref{thmgrbac}. Here again we are writing $\mf{p}$ for the complexified Lie algebra of $\mc P$. The aforementioned cohomology is computed by the following.

\begin{lemma}
\label{lemcohofgloinduced}
Let $m=\#\mc{I}_{\mathrm{coh}}$. We have an isomorphism of $\mc{G}(\A_f)$-modules,
\[H^q(\mf{g},K_\infty^\circ;\Ind_{\mc{P}(\A)}^{\mc{G}(\A)}(\pi\otimes\Sym(\C)_{s_0})\otimes E^\vee)\cong\begin{cases}
\Ind_{\mc{P}(\A_f)}^{\mc{G}(\A_f)}(\pi_f,s_0)^{\oplus m}&\textrm{if }q=d;\\
0&\textrm{otherwise,}
\end{cases}\]
where $\pi_f$ is the finite adelic component of $\pi$.
\end{lemma}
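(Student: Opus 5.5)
Here is how I plan to prove the lemma. First I separate the finite and archimedean parts: $\Ind_{\mc{P}(\A)}^{\mc{G}(\A)}(\pi\otimes\Sym(\C)_{s_0})\cong\Ind_{\mc{P}(\A_f)}^{\mc{G}(\A_f)}(\pi_f,s_0)\otimes\Ind_{\mc{P}(\R)}^{\mc{G}(\R)}(\pi_\infty\otimes\Sym(\C)_{s_0})$, where the finite part only sees $\Sym^0$. I will justify this as follows. The $\mc{P}(\A_f)$-action on $\Sym(\C)_{s_0}$ is unipotent with semisimplification $\delta_{\mc{P}}^{s_0+1/2}$ on each graded piece. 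However, $\delta_{\mc{P}}$ is trivial on $\mc{P}(\Q)$, and by the product formula the finite adelic modulus differs from the real one by a $\Q$-rational factor. Following Franke--Schwermer, the adelic induction is then identified with one in which all the derivative structure is put at the archimedean place, tensored with the finite normalized induction at $s_0$. Since $(\mf{g},K_\infty^\circ)$ acts only on the archimedean factor, the cohomology is $\Ind_{\mc{P}(\A_f)}^{\mc{G}(\A_f)}(\pi_f,s_0)\otimes H^q(\mf{g},K_\infty^\circ;\Ind_{\mc{P}(\R)}^{\mc{G}(\R)}(\pi_\infty\otimes\Sym(\C)_{s_0})\otimes E^\vee)$, and this identification is $\mc{G}(\A_f)$-equivariant.

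Next I reduce to the connected group. By Lemma \ref{lemrestrofindarch} and the decomposition $\pi_\infty\cong\bigoplus_i\pi_{\infty,i}$, the archimedean factor becomes $\bigoplus_{i=1}^n\Ind_{\mc{P}(\R)\cap\mc{G}(\R)^\circ}^{\mc{G}(\R)^\circ}(\pi_{\infty,i}\otimes\Sym(\C)_{s_0})$. It therefore suffices to show that the $i$-th summand has one-dimensional cohomology in degree $d$ when $i\in\mc{I}_{\mathrm{coh}}$, and vanishing cohomology otherwise.

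For the summands I use Shapiro's lemma, i.e.\ the Borel--Wallach computation \cite[Theorem III.3.3]{BW}. This gives
\[H^q(\mf{g},K;\Ind(\pi_{\infty,i}\otimes\Sym(\C)_{s_0})\otimes E^\vee)\cong\bigoplus_{a+b=q}\hom_{K\cap M}\Bigl(\sideset{}{^a}\bigwedge\mf{a},\,H^b(\mf{m}_0,K\cap M;\pi_{\infty,i}\otimes H^*(\mf{n},E^\vee))\otimes\Sym(\C)_{s_0}\Bigr)_{\mf{a}}\]
up to the usual bookkeeping. The $\mf{a}$-cohomology of $\Sym(\C)_{s_0}$ twisted by the $\mf{a}$-weight $\mu$ of a Kostant constituent is the key input. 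If $\mu$ differs from the weight of $\Sym^0$, then $\mf{a}$ acts invertibly and everything vanishes. If $\mu$ matches, then $\Sym(\C)_{s_0}$ is the injective hull (the symmetric algebra as the dual of the polynomial ring in one variable) of the character, so its $H^0$ is one-dimensional and its $H^1$ vanishes. This kills one of the two consecutive degrees that occur for the ordinary induced representation in Proposition \ref{propcohofinduced}.

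The remaining work is to decide which degree survives. At $s_0$, Proposition \ref{propcohofinduced} gives ordinary cohomology in degrees $d$ and $d+1$. There the $a=0$ term lands in degree $d$ and the $a=1$ term in degree $d+1$, so after replacing by $\Sym$ only degree $d$ survives, with dimension one for each $i\in\mc{I}_{\mathrm{coh}}$. For $i\notin\mc{I}_{\mathrm{coh}}$, the second part of Proposition \ref{propaltsetup} says the ordinary induced module has no cohomology. This means that no Kostant weight matches the infinitesimal character at $s_0$, so the $\Sym$-version vanishes as well. I expect the main obstacle to be the first step, namely making the finite/archimedean factorization rigorous and equivariant. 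If that proves awkward, I would instead apply the Borel--Wallach formula adelically, treating $\Sym(\C)_{s_0}$ as a $\mc{P}(\A_f)\times(\mf{a},\ldots)$-module, and take $\mf{a}$-invariants directly. The finite adelic group then acts on the surviving line $\Sym^0$ by $\delta_{\mc{P}}^{s_0+1/2}$, which yields $\Ind_{\mc{P}(\A_f)}^{\mc{G}(\A_f)}(\pi_f,s_0)$ via Proposition \ref{propinclofinds}.
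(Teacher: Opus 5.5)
Apart from your Step~1, this is the paper's argument. The paper also applies \cite[Theorem III.3.3]{BW} to $\Ind(\pi_{\infty,i},s_0)$ and uses Proposition~\ref{propcohofinduced} to pin down the Kostant representative via $\ell(w_i)=d-d'$; this is your observation that the $\bigwedge^0\mf{a}_{\mc{M}}$-term sits in degree $d$. It disposes of $i\notin\mc{I}_{\mathrm{coh}}$ with the second part of Proposition~\ref{propaltsetup}, and then quotes \cite[Theorem 2.2.3]{mung2cohart} for the passage from $\Ind(\pi,s_0)$ to $\Ind(\pi\otimes\Sym(\C)_{s_0})$. You instead compute by hand that, after twisting by the matching weight, $\mf{a}_{\mc{M}}$ acts on $\Sym(\C)_{s_0}$ surjectively with kernel $\Sym^0$. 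That computation is correct, and it is exactly what the citation supplies.

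Step~1, however, is false as stated, not merely awkward. The $\mc{P}(\A_f)$-action on $\Sym(\C)_{s_0}$ is genuinely non-semisimple: with $D_n=\frac{d^n}{ds^n}|_{s=s_0}$ one has $pD_1=\delta_{\mc{P}}^{s_0+1/2}(p)\bigl(D_1+\log\delta_{\mc{P}}(p)\,D_0\bigr)$. This survives induction. At an unramified place $\ell$, suppose a Hecke operator $T$ acts on $\Ind(\pi_\ell,s)^{K_\ell}$ by the Satake eigenvalue $\lambda(s)$. Then on the two spherical vectors $\phi_0,\phi_1$ of $\Ind(\pi_\ell\otimes\Sym^{\leq 1})$ it acts by $T\phi_1=\lambda(s_0)\phi_1+\lambda'(s_0)\phi_0$, and $\lambda'(s_0)\neq 0$ in general (already for $SL_2$ at $s_0=1/2$). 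On $\Ind_{\mc{P}(\A_f)}^{\mc{G}(\A_f)}(\pi_f,s_0)\otimes V_\infty$, with $\mc{G}(\A_f)$ acting only on the first factor, every such $T$ acts by a scalar on $K_\ell$-fixed vectors. So no $\mc{G}(\A_f)$-equivariant isomorphism of the kind you claim exists. The product formula does not help, since these are functions on $\mc{G}(\A)$, not on $\mc{G}(\Q)\backslash\mc{G}(\A)$.

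What does hold is the factorization as $K_f\times(\mf{g},K_\infty^\circ)$-modules, because $\mc{P}(\A_f)\cap K_f$ acts trivially on $\Sym(\C)_{s_0}$. That gives the right dimensions but not the $\mc{G}(\A_f)$-module structure the lemma asserts.

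So replace Step~1 by your fallback:
\begin{enumerate}
\item Use the $\mc{G}(\A_f)$-equivariant Shapiro isomorphism $H^q(\mf{g},K_\infty^\circ;\Ind_{\mc{P}(\A)}^{\mc{G}(\A)}(W)\otimes E^\vee)\cong\Ind_{\mc{P}(\A_f)}^{\mc{G}(\A_f)}H^q(\mf{p},K_\infty^\circ\cap\mc{P}(\R);W\otimes E^\vee)$.
\item Apply Kostant's theorem and K\"unneth for $\mf{m}_0\oplus\mf{a}_{\mc{M}}$.
\item Note that $\mc{P}(\A_f)$ and $\mf{a}_{\mc{M}}$ both act on $\Sym(\C)_{s_0}$ through transposes of multiplication operators on holomorphic germs at $s_0$, so they commute. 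Hence $\mc{P}(\A_f)$ preserves the surviving line $\Sym^0$, where it acts by $\delta_{\mc{P}}^{s_0+1/2}$.
\end{enumerate}
That last point is where the non-semisimplicity is killed, and it yields $\Ind_{\mc{P}(\A_f)}^{\mc{G}(\A_f)}(\pi_f,s_0)^{\oplus m}$ in degree $d$.
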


\begin{proof}
Let $i$ in $\mc{I}_{\mathrm{coh}}$. Note that $\Ad(g_{0,i}^{-1})(\mf{h})$ is a $\theta$-stable Cartan subalgebra of $\mf{g}$ contained in the complexified Lie algebra $\mf{m}$ of $\mc M$; we recall that $\mf{h}$ is the Cartan subalgebra defined in the bulleted list of notation above. Let us write $\mf{h}_{0,i}=\Ad(g_{0,i}^{-1})(\mf{h})$, and let $\Delta_{0,i}^+$ be the set of positive roots in $\Delta(\mf{g},\mf{h}_{0,i})$ obtained from $\tp{C}_{\alpha_{0,i}}(\Delta^{+,i})$ by $\Ad(g_{0,i}^{-1})$. Write $\Lambda_{0,i}=(\tp{C}_{\alpha_{0,i}}(\Lambda_{i}))\circ\Ad(g_{0,i})$, which is a weight of $\mf{h}_{0,i}$. In fact, if we write $\rho_{0,i}$ for the half sum of roots in $\Delta_{0,i}^+$, then $\Lambda_{0,i}-\rho_{0,i}$ is the highest weight of $\mf{h}_{0,i}$ in $E$. Let $W(\mf{g},\mf{h}_{0,i})$ and $W(\mf{m},\mf{h}_{0,i})$ be the Weyl groups of $\mf{h}$ in $\mf{g}$ and $\mf{m}$, respectively. Write $W^P$ for the set of minimal length representatives of the quotient $W(\mf{m}_0,\mf{h})\backslash W(\mf{g},\mf{h}_{0,i})$. Then since
\[H^*(\mf{g},K_\infty^\circ;\Ind_{\mc{P}(\R)\cap \mc{G}(\R)^\circ}^{\mc{G}(\R)^\circ}(\pi_{i,\infty},s_0)\otimes E^\vee)\]
is nonzero by Proposition \ref{propcohofinduced}, we have that \cite[Theorem III.3.3]{BW} tells us there is an element $w_i\in W^P$ such that the following holds: We have
\[(-w_i\Lambda_{0,i})|_{\mf{a}_\mc{M}}=\frac{1}{2}\widetilde{\alpha}=2s_0\rho_{\mc{P}},\]
and the infinitesimal character of $\pi_{\infty,i}$ is the orbit of $(-w_i\Lambda_{0,i})|_{\mf{h}\cap\mf{m}_0}$, and moreover there is an isomorphism
\begin{multline*}
H^q(\mf{g},K_\infty^\circ;\Ind_{\mc{P}(\R)\cap \mc{G}(\R)^\circ}^{\mc{G}(\R)^\circ}(\pi_{i,\infty},s_0)\otimes E^\vee)\\
\cong\bigoplus_{j=0}^1\left(\sideset{}{^j}\bigwedge\mf{a}_{\mc{M}}\right)\otimes H^{q-\ell(w_i)-j}(\Ad(g_{0,i}^{-1})\mf{m}_0,K_{\infty}^\circ\cap \mc{P}(\R);\pi_{\infty,i}\otimes F_{w_i\Lambda_{0,i}-\rho_{0,i}}^\vee),
\end{multline*}
where $F_{w_i\Lambda_{0,i}-\rho_{0,i}}$ is the representation of highest weight $(w_i\Lambda_{0,i}-\rho_{0,i})|_{\mf{h}_{0,i}\cap\mf{m}_0}$.

On the other hand, by Proposition \ref{propcohofinduced}, we have
\[\dim_{\C}H^q(\mf{g},K_\infty^\circ;\Ind_{\mc{P}(\R)\cap \mc{G}(\R)^\circ}^{\mc{G}(\R)^\circ}(\pi_{i,\infty},s_0)\otimes E^\vee)=\begin{cases}
1 & \textrm{if }i=d,d+1;\\
0 & \textrm{otherwise.}
\end{cases}\]
Thus, writing $d'=\frac{1}{2}\dim (g_{0,i}^{-1}M_0g_{0,i})/(\mc{P}(\R)\cap K_\infty^\circ)$ for the middle dimension for $M_0$, we have $\ell(w_i)=d-d'$ for any $i\in\mc{I}_{\mathrm{coh}}$, as the discrete series representation $\pi_{\infty,i}$ can only have cohomology in middle degree $d'$. If $i$ with $1\leq i\leq n$ is not in $\mc{I}_{\mathrm{coh}}$, then by (the second statement of) Proposition \ref{propaltsetup},
\[H^*(\mf{g},K_\infty^\circ;\Ind_{\mc{P}(\R)\cap \mc{G}(\R)^\circ}^{\mc{G}(\R)^\circ}(\pi_{i,\infty},s_0)\otimes E^\vee)=0.\]
Hence by the same token, it follows from \cite[Theorem 2.2.3]{mung2cohart} that
\begin{multline*}
H^q(\mf{g},K_\infty^\circ;\Ind_{\mc{P}(\A)}^{\mc{G}(\A)}(\pi\otimes\Sym(\C)_{s_0})\otimes E^\vee)\\
\cong\bigoplus_{i\in\mc{I}_{\mathrm{coh}}}\Ind_{\mc{P}(\A_f)}^{\mc{G}(\A_f)}(\pi_f,s_0)\otimes H^{q-d+d'}(\Ad(g_{0,i}^{-1})\mf{m}_0,K_{\infty}^\circ\cap \mc{P}(\R);\pi\otimes F_{w\Lambda_{0,i}-\rho_{0,i}}).
\end{multline*}
By above, this reduces exactly to the statement of the lemma.
\end{proof}

We will need the following lemma, which is similar to Lemma \ref{lemrestrofindarch}.

\begin{lemma}
\label{lemrestrofindglobal}
For any $s\in\C$, the map
\[\Ind_{\mc{P}(\A)}^{\mc{G}(\A)}(\pi\otimes\Sym(\C)_{s})\to\Ind_{\mc{P}(\A_f)\times(\mc{P}(\R)\cap \mc{G}(\R)^\circ)}^{\mc{G}(\A_f)\times \mc{G}(\R)^\circ}(\pi\otimes\Sym(\C)_{s})\]
given by
\[\phi\mapsto \phi|_{\mc{G}(\A_f)\times \mc{G}(\R)^\circ}\]
is an isomorphism of $\mc{G}(\A_f)\times(\mf{g},K_\infty^\circ)$-modules; here, of course, there is no implicit normalization in the induced representations above.
\end{lemma}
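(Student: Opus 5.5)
The plan is to mimic the proof of Lemma \ref{lemrestrofindarch}, with the finite adelic part carried along unchanged. First, the restriction map is well defined and equivariant. A function $\phi$ on $\mc{G}(\A)=\mc{G}(\A_f)\times\mc{G}(\R)$ with the transformation law under $\mc{P}(\A)$ restricts to a function on $\mc{G}(\A_f)\times\mc{G}(\R)^\circ$ with the transformation law under $\mc{P}(\A_f)\times(\mc{P}(\R)\cap\mc{G}(\R)^\circ)$. The module $\pi\otimes\Sym(\C)_s$ is regarded as a module for the smaller group by restriction. Smoothness and $K_f\times K_\infty^\circ$-finiteness are preserved. Right translation by $\mc{G}(\A_f)$ and the $(\mf{g},K_\infty^\circ)$-action commute with restriction, since they only involve right translations by elements of $\mc{G}(\A_f)\times\mc{G}(\R)^\circ$ and derivatives along $\mc{G}(\R)^\circ$.

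The key input, as in Lemma \ref{lemrestrofindarch}, is that $\mc{P}(\R)$ meets every connected component of $\mc{G}(\R)$. Indeed, $\mc{P}(\R)$ contains a maximally split Cartan $\mc{T}_s(\R)$, and \cite[Lemma 0.4.2 (a)]{voganbook} applies. Hence every $g\in\mc{G}(\A)$ can be written as $g=p\,(g_f,g_0)$ with $p\in\mc{P}(\R)\subset\mc{P}(\A)$, $g_f\in\mc{G}(\A_f)$, and $g_0\in\mc{G}(\R)^\circ$. Injectivity follows: if $\phi$ vanishes on $\mc{G}(\A_f)\times\mc{G}(\R)^\circ$, then $\phi(g)=p\cdot\phi(g_f,g_0)=0$. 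Here $p$ acts on $\pi\otimes\Sym(\C)_s$ through the archimedean component of the $\mc{P}(\A)$-structure. Concretely, this is $\pi_\infty(p)$ tensored with the action of $p$ on $\Sym(\C)_s$, which exponentiates the $\mf{a}_{\mc{M}}$-action and the trivial actions of $\mf{m}_0\oplus\mf{n}$ and $K_\infty\cap\mc{P}(\R)$.

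For surjectivity, given $\phi'$ on the smaller group, define $\phi(p(g_f,g_0))=p\cdot\phi'(g_f,g_0)$. The main point, and the only real obstacle, is that this is well defined. If $p(g_f,g_0)=p'(g_f',g_0')$, then $p^{-1}p'$ lies in $\mc{P}(\R)\cap\mc{G}(\R)^\circ$ (its finite part is trivial), and the transformation law of $\phi'$ under that group gives consistency. This uses that the action on $\pi\otimes\Sym(\C)_s$ of $\mc{P}(\R)\cap\mc{G}(\R)^\circ$ is the restriction of that of $\mc{P}(\R)$, which one checks directly from the definition of $\Sym(\C)_s$ in Section \ref{subseceisenstein}. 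The action of $\mc{P}(\R)$ on $\Sym(\C)_s$ is the one integrating the $(\mf{p},K_\infty\cap\mc{P}(\R))$-structure; it factors through $A_{\mc{M}}(\R)^\circ$ via $\delta_{\mc{P}}^{s+1/2}$ and derivatives in $s$.

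It remains to check that $\phi$ satisfies the full $\mc{P}(\A)$-transformation law, is smooth, and is $K$-finite. The law under $\mc{P}(\A_f)$ is inherited from $\phi'$, since $\mc{P}(\A_f)$ commutes with $\mc{P}(\R)$. Smoothness and $K$-finiteness follow because $\mc{G}(\A_f)\times\mc{G}(\R)^\circ$ is open in $\mc{G}(\A)$ and its $\mc{P}(\R)$-translates cover $\mc{G}(\A)$, with $K_\infty/K_\infty^\circ$ finite. Finally, $\phi$ restricts to $\phi'$, which proves surjectivity.
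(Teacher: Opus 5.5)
Your proposal is correct and matches the paper's approach: the paper omits the proof, saying it is exactly analogous to Lemma \ref{lemrestrofindarch}, and that is what you carry out. You use that $\mc{P}(\R)$ meets every component of $\mc{G}(\R)$ to get injectivity, and you define the extension $\phi(p(g_f,g_0))=p\cdot\phi'(g_f,g_0)$ for surjectivity. Your additional checks fill in details the paper leaves implicit: that the $\mc{P}(\R)$-action on $\Sym(\C)_s$ obtained by integrating the $(\mf{p},K_\infty\cap\mc{P}(\R))$-structure restricts to the one used for $\mc{P}(\R)\cap\mc{G}(\R)^\circ$, and that it commutes with the $\mc{P}(\A_f)$-action.
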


\begin{proof}
The proof is exactly analogous to that of Lemma \ref{lemrestrofindarch} and thus omitted.
\end{proof}

We can then decompose
\[\Ind_{\mc{P}(\A_f)\times(\mc{P}(\R)\cap \mc{G}(\R)^\circ)}^{\mc{G}(\A_f)\times \mc{G}(\R)^\circ}(\pi\otimes\Sym(\C)_{s})=\bigoplus_{i=1}^n\Ind_{\mc{P}(\A_f)\times(\mc{P}(\R)\cap \mc{G}(\R)^\circ)}^{\mc{G}(\A_f)\times \mc{G}(\R)^\circ}((\pi_f\otimes\pi_{\infty,i})\otimes\Sym(\C)_{s}),\]
and similarly
\[\mc{L}(\pi,s_0)\cong\bigoplus_{i=1}^n\mc{L}(\pi,s_0)_f\otimes\mc{L}(\pi_{\infty,i},s_0).\]
Taking $(\mf{g},K_\infty^\circ)$-cohomology of the modules in the latter of these two formulas tensored with $E^\vee$, and appealing to Proposition \ref{propcomplexesconc} (b), gives (a) of Theorem \ref{thmmainthm} with $m=\#\mc{I}_{\mathrm{coh}}$.

Now let us consider the short exact sequence of $\mc{G}(\A_f)\times(\mf{g},K_\infty)$-modules,
\begin{equation}
\label{eqngrbacses}
0\to\mc{L}(\pi,s_0)\to\mc{A}_{\pi,s_0}\to\Ind_{\mc{P}(\A)}^{\mc{G}(\A)}(\pi\otimes\Sym(\C)_{s_0})\to 0,
\end{equation}
coming from Grbac's Theorem \ref{thmgrbac} above. Note that $\mc{A}_{\pi,s_0}$ is a direct summand of $\mc{A}_E(\mc{G})$ by consideration of infinitesimal characters. By Lemma \ref{lemcohofgloinduced}, the $(\mf{g},K_\infty^\circ)$-cohomology of the quotient module tensored with $E^\vee$ is concentrated in degree $d$, and by (a) of Theorem \ref{thmmainthm}, the $(\mf{g},K_\infty^\circ)$-cohomology of the submodule tensored with $E^\vee$ is concentrated in degrees $d-1$ and $d+1$. We thus get that the $(\mf{g},K_\infty^\circ)$-cohomology of $\mc{A}_{\pi,s_0}\otimes E^\vee$ is concentrated in degrees $d-1$, $d$, and $d+1$. In fact, the long exact sequence associated with \eqref{eqngrbacses} tensored with $E^\vee$ gives an isomorphism
\begin{equation}
\label{eqnisoindegdminusone}
H^{d-1}(\mf{g},K_\infty^\circ;\mc{L}(\pi,s_0)\otimes E^\vee)\cong H^{d-1}(\mf{g},K_\infty^\circ;\mc{A}_{\pi,s_0}\otimes E^\vee),
\end{equation}
and an exact sequence
\begin{multline}
\label{eqnseqwithbdry}
0\to H^{d}(\mf{g},K_\infty^\circ;\mc{A}_{\pi,s_0}\otimes E^\vee)\to H^{d}(\mf{g},K_\infty^\circ;\Ind_{\mc{P}(\A)}^{\mc{G}(\A)}(\pi\otimes\Sym(\C)_{s_0})\otimes E^\vee)\\
\overset{\partial}{\longrightarrow} H^{d+1}(\mf{g},K_\infty^\circ;\mc{L}(\pi,s_0)\otimes E^\vee)\to H^{d+1}(\mf{g},K_\infty^\circ;\mc{A}_{\pi,s_0}\otimes E^\vee)\to 0,
\end{multline}
where $\partial$ is a boundary map.

Now the isomorphism \eqref{eqnisoindegdminusone} implies (b) of Theorem \ref{thmmainthm}. Thus we must now prove (c). As mentioned, we present two proofs.

\subsubsection*{First proof of (c)}

This first proof is in the style of that of \cite[Theorem 1.4.4]{RSson1}. We first note that there is a map of $\mc{G}(\A_f)\times(\mf{g},K_\infty)$-modules
\[\mc{E}:\Ind_{\mc{P}(\A)}^{\mc{G}(\A)}(\pi\otimes\Sym(\C)_{s_0})\to\mc{A}_{\pi,s_0},\]
given as follows. Let $\Psi$ be the map from \eqref{eqnmapPsi}. Choose a small disc around $s_0\in\C$, calling the boundary $C$, and assume this disc is small enough so that the only possible pole of an Eisenstein series coming from $\pi$ in this disc is at $s=s_0$. Then we set
\[\mc{E}\left(\Psi\left(\phi\otimes \frac{d^k}{ds^k}\right)\right)=\frac{k!}{2\pi i}\int_{C}\frac{E(\phi,s,\cdot)}{(s-s_0)^{k}}\, ds,\]
for all $\phi\in\Ind_{\mc{P}(\A)}^{\mc{G}(\A)}(\pi)$ and all monomials $D\in\Sym(\C)_{s_0}$ which are pure of degree $D$; compare \cite[p. 96]{RSson1}. It is not difficult to check that this is indeed a map of $\mc{G}(\A_f)\times(\mf{g},K_\infty)$-modules, using the fact that the poles of Eisenstein series at $s=s_0$ coming from $\pi$ are at most simple. Moreover, the simplicity of such poles implies that $\mc{E}$ is surjective, by definition of $\mc{A}_{\pi,s_0}$.

In fact, the kernel of $\mc{E}$ can be described. Write $\iota$ for the map
\[\iota:\Ind_{\mc{P}(\A)}^{\mc{G}(\A)}(\pi,s_0)\hookrightarrow\Ind_{\mc{P}(\A)}^{\mc{G}(\A)}(\pi\otimes\Sym(\C)_{s_0})\]
from Proposition \ref{propinclofinds}. The source induced representation has the natural surjective map to $\mc{L}(\pi,s_0)$, and we denote the kernel of this map by $\mc{K}$. Then by Grbac's theorem, Theorem \ref{thmgrbac} above, we must have that $\iota(\mc{K})=\ker(\mc{E})$.

Now it follows from the cohomology computations of Section \ref{subsecgkcoh} (in particular, the $(d+1)$-degree row of the first table above Corollary \ref{corbdrynontrivial}) that the induced map
\[H^{d+1}(\mf{g},K_\infty^\circ;\Ind_{\mc{P}(\A)}^{\mc{G}(\A)}(\pi,s_0)\otimes E^\vee)\to H^{d+1}(\mf{g},K_\infty^\circ;\mc{L}(\pi,s_0)\otimes E^\vee)\]
is surjective, because it is after splitting into components over all $i\in\mc{I}_{\mr{coh}}$. From the long exact sequence associated with the short exact sequence
\[0\to\mc{K}\to\Ind_{\mc{P}(\A)}^{\mc{G}(\A)}(\pi,s_0)\to\mc{L}(\pi,s_0)\to 0,\]
it follows that
\[H^{d+2}(\mf{g},K_\infty^\circ;\mc{K}\otimes E^\vee)=0.\]

Now take the long exact sequence associated with
\[0\to\iota(\mc{K})\to\Ind_{\mc{P}(\A)}^{\mc{G}(\A)}(\pi\otimes\Sym(\C)_{s_0})\to\mc{A}_{\pi,s_0}\to 0.\]
In degrees $d+1$ and $d+2$, this long exact sequence contains the snippet
\[H^{d+1}(\mf{g},K_\infty^\circ;\Ind_{\mc{P}(\A)}^{\mc{G}(\A)}(\pi\otimes\Sym(\C)_{s_0})\otimes E^\vee)\to H^{d+1}(\mf{g},K_\infty^\circ;\mc{A}_{\pi,s_0})\to H^{d+2}(\mf{g},K_\infty^\circ;\iota(\mc{K})\otimes E^\vee).\]
We just showed rightmost term is zero, and so is the leftmost by Lemma \ref{lemcohofgloinduced}. Hence the middle term is zero as well, which immediately implies part (c) of Theorem \ref{thmmainthm}.\qed

\subsubsection*{Second proof of (c)}

Part (c) of Theorem \ref{thmmainthm} also follows immediately from the following proposition, of which we give an explicit proof.

\begin{proposition}
\label{propbdrymapissurj}
The boundary map $\partial$ of \eqref{eqnseqwithbdry} is surjective.
\end{proposition}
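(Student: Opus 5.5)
The plan is to compute $\partial$ explicitly on a spanning set of classes, following the $SL_2$ model of Section 1. By Lemma~\ref{lemcohofgloinduced}, $H^d$ of the induced module $\Ind_{\mc{P}(\A)}^{\mc{G}(\A)}(\pi\otimes\Sym(\C)_{s_0})\otimes E^\vee$ is $\Ind_{\mc{P}(\A_f)}^{\mc{G}(\A_f)}(\pi_f,s_0)^{\oplus m}$. This is represented through the inclusion $\iota$ of Proposition~\ref{propinclofinds} by the cocycles $\phi_{f}\otimes c^{d,\pm}_{\infty,i}$, for $i\in\mc{I}_{\mr{coh}}$, valued in $D_{\pm,i}\otimes E^\vee\subset \Ind(\pi_{\infty,i},s_0)\otimes E^\vee$. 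By Theorem~\ref{introthmcohles}, $h^d(I_+)=1$, so for each $i$ these are cocycles differing by a coboundary up to scalars. The target $H^{d+1}(\mc{L}(\pi,s_0)\otimes E^\vee)$ is $\bigoplus_{i\in\mc{I}_{\mr{coh}}}\mc{L}(\pi,s_0)_f\otimes H^{d+1}(J_i\otimes E^\vee)$, with each summand $\mc{L}(\pi,s_0)_f\otimes\C$. Since everything is $\mc{G}(\A_f)$-equivariant and $\mc{L}(\pi,s_0)_f$ is irreducible, it suffices, for each $i$, to exhibit a single $\phi_f$ and a sign with nonzero image in the $i$-th summand.

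To compute $\partial$, I would lift the class to $\mc{A}_{\pi,s_0}\otimes E^\vee$ via $\mbf{MW}$. Concretely, I would take the cochain $c^{d,\pm}$ whose value on $X\in\bigwedge^d(\mf{g}/\mf{k})$ is the Eisenstein series $E(\phi_f\otimes c^{d,\pm}_{\infty,i,s}(X),s,\cdot)$ at $s=s_0$. This is regular: by Theorem~\ref{thmintertwining}, $M(s,\cdot)$ has a simple zero on flat sections through $D_\pm$, and this zero cancels the at most simple pole of $M(s,\phi_f)$. Its image under $\mbf{MW}$ modulo $\mc{L}$ is the given class, so $\partial[\phi_f\otimes c^{d,\pm}_\infty]=[dc^{d,\pm}]$. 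Here $dc^{d,\pm}$ is automatically valued in $\mc{L}(\pi,s_0)\otimes E^\vee$ by exactness of \eqref{eqngrbacses}.

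To identify $dc^{d,\pm}$, I would compute its constant terms via Theorem~\ref{thmLS}. The term $\phi_{f,s_0}\otimes dc^{d,\pm}_{\infty,s_0}$ vanishes because $c^{d,\pm}_\infty$ is a cocycle. The remaining term is $\lim_{s\to s_0}M(s,\phi_f)\otimes d\,M(s,c^{d,\pm}_{\infty,s})$, which equals $R(\phi_f)\otimes d\,\widetilde c^{\,\pm}$. Here $R(\phi_f)=\res_{s_0}M(s,\phi_f)\neq0$ for suitable $\phi_f$, by hypothesis (i) and the constant term formula. The cochain $\widetilde c^{\,\pm}$, valued in $\Ind(\pi_{\infty,i},-s_0)\otimes E^\vee$, is by Theorem~\ref{thmintertwining} a lift of the cocycle in $(D_+\oplus D_-)\otimes E^\vee=(I_-/J)\otimes E^\vee$ corresponding to the $D_\pm$ class. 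Hence $d\widetilde c^{\,\pm}$ is precisely the connecting map of \eqref{eqnsesminuss0} applied to $[c^{d,\pm}]$. By Corollary~\ref{corbdrynontrivial}, and because $h^{d}(D_\pm)=1$, this is nonzero for at least one sign, giving a nonzero $(d+1)$-cocycle valued in $J_i\otimes E^\vee$. Other constant terms, along non-associate parabolics, vanish, and the contributions from $w\ne w_0$ are handled as in Theorem~\ref{thmLS}.

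The last step compares constant terms. The constant term of a residual form $\res E(\phi_f\otimes\phi_\infty,s,\cdot)$ is $R(\phi_f)\otimes M(s_0,\phi_\infty)$, and $M(s_0,\cdot)$ maps onto $J_i$. So the constant term of $dc^{d,\pm}$ coincides with that of an explicit nonzero $\mc{L}(\pi,s_0)\otimes E^\vee$-valued cocycle, namely the one representing the nonzero element $R(\phi_f)\otimes[d\widetilde c^{\,\pm}]$ of the $i$-th summand. By the uniqueness principle for constant terms, the two forms agree, so $\partial$ hits a nonzero vector in each summand, and surjectivity follows by equivariance. I expect the main obstacle to be making the second step rigorous: checking that the regular Eisenstein cochain really lifts the class through $\mbf{MW}$ (no stray derivative terms, since $\mbf{MW}$ is not equivariant), and tracking the limit $\lim (s-s_0)^{-1}M(s,\cdot)$ compatibly with the Lie algebra action so that it computes the connecting homomorphism of \eqref{eqnsesminuss0}. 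For this I would first try arguing with flat sections and Lemma~\ref{lemsectionslinear} as in Step~3 of the proof of Theorem~\ref{thmintertwining}.
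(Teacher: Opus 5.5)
Your proposal is correct and follows essentially the paper's own (second, explicit) proof. You lift $\iota_i(\phi_f\otimes c^{d,\pm}_{\infty,i})$ to the cochain of regular Eisenstein series $\mbf{MW}\circ c_i^{d,\pm}$, identify the constant term of its coboundary as $\res_{s=s_0}M(s,\phi_f)$ tensored with the connecting map of \eqref{eqnsesminuss0} via Theorem \ref{thmintertwining}, apply Corollary \ref{corbdrynontrivial}, and conclude by irreducibility of $\mc{L}(\pi,s_0)_f$ together with varying $i$; the two obstacles you flag are resolved as you expect, since the lift property follows from the quotient map in Grbac's theorem being the inverse of $\mbf{MW}$ modulo $\mc{L}(\pi,s_0)$, and commuting $d$ with the constant term follows from the $(\mf{g},K_\infty^\circ)$-equivariance of the constant term map.
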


\begin{proof}
The proof will proceed in several steps by a rather explicit computation involving cochains.

\textit{Step 1}. We set the stage and define archimedean cocycles $c_{\infty,i}^{d,\pm}$. First, fix $\phi_{f,s}\in\Ind_{\mc{P}(\A_f)}^{\mc{G}(\A_f)}(\pi_f,s)$ any flat section with nontrivial image in $\mc{L}(\pi,s_0)_f$ at $s=s_0$. It follows from the Langlands--Shahidi method (Theorem \ref{thmLS} above) that the finite adelic intertwining operator, which we will denote abusively by $M(s,\cdot)$, on $\Ind_{\mc{P}(\A_f)}^{\mc{G}(\A_f)}(\pi_f,s)$ for the Weyl element $w_0$ defined above, is such that $M(s,\phi_{s,f})$ has a simple pole at $s=s_0$. Indeed, for any flat section $\phi_{\infty,s}\in\Ind_{\mc{P}(\R)}^{\mc{G}(\R)}(\pi_\infty,s)$, write
\[\phi_s=\phi_{f,s}\otimes\phi_{\infty,s}\in\Ind_{\mc{P}(\A)}^{\mc{G}(\A)}(\pi,s).\]
Then the associated Eisenstein series $E(\phi,s,g)$ has constant term along $\mc P$ given by
\[E_{\mc{N}}(\phi,s,g)=\phi_{s}(g)(1_{\mc{M}})+(M(s,\phi_{f,s})\otimes M(s,\phi_{\infty,s}))(g)(1_{\mc{M}}).\]
Thus if $\phi_{\infty,s}$ has nontrivial image in $\mc{L}(\pi,s_0)_\infty$ at $s=s_0$, then $E(\phi,s,g)$, and hence $E_{\mc{N}}(\phi,s,g)$, has a (necessarily simple) pole, which must then come from the term $M(s,\phi_{f,s})$ above.

Let $i\in\mc{I}_{\mathrm{coh}}$, and let $\Lambda_i$ be as in the bulleted list of notations above. Let $D_{+,i}$ be the discrete series representation of $\mc{G}(\R)^\circ$ with Harish-Chandra parameter $\Lambda_i$, and let $D_{-,i}$ be that with parameter $\Lambda_i-\alpha_{0,i}$. Theorem \ref{thmstdmodforind} (c) gives us exact sequences
\begin{equation}
\label{eqnsespluss0later}
0\to D_{+,i}\oplus D_{-,i}\to \Ind_{\mc{P}(\R)\cap \mc{G}(\R)^\circ}^{\mc{G}(\R)^\circ}(\pi_{\infty,i},s_0)\to \mc{L}(\pi_{\infty,i},s_0)\to 0,
\end{equation}
and
\begin{equation}
\label{eqnsesminuss0later}
0\to \mc{L}(\pi_{\infty,i},s_0) \to \Ind_{\mc{P}(\R)\cap \mc{G}(\R)^\circ}^{\mc{G}(\R)^\circ}(\pi_{\infty,i},-s_0)\to D_{+,i}\oplus D_{-,i}\to 0.
\end{equation}
By Remark \ref{remfourcochains}, we have two linearly independent cochains
\[c_{\infty,i}^{d,\pm}:\sideset{}{^d}\bigwedge(\mf{g}/\mf{k})\to\Ind_{\mc{P}(\R)\cap \mc{G}(\R)^\circ}^{\mc{G}(\R)^\circ}(\pi_{\infty,i},s_0)\otimes E^\vee\]
respectively factoring through $D_{\pm,i}\otimes E^\vee$. Here, of course, $\mf{k}$ is the complexified Lie algebra of $K_\infty^\circ$. These cochains are in fact cocycles because they come from those representing the nonzero cohomology classes in $H^{d}(\mf{g},K_\infty^\circ;D_{\pm,i}\otimes E^\vee)$.

\textit{Step 2}. Now we define global cocycles $c_{i}^{d,\pm}$ into the induced representation which is the quotient in Grbac's sequence. Let $\iota_i$ be the composition of the maps
\begin{align*}
\iota_i:\Ind_{\mc{P}(\A_f)}^{\mc{G}(\A_f)}(\pi_f,s_0)\otimes (D_{+,i}\oplus D_{-,i})\hookrightarrow &\Ind_{\mc{P}(\A_f)\times(\mc{P}(\R)\cap \mc{G}(\R)^\circ)}^{\mc{G}(\A_f)\times \mc{G}(\R)^\circ}(\pi_f\otimes\pi_{\infty,i},s_0)\\
\hookrightarrow &\Ind_{\mc{P}(\A_f)\times(\mc{P}(\R)\cap \mc{G}(\R)^\circ)}^{\mc{G}(\A_f)\times \mc{G}(\R)^\circ}(\pi,s_0)\\
\cong & \Ind_{\mc{P}(\A)}^{\mc{G}(\A)}(\pi,s_0)\\
\hookrightarrow &\Ind_{\mc{P}(\A)}^{\mc{G}(\A)}(\pi\otimes\Sym(\C)_{s_0}),
\end{align*}
where the isomorphism is from Lemma \ref{lemrestrofindglobal}, and the inclusion at the end is given by Proposition \ref{propinclofinds}. Then $\iota_i$ is an injection of $\mc{G}(\A_f)\times(\mf{g},K_\infty^\circ)$-modules. Let us define the cocycles
\[c_{i}^{d,\pm}:\sideset{}{^d}\bigwedge(\mf{g}/\mf{k})\to\Ind_{\mc{P}(\A)}^{\mc{G}(\A)}(\pi\otimes\Sym(\C)_{s_0})\otimes E^\vee\]
by
\[c_{i}^{d,\pm}(Y)=(\iota_i\otimes\id_{E^\vee})(\phi_{f,s}\otimes c_{\infty,i}^{d,\pm}(Y)),\qquad Y\in\sideset{}{^d}\bigwedge(\mf{g}/\mf{k}).\]
We will compute the images of these cocycles under $\partial$.

\textit{Step 3}. To start this computation, we define a lift $\tilde{c}_{i}^{d,\pm}$ of $c_{i}^{d,\pm}$ to a cochain valued in $\mc{A}_{\pi,s_0}\otimes E^\vee$. After we do this, the image $d\tilde{c}_{i}^{d,\pm}$ of $\tilde{c}_{i}^{d,\pm}$ under the boundary map will be valued in $\mc{L}(\pi,s_0)\otimes E^\vee$, and its class in cohomology will be the image of the class of $c_{i}^{d,\pm}$ under $\partial$ by definition.

So let us define
\[\tilde{c}_{i}^{d,\pm}:\sideset{}{^d}\bigwedge(\mf{g}/\mf{k})\to\mc{A}_{\pi,s_0}\otimes E^\vee\]
as follows. For any vector $e\in E$ and any $F\in\mc{A}_{\pi,s_0}$ and $v\in E^\vee$, we can consider the evaluation of $F\otimes v$ on the vector $e$, given explicitly by $v(e)F\in\mc{A}_{\pi,s_0}$, and this evaluation extends to $\mc{A}_{\pi,s_0}\otimes E^\vee$ by linearity. Thus we can define $\tilde{c}_{i}^{d,\pm}$ by defining its evaluation $\tilde{c}_{i}^{d,\pm}(Y)(e)$ on any $Y\in\bigwedge^d(\mf{g}/\mf{k})$ and any $e\in E$, and we do this as follows. Let
\[\tilde{c}_{i}^{d,\pm}(Y)(e)=E(c_{i}^{d,\pm}(Y)(e),s_0,\cdot),\qquad Y\in\sideset{}{^d}\bigwedge(\mf{g}/\mf{k}),\quad e\in E.\]
We claim that this gives a well defined cochain, i.e., that $\tilde{c}_{i}^{d,\pm}$ is $K_\infty^\circ$-equivariant and regular valued.

To see this, we examine the constant terms of $E(c_{i}^{d,\pm}(Y)(e),s_0,\cdot)$ for $Y\in\bigwedge^d(\mf{g}/\mf{k})$ and $e\in E$. The Eisenstein series $E(c_{i}^{d,\pm}(Y)(e),s_0,\cdot)$ has constant term along $\mc{P}$ given by
\begin{multline}
\label{eqnconstofctildeeval}
E_{\mc{N}}(c_{i}^{d,\pm}(Y)(e),s_0,g)=(\phi_{f,s_0}\otimes c_{\infty,i}^{d,\pm}(Y)(e))(g)(1_{\mc{M}})\\
+\lim_{s\to s_0}(M(s,\phi_{f,s})\otimes M(s,c_{\infty,i}^{d,\pm}(Y)(e)_s))(g)(1_{\mc{M}}), 
\end{multline}
where we have written $c_{\infty,i}^{d,\pm}(Y)(e)_s$ for the flat section in $\Ind_{\mc{P}(\R)\cap \mc{G}(\R)^\circ}^{\mc{G}(\R)^\circ}(\pi_{\infty,i},s)$ specializing to $c_{\infty,i}^{d,\pm}(Y)(e)$ at $s=s_0$. As noted above, the intertwined section $M(s,\phi_{f,s})$ has a simple pole at $s=s_0$, and since $c_{\infty,i}^{d,\pm}(Y)(e)\in D_{\pm,i}$, the other intertwined section $M(s,c_{\infty,i}^{d,\pm}(Y)(e)_s)$ has a zero at $s=s_0$, cancelling this pole. Also, up to conjugacy, there are no other parabolic subgroups $\mc{P}'$ along which $E(c_{i}^{d,\pm}(Y)(e),s_0,g)$ has nonvanishing constant term. Indeed, if any such $\mc{P}'$ puts us in the second case of Theorem \ref{thmLS}, then $\mc{P}'$ is conjugate to the opposite of $\mc{P}$, which itself is already conjugate to $\mc{P}$ by $w_0$; moreover, no such $\mc{P}'$ can put us in the third case of Theorem \ref{thmLS} because we already know $w=1$ is not the unique Weyl element fixing $\mc{M}$, as $w=w_0$ does as well. Thus the nonvanishing constant terms of $E(c_{i}^{d,\pm}(Y)(e),s_0,\cdot)$ are all regular, and so $E(c_{i}^{d,\pm}(Y)(e),s_0,\cdot)$ is itself a regular Eisenstein series. Moreover, since the nonvanishing constant terms clearly have the correct $K_\infty^\circ$-type, the map $\tilde{c}_{i}^{d,\pm}$ is $K_\infty^\circ$-equivariant, proving our claim.

The map $\tilde{c}_{i}^{d,\pm}$ thus defined is therefore a cochain. Moreover, because it is a cochain valued in regular Eisenstein series, we have that
\[\tilde{c}_{i}^{d,\pm}=(\mbf{MW}\otimes\id_{E^\vee})\circ c_{i}^{d,\pm},\]
where $\mbf{MW}$ is Franke's mean value map defined in Section \ref{subseceisenstein}. Since the isomorphism of Grbac's Theorem \ref{thmgrbac} above is induced by $\mbf{MW}$, the surjective map in the short exact sequence \eqref{eqngrbacses} is its inverse, and therefore $\tilde{c}_{i}^{d,\pm}$ is indeed a lift of $c_{i}^{d,\pm}$.

\textit{Step 4}. Now we must compute the coboundaries $d\tilde{c}_{i}^{d,\pm}$. We will do this by computing the constant terms of the Eisenstein series which are values of these coboundaries. We apply the definition from \eqref{eqndefofbdrymap} directly. Let $X_0,\dotsc,X_d\in\mf{g}$ and $e\in E$. Then
\begin{multline*}
(d\tilde{c}_{i}^{d,\pm})(X_0\wedge\dotsb\wedge X_d)(e)=\sum_{j=0}^d (-1)^{j}(X_j \tilde{c}_{i}^{d,\pm}(X_0\wedge\dotsb\wedge\widehat{X}_j\wedge\dotsb\wedge X_d))(e)\\
+\sum_{0\leq j<k\leq d}(-1)^{j+k}\tilde{c}_{i}^{d,\pm}([X_j,X_k]\wedge X_0\wedge\dotsb\wedge\widehat{X}_j\wedge\dotsb\wedge\widehat{X}_k\wedge\dotsb\wedge X_d)(e).
\end{multline*}
For brevity, let us write
\[X=X_0\wedge\dotsb\wedge X_d,\]
and
\[X^{(j)}=X_0\wedge\dotsb\wedge\widehat{X}_j\wedge\dotsb\wedge X_d,\]
and
\[X^{(jk)}=[X_j,X_k]\wedge X_0\wedge\dotsb\wedge\widehat{X}_j\wedge\dotsb\wedge\widehat{X}_k\wedge\dotsb\wedge X_d.\]
Then by \eqref{eqnconstofctildeeval}, the automorphic form $(d\tilde{c}_{i}^{d,\pm})(X)(e)$ has constant term along $\mc{P}$ given by
\begin{multline}
\label{eqnconstofcochain}
((d\tilde{c}_{i}^{d,\pm})(X)(e))_{\mc{N}}(g)=\left(\phi_{f,s_0}\otimes\sum_{j=0}^d (-1)^{j}(X_j c_{\infty,i}^{d,\pm}(X^{(j)}))(e)\right)(g)(1_{\mc{M}})\\
\begin{aligned}
&{} +\left(\phi_{f,s_0}\otimes\sum_{0\leq j<k\leq d}(-1)^{j+k}c_{\infty,i}^{d,\pm}(X^{(jk)})(e)\right)(g)(1_{\mc{M}})\\
&{} +\lim_{s\to s_0}\left(M(s,\phi_{f,s})\otimes \sum_{j=0}^d (-1)^{j}M(s,(X_j c_{\infty,i}^{d,\pm}(X^{(j)}))(e)_s)\right)(g)(1_{\mc{M}})\\
&{} +\lim_{s\to s_0}\left(M(s,\phi_{f,s})\otimes \sum_{0\leq j<k\leq d}(-1)^{j+k}M(s,c_{\infty,i}^{d,\pm}(X^{(jk)})(e)_s)\right)(g)(1_{\mc{M}}),
\end{aligned}
\end{multline}
where, like above, the section $(X_j c_{\infty,i}^{d,\pm}(X^{(j)}))(e)_s$ is the flat section in $\Ind_{\mc{P}(\R)\cap \mc{G}(\R)^\circ}^{\mc{G}(\R)^\circ}(\pi_{\infty,i},s)$ specializing to $(X_j c_{\infty,i}^{d,\pm}(X^{(j)}))(e)$ at $s=s_0$.

Now noting that $\pi_{\infty,i}^{w_0}\cong \pi_{\infty,i}$ by Lemma \ref{lemisopiw0pi}, let us write $Mc_{i}^{d,\pm}$ for the map
\[Mc_{i}^{d,\pm}:\sideset{}{^d}\bigwedge(\mf{g}/\mf{k})\to\Ind_{\mc{P}(\A_f)\times(\mc{P}(\R)\cap \mc{G}(\R)^\circ)}^{\mc{G}(\A_f)\times \mc{G}(\R)^\circ}(\pi_f^{w_0}\otimes\pi_{\infty,i},-s_0)\otimes E^\vee,\]
given by
\[Mc_{i}^{d,\pm}(Y)(e)=\lim_{s\to s_0}M(s,\phi_{f,s})\otimes M(s,c_{\infty,i}^{d,\pm}(Y)(e)_s),\qquad Y\in\sideset{}{^d}\bigwedge(\mf{g}/\mf{k}),\quad e\in E.\]
Since $M(s,\phi_{f,s})$ has a simple pole at $s=s_0$ and (crucially!) by Theorem \ref{thmintertwining}, the section $M(s,c_{\infty,i}^{d,\pm}(Y)(e)_s)$ has a \textit{simple} zero at $s=s_0$, the map $Mc_{i}^{d,\pm}$ is nonvanishing, and therefore defines a nonzero $d$-cochain into
\[\Ind_{\mc{P}(\A_f)\times(\mc{P}(\R)\cap \mc{G}(\R)^\circ)}^{\mc{G}(\A_f)\times \mc{G}(\R)^\circ}(\pi_f^{w_0}\otimes\pi_{\infty,i},-s_0)\otimes E^\vee.\]
The $K_\infty^\circ$-equivariance of the intertwining operator at the archimedean place then implies, by Remark \ref{remfourcochains}, that $Mc_{i}^{d,+}$ and $Mc_{i}^{d,-}$, respectively, fill out $K_\infty^\circ$-types in this representation of highest weights $2\rho_{n,i}$ and $2\rho_{n,i}-2\alpha_{0,i}$. Composing with the surjective map of the short exact sequence \eqref{eqnsesminuss0later} after tensoring with $E^\vee$ then gives nontrivial $d$-cochains, which we denote by $\overline{M}c_{i}^{d,\pm}$, into
\[\Ind_{\mc{P}(\A_f)}^{\mc{G}(\A_f)}(\pi_f^{w_0},-s_0)\otimes D_{\pm,i}\otimes E^\vee.\]
These are necessarily cocycles and define nonzero cohomology classes by Proposition \ref{propcomplexesconc} (a), and thus are given by
\begin{multline}
\label{eqnMbarcidpmYe}
\overline{M}c_{i}^{d,\pm}(Y)(e)\\
\begin{aligned}
&=\lim_{s\to s_0}(s-s_0)M(s,\phi_{f,s})\otimes\left(\frac{1}{s-s_0}M(s,c_{\infty,i}^{d,\pm}(Y)(e)_s)\modulo{\mc{L}(\pi_{\infty,i},s_0)}\right)\\
&=a_\pm\left(\res_{s=s_0}M(s,\phi_{f,s})\right)\otimes c_{\infty,i}^{d,\pm}(Y)(e),
\end{aligned}
\end{multline}
for some nonzero numbers $a_\pm\in\C$.

Now note that
\begin{equation*}
\lim_{s\to s_0}\left(M(s,\phi_{f,s})\otimes \sum_{j=0}^d (-1)^{j}M(s,(X_j c_{\infty,i}^{d,\pm}(X^{(j)}))(e)_s)\right)\\
= \sum_{j=0}^d (-1)^{j}X_jMc_{i}^{d,\pm}(X^{(j)})(e)
\end{equation*}
Thus \eqref{eqnconstofcochain} visibly becomes
\begin{equation*}
((d\tilde{c}_{i}^{d,\pm})(X)(e))_{\mc{N}}(g)=(\phi_{f,s_0}\otimes(dc_{\infty,i}^{d,\pm})(X)(e))(g)(1_{\mc{M}})+((dMc_{i}^{d,\pm})(X)(e))(g)(1_{\mc{M}}).
\end{equation*}
But $dc_{\infty,i}^{d,\pm}=0$ because $c_{\infty,i}^{d,\pm}$ is a cocycle. So this becomes
\begin{equation*}
((d\tilde{c}_{i}^{d,\pm})(X)(e))_{\mc{N}}(g)=((dMc_{i}^{d,\pm})(X)(e))(g)(1_{\mc{M}}).
\end{equation*}

Write $\partial_{\infty,i}$ for the boundary map from degree $d$ to degree $d+1$ in cohomology coming from the short exact sequence \eqref{eqnsesminuss0later} tensored with $E^\vee$, so that $\partial_{\infty,i}$ is a map
\[\partial_{\infty,i}:H^d(\mf{g},K_\infty^\circ;(D_{+,i}\oplus D_{-,i})\otimes E^\vee)\to H^{d+1}(\mf{g},K_\infty^\circ;\mc{L}(\pi_{\infty,i},s_0)\otimes E^\vee).\]
Then $\partial_{\infty,i}$ is nontrivial by Corollary \ref{corbdrynontrivial}. Now since $Mc_{i}^{d,\pm}$ is a cochain lifting $\overline{M}c_{i}^{d,\pm}$, the cochain $dMc_{i}^{d,\pm}$ is, by \eqref{eqnMbarcidpmYe} and the very definition of the connecting homomorphism, equal to
\[a_\pm\left(\res_{s=s_0}M(s,\phi_{f,s})\right)\otimes \partial_{\infty,i}(c_{\infty,i}^{d,\pm}),\]
where we are conflating the class $\partial_{\infty,i}(c_{\infty,i}^{d,\pm})$ with its corresponding cocycle, which we can do by Proposition \ref{propcomplexesconc} (b). Thus we obtain
\begin{equation*}
((\partial\tilde{c}_{i}^{d,\pm})(X)(e))_{\mc{N}}(g)=a_\pm\left(\left(\res_{s=s_0}M(s,\phi_{f,s})\right)\otimes(\partial_{\infty,i}c_{\infty,i}^{d,\pm})(X)(e)\right)(g)(1_{\mc{M}}),
\end{equation*}
and since $\partial_{\infty,i}$ is nontrivial, there is a choice of sign $\epsilon_i\in\{+,-\}$ such that $(\partial\tilde{c}_{i}^{d,\epsilon_i})$ is nontrivial. Since $(\partial\tilde{c}_{i}^{d,\epsilon_i})$ visibly has image contained in
\[\mc{L}(\pi,s_0)_f\otimes\mc{L}(\pi_{\infty,i},s_0)\otimes E^\vee,\]
and $\mc{L}(\pi,s_0)_f$ is irreducible, varying over all $i$ shows that $\partial$ is surjective. This completes the proof of the proposition, and hence of Theorem \ref{thmmainthm}.
\end{proof}

\begin{remark}
Note that we did not need to show in the proof above that $\partial\tilde{c}_{i}^{d,\epsilon_i}$ takes values in residual Eisenstein series, since this follows from the cohomological formalism. However, it is not too difficult to construct the polar Eisenstein series whose residue is $(\partial\tilde{c}_{i}^{d,\epsilon_i})(Y)(e)$ for $Y\in\bigwedge^d(\mf{g}/\mf{k})$ and $e\in E$. Indeed, we have that $(\partial_{\infty,i}c_{\infty,i}^{d,\epsilon_i})(Y)(e)\in\mc{L}(\pi_{\infty,i},s_0)$, and that $\mc{L}(\pi_{\infty,i},s_0)$ is the image of the intertwining operator
\[M(s_0,\cdot):\Ind_{\mc{P}(\R)\cap \mc{G}(\R)^\circ}^{\mc{G}(\R)^\circ}(\pi_{\infty,i},s_0)\to\Ind_{\mc{P}(\R)\cap \mc{G}(\R)^\circ}^{\mc{G}(\R)^\circ}(\pi_{\infty,i},-s_0).\]
Therefore there is a flat section $\phi_{\infty,s}'\in\Ind_{\mc{P}(\R)\cap \mc{G}(\R)^\circ}^{\mc{G}(\R)^\circ}(\pi_{\infty,i},s)$ such that
\[(\partial_{\infty,i}c_{\infty,i}^{d,\epsilon_i})(Y)(e)=M(s_0,\phi_{\infty,s}').\]
One can even upgrade $\phi_{\infty,s}'$ to a cocycle
\[\sideset{}{^{d+1}}\bigwedge(\mf{g}/\mf{k})\to\Ind_{\mc{P}(\R)\cap \mc{G}(\R)^\circ}^{\mc{G}(\R)^\circ}(\pi_{\infty,i},s_0)\otimes E^\vee\]
because the $K_\infty^\circ$-type of highest weight $2\rho_n-\alpha_0$ in $\Ind_{\mc{P}(\R)\cap \mc{G}(\R)^\circ}^{\mc{G}(\R)^\circ}(\pi_{\infty,i},s_0)\otimes E^\vee$ is unique (see Remark \ref{remfourcochains}). In any case, the polar Eisenstein series with residue $(\partial\tilde{c}_{i}^{d,\epsilon_i})(Y)(e)$ at $s=s_0$ is
\[a_{\epsilon_i}E(\phi_f\otimes\phi_\infty',s,\cdot),\]
as can be checked using the proof above and an analysis of constant terms.
\end{remark}

It is worthwhile to note the following corollary, which describes completely the Eisenstein cohomology coming from $\pi$ at $s_0$.

\begin{corollary}
In the context of Theorem \ref{thmmainthm}, we have the following. Let $\mc{K}(\pi_f,s_0)$ be the kernel of the map from $\Ind_{\mc{P}(\A_f)}^{\mc{G}(\A_f)}(\pi_f,s_0)$ to its unique irreducible quotient $\mc{L}(\pi,s_0)_f$. Then the Franke--Schwermer summand $\mc{A}_{\pi,s_0}$ has cohomology
\[H^q(\mf{g},K_\infty^\circ;\mc{A}_{\pi,s_0}\otimes E^\vee)\cong\begin{cases}
\mc{L}(\pi,s_0)_f^{\oplus m}&\textrm{if }q=d-1;\\
\mc{K}(\pi_f,s_0)^{\oplus m}&\textrm{if }q=d;\\
0&\textrm{otherwise.}
\end{cases}\]
\end{corollary}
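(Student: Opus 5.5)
The plan is to read everything off the long exact sequence attached to Grbac's short exact sequence \eqref{eqngrbacses}, tensored with $E^\vee$, using the cohomology already computed for its outer terms. By Theorem \ref{thmmainthm} (a), the cohomology of $\mc{L}(\pi,s_0)\otimes E^\vee$ lives only in degrees $d\pm1$. In degree $d\pm1$ it is $\mc{L}(\pi,s_0)_f\otimes H^{d\pm1}(\mf{g},K_\infty^\circ;\mc{L}(\pi_\infty,s_0)\otimes E^\vee)\cong\mc{L}(\pi,s_0)_f^{\oplus m}$, as a $\mc{G}(\A_f)$-module. By Lemma \ref{lemcohofgloinduced}, the cohomology of the quotient $\Ind_{\mc{P}(\A)}^{\mc{G}(\A)}(\pi\otimes\Sym(\C)_{s_0})\otimes E^\vee$ is $\Ind_{\mc{P}(\A_f)}^{\mc{G}(\A_f)}(\pi_f,s_0)^{\oplus m}$, concentrated in degree $d$. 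Consequently the cohomology of $\mc{A}_{\pi,s_0}\otimes E^\vee$ vanishes outside degrees $d-1,d,d+1$, and \eqref{eqnisoindegdminusone} gives degree $d-1$ as $\mc{L}(\pi,s_0)_f^{\oplus m}$.

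For degree $d+1$: in \eqref{eqnseqwithbdry}, Proposition \ref{propbdrymapissurj} (or equivalently the first proof of (c)) shows $\partial$ is surjective. Hence $H^{d+1}(\mf{g},K_\infty^\circ;\mc{A}_{\pi,s_0}\otimes E^\vee)=0$, and $H^d(\mf{g},K_\infty^\circ;\mc{A}_{\pi,s_0}\otimes E^\vee)\cong\ker\partial$.

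It remains to identify $\ker\partial$ with $\mc{K}(\pi_f,s_0)^{\oplus m}$, which I expect to be the only real point. I would decompose $\partial$ along the indices $i\in\mc{I}_{\mathrm{coh}}$, using Lemma \ref{lemrestrofindglobal} and the corresponding decomposition of $\mc{L}(\pi,s_0)$. For each $i$, $\partial$ restricts to a $\mc{G}(\A_f)$-equivariant map
\[\partial_i:\Ind_{\mc{P}(\A_f)}^{\mc{G}(\A_f)}(\pi_f,s_0)\otimes H^d(D_{+,i}\oplus D_{-,i})\text{-part}\to\mc{L}(\pi,s_0)_f\otimes H^{d+1}(\mc{L}(\pi_{\infty,i},s_0)\otimes E^\vee).\]
The $H^d$ of the induced module in degree $d$ is one-dimensional at infinity (Proposition \ref{propcohofinduced}), spanned by the image of $a_+c^{d,+}+a_-c^{d,-}$ for suitable constants. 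The computation in Step 4 of that proof shows that, on a class $\phi_{f}\otimes c_{\infty,i}^{d,\epsilon}$, the map $\partial_i$ is given by $\phi_f\mapsto a_\epsilon\res_{s=s_0}M(s,\phi_{f,s})$ tensored with the nonzero archimedean connecting map $\partial_{\infty,i}$. The explicit formula was derived for a particular $\phi_f$, but the same computation holds for every flat $\phi_{f,s}$ by linearity.

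So each $\partial_i$ is, up to a nonzero scalar, the map $\Ind_{\mc{P}(\A_f)}^{\mc{G}(\A_f)}(\pi_f,s_0)\to\mc{L}(\pi,s_0)_f$ given by the residue of the finite intertwining operator. This residue operator is a nonzero $\mc{G}(\A_f)$-map, since it is nonzero on sections producing the pole. Its image is a nonzero quotient of the induced module sitting inside the irreducible $\mc{L}(\pi,s_0)_f$, and that quotient is unique because the induced representation has a unique irreducible quotient. Therefore its kernel is exactly $\mc{K}(\pi_f,s_0)$. Summing over the $m$ indices gives $\ker\partial\cong\mc{K}(\pi_f,s_0)^{\oplus m}$.

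The care needed is in checking that the residue operator is really independent of the archimedean choice, that is, that it factors through $\mc{L}(\pi,s_0)_f$ and is not just nonzero on one vector. I would get this from the fact that residues of Eisenstein series span $\mc{L}(\pi,s_0)$, together with the constant-term formula.
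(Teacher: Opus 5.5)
Your proposal is correct and follows the paper's route: the paper's proof consists of the long exact sequence \eqref{eqnisoindegdminusone}--\eqref{eqnseqwithbdry} of Grbac's sequence together with the surjectivity of $\partial$ from Proposition \ref{propbdrymapissurj}. Your identification of $\ker\partial$ fills in the step the paper leaves implicit: each $i$-component of $\partial$ is a nonzero multiple of $\phi_f\mapsto\res_{s=s_0}M(s,\phi_{f,s})$, which is a nonzero map into the irreducible $\mc{L}(\pi,s_0)_f$, so its kernel must be the unique maximal submodule $\mc{K}(\pi_f,s_0)$. This also matches the paper's closing remark that $\partial$ is a cohomological incarnation of the finite adelic intertwining operator.
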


\begin{proof}
This follows from Proposition \ref{propbdrymapissurj} and the long exact sequence associated with \eqref{eqngrbacses}, which is given by \eqref{eqnisoindegdminusone} and \eqref{eqnseqwithbdry}.
\end{proof}

Our methods have thus, in some sense, revealed the boundary map $\partial$ of \eqref{eqnseqwithbdry} as a cohomological incarnation of the finite adelic intertwining operator.

\subsection{Examples}
\label{subsecexamples}
We now give some examples in which we check the hypotheses of Theorem \ref{thmmainthm} in the symplectic case. These examples should illustrate well the process of checking the setup in Sections \ref{subsecsetup} and \ref{subsecds} in specific cases, and they should also indicate the generality in which Theorem \ref{thmmainthm} holds. We begin by setting up notation for symplectic groups.

\begin{setup}
Let $G=Sp_{2n}$ denote the split simple group over $\Q$ of type $C_n$. it is presented via its image under its faithful standard representation by
\[\sset{g\in GL_{2n}}{\tp{g}Jg=J},\qquad J=\pmat{0 & 1_n\\ -1_n & 0},\]
where $1_n$ denotes the $n\times n$ identity matrix. Then a maximal split torus $\mc{T}_s$ is given by the set of diagonal matrices $t$, which necessarily take the form
\[t=\diag(t_1,t_2,\dotsc,t_n,t_1^{-1},t_2^{-1},\dotsc,t_n^{-1})\]
with $t_1,\dotsc,t_n$ arbitrary invertible. For $t$ of this form, and for $i$ an integer with $1\leq i\leq n$, let $e_i:\mc{T}_s\to\mb{G}_m$ be the character sending $t$ to $t_i$. Then the root system is given by the set
\[\Delta=\Delta(G,\mc{T}_s)=\sset{\pm(e_i+e_j),\,\pm(e_i-e_j),\,\pm 2e_i}{1\leq i<j\leq n}.\]
If a system of positive roots is taken to be those above without a minus sign in front,
\[\Delta^{+,\mr{std}}=\Delta(G,\mc{T}_s)^{+,\mr{std}}=\sset{(e_i+e_j),\,(e_i-e_j),\,2e_i}{1\leq i<j\leq n},\]
then the simple roots are given by
\begin{equation}
\label{eqnspsimpleroots}
e_1-e_2,\, e_2-e_3,\, \dotsc, e_{n-1}-e_n,\, 2e_n.
\end{equation}

Let us assume for simplicity that $n\geq 2$; when $n=1$, we have $S\mc{P}_2\cong SL_2$. The group $Sp_{2n}$ then has two conjugacy classes of maximal parabolic subgroups $\mc{P}=\mc{M}\mc{N}$ such that $\mc{M}(\R)$ has discrete series. In fact, for all integers $r$ with $1\leq r\leq n$, there is a unique conjugacy class of maximal parabolic subgroups whose Levi factors are isomorphic to $GL_r\times Sp_{2n-2r}$ (where we view $Sp_0$ as the trivial group if $r=n$); the standard representative for such a conjugacy class, which we denote by $\mc{P}_r$, is the one whose Levi omits the $r$th simple root from the list \eqref{eqnspsimpleroots} above. The two of these parabolic subgroups $\mc{P}_r$ whose Levi factors have real points with discrete series are $\mc{P}_1$ and $\mc{P}_2$.

The group $Sp_{2n}(\R)$ is connected. Its maximal compact subgroup $K_\infty$ is isomorphic to the group $U(n)$ of Hermitian $n\times n$ invertible matrices with complex entries. The complexification of $K_\infty$ is thus isomorphic to $GL_n(\C)$.

Let $T\subset K_\infty$ be a maximal torus, which is thus a compact Cartan subgroup of $Sp_{2n}(\R)$. Assume a Cartan involution $\theta$ on $Sp_{2n}(\R)$ is fixed giving $K_\infty$. Let $\mf{g}$, $\mf{k}$, and $\mf{t}$ be the complexified Lie algebras of $Sp_{2n}(\R)$, $K_\infty$, and $T$, respectively. Then we may and will identify the set $\Delta(\mf{g},\mf{t})$ of roots of $\mf{t}$ in $\mf{g}$ with the set $\Delta$ above in such a way that the set $\Delta_c$ of compact roots, namely $\Delta(\mf{k},\mf{t})$, is identified with the set
\[\Delta_c=\sset{\pm(e_i-e_j)}{1\leq i<j\leq n}.\]

The Weyl group $W_c$ of $\Delta_c$ may be identified as usual with the symmetric group $S_n$, and it acts in the natural way by permuting the indices of the $e_i$'s. Then the full Weyl group $W$ for $Sp_{2n}$ may be identified with the semidirect product
\[W=S_n\ltimes \{\pm\}^{\oplus n},\]
where the $i$th sign acts via $e_i\mapsto -e_i$, and preserving $e_j$ if $j\ne i$.

Now an integral weight $\lambda$ for $T$ may be identified with an $n$-tuple of integers,
\[\lambda=(\lambda_1,\dotsc,\lambda_n),\qquad \lambda_i\in\Z,\]
where we identify this tuple with the weight $\sum_{i=1}^n\lambda_i e_i$. Such a weight is dominant for the system $\Delta^{+,\mr{std}}$ if and only if $\lambda_1\geq \dotsb\geq \lambda_n\geq 0$, and it is dominant for the induced system of positive roots in $\Delta_c$ if and only if $\lambda_1\geq \dotsb\geq \lambda_n$ (with no sign condition on $\lambda_n$). Moreover, we then have
\begin{equation}
\label{eqnsymprhos}
\rho=(n,n-1,\dotsc,1),\qquad\rho_c=(\tfrac{n-1}{2},\tfrac{n-3}{2},\dotsc,\tfrac{1-n}{2}),\qquad\rho_n=(\tfrac{n+1}{2},\dotsc,\tfrac{n+1}{2}).
\end{equation}
So the Harish-Chandra parameters for $Sp_{2n}$ in the positive chamber for $\Delta^{+,\mr{std}}$ are those integral weights $\lambda$ as above with $\lambda_1>\dotsb >\lambda_n>0$. It follows that a general Harish-Chandra parameter for $Sp_{2n}$ may be viewed as an integral weight $\lambda$ with $\vert\lambda_1\vert>\dotsb >\vert\lambda_n\vert>0$, since the chambers containing such weights are in a natural bijection with the signs $\{\pm\}^{\oplus n}=W_c\backslash W$.
\end{setup}

With this setup we can now give examples involving the parabolic subgroups $\mc{P}_1$ and $\mc{P}_2$ above. We deal with the latter first.

\begin{example}
\label{exgl2sp2nminus4}
Fix an element $\epsilon=(\epsilon_1,\dotsc,\epsilon_n)\in\{\pm\}^{\oplus n}\subset W$. We work in the chamber $\Delta^\epsilon=\epsilon\Delta^{+,\mr{std}}$. Assume there is an integer $i_0$ with $1\leq i_0\leq n-1$ such that $\epsilon_{i_0}\ne\epsilon_{i_0+1}$. Then the set $\Delta^\epsilon$ contains the noncompact simple root $\epsilon(e_{i_0}-e_{i_0+1})=\epsilon_{i_0}(e_{i_0}+e_{i_0+1})$ of $\mf{t}$ in $\mf{g}$, and in the notation of Section \ref{subsecsetup}, we take
\[\alpha_0=\epsilon(e_{i_0}-e_{i_0+1}).\]
Then the Lie algebra $\mf{a}$ constructed from $\alpha_0$ as in \textit{loc. cit.} is, by definition, conjugate by an element of $Sp_{2n}(\C)$ to the Lie algebra of the diagonal torus in the $SL_2$-subgroup of $Sp_{2n}(\C)$ determined by the root $\alpha_0$. The Levi subgroup $M$ of $Sp_{2n}(\R)$, which by definition is the centralizer of $\mf{a}$, therefore has complexified Lie algebra $\mf{m}$ conjugate to the maximal Levi subalgebra containing the roots orthogonal to $\alpha_0$. This Levi subalgebra is the one containing $\epsilon(e_{i_0}+e_{i_0+1})=\epsilon_{i_0}(e_{i_0}-e_{i_0+1})$ and the roots that are combinations of $e_i$'s with $1\leq i\leq n$ and $i\ne i_0,i_0+1$. Conjugating by the element $w_1=(1,i_0)(2,i_0+1)$ in $W$, we see that $M$ is conjugate to the Levi $\mc{M}_2(\R)$ of $\mc{P}_2(\R)$; for clarity, here we have written $(1,i_0)$ and $(2,i_0+1)$ for the corresponding transpositions in $S_n$. Therefore the parabolic subgroup $P$ determined as in Section \ref{subsecsetup} by $\alpha_0$ in positive system $\Delta^\epsilon$ is conjugate to $\mc{P}_2(\R)$.

Now we consider a Harish-Chandra parameter $\Lambda=(\lambda_1,\dotsc,\lambda_n)$ which is dominant for $\Delta^\epsilon$ and which satisfies the condition that
\[1=\frac{2\langle\alpha_0,\Lambda\rangle}{\langle\alpha_0,\alpha_0\rangle}=\epsilon_{i_0}\lambda_{i_0}-\epsilon_{i_0+1}\lambda_{i_0+1},\]
as in condition \eqref{eqnlanranlamrho}. Note that $\epsilon_{i}\lambda_{i}>0$ for all $i$ by the dominance condition, so this may be rewritten as
\[\vert\lambda_{i_0}\vert-\vert\lambda_{i_0+1}\vert=1.\]

A Harish-Chandra parameter for $\mc{M}_2(\R)\cong GL_2(\R)\times Sp_{2n-4}(\R)$ may be seen as a tuple $(\mu_0;\mu_1,\mu_2,\dotsc,\mu_{n-2})$ of integers, where $\vert\mu_0\vert\ne 0$ and $\vert\mu_1\vert>\dotsb >\vert\mu_{n-2}\vert>0$. To restrict $\Lambda$ to a Harish-Chandra parameter for $\mc{M}_2(\R)$, we note that, writing $\mf{m}$ for the complexified Lie algebra of $M$, we have that $\mf{t}'=\mf{t}\cap\mf{m}$ is a compact Cartan subalgebra of $\mf{m}$ and it is given as the orthogonal complement of $H_{\alpha_0}\in\mf{t}$. It follows that the Lie algebra of the $GL_2(\R)$-factor of $M$ has compact Cartan subalgebra spanned by the normalized root vector $H_{\alpha_0^\perp}$ where $\alpha_0^\perp=\epsilon(e_{i_0}+e_{i_0+1})$. So we have
\[\Lambda(H_{\alpha_0^\perp})=\epsilon_{i_0}\lambda_{i_0}+\epsilon_{i_0+1}\lambda_{i_0+1}=2\vert\lambda_{i_0+1}\vert+1.\]
Clearly, on the $Sp_{2n-4}(\R)$-factor of $M$, the parameter $\Lambda$ restricts to the $(n-2)$-tuple
\[(\lambda_1,\lambda_2,\dotsc,\widehat{\lambda}_{i_0},\widehat{\lambda}_{i_0+1},\dotsc,\lambda_n).\]
This determines the restricted Harish-Chandra parameter for $\mc{M}_2(\R)$; it is
\[\mu=(2\vert\lambda_{i_0+1}\vert+1;\lambda_1,\lambda_2,\dotsc,\widehat{\lambda}_{i_0},\widehat{\lambda}_{i_0+1},\dotsc,\lambda_n).\]
As usual, the hats mean we remove that entry.

It is not hard to write down the discrete series representation, which we will call $\pi_\infty'$ here, corresponding to this parameter $\mu$ as in Section \ref{subsecds} (wherein we called it $\pi$ instead). To describe it, we need to understand the components of $\mc{M}_2(\R)$. We note that $\mc{M}_2(\R)\cong GL_2(\R)\times Sp_{2n-4}(\R)$ has two connected components, the identity component splitting as
\[\mc{M}_2(\R)^\circ\cong(\R_{>0}\cdot SL_2(\R))\times Sp_{2n-4}(\R),\]
with the $\R_{>0}$ component in center of the $GL_2$-factor being the identity component $A_{\mc{M}_2}(\R)^\circ$ of the split center. The center of $\mc{M}_2(\R)$ is contained in $\mc{M}_2(\R)^\circ$. Let us write $\mc{M}_2(\R)_0^\circ$ and $\mc{M}_2(\R)_0$ for the semisimple components of $\mc{M}_2(\R)^\circ$ and $\mc{M}_2(\R)$, respectively, in the Langlands decomposition. Then
\[\mc{M}_2(\R)_0^\circ\cong SL_2(\R)\times Sp_{2n-4}(\R),\]
and
\[\mc{M}_2(\R)_0\cong SL_2(\R)^{\pm}\times Sp_{2n-4}(\R),\]
where $SL_2(\R)^{\pm}\subset GL_2(\R)$ is the group of matrices with determinant $\pm 1$.

So then we start with the discrete series representation $\pi_{0,\infty}'$ of $\mc{M}_2(\R)_0^\circ$ with parameter $\mu$ as above. Since the center $Z_{\mc{M}_2(\R)_0}$ is already contained in $\mc{M}_2(\R)_0^\circ$, there is no need to specify a central character $\xi$ as in Section \ref{subsecds}. Then we induce:
\[\pi_\infty'=\Ind_{\mc{M}_2(\R)_0^\circ}^{\mc{M}_2(\R)_0}(\pi_{0,\infty}').\]
This induction has the following effect. Let $\mu_{Sp}$ be the Harish-Chandra parameter for $Sp_{2n-4}(\R)$ given by
\[\mu_{Sp}=(\lambda_1,\lambda_2,\dotsc,\widehat{\lambda}_{i_0},\widehat{\lambda}_{i_0+1},\dotsc,\lambda_n),\]
and $\mu_{SL}$ the Harish-Chandra parameter of $SL_2(\R)$ given by the integer $2\vert\lambda_{i_0+1}\vert+1$. If $\pi_{0,\infty,Sp}'$ and $\pi_{0,\infty,SL}'$ are the corresponding representations, then $\pi_{0,\infty,SL}'$ is the weight $(2\vert\lambda_{i_0+1}\vert+2)$ holomorphic discrete series, and we have
\[\pi_{0,\infty}'=\pi_{0,\infty,SL}'\boxtimes \pi_{0,\infty,Sp}'\]
as a representation of $\mc{M}_2(\R)_0^\circ\cong SL_2(\R)\times Sp_{2n-4}(\R)$. Let $\pi_{\infty,SL^{\pm}}'$ be the usual discrete series representation of $SL_2(\R)^\pm$ of weight $(2\vert\lambda_{i_0+1}\vert+2)$; as a representation of $SL_2(\R)$ it is a direct sum of both holomorphic and antiholomorphic discrete series, but the nonidentity component of $SL_2(\R)^\pm$ switches between them. Then we have
\[\pi_{\infty}'=\pi_{\infty,SL^\pm}'\boxtimes \pi_{0,\infty,Sp}'.\]

We then write $\pi_\infty$ for the representation of $\mc{M}_2(\R)$ given by the representation $\pi_\infty'$ on $\mc{M}_2(\R)_0$ and with trivial character along $A_{\mc{M}_2}(\R)^\circ$, so
\[\pi_\infty=\pi_{\infty}'\otimes 1\qquad\textrm{on}\qquad\mc{M}_2(\R)_0\cdot A_{\mc{M}_2}(\R)^\circ.\]
The cuspidal automorphic representations $\pi$ of $\mc{M}_2(\A)$ with archimedean component one of these representations $\pi_\infty$ are the ones out of which we can build the Eisenstein series which could be considered for our main theorem (and by Proposition \ref{propaltsetup}, there can be no other such cuspidal representations of $\mc{M}_2(\A)$ which have discrete series archimedean component and which could possibly contribute to residual Eisenstein cohomology, at least at the relevant value $s_0$). The condition that such a $\pi$ has such an archimedean component amounts therefore to a mild regularity condition on the weight of the $Sp_{2n-4}(\R)$-component, along with the condition that the weight of the $GL_2(\R)$-component be at least $4$ and even, as well as a minor separation condition on the weights for both components (which is seen in our setup through the position of the index $i_0$).

Now let us fix such a cuspidal automorphic representation $\pi$ of $\mc{M}_2(\A)$ with archimedean component this $\pi_\infty$. We next explain how to use $L$-functions to determine whether or not the Eisenstein series built from inducing $\pi$, along with the parameter $s_0$ from Theorem \ref{thmmainthm}, have poles, and hence whether they can contribute to residual Eisenstein cohomology.

First we compute $s_0$. We note that the positive roots of the standard split torus $\mc{T}_s$ contained in $\mc{N}_2$ are those of the form
\[2e_1,\,\,2e_2,\,\,e_1+e_2,\,\, e_1\pm e_i,\,\,e_2\pm e_i,\,\,\textrm{for }3\leq i\leq n.\]
The sum of these is $2\rho_{\mc{P}_2}|_{\mc{T}_s}=(2n-1)(e_1+e_2)$. We claim the root $e_1+e_2$, which is in $\mc{N}_2$, vanishes on the orthogonal complement of the complexified Lie algebra of $A_{\mc{M}_2}$ in the complexified Lie algebra of the split torus $\mc{T}_s$. Indeed, the torus $A_{\mc{M}_2}$ is the center of the $GL_2$-factor of $\mc{M}_2$. It is therefore image of the cocharacter $e_1^\vee+e_2^\vee$, where we write $e_i^\vee:\mb{G}_m\to\mc{T}_s$ for the cocharacter given by
\[e_i^\vee(t)=\diag(1,\dotsc,t,\dotsc,1,1,\dotsc,t^{-1},\dotsc,1);\]
here, the $t$ is in the $i$th slot, and the $t^{-1}$ is in the $(n+i)$th slot. The cocharacters $e_i^\vee$ form the dual basis to the characters $e_i$ under composition, and so by duality, $e_1+e_2$ vanishes on the orthogonal complement of the complexified Lie algebra of $A_{\mc{M}_2}$ in that of $\mc{T}_s$. Thus we have, by definition of $s_0$, that $\frac{1}{2}(e_1+e_2)|_{A_{\mc{M}_2}}=2s_0\rho_{\mc{P}_{2}}|_{A_{\mc{M}_2}}$, and hence that
\[s_0=\frac{1}{4n-2}.\]

The Langlands--Shahidi method will express the constant terms of the Eisenstein series built from $\pi$ in terms of $L$-functions. In our case, the process will be as follows; see \cite[\S 6.3]{shahidiES} for the general case.

Let $SO_{2n+1}$ denote the usual split special orthogonal group, which is a simple group of type $B_n$; it is Langlands dual to $Sp_{2n}$. Its root system $\Delta^\vee$ is given by the following coroots for $Sp_{2n}$:
\[\Delta^\vee=\sset{\pm(e_i^\vee+e_j^\vee),\,\pm(e_i^\vee-e_j^\vee),\,\pm e_i^\vee}{1\leq i<j\leq n}.\]

Now the parabolic $\mc{P}_2$ in $Sp_{2n}$ is dual to a parabolic $\mc{P}_2^\vee$ in $SO_{2n+1}$ with Levi decomposition $\mc{M}_2^\vee \mc{N}_2^\vee$, where $\mc{M}_2^\vee\cong GL_2\times SO_{2n-3}$. The roots in $\Delta^\vee$ contained in $\mc{N}_2^\vee$ are
\[e_1^\vee,\,\,e_2^\vee,\,\,e_1^\vee+e_2^\vee,\,\, e_1^\vee\pm e_i^\vee,\,\,e_2^\vee\pm e_i^\vee,\,\,\textrm{for }3\leq i\leq n.\]
The weights of the standard $(2n-3)$-dimensional representation of the $SO_{2n-3}$-factor, call it $\std_{SO_{2n-3}}$, are $\pm e_i^\vee$ for $3\leq i\leq n$ along with the $0$ weight. Those for the standard representation $\std_{GL_2}$ of the $GL_2$-factor are $e_1^\vee+e_2^\vee$. So $\Lie(\mc{N}_2^\vee)$ contains all the weights for $\std_{GL_2}\boxtimes \std_{SO_{2n-3}}$, along with the weight $e_1^\vee+e_2^\vee$ which is the determinant on the $GL_2$-factor of $\mc{M}_2^\vee$. Therefore it decomposes as a representation of $GL_2\times SO_{2n-3}$ as
\[\Lie(\mc{N}_2^\vee)\cong (\std_{GL_2}\boxtimes \std_{SO_{2n-3}})\oplus (\det\boxtimes 1).\]
The weights of the first factor above pair with the root $e_1+e_2$ of $Sp_{2n}$ to give $1$, while the second factor pairs with $e_1+e_2$ to give $2$.

For our fixed cuspidal automorphic representation $\pi$ of $\mc{M}_2(\A)$, let us write
\[\pi=\pi_{GL}\boxtimes\pi_{Sp}\]
for cuspidal automorphic representations $\pi_{GL}$ and $\pi_{Sp}$ of $GL_2(\A)$ and $Sp_{2n-4}(\A)$, respectively. Let $S$ be a finite set of places of $\Q$ containing the archimedean place and any finite place where $\pi$ is ramified. Let $\phi_s\in\Ind_{\mc{P}_2(\A)}^{Sp_{2n}(\A)}(\pi,s)$ be a flat section which decomposes over all places $v$ as $\otimes_v\phi_{v,s}$ with
\[\phi_{v,s}\in\Ind_{\mc{P}_2(\Q_v)}^{Sp_{2n}(\Q_v)}(\pi_v,s),\]
where $\pi_v$ is the local component of $\pi$ at $v$. Assume $\phi_{v,s}$ is spherical for all $v\notin S$. Let $w_0=(-)_1(-)_2$ be the Weyl element for $Sp_{2n}$ which flips the signs of $e_1$ and $e_2$. Then $w_0\mc{P}_2 w_0^{-1}$ is opposite to $\mc{P}_2$, and $w_0^{-1}\mc{M}_2w_0=\mc{M}_2$. For brevity, write
\[R_1=\std_{GL_2}\boxtimes \std_{SO_{2n-3}}.\]
Then the Langlands--Shahidi method then gives
\[M(s,\phi)=\frac{L^S((2n-1)s,\pi,R_1)L^S((4n-2)s,\pi_{GL_2},\det)}{L^S((2n-1)s+1,\pi,R_1)L^S((4n-2)s+1,\pi_{GL_2},\det)}\bigotimes_{v\notin S}\phi_{v,-s}\otimes\bigotimes_{v\in S}M(s,\phi_v).\]
Here, we have written $M(s,\cdot)$ for the intertwining operator defined with respect to $w_0$, and for an algebraic representation $R$ of $\mc{M}_2^\vee$, the function $L^S(s,\pi,R)$ denotes the usual Langlands $L$-function with Euler factors only at the places not in $S$. Precisely, if $v\notin S$ corresponds to a rational prime $\ell_v$ and $t_v\in \mc{M}_2^\vee(\C)$ denotes the Satake parameter of $\pi_v$, then the corresponding Euler factor is
\[L_v(s,\pi_v,R)=\det(1-R(t_v)\ell_v^{-s})^{-1}.\]
Then by definition we have
\[L^S(s,\pi,R)=\prod_{v\notin S}L_v(s,\pi_v,R),\]
when $\re(s)$ is sufficiently large. In the case of $R=R_1$, the expression above implies the meromorphic continuation of $L^S(s,\pi,R_1)$, because we know the meromorphic continuation of the Eisenstein series whose constant term contains the intertwined section above as a summand by Theorem \ref{thmLS}, and we know the meromorphic continuation of the Hecke $L$-function $L^S(s,\pi_{GL_2},\det)$ classically.

Now assume that the central character of $\pi_{GL_2}$ is trivial. Then the $L$-function
\[L^S((4n-2)s,\pi_{GL_2},\det)\]
appearing above is the incomplete zeta function $\zeta^S((4n-2)s)$ and therefore has a simple pole at $s=s_0$. The $L$-value
\[L^S((4n-2)s_0+1,\pi_{GL_2},\det)\]
is not divergent because it equals $\zeta^S(2)$. One should expect, at least when $\pi$ has, say, neither CAP nor endoscopic factors, that the other $L$-function appearing in the denominator above, namely
\[L^S((2n-1)s+1,\pi,R_1),\]
should converge as an Euler product at $s=s_0$ for temperedness reasons. As well, the local intertwined sections $M(s,\phi_v)$ could be chosen to be nonvanishing for the same temperedness reasons. Finally, the factor in the numerator,
\[L^S((2n-1)s,\pi,R_1),\]
should have $s=s_0$ as a central critical value, and one should expect that it either may or may not vanish at $s=s_0$. Thus altogether, if $\pi$ is tempered and $L^S(1/2,\pi,R_1)\ne 0$, then the intertwining operator $M(s,\phi)$ has a (simple) pole at $s=s_0$ for some section $\phi_s$, and hence so does the Eisenstein series $E(s,\phi,\cdot)$. Our Theorem \ref{thmmainthm} will then apply to the residual representation $\mc{L}(\pi,s_0)$, showing that its image in automorphic cohomology in degree $\frac{n(n+1)}{2}-1$ contributes one copy of $\mc{L}(\pi,s_0)_f$ (and not more, because $Sp_{2n}(\R)$ is connected), but that its image vanishes in degree $\frac{n(n+1)}{2}+1$, despite it being cohomological itself in that degree.
\end{example}

\begin{example}
Keeping the notation $\epsilon=(\epsilon_1,\dotsc,\epsilon_n)$ and $\Delta^\epsilon$ from the beginning of Example \ref{exgl2sp2nminus4} above (but with no assumption at any fixed index $i_0$ as we had there), let us now quickly describe what happens in the remaining case, namely when the fixed noncompact root $\alpha_0$ is $2\epsilon_n e_n$. We will be somewhat brief, leaving the rest of the details to the interested reader. When $\alpha_0=2\epsilon_n e_n$, the corresponding real Levi $M$ is conjugate to the $\R$-points of $\mc{M}_1\cong GL_1\times Sp_{2n-2}$. The allowable Harish-Chandra parameters $\Lambda=(\lambda_1,\dotsc,\lambda_n)$ are those with
\[\epsilon_1\lambda_1>\dotsb >\epsilon_{n-1}\lambda_{n-1}>\epsilon_n\lambda_n=1.\]
Their restriction to the $Sp_{2n-2}(\R)$-factor of $M$ are given by $(\lambda_1,\dotsc,\lambda_{n-1})$. Thus the additional regularity condition imposed on these parameters for $Sp_{2n-2}(\R)$ is that $\lambda_{n-1}\geq 2$; otherwise these Harish-Chandra parameters are arbitrary.

Now, in this case, the semisimple component $\mc{M}_1(\R)_0$ of $\mc{M}_1(\R)$ in its Langlands decomposition is
\[\mc{M}_1(\R)_0\cong\{\pm 1\}\times Sp_{2n-2}(\R).\]
It has two connected components and the center meets both. The character $\xi$ of Section \ref{subsecds} in this case is determined by its restriction to the $-1$ in the first factor, and is given there by the $n$th component of $\Lambda-\rho_c+\rho_n$. By \eqref{eqnsymprhos}, this is entry given by $n+\epsilon_n$, and so we must require that $\xi|_{\{\pm 1\}}(-1)=(-1)^{n+\epsilon_n}=(-1)^{n-1}$. This means that the discrete series representation $\pi_\infty$ of $\mc{M}_1(\R)$ will have the form
\[\pi_\infty=\sign^{n-1}\boxtimes\pi_{\infty,Sp},\]
where $\sign:GL_1(\R)\to\{\pm 1\}$ is the sign representation of $GL_1(\R)$, and $\pi_{\infty,Sp}$ is a discrete series representation of $Sp_{2n-2}(\R)$ with Harish-Chandra parameter $(\lambda_1,\dotsc,\lambda_{n-1})$ satisfying $\vert\lambda_{n-1}\vert\geq 2$.

Now let $\pi$ be a cuspidal automorphic representation of $\mc{M}_1(\A)$ with archimedean component given by $\pi_\infty$ as above. It is given by a Dirichlet character $\chi$ on the $GL_1(\A)$ factor with parity that of $n-1$, and some cuspidal representation $\pi_{Sp}$ of $Sp_{2n-2}$ with archimedean component $\pi_{\infty,Sp}$. Let $S$ be a finite set of places containing the archimedean one and those for which $\pi$ is ramified. One checks that the root $\alpha$ giving $\widetilde{\alpha}$ as in the notation of Theorem \ref{thmmainthm} is $2e_1$, and that $s_0=1/2n$. Write $\std_{SO_{2n-1}}$ for the standard representation of $SO_{2n-1}(\C)$. Then the Langlands--Shahidi $L$-function appearing in the constant term of the Eisenstein series induced from $\pi$ along $\mc{P}_1(\A)$ is a quotient of twisted $L$-functions,
\[\frac{L^S(2ns,\pi_{Sp}\otimes\chi,\std_{SO_{2n-1}})}{L^S(2ns+1,\pi_{Sp}\otimes\chi,\std_{SO_{2n-1}})}.\]
Here, the local Euler factor of $L^S(s,\pi_{Sp}\otimes\chi,\std_{SO_{2n-1}})$ at $v\notin S$ is given by
\[L_v(2ns,\pi_{Sp}\otimes\chi,\std_{SO_{2n-1}})=\det(1-\chi(\ell_v)\std_{SO_{2n-1}}(t_v)\ell_v^{-s})^{-1},\]
where $t_v$ is the Satake parameter for $\pi$ at $v$ and $\ell_v$ is the rational prime corresponding to $v$.

Now, the point of evaluation $s_0$ thus returns the non-central critical value at $s=1$ for the $L$-function $L^S(s,\pi_{Sp}\otimes\chi,\std_{SO_{2n-1}})$. We expect that very often this $L$-function is entire, and also that it is nonvanishing when $s>1$. In this case the Langlands--Shahidi quotient of $L$-functions above cannot have a pole at $s=s_0$. However, the situation is more interesting when $\pi_{Sp}$ is CAP or endoscopic.

For an example in the endoscopic case, consider when $n=2$. Write $\pi_{SL}=\pi_{Sp}$, which will be an automorphic representation of $SL_2(\A)=Sp_{2}(\A)$. Assume that $\pi_{SL}$ is attached to either a holomorphic or antiholomorphic CM modular form of even weight $k\geq 4$ and trivial nebentypus associated with a Hecke character $\psi$ defined over an imaginary quadratic field $F$. Its Harish-Chandra parameter will be given by $\pm(k-1)$, the sign depending on its holomorphicity or antiholomorphicity. Let $\eta_F$ be the imaginary quadratic character associated with $F$, and let $\chi=\eta_F$. Then $\chi$ is odd, so that the parity condition is satisfied. The standard representation $\std_{SO_{3}}$ is the symmetric square for $SL_2^\vee=PGL_2(\C)$, and so the $L$-function
\[L^S(s,\pi_{SL}\otimes\chi,\std_{SO_{3}})\]
splits into two factors, namely
\[L^S(s,\pi_{SL}\otimes\chi,\std_{SO_{3}})=L^S(s+k-1,\psi^2\chi|_{F})L^S(s,\eta_F\chi),\]
where the first $L$-function on the right hand side is a Hecke $L$-function over $F$ and the second is one over $\Q$; see \cite[Corollary 3.4]{leiCM}, for instance (though our normalizations are different here, hence the shift by $k-1$ from \textit{loc. cit.}). Of course, we have $\chi|_F=\eta_F|_F$ is trivial, as is $\eta_F\chi=\eta_F^2$, so we have
\[L^S(4s,\pi_{SL}\otimes\chi,\std_{SO_{3}})=L^S(4s+k-1,\psi^2)\zeta^S(4s).\]
This has a pole at $s_0=1/4$. Our Theorem \ref{thmmainthm} thus applies to the residual representation $\mc{L}(\pi,1/4)$.

For an example in the CAP case, consider when $n=3$. Now let $\pi_{Sp}$ be attached to a holomorphic or antiholomophic Saito--Kurokawa form, this form itself coming from a modular form $f$ of even weight $k\geq 6$ and trivial nebentypus with $L$-function $L(s,f)$ vanishing to odd order at the central value $k/2$. The odd order vanishing condition here guarantees that such a $\pi_{Sp}$ exists with archimedean components having Harish-Chandra parameter $\pm(\tfrac{k}{2},\tfrac{k-2}{2})$, and so the condition that $k\geq 6$ guarantees that the second component of this weight is at least $2$ in absolute value. Let $\chi$ be the trivial character, which is even now, and hence satisfies the required parity condition. Then one computes that the relevant Langlands--Shahidi $L$-function splits as
\[L^S(6s,\pi_{Sp},\std_{SO_{5}})=L^S(f,6s-1+\tfrac{k-1}{2})L^S(f,6s-1+\tfrac{k+1}{2})\zeta^S(6s).\]
Hence the quotient appearing in the constant term is
\[\frac{L^S(f,6s-1+\tfrac{k-1}{2})\zeta^S(6s)}{L^S(f,6s+\tfrac{k+1}{2})\zeta^S(6s+1)}.\]
The first $L$-function in the numerator does not vanish at $s=s_0=1/6$ by Shahidi's prime number theorem \cite[\S 5]{shahidiPNT} along with the functional equation, as long as we have not removed an Euler factor with a pole at $s=s_0$ from it. One checks that such an Euler factor exists at a finite place $v$ if and only if $f$ is unramified at $v$ and $1$ is a Satake parameter for $f$ at $v$ (with unitary normalizations). In this case, the other Satake parameter at $v$ is also $1$. Conjecturally, this never happens, but we could always omit $v$ from the set $S$ in any case since $f$ is unramified at $v$. Thus this quotient of $L$-function acquires a pole at $s=1/6$ in this case, and so our Theorem \ref{thmmainthm} applies to the residual representation $\mc{L}(\pi,1/6)$.
\end{example}

For one final remark, let $\pi$ be an automorphic representation of $GL_2(\A)$ attached to a holomorphic cuspidal eigenform of even weight at least $4$ and trivial nebentypus. In \cite{mung2cohart}, we considered Eisenstein series for the split exceptional group $G_2$ attached to such $\pi$, induced along the long root parabolic subgroup. The symmetric cube $L$-function $L(s,\pi,\Sym^3)$ appears in the constant term of those Eisenstein series, and in fact the numerator of the quotient of $L$-functions appearing there is
\[L^S(5s,\pi,\Sym^3)\zeta^S(10s).\]
That paper was primarily concerned with the case when the case when $L(s,\pi,\Sym^3)$ vanishes to odd order at $s=1/2$. But in the case when $L(1/2,\pi,\Sym^3)\ne 0$, we made the remark that residual Eisenstein series arise at $s=1/10$, and we wondered in \cite[Remark 3.5.5]{mung2cohart} whether the boundary map in cohomology between middle degree $4$ and degree $5$ coming from Grbac's exact sequence was nontrivial. In the context of the present paper, we have $s_0=1/10$ and the hypotheses of Theorem \ref{thmmainthm} are satisfied for these residual Eisenstein series. So we have answered this question completely affirmatively here.
\printbibliography

@article{blank,
author = {Blank, Brian E.},
year = {1985},
pages = {127-145},
title = {Knapp-Wallach Szegö integrals and generalized principal series representations: The parabolic rank one case},
volume = {60},
journal = {Journal of Functional Analysis},
}

@book {BW,
    AUTHOR = {Borel, A. and Wallach, N.},
     TITLE = {Continuous cohomology, discrete subgroups, and representations
              of reductive groups},
    SERIES = {Mathematical Surveys and Monographs},
    VOLUME = {67},
   EDITION = {Second},
 PUBLISHER = {American Mathematical Society, Providence, RI},
      YEAR = {2000}
}

@article {franke,
    AUTHOR = {Franke, Jens},
     TITLE = {Harmonic analysis in weighted {$L_2$}-spaces},
   JOURNAL = {Ann. Sci. \'{E}cole Norm. Sup. (4)},
  FJOURNAL = {Annales Scientifiques de l'\'{E}cole Normale Sup\'{e}rieure. Quatri\`eme
              S\'{e}rie},
    VOLUME = {31},
      YEAR = {1998},
    NUMBER = {2},
     PAGES = {181--279}
}

@article {FS,
    AUTHOR = {Franke, Jens and Schwermer, Joachim},
     TITLE = {A decomposition of spaces of automorphic forms, and the
              {E}isenstein cohomology of arithmetic groups},
   JOURNAL = {Math. Ann.},
  FJOURNAL = {Mathematische Annalen},
    VOLUME = {311},
      YEAR = {1998},
    NUMBER = {4},
     PAGES = {765--790}
}

@article {gotsgrob,
    AUTHOR = {Gotsbacher, Gerald and Grobner, Harald},
     TITLE = {On the {E}isenstein cohomology of odd orthogonal groups},
   JOURNAL = {Forum Math.},
  FJOURNAL = {Forum Mathematicum},
    VOLUME = {25},
      YEAR = {2013},
    NUMBER = {2},
     PAGES = {283--311}
}

@article {grbac,
    AUTHOR = {Grbac, Neven},
     TITLE = {The {F}ranke filtration of the spaces of automorphic forms
              supported in a maximal proper parabolic subgroup},
   JOURNAL = {Glas. Mat. Ser. III},
  FJOURNAL = {Glasnik Matemati\v{c}ki. Serija III},
    VOLUME = {47(67)},
      YEAR = {2012},
    NUMBER = {2},
     PAGES = {351--372}
}

@article {grbsch,
    AUTHOR = {Grbac, Neven and Schwermer, Joachim},
     TITLE = {On residual cohomology classes attached to relative rank one
              {E}isenstein series for the symplectic group},
   JOURNAL = {Int. Math. Res. Not. IMRN},
  FJOURNAL = {International Mathematics Research Notices. IMRN},
      YEAR = {2011},
    NUMBER = {7},
     PAGES = {1654--1705}
}

@article {grobsp11,
    AUTHOR = {Grobner, Harald},
     TITLE = {The automorphic cohomology and the residual spectrum of
              {H}ermitian groups of rank one},
   JOURNAL = {Internat. J. Math.},
  FJOURNAL = {International Journal of Mathematics},
    VOLUME = {21},
      YEAR = {2010},
    NUMBER = {2},
     PAGES = {255--278}
}

@article {grobqres,
    AUTHOR = {Grobner, Harald},
     TITLE = {Residues of {E}isenstein series and the automorphic cohomology
              of reductive groups},
   JOURNAL = {Compos. Math.},
  FJOURNAL = {Compositio Mathematica},
    VOLUME = {149},
      YEAR = {2013},
    NUMBER = {7},
     PAGES = {1061--1090}
}

@article {GrobnerQuat,
    AUTHOR = {Grobner, Harald},
     TITLE = {Automorphic forms, cohomology and {CAP} representations. {T}he
              case {$GL_2$} over a definite quaternion algebra},
   JOURNAL = {J. Ramanujan Math. Soc.},
  FJOURNAL = {Journal of the Ramanujan Mathematical Society},
    VOLUME = {28},
      YEAR = {2013},
    NUMBER = {1},
     PAGES = {19--48}
}

@book {knappbook,
    AUTHOR = {Knapp, Anthony W.},
     TITLE = {Lie groups beyond an introduction},
    SERIES = {Progress in Mathematics},
    VOLUME = {140},
   EDITION = {Second},
 PUBLISHER = {Birkh\"{a}user Boston, Inc., Boston, MA},
      YEAR = {2002},
     PAGES = {xviii+812}
}

@book {knvo,
    AUTHOR = {Knapp, Anthony W. and Vogan, Jr., David A.},
     TITLE = {Cohomological induction and unitary representations},
    SERIES = {Princeton Mathematical Series},
    VOLUME = {45},
 PUBLISHER = {Princeton University Press, Princeton, NJ},
      YEAR = {1995}
}

@book {LanglandsES,
    AUTHOR = {Langlands, Robert P.},
     TITLE = {On the functional equations satisfied by {E}isenstein series},
    SERIES = {Lecture Notes in Mathematics, Vol. 544},
 PUBLISHER = {Springer-Verlag, Berlin-New York},
      YEAR = {1976}
}

@article {leiCM,
    AUTHOR = {Lei, Antonio},
     TITLE = {Iwasawa theory for the symmetric square of a {CM} modular form
              at inert primes},
   JOURNAL = {Glasg. Math. J.},
  FJOURNAL = {Glasgow Mathematical Journal},
    VOLUME = {54},
      YEAR = {2012},
    NUMBER = {2},
     PAGES = {241--259}
}

@article {LS,
    AUTHOR = {Li, Jian-Shu and Schwermer, Joachim},
     TITLE = {On the {E}isenstein cohomology of arithmetic groups},
   JOURNAL = {Duke Math. J.},
  FJOURNAL = {Duke Mathematical Journal},
    VOLUME = {123},
      YEAR = {2004},
    NUMBER = {1},
     PAGES = {141--169}
}

@book {MW,
    AUTHOR = {M{\oe}glin, C. and Waldspurger, J.-L.},
     TITLE = {Spectral decomposition and {E}isenstein series},
    SERIES = {Cambridge Tracts in Mathematics},
    VOLUME = {113},
      NOTE = {Une paraphrase de l'\'{E}criture [A paraphrase of Scripture]},
 PUBLISHER = {Cambridge University Press, Cambridge},
      YEAR = {1995}
}

@misc {mung2cohart,
    AUTHOR = {Mundy, Sam},
     TITLE = {Automorphic cohomology, Arthur's conjectures and applications to $G_2$},
      NOTE = {To appear, Amer. J. Math.}
}

@article {RSson1,
    AUTHOR = {Rohlfs, J\"{u}rgen and Speh, Birgit},
     TITLE = {Representations with cohomology in the discrete spectrum of
              subgroups of {${\rm SO}(n,1)({\bf Z})$} and {L}efschetz
              numbers},
   JOURNAL = {Ann. Sci. \'{E}cole Norm. Sup. (4)},
  FJOURNAL = {Annales Scientifiques de l'\'{E}cole Normale Sup\'{e}rieure.
              Quatri\`eme S\'{e}rie},
    VOLUME = {20},
      YEAR = {1987},
    NUMBER = {1},
     PAGES = {89--136}
}

@incollection {rohlfsspeh,
    AUTHOR = {Rohlfs, J\"{u}rgen and Speh, Birgit},
     TITLE = {Pseudo {E}isenstein forms and the cohomology of arithmetic
              groups {III}: residual cohomology classes},
 BOOKTITLE = {On certain {$L$}-functions},
    SERIES = {Clay Math. Proc.},
    VOLUME = {13},
     PAGES = {501--523},
 PUBLISHER = {Amer. Math. Soc., Providence, RI},
      YEAR = {2011}
}

@article {shahidiPNT,
    AUTHOR = {Shahidi, Freydoon},
     TITLE = {On certain {$L$}-functions},
   JOURNAL = {Amer. J. Math.},
  FJOURNAL = {American Journal of Mathematics},
    VOLUME = {103},
      YEAR = {1981},
    NUMBER = {2},
     PAGES = {297--355}
}

@book {shahidiES,
    AUTHOR = {Shahidi, Freydoon},
     TITLE = {Eisenstein series and automorphic {$L$}-functions},
    SERIES = {American Mathematical Society Colloquium Publications},
    VOLUME = {58},
 PUBLISHER = {American Mathematical Society, Providence, RI},
      YEAR = {2010},
}

@phdthesis{shang,
  author       = {Shang, Zhengyuan}, 
  title        = {Special {$L$}-values and motivic extensions via {E}isenstein series on {$GL_2(F)$}},
  school       = {Princeton University},
  year         = {2026},
  url          = {https://www.proquest.com/dissertations-theses/special-em-l-values-motivic-extensions-via/docview/3347410275/se-2}
}

@book {voganbook,
    AUTHOR = {Vogan, Jr., David A.},
     TITLE = {Representations of real reductive {L}ie groups},
    SERIES = {Progress in Mathematics},
    VOLUME = {15},
 PUBLISHER = {Birkh\"{a}user, Boston, Mass.},
      YEAR = {1981}
}

@article {VZ,
    AUTHOR = {Vogan, Jr., David A. and Zuckerman, Gregg J.},
     TITLE = {Unitary representations with nonzero cohomology},
   JOURNAL = {Compositio Math.},
  FJOURNAL = {Compositio Mathematica},
    VOLUME = {53},
      YEAR = {1984},
    NUMBER = {1},
     PAGES = {51--90}
}
\end{document}